\documentclass[10pt]{article}

\usepackage{fullpage}
\usepackage{natbib}
\usepackage{algorithm}
\usepackage{algorithmic}
\newcommand{\squeeze}{}

\usepackage{threeparttable}

\usepackage{multirow}
\usepackage{colortbl}
\definecolor{bgcolor}{rgb}{0.8,1,1}
\definecolor{bgcolor2}{rgb}{0.8,1,0.8}

\usepackage{graphicx} 
\graphicspath{{figs/}}

\usepackage{nicefrac}       

\newcommand{\eqdef}{\; { := }\;}

\newcommand{\R}{\mathbb{R}}

\def\<#1,#2>{\langle #1,#2\rangle}

\newcommand{\prox}{\operatorname{prox}}

\usepackage{tcolorbox}
\usepackage{pifont}
\definecolor{mydarkgreen}{RGB}{39,130,67}
\definecolor{mydarkred}{RGB}{192,47,25}

\usepackage{amsmath,amsfonts,amssymb,amsthm,array}

\usepackage{mdframed} 
\usepackage{thmtools}
\usepackage{textcomp}

\declaretheorem[within=section]{definition}
\declaretheorem[sibling=definition]{theorem}

\declaretheorem[sibling=definition]{assumption}

\declaretheorem[sibling=definition]{lemma}

\usepackage{microtype}
\usepackage{subfigure}
\usepackage{booktabs} 

\usepackage{grffile}

\usepackage{hyperref}

\title{\bf Error Compensated Loopless SVRG, Quartz, and SDCA for Distributed Optimization}
\author{Xun Qian \\ KAUST\thanks{King Abdullah University of Science and Technology, Thuwal, Saudi Arabia.} \and Hanze Dong \\ HKUST\thanks{Hong Kong University of Science and Technology, Hong Kong} \and Peter Richt\'{a}rik\\KAUST \and Tong  Zhang\\HKUST}
\date{September 21, 2021}

\begin{document}

\maketitle

\begin{abstract}
The communication of gradients is a key bottleneck in distributed training of large scale machine learning models. In order to reduce the communication cost, gradient compression (e.g., sparsification and quantization) and error compensation techniques are often used. In this paper, we propose and study three new efficient methods in this space: error compensated loopless SVRG method (EC-LSVRG), error compensated Quartz (EC-Quartz), and error compensated SDCA (EC-SDCA). Our method is capable of working with any contraction compressor (e.g., TopK compressor), and we perform analysis for convex optimization problems in the composite case and smooth case for EC-LSVRG. We prove linear convergence rates for both cases and show that in the smooth case the rate has a better dependence on the parameter associated with the contraction compressor. Further, we show that in the smooth case, and under some certain conditions, error compensated loopless SVRG has the same convergence rate as the vanilla loopless SVRG method. Then we show that the convergence rates of EC-Quartz and EC-SDCA in the composite case are as good as EC-LSVRG in the smooth case. Finally, numerical experiments are presented to illustrate the efficiency of our methods.
\end{abstract}

{
\tableofcontents
}


\section{Introduction}

In this work we consider the composite finite-sum optimization problem

\begin{equation}\label{primal-LSVRG}
\squeeze \min \limits_{x\in \mathbb{R}^d} P(x) \eqdef \frac{1}{n} \sum\limits_{\tau=1}^n  f^{(\tau)}(x) + \psi(x),
\end{equation}
where $f(x)\eqdef\frac{1}{n}\sum_{\tau}  f^{(\tau)}(x)$ is an average of $n$ smooth convex functions $f^{(\tau)}:\R^d\to \R$ distributed over $n$ nodes (devices, computers), and  $\psi:\R^d \to \R\cup \{+\infty\}$ is a proper, closed and convex function representing a possibly nonsmooth regularizer. On each node, $f^{(\tau)}(x)$ is an average of $m$ smooth convex functions 
\[
\squeeze f^{(\tau)}(x) \eqdef  \frac{1}{m} \sum \limits_{i=1}^m f^{(\tau)}_i(x) , 
\]
representing the average loss over the training data stored on node $\tau$. We assume that problem (\ref{primal-LSVRG}) has at least one optimal solution $x^*$. 

\vskip 2mm 

For large scale machine learning problems, distributed training and parallel training are often used. While in such settings, communication is generally much slower than the computation, which makes the communication overhead become a key bottleneck. There are many ways to tackle this issue. For instance, there are large mini-batches strategy \citep{Goyal17, You17}, local SGD \citep{COCOA+journal, localSGD-Stich}, asynchronous learning \citep{Agarwal11, Lian15, Recht11}, quantization and error compensation \citep{Alistarh17, Bernstein18, Mish19, Seide14, Wen17}. 

For quantization, there are mainly two types, i.e., contraction compressor and unbiased compressor, which are defined as follows. 

A possibly randomized map $Q: \R^d \to \R^d$ is called a contraction compressor if there is a $0< \delta \leq 1$ such that  
\begin{equation}\label{eq:contractor}
\mathbb{E} \left[ \|x - Q(x) \|^2 \right] \leq (1-\delta)\|x\|^2,  \quad \forall x\in \R^d.
\end{equation}
${\tilde Q}: \R^d \to \R^d$ is called an unbiased compressor if there is $\omega \geq 0$ such that
\begin{equation} \label{eq:unbiased}
\mathbb{E} \left[{\tilde Q}(x) \right] = x \quad \text{and} \quad  \mathbb{E} \left[ \|{\tilde Q}(x)\|^2  \right] \leq (\omega + 1)\|x\|^2, \quad \forall x\in \R^d.
\end{equation}

Quantization can reduce the communicated bits to improve the communication efficiency, but it will slow down the convergence rate generally. Hence, error feedback (or called error compensation) scheme is often used to improve the performance of quantization algorithms \citep{Seide14}. For the unbiased compressor, if the accumulated quantization error is assumed to be bounded, the convergence rate of error compensated SGD is the same as vanilla SGD \citep{Tang18}. However, if the bounded stochastic gradient is assumed instead, in order to bound the accumulated quantization error, some decaying factors need to be involved generally, and the error compensated SGD is proved to have some advantage over QSGD \citet{Alistarh17} in some perspective for convex quadratic problems \citep{Wu18}. On the other hand, for the contraction compressor (for example TopK compressor \citep{Alistarh18}), the error compensated SGD actually has the same convergence rate as Vanilla SGD \citep{Stich18, Stich19, Tang19}. The above results are all for the smooth case. If $f$ is non-smooth and $\psi\equiv0$, error compensated SGD was studied by \citet{karimireddy2019error} in the single node case, and the convergence rate is of order $O\left(  \nicefrac{1}{\sqrt{\delta k} } \right)$ for $f$ being non-strongly convex. 

For variance-reduced methods, there are QSVRG \citep{Alistarh17} for the smooth case where $\psi \equiv 0$ in problem (\ref{primal-LSVRG}), and VR-DIANA \citep{Samuel19} for the composite or regularized case. However, the compressor of both algorithms need to be unbiased. Recently, an error compensated method called EC-LSVRG-DIANA which can achieve linear convergence for the strongly convex and smooth case was proposed by \citet{gorbunov2020linearly}, but besides the contraction compressor, the unbiased compressor is also needed in the algorithm. In this paper, we study the error compensated methods for loopless SVRG (L-SVRG) \citep{LSVRG}, Quartz \citep{Quartz}, and SDCA \citep{SDCA}, where only contraction compressors are needed. 

\subsection{Contributions}

We now outline our main theoretical contributions.

\subsubsection{Strongly convex case for EC-LSVRG} Denote the smoothness constants of functions $f$, $f^{(\tau)}$, and $f^{(\tau)}_i$ by $L_f$, ${\bar L}$, and $L$, respectively. Let $\mu$ be the strong convexity parameter, $p$ be the updating frequency of the reference point, and $\delta$, $\delta_1$ be the contraction compressor parameters. Let $p\leq O(\delta_1)$.

\paragraph{Composite case.}
In the composite case, the iteration complexity of error compensated L-SVRG (EC-LSVRG) is 
$$
\squeeze
O\left(  \left(  \frac{1}{\delta} + \frac{1}{p} + \frac{L_f}{\mu} + \frac{L}{n \mu}  + \frac{(1-\delta){{\color{blue}\bar L}}}{\delta^2 \mu} + \frac{(1-\delta)L}{\delta \mu} \right) \ln \frac{1}{\epsilon} \right). 
$$
Under an additional assumption (Assumption \ref{as:expcompressor}) on the contraction compressor, the iteration complexity is improved to 
$$
\squeeze
O\left(  \left(  \frac{1}{\delta} + \frac{1}{p} + \frac{L_f}{\mu} + \frac{L}{n \mu} + \frac{(1-\delta){\color{red}L_f}}{\delta^2 \mu}  + \frac{(1-\delta)L}{{\color{red}n} \delta \mu}  \right) \ln \frac{1}{\epsilon}  \right). 
$$

\paragraph{Smooth case.}
In the smooth case, the iteration complexity of EC-LSVRG is 
$$
\squeeze
O\left( \left( \frac{1}{\delta} + \frac{1}{p}  + \frac{L_f}{\mu} + \frac{L}{n \mu}  +  \frac{\sqrt{(1-\delta) L_f{\bar L}}}{\mu \delta}  + \frac{\sqrt{(1-\delta)L_f L}}{\mu \sqrt{\delta}}  \right)   \ln \frac{1}{\epsilon} \right). 
$$
The iteration complexity of EC-LSVRG-DIANA \citep{gorbunov2020linearly} is $O((\omega + m + \frac{L}{\mu \delta}) \ln \frac{1}{\epsilon})$. If the compressor $Q_1$ in EC-LSVRG is obtained by scaling the unbiased compressor in EC-LSVRG-DIANA and we choose $p = \min \{ \frac{1}{m}, \frac{1}{\omega + 1}   \}$, then our iteration complexity becomes 
$$
\squeeze
O\left( \left( \frac{1}{\delta} + \omega + m + \frac{L_f}{\mu} + \frac{L}{n \mu}  +  \frac{\sqrt{(1-\delta) L_f{\bar L}}}{\mu \delta}  + \frac{\sqrt{(1-\delta)L_f L}}{\mu \sqrt{\delta}}  \right)   \ln \frac{1}{\epsilon} \right), $$ which is better than that of EC-LSVRG-DIANA since $L_f \leq {\bar L} \leq L$. 

Under an additional assumption (Assumption \ref{as:expcompressor}) on the contraction compressor, the iteration complexity of EC-LSVRG is improved to 
$$
\squeeze
O\left( \left( \frac{1}{\delta} + \frac{1}{p} + \frac{L_f}{\mu} + \frac{L}{n \mu} + \frac{\sqrt{(1-\delta)}L_f}{\mu \delta}  \right)   \ln \frac{1}{\epsilon} \right). 
$$
In particular, if $\frac{L_f}{\delta} \leq \frac{L}{n}$, then the above iteration complexity becomes 
$$
\squeeze
O\left( \left( \frac{1}{p} + \frac{L_f}{\mu} + \frac{L}{n \mu}  \right)   \ln \frac{1}{\epsilon} \right), 
$$
which is actually the iteration complexity of the uncompressed L-SVRG \citep{qian2019svrg}. Noticing that $L_f \leq L \leq mnL_f$, this means that in the extreme case: $L=mnL_f$, the error compensated L-SVRG has the same convergence rate as the uncompressed L-SVRG as long as $\frac{1}{\delta} \leq m$ and $p\leq O(\delta_1)$. 

\subsubsection{Non-strongly convex case for EC-LSVRG} Let $p\leq O(\delta_1)$. 

\paragraph{Composite case.} In the composite case, the iteration complexity of EC-LSVRG is 
$$
\squeeze
O\left(  \left( \frac{1}{p} + L_f + \frac{L}{n} + \frac{(1-\delta){\bar L}}{\delta^2} + \frac{(1-\delta) L}{\delta}   \right) \frac{1}{\epsilon} \right). 
$$
Under an additional assumption (Assumption \ref{as:expcompressor}) on the contraction compressor, the iteration complexity is improved to 
$$
\squeeze
O\left(  \left( \frac{1}{p} + L_f + \frac{L}{n} + \frac{(1-\delta){\color{red}L_f}}{\delta^2} + \frac{(1-\delta) L}{{\color{red}n} \delta}   \right) \frac{1}{\epsilon} \right). 
$$

\paragraph{Smooth case.} 
In the smooth case, the iteration complexity of EC-LSVRG is 
$$
\squeeze
O\left(  \left( \frac{1}{p} + L_f + \frac{L}{n} +  \frac{\sqrt{(1-\delta) L_f{\bar L}}}{\delta}  + \frac{\sqrt{(1-\delta)L_f L}}{\sqrt{\delta}}    \right) \frac{1}{\epsilon}  \right). 
$$
Under an additional assumption (Assumption \ref{as:expcompressor}) on the contraction compressor, the iteration complexity is improved to 
$$
\squeeze
O\left( \left(  \frac{1}{p} +  L_f + \frac{L}{n} + \frac{\sqrt{(1-\delta)}L_f}{\delta} \right)  \frac{1}{\epsilon}  \right).  
$$

\subsubsection{Strongly convex and composite case for EC-Quartz and EC-SDCA} We consider problem (\ref{primal-sdca}) for EC-Quartz and EC-SDCA in the strongly convex and composite case. The iteration complexities of EC-Quartz and EC-SDCA are 
$$
O \left( \left(  \frac{1}{\delta} + m + \frac{R_m^2}{n\lambda \gamma} + \frac{R^2}{\lambda \gamma}   +   \frac{\sqrt{1-\delta} R{\bar R}}{\delta \lambda \gamma}  + \frac{\sqrt{1-\delta} RR_m}{\lambda \gamma \sqrt{\delta}}  \right) \ln\frac{1}{\epsilon}  \right). 
$$ 
Under an additional assumption (Assumption~\ref{as:expcompressor}) on the contraction compressor, the iteration complexities are improved to 
$$
O \left( \left(  \frac{1}{\delta} + m + \frac{R_m^2}{n\lambda \gamma} + \frac{R^2}{\lambda \gamma}    +  \frac{\sqrt{1-\delta}}{\delta} \frac{R^2}{\lambda \gamma}   \right) \ln\frac{1}{\epsilon}  \right). 
$$
Noticing that for problem (\ref{primal-sdca}), we usually use $\frac{R^2}{\gamma}$, $\frac{{\bar R}^2}{\gamma}$, and $\frac{R_m^2}{\gamma}$ ( ${R}$, ${\bar R}$, and $R_m$ are defined in Algorithm~\ref{alg:ec-quartz}) to estimate the smoothness constants of $\frac{1}{mn} \sum_{\tau=1}^n \sum_{i=1}^m \phi_{i \tau}(A_{i\tau}^\top x)$, $\frac{1}{m} \sum_{i=1}^m \phi_{i \tau}(A_{i\tau}^\top x)$, and $\phi_{i \tau}(A_{i\tau}^\top x)$, respectively. In this sense, the iteration complexities of EC-Quartz and EC-SDCA in the composite case are as good as EC-LSVRG in the smooth case. Let $\delta$ go to $1$. Then the iteration complexity becomes $O \left( \left(   \frac{mn}{n} + \frac{R_m^2}{n\lambda \gamma} + \frac{R^2}{\lambda \gamma}   \right) \ln\frac{1}{\epsilon}  \right)$, which has linear speed up with respect to the number of nodes $n$ when $n \leq \frac{R_m^2}{R^2}$. Noticing that there is no linear speed up with respect to $n$ for Quartz with fully dense data, our result is actually better than Quartz in this case. This improvement for fully dense data benefits from the better estimation of expected eparable overapproximation (ESO). The ESO estimation for arbitrary sampling for Quartz can be found in the appendix.

\subsection{Compression methods}

We now give a few examples of contraction compressors:

\noindent {\bf TopK compressor.} For a parameter $1\leq K \leq d$,  the TopK compressor \citep{Stich18} is defined as
$$
({\rm TopK}(x))_{\pi(i)} = \left\{ \begin{array}{rl}
(x)_{\pi(i)}  &\mbox{ if $i\leq K$, } \\
0  \quad \quad &\mbox{ otherwise, }
\end{array} \right.
$$
where $\pi$ is a permutation of $[d] \eqdef \{1, 2, \dots, d\}$ such that $(|x|)_{\pi(i)} \geq (|x|)_{\pi(i+1)}$ for $i = 1, \dots, d-1$.

\noindent {\bf RandK compressor.} For a parameter $1\leq K \leq d$,  the {\rm RandK} compressor is defined as
$$
({\rm RandK}(x))_{i} = \left\{ \begin{array}{rl}
(x)_i  &\mbox{ if $i \in S$, } \\
0  \quad \quad &\mbox{ otherwise, }
\end{array} \right.
$$
where $S$ is chosen uniformly from the set of all $K$ element subsets of $[d]$.  \\ 

\noindent It is known that after appropriate scaling, any unbiased compressor satisfying \eqref{eq:unbiased} becomes a contraction compressor \citep{biased2020}. Indeed, it is easy to verify that for any ${\tilde Q}$ satisfying \eqref{eq:unbiased}, $\frac{1}{\omega+1}{\tilde Q}$ is  a contraction compressor satisfying \eqref{eq:contractor} with $\delta = \nicefrac{1}{(\omega+1)}$ as follows. 
\begin{eqnarray*}
	\mathbb{E} \left[ \left\|\frac{1}{\omega+1}{\tilde Q}(x) - x \right\|^2 \right] &=& \frac{1}{(\omega+1)^2} \mathbb{E} \left[ \|{\tilde Q}(x)\|^2 \right] + \|x\|^2 - \frac{2}{\omega+1}\mathbb{E} \left[ \langle {\tilde Q}(x), x\rangle \right]  \\ 
	&\leq& \frac{1}{\omega+1}\|x\|^2 + \|x\|^2 - \frac{2}{\omega+1}\|x\|^2 = \left(1 - \frac{1}{\omega+1} \right) \|x\|^2. 
\end{eqnarray*}

For more examples of contraction and unbiased compressors, we refer the reader to \citep{biased2020}. For the {\rm TopK} and {\rm RandK} compressors, we have the following property. 

\begin{lemma}[Lemma A.1 in \citep{Stich18}]
	For the {\rm TopK} and {\rm RandK} compressors with $1\leq K \leq d$, we have 
	$$
	\mathbb{E} \left[ \|{\rm TopK}(x) - x \|^2 \right] \leq \left(  1 - \frac{K}{d}  \right) \|x\|^2, 
\qquad 	\mathbb{E} \left[ \|{\rm RandK}(x) - x \|^2 \right] \leq \left(  1 - \frac{K}{d}  \right) \|x\|^2.  
	$$
\end{lemma}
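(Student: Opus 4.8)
The plan is to treat the two compressors separately, since ${\rm TopK}$ is deterministic while ${\rm RandK}$ is randomized. In both cases the key observation is that the compression error only involves the coordinates that are \emph{dropped}. Writing $(x)_{\pi(i)}$ for the coordinates of $x$ sorted so that $|(x)_{\pi(1)}| \ge \dots \ge |(x)_{\pi(d)}|$, the vector ${\rm TopK}(x) - x$ has the $K$ largest-magnitude coordinates zeroed out and the rest equal to $-(x)_{\pi(i)}$, so that $\|{\rm TopK}(x) - x\|^2 = \sum_{i=K+1}^d (x)_{\pi(i)}^2$. Similarly, for a random $K$-subset $S \subseteq [d]$, the vector ${\rm RandK}(x) - x$ vanishes on $S$ and equals $-(x)_i$ off $S$, so $\|{\rm RandK}(x) - x\|^2 = \sum_{i \notin S}(x)_i^2$. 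Reducing each norm to a sum over the discarded coordinates is what makes both bounds elementary.

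For ${\rm TopK}$, set $a_i \eqdef (x)_{\pi(i)}^2$, so that $a_1 \ge \dots \ge a_d \ge 0$, $\|x\|^2 = \sum_{i=1}^d a_i$, and the error equals $B \eqdef \sum_{i=K+1}^d a_i$; the goal is $B \le \tfrac{d-K}{d}\|x\|^2$. First I would compare the average of the $K$ retained entries with the average of the $d-K$ discarded ones: monotonicity of $(a_i)$ gives $\min_{i \le K} a_i = a_K \ge a_{K+1} = \max_{i > K} a_i$, hence $\tfrac{1}{K}\sum_{i=1}^K a_i \ge \tfrac{1}{d-K}\sum_{i=K+1}^d a_i = \tfrac{B}{d-K}$. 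Writing $T \eqdef \sum_{i=1}^K a_i$, this is $(d-K)T \ge K B$; adding $(d-K)B$ to both sides gives $(d-K)(T+B) \ge d B$, i.e. $dB \le (d-K)\|x\|^2$, which is exactly the claim. This averaging/rearrangement step is the only nontrivial point in the whole lemma, and it is where I expect any subtlety to lie.

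For ${\rm RandK}$, the bound follows (in fact with equality) by taking expectations and using symmetry. By linearity, $\mathbb{E}\left[\|{\rm RandK}(x) - x\|^2\right] = \sum_{i=1}^d (x)_i^2 \, \mathbb{P}(i \notin S)$, and since $S$ is uniform over all $K$-element subsets of $[d]$, each fixed index lies outside $S$ with probability $1 - \binom{d-1}{K-1}/\binom{d}{K} = 1 - \tfrac{K}{d}$; summing yields $\mathbb{E}\left[\|{\rm RandK}(x) - x\|^2\right] = \left(1 - \tfrac{K}{d}\right)\|x\|^2$. I expect no real obstacle here: the whole content is the sorting inequality for ${\rm TopK}$, while ${\rm RandK}$ reduces to a one-line inclusion-probability computation.
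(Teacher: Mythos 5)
Your proof is correct. Note that the paper does not actually prove this lemma---it only quotes it as Lemma A.1 of \citep{Stich18}---so the relevant comparison is with that cited proof rather than with an in-paper argument. Your ${\rm RandK}$ part coincides with the standard one: each coordinate is dropped with probability $1-\tfrac{K}{d}$, so the bound holds with equality. For ${\rm TopK}$ you give a direct rearrangement/averaging argument on the sorted squares $a_1\ge\dots\ge a_d$: the mean of the $K$ retained squares dominates the mean of the $d-K$ discarded ones, and clearing denominators yields $d\sum_{i=K+1}^d a_i \le (d-K)\|x\|^2$, which is valid. The cited proof instead gets ${\rm TopK}$ for free from ${\rm RandK}$: since ${\rm TopK}$ discards the $d-K$ smallest squares, its (deterministic) error $\sum_{i=K+1}^d (x)_{\pi(i)}^2$ equals $\min_{|S|=K}\sum_{i\notin S}(x)_i^2$, and a minimum is at most the average over uniformly random $K$-subsets, which is exactly $\left(1-\tfrac{K}{d}\right)\|x\|^2$. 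Both routes are sound; yours is self-contained for each compressor, while the min-at-most-average trick unifies the two bounds in one line and makes transparent why ${\rm TopK}$ is never worse than ${\rm RandK}$.
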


\noindent Now we propose some new contraction compressors. \\ 
{\bf Composition of the unbiased compressor and contraction compressor.} Let $S \in [d]$, and denote $S_i \in S$ such that $S_1 < S_2 < \cdots < S_{|S|}$, where $|S|$ represents the cardinality of $S$. For any $x \in \R^d$, define $x_{S} \in \R^{|S|}$ such that $(x_{S})_i = x_{S_i}$ for $1\leq i \leq |S|$. For any $y \in \R^{|S|}$, define $y_{S^{-1}} \in \R^{d}$ such that $(y_{S^{-1}})_{S_i} = y_i$ for $1\leq i \leq |S|$ and $(y_{S^{-1}})_j = 0$ for $j\notin S$. Then we have following result. 

\begin{theorem}\label{th:compositioncomp}
For any unbiased compressor ${\tilde Q}$ with parameter $\omega$,  and any contraction compressor $Q$ with parameter $\delta$, Define ${\tilde Q} \circ Q:$ $ x \to \left(  \frac{1}{\omega + 1} {\tilde Q} (Q(x)_S)  \right)_{S^{-1}}$, where $x\in \R^d$, and $S$ is any subset of $[d]$ such that $Q(x)_j = 0$ for $j\notin S$ ($S$ can depend on $Q(x)$). Then ${\tilde Q} \circ Q$ is a contraction compressor with parameter $\frac{\delta}{\omega+1}$. 
\end{theorem}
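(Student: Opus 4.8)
The plan is to condition on the output of the outer contraction compressor $Q$, dispatch the inner (scaled unbiased) compression first, and only then take the outer expectation and invoke the contraction property \eqref{eq:contractor} of $Q$. Throughout, write $w \eqdef (\tilde{Q} \circ Q)(x) = \left(\frac{1}{\omega+1}\tilde{Q}(Q(x)_S)\right)_{S^{-1}}$, and recall that the index set $S \supseteq \operatorname{supp}(Q(x))$ is determined once $Q(x)$ is fixed, so that $S$ is $\sigma(Q(x))$-measurable. The two elementary facts I would record first are that restriction to $S$ and embedding via $S^{-1}$ are isometric on the relevant subspaces: since $Q(x)$ vanishes off $S$ we have $\|Q(x)_S\|^2 = \|Q(x)\|^2$ and $\langle x_S, Q(x)_S\rangle = \langle x, Q(x)\rangle$; and for any $y\in\R^{|S|}$, $\|y_{S^{-1}}\|^2 = \|y\|^2$ with $y_{S^{-1}}$ supported on $S$.

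Because $w$ is supported on $S$, the orthogonal split of $\R^d$ along $S$ and its complement gives $\|x - w\|^2 = \|x\|_{[d]\setminus S}^2 + \|x_S - \frac{1}{\omega+1}\tilde{Q}(Q(x)_S)\|^2$, where the second term is computed in $\R^{|S|}$. Conditioning on $Q(x)$ (so $S$ is fixed) and taking the expectation over $\tilde{Q}$, I would expand this second term and apply the two defining properties of the unbiased compressor on $\R^{|S|}$: $\mathbb{E}[\tilde{Q}(Q(x)_S)] = Q(x)_S$ and $\mathbb{E}[\|\tilde{Q}(Q(x)_S)\|^2] \leq (\omega+1)\|Q(x)_S\|^2$. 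Using the isometry facts to rewrite the resulting inner-product and norm terms in $\R^d$, and the cancellation $\|x\|_{[d]\setminus S}^2 + \|x_S\|^2 = \|x\|^2$, this produces the conditional bound $\mathbb{E}[\|x-w\|^2 \mid Q(x)] \leq \|x\|^2 + \frac{1}{\omega+1}\big(\|Q(x)\|^2 - 2\langle x, Q(x)\rangle\big)$.

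Finally I would take the outer expectation over $Q$ and observe that $\mathbb{E}[\|Q(x)\|^2 - 2\langle x, Q(x)\rangle] = \mathbb{E}[\|x - Q(x)\|^2] - \|x\|^2 \leq -\delta\|x\|^2$ by \eqref{eq:contractor}. Substituting gives $\mathbb{E}[\|x - w\|^2] \leq \left(1 - \frac{\delta}{\omega+1}\right)\|x\|^2$, which is the claim. Rather than a deep obstacle, the thing to be careful about is the bookkeeping around the random support $S$: one must justify via the tower property that the $S$-dependent orthogonal decomposition is legitimate (it is, since $S$ is measurable with respect to $Q(x)$), and one must notice that the terms $-\frac{2}{\omega+1}\langle x, Q(x)\rangle + \frac{1}{\omega+1}\|Q(x)\|^2$ assemble into exactly $\frac{1}{\omega+1}$ times the contraction quantity $\|Q(x)\|^2 - 2\langle x, Q(x)\rangle$ of $Q$. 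This is what converts the factor $1-\delta$ into $1-\frac{\delta}{\omega+1}$, and confirms that scaling by $\frac{1}{\omega+1}$ is precisely the normalization needed; note also that using the second-moment bound of $\tilde{Q}$ directly (rather than merely treating $\frac{1}{\omega+1}\tilde{Q}$ as a generic contraction) is what keeps the cross term aligned so that the cancellation goes through.
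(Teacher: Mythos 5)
Your proof is correct and follows essentially the same route as the paper's: expand the squared error, use unbiasedness of $\tilde{Q}$ to reduce the cross term to $\langle x, Q(x)\rangle/(\omega+1)$, apply the second-moment bound, and reassemble $\|Q(x)\|^2 - 2\langle x, Q(x)\rangle$ into $\|x - Q(x)\|^2 - \|x\|^2$ so that the contraction property of $Q$ yields the factor $1 - \frac{\delta}{\omega+1}$. Your explicit conditioning on $Q(x)$ and the orthogonal decomposition along $S$ is just a more careful write-up of the measurability bookkeeping that the paper handles implicitly via the embedding isometries; the mathematical content is identical.
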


\begin{proof}
For any $x\in \R^d$, we have 
\begin{align*}
\mathbb{E} \| x - {\tilde Q} \circ Q(x)\|^2 & =  \mathbb{E} \left\|  x - \left(  \frac{1}{\omega + 1} {\tilde Q} (Q(x)_S)  \right)_{S^{-1}}  \right\|^2 \\ 
& = \|x\|^2 + \mathbb{E} \left\| \left(  \frac{1}{\omega + 1} {\tilde Q} (Q(x)_S)  \right)_{S^{-1}}  \right\|^2 - 2\mathbb{E} \left\langle x, \left(  \frac{1}{\omega + 1} {\tilde Q} (Q(x)_S)  \right)_{S^{-1}}  \right\rangle \\ 
& = \|x\|^2 + \mathbb{E} \left\| \frac{1}{\omega + 1} {\tilde Q} (Q(x)_S)   \right\|^2 - 2\mathbb{E} \left\langle x, \frac{Q(x)}{\omega+1}  \right\rangle \\  
& \leq \|x\|^2 + \frac{1}{\omega+1} \mathbb{E}\|Q(x)_S\|^2 - 2\mathbb{E} \left\langle x, \frac{Q(x)}{\omega+1}  \right\rangle, 
\end{align*}
where in the third equality we use $\|y_{S^{-1}}\|^2 = \|y\|^2$ for any $y\in \R^{|S|}$, 
$$
\mathbb{E} \left[ \left(  \frac{1}{\omega + 1} {\tilde Q} (Q(x)_S)  \right)_{S^{-1}} \right] = \mathbb{E} \left[ \left(  \frac{1}{\omega + 1} Q(x)_S  \right)_{S^{-1}} \right], 
$$ 
and the fact that for any $x\in \R^d$, if $x_j = 0$ for $j\notin S$, then $(x_S)_{S^{-1}} = x$. Since $\|Q(x)_S\|^2 = \|Q(x)\|^2$, we further have 
\begin{align*}
\mathbb{E} \| x - {\tilde Q} \circ Q(x)\|^2 & \leq \|x\|^2 + \frac{1}{\omega+1} \mathbb{E}\|Q(x)\|^2 - 2\mathbb{E} \left\langle x, \frac{Q(x)}{\omega+1}  \right\rangle \\ 
& = \left(  1 - \frac{1}{\omega+1}  \right)\|x\|^2 + \frac{1}{\omega+1} \mathbb{E} \left[  \|x\|^2 + \|Q(x)\|^2 - 2 \langle x, Q(x)\rangle  \right] \\ 
& = \left(  1 - \frac{1}{\omega+1}  \right)\|x\|^2  + \frac{1}{\omega+1} \mathbb{E}\|x-Q(x)\|^2 \\ 
& \leq \left(  1 - \frac{\delta}{\omega+1}  \right) \|x\|^2. 
\end{align*}
\end{proof}

\noindent If we let $Q$ be the TopK compressor, ${\tilde Q}$ be the general exponential dithering operator \citep{biased2020}, and $S$ be $[d]$, then we recover the TopK combined with exponential dithering compressor in \citep{biased2020}. 

We can construct some concrete contraction compressors based on Theorem \ref{th:compositioncomp}. For example, let RTopK be the composition of TopK and random dithering \citep{Alistarh17}, and NTopK be the composition of TopK and natural compression \citep{horvath2019natural}, where $S$ is composed of the $K$ indexes in ${\rm TopK}(x)$, i.e., $S = \{  \pi(1), ..., \pi(K)  \}$.

We may use the following assumptions for the contraction compressor in some cases; and this will allow us to obtained improved results. 

\begin{assumption}\label{as:expcompressor}
	$\mathbb{E}[Q(x)] = \delta x$. $\mathbb{E}[Q_1(x)] = \delta_1 x$. 
\end{assumption}

It is easy to verify that {\rm RandK} compressor satisfies Assumption \ref{as:expcompressor} with $\delta = \nicefrac{K}{d}$, and ${\tilde Q}/(\omega+1)$, where ${\tilde Q}$ is any unbiased compressor, also satisfies Assumption \ref{as:expcompressor} with $\delta = \nicefrac{1}{(\omega+1)}$.

\section{Error Compensated L-SVRG }

We now describe the error compensated L-SVRG algorithm (Algotirhm \ref{alg:ec-lsvrg}). Roughly speaking, EC-LSVRG is a combination of L-SVRG, the error feedback technique, and the learning scheme method in VR-DIANA \citep{Samuel19}. The search direction in L-SVRG is 
\begin{equation}\label{eq:sdinLsvrg}
\squeeze
\frac{1}{n} \sum_{\tau=1}^n  \nabla f_{i_k^\tau}^{(\tau)}(x^k) - \nabla f_{i_k^\tau}^{(\tau)}(w^k) + \nabla f^{(\tau)}(w^k) , 
\end{equation}
where $i_k^\tau$ is sampled uniformly and independently from $[m] \eqdef \{ 1, 2, \dots, m  \}$ on $\tau$-th node for $1\leq \tau \leq n$, $x^k$ is the current iteration, and $w^k$ is the reference point. Since when $\psi$ is nonzero in problem (\ref{primal-LSVRG}), $\nabla f(x^*)$ is nonzero in general, and so is $\nabla f^{(\tau)}(x^*)$. Thus, compressing the direction 
$$
\nabla f_{i_k^\tau}^{(\tau)}(x^k) - \nabla f_{i_k^\tau}^{(\tau)}(w^k) + \nabla f^{(\tau)}(w^k)
$$ 
directly on each node would cause nonzero noise even when $x^k$ and $w^k$ goes to the optimal solution $x^*$. We adopt a learning scheme method in VR-DIANA \citep{Samuel19}, but with the contraction compressor rather than the unbiased compressor. We maintain a vector $h_\tau^k\in \R^d$ on each node, and use it to learn $\nabla f^{(\tau)}(w^k)$ iteratively. The same copy of $h^k$, which is the average of $h_\tau^k$, is also maintained on each node. We substract $h_\tau^k$ from the search direction in L-SVRG on each node, and then add $h^k$ back after the aggregation. The search direction of EC-LSVRG becomes 
$$
g^k_\tau = \nabla f_{i_k^\tau}^{(\tau)}(x^k) - \nabla f_{i_k^\tau}^{(\tau)}(w^k) + \nabla f^{(\tau)}(w^k) - h^k_{\tau},
$$
that could be small if  $w^k$ is close to $x^k$ and $h_\tau^k$ close to $\nabla f^{(\tau)}(w^k)$. Before compressing $g_\tau^k$, we apply the error feedback technique. The accumulated error vector $e_\tau^k$ is maintained on each node and will be added to $\eta g_\tau^k$, where $\eta$ represents the stepsize, before the compression. $e^{k+1}_\tau$ is updated by the compression error at iteration $k$ for each node. On each node, a scalar $u^k_\tau$ is also maintained, and only $u^k_1$ will be updated. The summation of $u^k_\tau$ is $u^k$, and we use $u^k$ to control the updating frequency of the reference point $w^k$. All nodes maintain the same copies of $x^k$, $w^k$, $y^k$, $z^k$, and $u^k$. Each node sends their compressed vectors $y^k_{\tau}$, $z_\tau^k$, and $u^{k+1}_\tau$ to the other nodes. At last, the proximal step is taken on each node, where we use the standard proximal operator: 
$$
\prox_{\eta \psi} (x) \eqdef \arg\min_y \left\{  \frac{1}{2}\|x-y\|^2 + \eta \psi(y)  \right\}. 
$$
The reference point $w^k$ will be updated if $u^{k+1}=1$. It is easy to see that $w^k$ will be updated with probability $p$ at each iteration. 

\begin{algorithm}[tb]
	\caption{Error compensated loopless SVRG (EC-LSVRG)}
	\label{alg:ec-lsvrg}
	\begin{algorithmic}[1]
		\STATE {\bfseries Parameters:} stepsize $\eta >0$; probability $p \in (0, 1]$
		\STATE {\bfseries Initialization:}
		$x^0 = w^0 \in \R^d$; $e^0_\tau = 0 \in \R^d$; $u^0=1 \in \R$; $h^0_{\tau} \in \R^d$; $h^0 = \frac{1}{n} \sum_{\tau=1}^n h^0_\tau$
		\FOR{ $k = 0, 1, 2, \dots$} 
		\FOR{ $\tau = 1, \dots, n$} 
		\STATE Sample $i_k^\tau$ uniformly and independently in $[m]$ on each node 
		\STATE $g^k_{\tau} = \nabla f_{i_k^\tau}^{(\tau)}(x^k) - \nabla f_{i_k^\tau}^{(\tau)}(w^k) + \nabla f^{(\tau)}(w^k) - h^k_{\tau}$ 
		\STATE $y^k_{\tau} = Q(\eta g^k_{\tau} + e^k_{\tau})$ 
		\STATE $e^{k+1}_{\tau} = e^k_{\tau} + \eta g^k_{\tau} - y^k_{\tau}$
		\STATE $z^k_{\tau} = Q_1(\nabla f^{(\tau)}(w^k) - h^k_\tau)$ 
		\STATE $h^{k+1}_{\tau} = h^k_{\tau} + z^k_{\tau}$
		\STATE $u^{k+1}_\tau = 0$ for $\tau = 2, \dots, n$ 
		\STATE $
		u^{k+1}_1 = \left\{ \begin{array}{rl}
		1 & \mbox{ with probability $p$} \\
		0 &\mbox{ with probability $1-p$}
		\end{array} \right.
		$
		\STATE Send $y^k_{\tau}$, $z^k_{\tau}$, and $u^{k+1}_\tau$ to the other nodes 
		\STATE Receive $y^k_{\tau}$, $z^k_{\tau}$, and $u^{k+1}_\tau$  from the other nodes
		\STATE $y^k = \frac{1}{n} \sum_{\tau=1}^n y^k_{\tau}$ 
		\STATE $z^k = \frac{1}{n} \sum_{\tau=1}^n z^k_{\tau}$
		\STATE $u^{k+1} = \sum_{\tau=1}^n u^{k+1}_\tau$ 
		\STATE $x^{k+0.5} = x^k - (y^k + \eta h^k)$
		\STATE $x^{k+1} = \prox_{\eta \psi} (x^{k+0.5})$
		\STATE $
		w^{k+1} = \left\{ \begin{array}{rl}
		x^k & \mbox{ if $u^{k+1} = 1$} \\
		w^k &\mbox{ otherwise }
		\end{array} \right.
		$
		\STATE $h^{k+1} = h^k + z^k$
		\ENDFOR
		\ENDFOR
	\end{algorithmic}
\end{algorithm}

In algorithm \ref{alg:ec-lsvrg}, let $e^k = \frac{1}{n}\sum_{\tau=1}^n e^k_\tau$, $g^k = \frac{1}{n} \sum_{\tau=1}^n g^k_\tau$, and ${\tilde x}^k = x^k - e^k$ for $k\geq 0$. Then we have 
$$
e^{k+1} = \frac{1}{n} \sum_{\tau=1}^n \left(  e^k_\tau + \eta g^k_\tau - y^k_\tau  \right) = e^k + \eta g^k - y^k, 
$$
and 
\begin{eqnarray*}
	{\tilde x}^{k+1} &=& x^{k+1} - e^{k+1} \\
	&=& x^{k+0.5} - \eta \partial \psi(x^{k+1}) - e^{k+1} \\ 
	&=& x^k - y^k - \eta h^k - \eta \partial \psi(x^{k+1}) - e^k  - \eta g^k + y^k \\ 
	&=& {\tilde x}^k - \eta(g^k + h^k + \partial \psi(x^{k+1})). 
\end{eqnarray*}

\subsection{Composite case ($\psi\neq 0$) of EC-LSVRG}

We need the following assumptions in this subsection. 

\begin{assumption}\label{as:contracQQ1}
The two compressors $Q$ and $Q_1$ in Algorithm \ref{alg:ec-lsvrg}  are contraction compressors with parameters $\delta$ and $\delta_1$, respectively. 
\end{assumption}

\begin{assumption}\label{as:eclsvrg}
	$f_i^{(\tau)}$ is $L$-smooth, $f^{(\tau)}$ is ${\bar L}$-smooth, $f$ is $L_f$-smooth, and $\psi$ is $\mu$-strongly convex. $L_f \geq \mu\geq 0$. 
\end{assumption}

The following are the main results. We use two Lyapunov functions for two cases: with or without Assumption \ref{as:expcompressor} in the following two theorems.

\begin{theorem}\label{th:eclsvrg-1}
	Let Assumption \ref{as:contracQQ1} and Assumption \ref{as:eclsvrg} hold. Define 
	\begin{eqnarray*}
	\squeeze
		\Phi^k_1 &\eqdef&  \left(1 + \frac{\eta \mu}{2} \right)\mathbb{E} \left[ \|\tilde{x}^{k} - x^*\|^2 \right] + \frac{9}{\delta n} \sum_{\tau=1}^n\mathbb{E} \left[ \|e^{k}_{\tau}\|^2\right] + \frac{164(1-\delta)\eta^2}{\delta^2 \delta_1 n} \sum_{\tau=1}^n \mathbb{E} \left[ \| h^{k}_\tau - \nabla f^{(\tau)}(w^{k})\|^2 \right] \\ 
		&& + \frac{4\eta^2}{3p} \left(  \frac{41(1-\delta)}{\delta} \left(  \frac{4{\bar L}}{\delta} + L  + \frac{16{\bar L}p}{\delta \delta_1} \left(  1 + \frac{2p}{\delta_1}  \right) \right) + \frac{16L}{n} \right)  \mathbb{E} [P(w^{k}) - P(x^*)]. 
	\end{eqnarray*}
	If $\eta \leq \frac{1}{4L_f}$, then we have 
	\begin{eqnarray*}
		\mathbb{E} \left[\Phi^{k+1}_1 \right] &\leq& \left(  1 - \min\left\{  \frac{\mu \eta}{3}, \frac{\delta}{4}, \frac{\delta_1}{4}, \frac{p}{4}  \right\}  \right) \mathbb{E} \left[\Phi^k_1 \right] + 2\eta \mathbb{E} \left[P(x^*) - P(x^{k+1}) \right] \\ 
		&& + \left(  \frac{96(1-\delta)}{\delta} \left(  \frac{4{\bar L}}{\delta} + L  + \frac{16{\bar L}p}{\delta \delta_1} \left(  1 + \frac{2p}{\delta_1}  \right) \right) + \frac{38L}{n}  \right) \eta^2 \mathbb{E} \left[P(x^k) - P(x^*) \right]. 
	\end{eqnarray*}
\end{theorem}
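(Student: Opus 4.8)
The plan is to establish four one-step recursions, one for each component of $\Phi^k_1$, and then combine them with the stated weights so that every component contracts at its own rate while the stray function-value terms are left over as the two $P$-terms on the right-hand side. Throughout I would work with the virtual iterate $\tilde x^k = x^k - e^k$, for which the excerpt already derived $\tilde x^{k+1} = \tilde x^k - \eta(g^k + h^k + \xi^{k+1})$ with $\xi^{k+1}\in\partial\psi(x^{k+1})$. The key algebraic fact is that $g^k_\tau + h^k_\tau = \nabla f^{(\tau)}_{i^\tau_k}(x^k) - \nabla f^{(\tau)}_{i^\tau_k}(w^k) + \nabla f^{(\tau)}(w^k)$, so that conditionally on the randomness up to iteration $k$ we have $\mathbb{E}[g^k+h^k] = \nabla f(x^k)$; that is, the aggregated direction is an unbiased SVRG estimator of $\nabla f(x^k)$, and the error feedback is entirely hidden inside the substitution $x^k = \tilde x^k + e^k$.

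First I would expand $\|\tilde x^{k+1}-x^*\|^2$, take the conditional expectation, and split $\tilde x^k - x^* = (x^{k+1}-x^*) + (\tilde x^k - x^{k+1})$. The inner product $\langle\nabla f(x^k), x^k-x^*\rangle$ is bounded below via convexity and $L_f$-smoothness of $f$, and $\langle \xi^{k+1}, x^{k+1}-x^*\rangle$ via $\mu$-strong convexity of $\psi$; together these produce the negative progress term $-2\eta(P(x^{k+1})-P(x^*))$ and a strong-convexity gain proportional to $\|x^{k+1}-x^*\|^2$. The mismatch term $\langle \cdot, \tilde x^k - x^{k+1}\rangle$ and the stray inner product $\langle\nabla f(x^k), e^k\rangle$ are absorbed by Young's inequality into $\|e^k\|^2$ and the squared-direction term $\eta^2\|g^k+h^k\|^2$. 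The prefactor $(1+\eta\mu/2)$ on $\|\tilde x^k - x^*\|^2$ is precisely what lets the $\|x^{k+1}-x^*\|^2$ gain be converted back, after accounting for the $\tilde x^k$-versus-$x^{k+1}$ discrepancy, into a $(1-\mu\eta/3)$ contraction of the distance term.

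Next I would bound the second moment $\mathbb{E}\|g^k+h^k\|^2$ and the per-node norms $\mathbb{E}\|g^k_\tau\|^2$ by the standard SVRG variance estimate, expressing them through $P(x^k)-P(x^*)$, $P(w^k)-P(x^*)$, and $\frac1n\sum_\tau\|h^k_\tau-\nabla f^{(\tau)}(w^k)\|^2$ using $L$- and $\bar L$-smoothness. For the error component I would use the contraction property \eqref{eq:contractor} of $Q$ from Assumption \ref{as:contracQQ1}, writing $e^{k+1}_\tau = (\eta g^k_\tau + e^k_\tau) - Q(\eta g^k_\tau + e^k_\tau)$ so that $\mathbb{E}\|e^{k+1}_\tau\|^2 \leq (1-\delta)\|\eta g^k_\tau + e^k_\tau\|^2$, and then Young's inequality with parameter tuned to $\delta$ turns this into a $(1-\delta/2)$-type contraction of $\|e^k_\tau\|^2$ plus a multiple of $\eta^2\|g^k_\tau\|^2$. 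For the learning component, $h^{k+1}_\tau - \nabla f^{(\tau)}(w^k) = (h^k_\tau-\nabla f^{(\tau)}(w^k)) - Q_1(\nabla f^{(\tau)}(w^k)-h^k_\tau)$ gives the $(1-\delta_1)$ contraction via $Q_1$; since $w^{k+1}$ equals $x^k$ with probability $p$ and $w^k$ otherwise, accounting for the change of reference point injects $\|\nabla f^{(\tau)}(x^k)-\nabla f^{(\tau)}(w^k)\|^2$ terms, which smoothness converts back into function gaps. Finally, $\mathbb{E}[P(w^{k+1})-P(x^*)] = p\,(P(x^k)-P(x^*)) + (1-p)(P(w^k)-P(x^*))$ is immediate from the reference-point rule and supplies the $p/4$ contraction.

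I would then add the four recursions weighted by $1$, $9/(\delta n)$, $164(1-\delta)\eta^2/(\delta^2\delta_1 n)$, and the given prefactor of the $w$-term, and verify that with $\eta\leq 1/(4L_f)$ all positive $\eta^2$-contributions of the squared direction, the variance, and the reference-point injections are dominated, leaving the claimed $(1-\min\{\mu\eta/3,\delta/4,\delta_1/4,p/4\})$ contraction together with $2\eta(P(x^*)-P(x^{k+1}))$ and the residual $\eta^2(\dots)(P(x^k)-P(x^*))$. The main obstacle is the coefficient bookkeeping of this final combination: the components are tightly coupled, since the error recursion feeds on $\|g^k_\tau\|^2$, which depends through $h^k_\tau$ on the learning term and the function gaps, while the learning recursion's reference-point switch feeds gradient-difference terms back into the function gaps. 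The constants $9$, $164$, and $41$ must therefore be chosen simultaneously so that every cross-term is nonpositive, and keeping the $\delta$-, $\delta_1$-, and $p$-tuned Young parameters and the $\tilde x^k$-to-$x^{k+1}$ conversion mutually consistent across all four recursions is where the delicacy lies.
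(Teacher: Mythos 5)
Your overall architecture --- four one-step recursions (distance, per-node compression errors, per-node learning errors, reference point) combined with the stated Lyapunov weights --- is exactly the paper's strategy, and your compression-error, learning, and reference-point recursions correctly reproduce Lemma \ref{lm:ek+1-1eclsvrg}, Lemma \ref{lm:hk+1-1eclsvrg} and (\ref{eq:wk+1}). The genuine gap is in your distance recursion. You propose to absorb the mismatch term and the stray inner product $\langle \nabla f(x^k), e^k\rangle$ via Young's inequality into $\|e^k\|^2$ plus the \emph{second moment} $\eta^2\mathbb{E}\|g^k+h^k\|^2$, and then to bound $\mathbb{E}\|g^k+h^k\|^2$ by $P(x^k)-P(x^*)$, $P(w^k)-P(x^*)$ and $\frac{1}{n}\sum_\tau\|h_\tau^k-\nabla f^{(\tau)}(w^k)\|^2$. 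In the composite case this bound is false: as $x^k, w^k \to x^*$ and $h_\tau^k \to \nabla f^{(\tau)}(w^k)$, the direction $g^k+h^k$ converges to $\nabla f(x^*)$, which is nonzero when $\psi\neq 0$ (only $-\nabla f(x^*)\in\partial\psi(x^*)$ holds), while every quantity on your proposed right-hand side vanishes. The same obstruction breaks the Young step on $\langle\nabla f(x^k), e^k\rangle$, which necessarily produces a $\|\nabla f(x^k)\|^2$-type term converging to $\|\nabla f(x^*)\|^2>0$. Any recursion carrying such terms has a non-vanishing additive residual, incompatible with the claimed conclusion (contraction plus function-gap terms only). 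This smooth-case device is precisely what the paper uses for $\psi=0$ (Lemma \ref{lm:itereclsvrgsmooth}), but it cannot be transplanted to the composite theorem you are proving.

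What the composite case requires --- and what the paper's Lemma \ref{lm:itereclsvrg} does --- is to keep the estimator paired with $x^*-x^{k+1}$ throughout: starting from $\langle \eta(g^k+h^k), x^*-x^{k+1}\rangle$, the three-point expansion produces $\|e^k\|^2$, $\|e^{k+1}\|^2$ and a positive $\frac{1}{4\eta}\|x^{k+1}-x^k\|^2$; then convexity of $f$ together with $\mathbb{E}_k[g^k+h^k]=\nabla f(x^k)$ introduces $\langle g^k+h^k-\nabla f(x^k), x^{k+1}-x^k\rangle$, which is split by Young's inequality with parameter $\beta=\frac{1}{4\eta}$ so that its quadratic penalty is cancelled against $\frac{1}{4\eta}\|x^{k+1}-x^k\|^2$ (this is where $\eta\leq\frac{1}{4L_f}$ enters). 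What survives is only the \emph{variance} $4\eta^2\mathbb{E}_k\|g^k+h^k-\nabla f(x^k)\|^2$, which is centered --- so $\nabla f(x^*)$ drops out --- and is bounded via (\ref{eq:gk2variance}) by $\frac{4L}{n}$ times function gaps; the centering also buys the $\frac{1}{n}$ factor on which the final constant $\frac{38L}{n}$ in the theorem depends. With your second-moment version you lose both the vanishing property and the $\frac{1}{n}$, so the fix is not cosmetic: the distance recursion must be restructured as above before your (otherwise correct) weighted combination can go through.
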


\begin{theorem}\label{th:eclsvrg-2}
	Let Assumption \ref{as:contracQQ1} and Assumption \ref{as:eclsvrg} hold. Define 
	\begin{eqnarray*}
		{\Phi_2^{k}} &\eqdef& \left(1 + \frac{\eta \mu}{2} \right)\mathbb{E}\|\tilde{x}^{k} - x^*\|^2 + \frac{9}{\delta} \mathbb{E}\|e^{k}\|^2 + \frac{84(1-\delta)}{\delta n^2} \sum_{\tau=1}^n \mathbb{E}\|e^{k}_\tau\|^2 \\ 
		&&  + \frac{164(1-\delta)\eta^2}{\delta^2 \delta_1} \mathbb{E}\|h^{k} - \nabla f(w^{k})\|^2 + \frac{2000(1-\delta)\eta^2}{\delta^2 \delta_1 n^2} \sum_{\tau=1}^n \mathbb{E} \|h^{k}_\tau - \nabla f^{(\tau)}(w^{k})\|^2 \\ 
		&& + \frac{4\eta^2}{3p}  \left(  \frac{(1-\delta)}{\delta} \left(  \frac{164L_f}{\delta} + \frac{1344{\bar L}}{\delta n} + \frac{541L}{n}  + \frac{(656L_f + \frac{8000{\bar L}}{n})p}{\delta \delta_1}\left(  1 + \frac{2p}{\delta_1}  \right)  \right) + \frac{16L}{n} \right) \mathbb{E} [P(w^{k}) - P(x^*)] 
	\end{eqnarray*}
	Under Assumption \ref{as:expcompressor}, if $\eta \leq \frac{1}{4L_f}$, then we have 
	\begin{eqnarray*}
		\mathbb{E} [{\Phi}_2^{k+1}] &\leq& \left(  1 - \min\left\{  \frac{\mu \eta}{3}, \frac{\delta}{4}, \frac{\delta_1}{4}, \frac{p}{4}  \right\}  \right) \mathbb{E} [{\Phi}_2^k] + 2\eta \mathbb{E}[P(x^*) - P(x^{k+1})] \\ 
		&& +  \left(  \frac{(1-\delta)}{\delta} \left(  \frac{383L_f}{\delta} + \frac{3136{\bar L}}{\delta n} + \frac{1263L}{n} + \frac{(1531L_f + \frac{18667{\bar L}}{n})p}{\delta \delta_1}\left(  1 + \frac{2p}{\delta_1}  \right)  \right) + \frac{38L}{n} \right)  \eta^2  \mathbb{E}[P(x^k)- P(x^*) ]. 
	\end{eqnarray*}
\end{theorem}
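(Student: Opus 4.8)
The plan is to run the same Lyapunov-descent argument that underlies Theorem~\ref{th:eclsvrg-1}, but to exploit Assumption~\ref{as:expcompressor} by splitting each accumulated quantity (the compression error and the learning error) into a \emph{bias} part, tracked for the network average, and a \emph{variance} part, summed over nodes. This split is what upgrades the dependence on the compressor from $(1-\delta)$ to $\sqrt{1-\delta}$, and it is the reason $\Phi_2^k$ carries both averaged terms ($\|e^k\|^2$, $\|h^k-\nabla f(w^k)\|^2$) and per-node terms. Concretely, I would derive a one-step inequality for each component of $\Phi_2^k$ and then combine them with the prescribed weights. The backbone is the virtual iterate: since $\tilde x^{k+1}=\tilde x^k-\eta(g^k+h^k+\partial\psi(x^{k+1}))$ with $\mathbb{E}[g^k+h^k\mid\mathcal F^k]=\nabla f(x^k)$, I expand $\|\tilde x^{k+1}-x^*\|^2$ and write $\tilde x^k-x^*=(x^k-x^*)-e^k$. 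Using $\mu$-strong convexity of $\psi$ and convexity of $f$ through the prox optimality for $x^{k+1}$, the $(x^k-x^*)$ contribution becomes $-2\eta(P(x^{k+1})-P(x^*))$ after the $(1+\eta\mu/2)$ rescaling, while the $-e^k$ contribution is absorbed into $\|e^k\|^2$ by Young's inequality; this produces the $2\eta\,\mathbb{E}[P(x^*)-P(x^{k+1})]$ term in the claim.

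The decisive step is the error recursion. Each node satisfies $e^{k+1}_\tau=(\mathrm{Id}-Q)(\eta g^k_\tau+e^k_\tau)$, so contraction plus Young's inequality gives $\mathbb{E}\|e^{k+1}_\tau\|^2\le(1-\delta)(1+s)\|e^k_\tau\|^2+(1-\delta)(1+1/s)\eta^2\|g^k_\tau\|^2$ with $s\asymp\delta$, feeding the per-node term $\frac{84(1-\delta)}{\delta n^2}\sum_\tau\|e^k_\tau\|^2$. For the average, Assumption~\ref{as:expcompressor} gives $\mathbb{E}[y^k_\tau]=\delta(\eta g^k_\tau+e^k_\tau)$ over the compression noise, so the averaged error has conditional mean $(1-\delta)(\eta g^k+e^k)$, and the bias-variance identity yields $\mathbb{E}\|e^{k+1}\|^2=(1-\delta)^2\|\eta g^k+e^k\|^2+\frac{1}{n^2}\sum_\tau\mathrm{Var}(y^k_\tau)$, with each variance at most $(1-\delta)\|\eta g^k_\tau+e^k_\tau\|^2$ by the independence of the node compressions. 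The \emph{squared} factor $(1-\delta)^2$ on the averaged bias (versus $(1-\delta)$ in Theorem~\ref{th:eclsvrg-1}) is exactly what produces the $\sqrt{1-\delta}$ scaling, and the residual variance is what couples the averaged weight $\frac{9}{\delta}\|e^k\|^2$ to the per-node weight.

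The learning error $h^k-\nabla f(w^k)$ is handled identically. When the reference point is fixed ($w^{k+1}=w^k$, probability $1-p$), $h^{k+1}_\tau-\nabla f^{(\tau)}(w^k)=-(\mathrm{Id}-Q_1)(\nabla f^{(\tau)}(w^k)-h^k_\tau)$ contracts by $(1-\delta_1)$; when $w$ updates to $x^k$ (probability $p$) I pay $\|\nabla f^{(\tau)}(x^k)-\nabla f^{(\tau)}(w^k)\|^2$, controlled via $\bar L$-smoothness by the gaps $P(x^k)-P(x^*)$ and $P(w^k)-P(x^*)$. Assumption~\ref{as:expcompressor} again supplies $\mathbb{E}[z^k_\tau]=\delta_1(\nabla f^{(\tau)}(w^k)-h^k_\tau)$, giving the same bias-variance split and justifying both the averaged term $\frac{164(1-\delta)\eta^2}{\delta^2\delta_1}\|h^k-\nabla f(w^k)\|^2$ and the per-node term in $\Phi_2^k$. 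The remaining inputs are the squared-direction bounds on $\mathbb{E}\|g^k+h^k-\nabla f(x^k)\|^2$ and on the per-node $\mathbb{E}\|g^k_\tau\|^2$; using $\|\nabla f_i(x)-\nabla f_i(y)\|^2\le 2L(f_i(x)-f_i(y)-\langle\nabla f_i(y),x-y\rangle)$ together with the definition of $g^k_\tau$, these reduce to the gaps $P(x^k)-P(x^*)$, $P(w^k)-P(x^*)$ plus the learning errors $\|h^k_\tau-\nabla f^{(\tau)}(w^k)\|^2$, and they supply the $L_f,\bar L,L$ coefficients.

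Finally I would multiply each component inequality by its $\Phi_2^k$ weight and sum, using $\eta\le\frac{1}{4L_f}$ to push the contraction rates to $\mu\eta/3$, $\delta/4$, $\delta_1/4$, and $p/4$, with the leftover second-moment mass collected into the $\eta^2\,\mathbb{E}[P(x^k)-P(x^*)]$ residual. The main obstacle is the coupled weight-matching: the averaged error variance feeds the per-node error term, the learning errors feed the direction-norm bounds, and the reference-point gap cycles back through the $w$-update, so the coefficients ($9/\delta$, $84(1-\delta)/(\delta n^2)$, $164/(\delta^2\delta_1)$, $2000/(\delta^2\delta_1 n^2)$, and the weight on $P(w^k)-P(x^*)$) must be fixed simultaneously so that every cross-term is dominated and the $(1-\delta)^2$ savings survive into the final constants. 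Verifying this coupled system of inequalities — rather than any single bound — is where the real work lies.
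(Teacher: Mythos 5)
Your proposal is correct and follows essentially the same route as the paper's proof: the virtual-iterate descent inequality (Lemma \ref{lm:itereclsvrg}), the per-node error and learning-error recursions via contraction plus Young's inequality (Lemmas \ref{lm:ek+1-1eclsvrg} and \ref{lm:hk+1-1eclsvrg}), the bias--variance split of the averaged quantities $e^k$ and $h^k-\nabla f(w^k)$ using $\mathbb{E}[Q(x)]=\delta x$ together with independence across nodes (Lemmas \ref{lm:ek+1-2eclsvrg} and \ref{lm:hk+1-2eclsvrg}), the smoothness bounds reducing everything to primal gaps (Lemma \ref{lm:gk2}), and the final weighted combination with the $w^{k+1}$-update identity.

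One correction to your framing, though it does not affect the validity of the steps: for this composite-case theorem, Assumption \ref{as:expcompressor} does not upgrade the dependence from $(1-\delta)$ to $\sqrt{1-\delta}$ --- both Theorem \ref{th:eclsvrg-1} and Theorem \ref{th:eclsvrg-2} carry the factor $(1-\delta)$, and the paper in fact immediately relaxes the $(1-\delta)^2$ on the averaged bias back to $(1-\delta)$, so no ``$(1-\delta)^2$ savings'' survive. What tracking the averaged error actually buys is that the dominant bias term becomes $\|e^k+\eta g^k\|^2$ with $g^k$ the \emph{network-averaged} direction, so it is controlled by the $L_f$-smoothness of $f$ and by the $O(L/n)$ cross-node variance reduction (inequality (\ref{eq:gk2variance})), which is exactly why $\bar L$ is replaced by $L_f$ and $L$ by $L/n$ in the leading coefficients of Theorem \ref{th:eclsvrg-2}; the $\sqrt{1-\delta}$ improvements belong to the smooth-case theorems and arise there from stepsize balancing, not from this assumption.
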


\noindent From the above two theorems, we can get the iteration complexity. 

\begin{theorem}\label{th:eclsvrg-11}
	Let Assumption \ref{as:contracQQ1} and Assumption \ref{as:eclsvrg} hold.\\ Let $w_k = \left(  1 - \min\left\{  \frac{\mu \eta}{3}, \frac{\delta}{4}, \frac{\delta_1}{4}, \frac{p}{4}  \right\}  \right)^{-k}$, $W_k = \sum_{i=0}^k w_i$, and ${\bar x}^k = \frac{1}{W_k} \sum_{i=0}^k w_ix^i$. \\ 
	(i) Let $\eta \leq {1}/{\left(  \frac{105(1-\delta)}{\delta} \left(  \frac{4{\bar L}}{\delta} + L  + \frac{16{\bar L}p}{\delta \delta_1} \left(  1 + \frac{2p}{\delta_1}  \right) \right) + 4L_f + \frac{42L}{n}  \right)}$. If $\mu>0$, we have
	$$
	\mathbb{E}[P({\bar x}^k)- P(x^*)] \leq  \frac{\frac{\mu}{2} \|x^0 - x^*\|^2 + \frac{1}{2} (P(x^0) - P(x^*))  + \frac{1}{40{\bar L}n}\sum_{\tau=1}^n \| \nabla f^{(\tau)}(x^0)  - h_\tau^0\|^2 }{1 - (1 - \min\{   \frac{\mu \eta}{3}, \frac{\delta}{4}, \frac{\delta_1}{4},  \frac{p}{4}   \})^{k+1}} \left(  1 - \min\left\{  \frac{\mu \eta}{3}, \frac{\delta}{4}, \frac{\delta_1}{4}, \frac{p}{4}   \right\}  \right)^k. 
	$$
	If $\mu=0$, we have 
	$$
	\mathbb{E}[P({\bar x}^k) - P(x^*)] \leq \frac{1}{k+1} \left( \frac{9}{8\eta} \|x^0-x^*\|^2 + \frac{1}{10{\bar L}p n} \sum_{\tau=1}^n \| \nabla f^{(\tau)}(x^0)  - h_\tau^0\|^2 + \frac{8}{3p} (P(x^0) - P(x^*))  \right). 
	$$
	(ii) Let $\eta = {1}/{\left(  \frac{105(1-\delta)}{\delta} \left(  \frac{4{\bar L}}{\delta} + L  + \frac{16{\bar L}p}{\delta \delta_1} \left(  1 + \frac{2p}{\delta_1}  \right) \right) +4L_f + \frac{42L}{n}  \right)}$ and $p\leq O(\delta_1)$.  If $\mu>0$ and $\epsilon \leq \frac{\mu}{2}\|x^0-x^*\|^2 + \frac{1}{2} (P(x^0) - P(x^*)) + \frac{1}{40{\bar L}n}\sum_{\tau=1}^n \| \nabla f^{(\tau)}(x^0)  - h_\tau^0\|^2$, we have $\mathbb{E}[P({\bar x}^k) - P(x^*)] \leq \epsilon$ as long as 
	$$
	k \geq O\left(  \left(  \frac{1}{\delta} + \frac{1}{p} + \frac{L_f}{\mu} + \frac{L}{n \mu}  +  \frac{(1-\delta){\bar L}}{\delta^2 \mu} + \frac{(1-\delta)L}{\delta \mu} \right)  \ln\frac{1}{\epsilon} \right). 
	$$
	If $\mu=0$, we have $\mathbb{E}[P({\bar x}^k) - P(x^*)] \leq \epsilon$ as long as  
	$$
	k \geq O\left(  \left( \frac{1}{p} + L_f + \frac{L}{n} + \frac{(1-\delta){\bar L}}{\delta^2} + \frac{(1-\delta) L}{\delta}   \right) \frac{1}{\epsilon} \right). 
	$$
\end{theorem}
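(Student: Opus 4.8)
The plan is to derive Theorem~\ref{th:eclsvrg-11} directly from the one-step inequality of Theorem~\ref{th:eclsvrg-1} by a weighted telescoping argument combined with Jensen's inequality applied to the weighted iterate average $\bar{x}^k$. Write $\rho \eqdef \min\{\frac{\mu\eta}{3}, \frac{\delta}{4}, \frac{\delta_1}{4}, \frac{p}{4}\}$ and let $C$ denote the coefficient of $\mathbb{E}[P(x^k)-P(x^*)]$ in the conclusion of Theorem~\ref{th:eclsvrg-1}, so that the recursion reads $\mathbb{E}[\Phi_1^{k+1}] \leq (1-\rho)\mathbb{E}[\Phi_1^k] + 2\eta\,\mathbb{E}[P(x^*)-P(x^{k+1})] + C\eta^2\,\mathbb{E}[P(x^k)-P(x^*)]$. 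The two function-value terms live at the shifted indices $k+1$ and $k$; the whole difficulty is to combine them, across all iterations, into a single nonnegative weighted sum of $\mathbb{E}[P(x^j)-P(x^*)]$ that can then be lower bounded by $W_k\,\mathbb{E}[P(\bar{x}^k)-P(x^*)]$ via convexity of $P$.

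First I would check that the stepsize choice in part~(i) is admissible and strong enough. The prescribed bound forces $\eta \leq \frac{1}{4L_f}$, so Theorem~\ref{th:eclsvrg-1} applies, and moreover makes $C\eta \leq 1-\rho$: indeed the denominator defining $\eta$ dominates $C$ term by term (coefficients $105$ versus $96$, $42$ versus $38$, plus an extra $4L_f$), so $C\eta < 1$. Next I would multiply the recursion by $w_{k+1} = (1-\rho)^{-(k+1)}$ and sum over $k$. The Lyapunov terms telescope to $w_{K+1}\mathbb{E}[\Phi_1^{K+1}] - \Phi_1^0$. Using $w_{k+1} = w_k/(1-\rho)$, the positive term becomes $\frac{C\eta^2}{1-\rho}\sum_j w_j\,\mathbb{E}[P(x^j)-P(x^*)]$, while reindexing the negative term gives $-2\eta\sum_j w_j\,\mathbb{E}[P(x^j)-P(x^*)]$. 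Over the common indices these combine with net coefficient $(2\eta - \frac{C\eta^2}{1-\rho})w_j \geq 0$, which is exactly where $C\eta \leq 1-\rho$ is used; dropping the nonnegative term $w_{K+1}\mathbb{E}[\Phi_1^{K+1}]$ and the boundary terms then yields $(2\eta - \frac{C\eta^2}{1-\rho})\sum_{j=0}^k w_j\,\mathbb{E}[P(x^j)-P(x^*)] \leq \Phi_1^0 + 2\eta\,\mathbb{E}[P(x^0)-P(x^*)]$.

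To finish part~(i) I would invoke convexity, $\mathbb{E}[P(\bar{x}^k)-P(x^*)] \leq \frac{1}{W_k}\sum_{j=0}^k w_j\,\mathbb{E}[P(x^j)-P(x^*)]$, and evaluate the geometric sum $W_k = \sum_{i=0}^k (1-\rho)^{-i}$. When $\mu>0$ one has $\frac{1}{W_k} = \frac{\rho(1-\rho)^k}{1-(1-\rho)^{k+1}}$, which produces the stated linear factor $\frac{(1-\rho)^k}{1-(1-\rho)^{k+1}}$; when $\mu=0$ we have $\rho=0$, so $W_k = k+1$ and $\bar{x}^k$ is the plain average, giving the $O(1/(k+1))$ rate. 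It remains to simplify the initial Lyapunov value $\Phi_1^0$ into the stated constants: here $e^0=0$, $w^0=x^0$ and $\tilde{x}^0 = x^0$ collapse most terms, and the stepsize upper bound converts the leftover $\eta^2$ factors into clean form. In particular the $h$-term coefficient $\frac{164(1-\delta)\eta^2}{\delta^2\delta_1 n}$ is reduced to $\frac{1}{40\bar{L}n}$ (for $\mu>0$) using $\rho \leq \frac{\delta_1}{4}$ so that the factor $\rho$ cancels the $\delta_1$, and to $\frac{1}{10\bar{L}pn}$ (for $\mu=0$) using the hypothesis $p\leq O(\delta_1)$. Part~(ii) is then immediate: substituting the explicit $\eta$, using $p\leq O(\delta_1)$ to absorb the terms $\frac{16\bar{L}p}{\delta\delta_1}(1+\frac{2p}{\delta_1})$ into $O(\frac{\bar{L}}{\delta})$, and reading off $\frac{1}{\rho} = \max\{\frac{3}{\mu\eta}, \frac{4}{\delta}, \frac{4}{\delta_1}, \frac{4}{p}\}$ (for $\mu>0$) or $\frac{1}{\eta}$ together with the $\frac{1}{p}$ from the $\mathbb{E}[P(x^0)-P(x^*)]$ term (for $\mu=0$) gives the advertised complexities.

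I expect the main obstacle to be the index bookkeeping in the weighted telescoping: the positive and negative function-value terms sit at different time indices and carry different geometric weights, so combining them into a single nonnegative sum requires both the reindexing identity $w_{k+1}=w_k/(1-\rho)$ and the stepsize-driven inequality $C\eta \leq 1-\rho$ to keep the net coefficient from turning negative. A secondary, more tedious obstacle is matching the clean constants in the final bounds; this forces one to track the stepsize upper bound carefully and, crucially, to use $\rho\leq\delta_1/4$ (respectively $p\leq O(\delta_1)$) precisely when simplifying the $h_\tau^0$ contribution, since otherwise an uncancelled $\delta_1^{-1}$ would remain.
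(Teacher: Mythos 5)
Your skeleton is the paper's own argument in different clothing: multiplying the recursion of Theorem~\ref{th:eclsvrg-1} by $w_{k+1}$ and summing is algebraically equivalent to the paper's unrolling of the recursion followed by division by $w_k$, and your intermediate inequality $\bigl(2\eta - \tfrac{C\eta^2}{1-\rho}\bigr)\sum_{j=0}^k w_j\,\mathbb{E}[P(x^j)-P(x^*)] \leq \Phi_1^0 + 2\eta\,(P(x^0)-P(x^*))$ reduces to the paper's display (\ref{eq:sumPx-1-eclsvrg}) once $C\eta\le 1-\rho$ is granted; the Jensen step and the evaluation of $W_k$ coincide as well. However, the two quantitative steps that make the theorem's actual constants come out are not closed correctly. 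First, your justification of $C\eta\le 1-\rho$ is incomplete: term-by-term domination only gives $C\eta\le \max\{96/105,\,38/42\}=96/105<1$, and $C\eta<1$ does not imply $C\eta\le 1-\rho$. You additionally need $\rho\le 1-\tfrac{96}{105}=\tfrac{9}{105}$, which does hold, but only via the observation that $\rho\le\tfrac{\mu\eta}{3}\le\tfrac{1}{12}$ (using $\mu\le L_f$ from Assumption~\ref{as:eclsvrg} and $\eta\le\tfrac{1}{4L_f}$) together with $\tfrac{1}{12}<\tfrac{9}{105}$; this is exactly the paper's ``$w_1\le\tfrac{12}{11}$'' step, and without it your pivotal inequality is unjustified.

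Second, your route to the initial-value constants fails. Using $\rho\le\tfrac{\delta_1}{4}$ to cancel the $\delta_1$ in $\tfrac{164(1-\delta)\eta^2}{\delta^2\delta_1 n}$ leaves $\tfrac{41(1-\delta)\eta}{\delta^2 n}$, and the strongest relevant piece of the stepsize bound, $\eta\le\tfrac{\delta^2}{420(1-\delta){\bar L}}$, then gives only $\tfrac{41}{420{\bar L}n}\approx\tfrac{1}{10{\bar L}n}$ --- four times larger than the stated $\tfrac{1}{40{\bar L}n}$, so the theorem as written is not recovered. The correct route is the opposite pairing: use the $\tfrac{105(1-\delta)}{\delta}\cdot\tfrac{16{\bar L}p}{\delta\delta_1}$ term of the stepsize denominator, i.e.\ $\eta\le\tfrac{\delta^2\delta_1}{1680(1-\delta){\bar L}p}$, to get $\tfrac{164(1-\delta)\eta}{\delta^2\delta_1 n}\le\tfrac{1}{10{\bar L}pn}$ (this is also precisely how the $\mu=0$ coefficient $\tfrac{1}{10{\bar L}pn}$ arises), and only then multiply by $\rho\le\tfrac{p}{4}$ to obtain $\tfrac{1}{40{\bar L}n}$ in the $\mu>0$ case. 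Relatedly, in the $\mu=0$ bound you invoke the hypothesis $p\le O(\delta_1)$, but that assumption belongs to part~(ii) only; part~(i) must hold for arbitrary $p\in(0,1]$, and with the pairing just described no relation between $p$ and $\delta_1$ is needed.
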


\begin{theorem}\label{th:eclsvrg-22}
	Let Assumption \ref{as:contracQQ1} and Assumption \ref{as:eclsvrg} hold. Assume the compressors $Q$ and $Q_1$ also satisfy Assumption \ref{as:expcompressor}. 
	Let $w_k = \left(  1 - \min\left\{  \frac{\mu \eta}{3}, \frac{\delta}{4}, \frac{\delta_1}{4}, \frac{p}{4}  \right\}  \right)^{-k}$, $W_k = \sum_{i=0}^k w_i$, and ${\bar x}^k = \frac{1}{W_k} \sum_{i=0}^k w_ix^i$. \\ 
	(i) Let $\eta \leq {1}/{ \left(  \frac{(1-\delta)}{\delta} \left(  \frac{418L_f}{\delta} + \frac{3422{\bar L}}{\delta n} + \frac{1349L}{n} + \frac{(1671L_f + \frac{20364{\bar L}}{n})p}{\delta \delta_1}\left(  1 + \frac{2p}{\delta_1}  \right)  \right) + 4L_f + \frac{42L}{n} \right) }$. If $\mu>0$, we have
	\begin{eqnarray*}
	\mathbb{E}[P({\bar x}^k)- P(x^*)] &\leq&  \frac{  \frac{\mu}{2} \|x^0 - x^*\|^2 + \frac{1}{2} (P(x^0) - P(x^*)) + \frac{1}{40L_f}\|\nabla f(x^0)\|^2 + \frac{3}{10L_f n}\sum_{\tau=1}^n \| \nabla f^{(\tau)}(x^0)  - h_\tau^0\|^2 }{1 - (1 - \min\{   \frac{\mu \eta}{3}, \frac{\delta}{4}, \frac{\delta_1}{4},  \frac{p}{4}   \})^{k+1}} \\ 
	&& \cdot \left(  1 - \min\left\{  \frac{\mu \eta}{3}, \frac{\delta}{4}, \frac{\delta_1}{4}, \frac{p}{4}   \right\}  \right)^k. 
	\end{eqnarray*}
	If $\mu=0$, we have 
	\begin{eqnarray*}
	\mathbb{E}[P({\bar x}^k) - P(x^*)] &\leq& \frac{1}{k+1} \left( \frac{9}{8\eta} \|x^0-x^*\|^2 +  \frac{1}{10L_fp} \|\nabla f(x^0)\|^2  \right. \\ 
	&& \left. +  \frac{6}{5 L_f p n} \sum_{\tau=1}^n \| \nabla f^{(\tau)}(x^0)  - h_\tau^0\|^2  + \frac{8}{3p} \left(P(x^0) - P(x^*) \right)  \right). 
	\end{eqnarray*}
	(ii) Let $\eta = {1}/{ \left(  \frac{(1-\delta)}{\delta} \left(  \frac{418L_f}{\delta} + \frac{3422{\bar L}}{\delta n} + \frac{1349L}{n} + \frac{(1671L_f + \frac{20364{\bar L}}{n})p}{\delta \delta_1}\left(  1 + \frac{2p}{\delta_1}  \right)  \right) + 4L_f + \frac{42L}{n} \right) }$ and $p\leq O(\delta_1)$.  If $\mu>0$ and $\epsilon \leq  \frac{\mu}{2} \|x^0 - x^*\|^2 + \frac{1}{2} (P(x^0) - P(x^*)) + \frac{1}{40L_f}\|\nabla f(x^0)\|^2 + \frac{3}{10L_f n}\sum_{\tau=1}^n \| \nabla f^{(\tau)}(x^0)  - h_\tau^0\|^2$, we have $\mathbb{E}[P({\bar x}^k) - P(x^*)] \leq \epsilon$ as long as 
	$$
	k \geq O\left(  \left(  \frac{1}{\delta} + \frac{1}{p} + \frac{L_f}{\mu} + \frac{L}{n \mu}  +  \frac{(1-\delta)L_f}{\delta^2 \mu} + \frac{(1-\delta)L}{n \delta  \mu} \right)  \ln\frac{1}{\epsilon} \right). 
	$$
	If $\mu=0$, we have $\mathbb{E}[P({\bar x}^k) - P(x^*)] \leq \epsilon$ as long as  
	$$
	k \geq O\left(  \left( \frac{1}{p} + L_f + \frac{L}{n} + \frac{(1-\delta)L_f}{\delta^2} + \frac{(1-\delta) L}{n \delta}   \right) \frac{1}{\epsilon} \right). 
	$$
\end{theorem}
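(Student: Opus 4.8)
The plan is to treat Theorem~\ref{th:eclsvrg-22} as a purely deterministic ``recurrence-to-rate'' consequence of the one-step descent inequality in Theorem~\ref{th:eclsvrg-2}, exactly as Theorem~\ref{th:eclsvrg-11} is obtained from Theorem~\ref{th:eclsvrg-1}. Both recursions have the identical shape
$$\mathbb{E}[\Phi_2^{k+1}]\leq(1-\theta)\,\mathbb{E}[\Phi_2^{k}]-2\eta\,\mathbb{E}[P(x^{k+1})-P(x^*)]+C\eta^2\,\mathbb{E}[P(x^k)-P(x^*)],$$
with $\theta\eqdef\min\{\tfrac{\mu\eta}{3},\tfrac{\delta}{4},\tfrac{\delta_1}{4},\tfrac{p}{4}\}$ and $C$ the bracketed constant of Theorem~\ref{th:eclsvrg-2}, so the conversion argument is the same with only $\Phi_2$, $C$, and $\Phi_2^0$ changed. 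First I would multiply by $w_{k+1}=(1-\theta)^{-(k+1)}$, use $w_{k+1}(1-\theta)=w_k$, and sum over $k=0,\dots,K$. The $\Phi_2$ terms telescope to $w_{K+1}\mathbb{E}[\Phi_2^{K+1}]$ on the left; the delicate bookkeeping is that the ``bad'' sum $\sum_k w_{k+1}\mathbb{E}[P(x^k)-P(x^*)]$ re-indexes to $(1-\theta)^{-1}\sum_j w_j\,\mathbb{E}[P(x^j)-P(x^*)]$, so it aligns with the ``good'' sum $-2\eta\sum_j w_j\,\mathbb{E}[P(x^j)-P(x^*)]$ up to the two boundary contributions at $j=0$ and $j=K+1$.

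Next I would invoke the stated stepsize bound to guarantee $2\eta-C\eta^2(1-\theta)^{-1}\ge c\,\eta>0$; the constants inflated relative to Theorem~\ref{th:eclsvrg-2}, together with the extra $4L_f$, are there precisely to force $C\eta$ safely below $1$ while still respecting the requirement $\eta\le\tfrac{1}{4L_f}$ of Theorem~\ref{th:eclsvrg-2}. Dropping the nonnegative terminal terms $w_{K+1}\mathbb{E}[\Phi_2^{K+1}]\ge0$ and $\mathbb{E}[P(x^{K+1})-P(x^*)]\ge0$ then isolates the positive weighted sum $\sum_{j=0}^K w_j\,\mathbb{E}[P(x^j)-P(x^*)]$, bounded by $\tfrac{1}{c\eta}(\Phi_2^0+2\eta(P(x^0)-P(x^*)))$. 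Convexity of $P$, via Jensen applied to $\bar x^k=\tfrac1{W_k}\sum w_i x^i$, converts this into $\mathbb{E}[P(\bar x^k)-P(x^*)]\le\tfrac{1}{W_k}\sum w_i\,\mathbb{E}[P(x^i)-P(x^*)]$, and the rate is then read off from $W_k$: for $\mu>0$ one has $\theta>0$ and $W_k$ grows geometrically, producing the factor $(1-\theta)^k/(1-(1-\theta)^{k+1})$, whereas for $\mu=0$ one has $\theta=0$, $w_i\equiv1$, $W_k=k+1$, and the $O(1/k)$ rate.

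The remaining work is to evaluate $\Phi_2^0$ and simplify its coefficients into the advertised initial quantities. Here I would use $e^0_\tau=0$ and $w^0=x^0$ (hence $\tilde x^0=x^0$), which annihilates the error terms and leaves $\|x^0-x^*\|^2$, the two learning terms $\|h^0-\nabla f(x^0)\|^2$ and $\tfrac1{n^2}\sum_\tau\|h^0_\tau-\nabla f^{(\tau)}(x^0)\|^2$, and the $P(x^0)-P(x^*)$ term. The pivotal observation is that the weight $\theta=\min\{\cdot\}$ is tailored to the denominators inside $\Phi_2^0$: the estimate $\tfrac{\theta}{1-\theta}\le\tfrac43\theta\le\min\{\tfrac{\delta_1}{3},\tfrac{p}{3}\}$ cancels the $1/\delta_1$ and $1/p$ factors carried by the learning and function-value coefficients; this is exactly why, for $\mu>0$, the $1/p$ disappears from the initialization constants, whereas for $\mu=0$ (where $\theta=0$ gives no such cancellation) the $1/p$ can only be absorbed through the stepsize and therefore survives. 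Finally, substituting $1/\eta=C'$ and simplifying with $L_f\le\bar L\le L$, with $\bar L\le nL_f$ (which holds since $nL_f$ is the smoothness of $\sum_\tau f^{(\tau)}$ and dominates each $\bar L$ by positive semidefiniteness of the Hessians), and with $p\le O(\delta_1)$ (so $1/\delta_1\le O(1/p)$ and $1+2p/\delta_1=O(1)$) collapses $C'$ into the stated complexities.

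The main obstacle is not any single inequality but the coordinated bookkeeping: getting the telescoping signs right through the $(1-\theta)^{-1}$ re-indexing, and then matching each $1/\delta$, $1/\delta_1$, $1/p$ factor in the Lyapunov coefficients against its counterpart either inside the $\min$ defining $\theta$ (in the $\mu>0$ case) or inside $C'$ (in the $\mu=0$ case). Keeping these roles sharply separated is precisely what makes the $\mu>0$ constants free of $1/p$ and lets the two final complexity expressions come out clean.
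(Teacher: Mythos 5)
Your proposal is correct and follows essentially the same route as the paper: the paper likewise derives the weighted-sum bound $\sum_{i=0}^k w_i\,\mathbb{E}[P(x^i)-P(x^*)]\leq \frac{1}{\eta}\Phi_2^0+2(P(x^0)-P(x^*))$ from the recursion of Theorem~\ref{th:eclsvrg-2} (reusing the telescoping argument of Theorem~\ref{th:eclsvrg-11}, with the stepsize chosen so the $C\eta^2$ term is absorbed), then applies convexity of $P$ via the weighted average $\bar x^k$, evaluates $\Phi_2^0$ using $e_\tau^0=0$ and $w^0=x^0$, and finishes with $\bar L/n\leq L_f$ and $p\leq O(\delta_1)$. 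Your bookkeeping of which factors of $1/\delta_1$ and $1/p$ are cancelled by the $\min$ in the $\mu>0$ case versus surviving in the $\mu=0$ case matches the paper's treatment exactly.
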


{\bf Remark.} The iteration complexity of EC-LSVRG in the case where $p\geq O(\delta_1)$ can be obtained easily, however, it is no better than the iteration complexity of EC-LSVRG in the case where $p\leq O(\delta_1)$. Thus we omit that case for simplicity. 

Noticing that $L_f\leq {\bar L} \leq nL_f$ and ${\bar L} \leq L \leq m{\bar L}$, the iteration complexity in Theorem \ref{th:eclsvrg-22} could be better than that in Theorem \ref{th:eclsvrg-11}. On the other hand, if $L_f = {\bar L} = L$, then both iteration complexities in Theorem \ref{th:eclsvrg-11} and Theorem \ref{th:eclsvrg-22} in the strongly convex case become 
\begin{equation}\label{eq:iter}
O\left(   \left(  \frac{1}{\delta} + \frac{1}{p} + \frac{L}{\mu} + \frac{(1-\delta) L}{\delta^2 \mu}  \right) \ln \frac{1}{\epsilon} \right). 
\end{equation}

\subsection{Smooth case ($\psi=0$) of EC-LSVRG}

In this subsection, we study the Algorithm \ref{alg:ec-lsvrg} for problem (\ref{primal-LSVRG}) with $\psi =0$. 
We need the following assumption in this subsection. 

\begin{assumption}\label{as:eclsvrgsmooth}
	$f^{(\tau)}_i$ is $L$-smooth, $f^{(\tau)}$ is ${\bar L}$-smooth, $f$ is $L_f$-smooth and $f$ is $\mu$-strongly convex. 
\end{assumption}

We also use two Lyapulov functions for two cases: with or without Assumption \ref{as:expcompressor} in the following two theorems. 

\begin{theorem}\label{th:eclsvrgsmooth-1}
	Let Assumption \ref{as:contracQQ1} and Assumption \ref{as:eclsvrgsmooth} hold. Define 
	\begin{eqnarray*}
	\Phi_3^k &\eqdef&  \mathbb{E} \|{\tilde x}^{k} - x^*\|^2 + \frac{12L_f \eta}{n \delta } \sum_{\tau=1}^n \mathbb{E}\|e^{k}_\tau\|^2 + \frac{192(1-\delta)L_f \eta^3}{\delta^2 \delta_1 n}  \sum_{\tau=1}^n \mathbb{E}\|h^{k}_\tau - \nabla f^{(\tau)}(w^{k})\|^2 \\ 
	&& + \frac{4}{3p}  \left(   \frac{48(1-\delta) L_f \eta^3}{\delta} \left(  \frac{4{\bar L}}{\delta} + L  + \frac{16{\bar L}p}{\delta \delta_1} \left(  1 + \frac{2p}{\delta_1}  \right) \right) + \frac{4L\eta^2}{n}  \right) \mathbb{E} [f(w^{k}) - f(x^*)]. 
	\end{eqnarray*}
	If $\eta\leq \frac{1}{4L_f + 8L/n}$, then 
	\begin{eqnarray*}
		\mathbb{E} [\Phi_3^{k+1}] &\leq& \left(  1 - \min\left\{  \frac{\mu \eta}{2}, \frac{\delta}{4}, \frac{\delta_1}{4}, \frac{p}{4}  \right\}  \right) \mathbb{E} [\Phi_3^k] \\ 
		&& - \frac{\eta}{2} \left(  1 - \frac{224(1-\delta)L_f \eta^2}{\delta} \left(  \frac{4{\bar L}}{\delta} + L + \frac{16{\bar L}p}{\delta \delta_1} \left(  1 + \frac{2p}{\delta_1}  \right)   \right) - \frac{11L\eta}{n}   \right)  \mathbb{E} [f(x^k) - f(x^*)]. 
	\end{eqnarray*}
\end{theorem}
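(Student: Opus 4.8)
The plan is to prove a one-step Lyapunov descent by tracking four coupled quantities — the virtual-iterate distance $\|\tilde{x}^k - x^*\|^2$, the aggregated error-feedback energy $\frac{1}{n}\sum_{\tau}\|e^k_\tau\|^2$, the gradient-learning error $\frac{1}{n}\sum_{\tau}\|h^k_\tau - \nabla f^{(\tau)}(w^k)\|^2$, and the reference-point suboptimality $f(w^k)-f(x^*)$ — establishing a recursion for each and then combining them with exactly the weights appearing in $\Phi_3^k$. I would write $\mathbb{E}_k$ for conditional expectation given the history up to iteration $k$.

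First I would exploit the identity derived just before the composite subsection, which in the smooth case ($\psi=0$) reads $\tilde{x}^{k+1} = \tilde{x}^k - \eta G^k$ with $G^k \eqdef g^k + h^k = \frac{1}{n}\sum_{\tau}\left(\nabla f_{i_k^\tau}^{(\tau)}(x^k) - \nabla f_{i_k^\tau}^{(\tau)}(w^k) + \nabla f^{(\tau)}(w^k)\right)$. Since $G^k$ is conditionally unbiased for $\nabla f(x^k)$ and the node samples $i_k^\tau$ are independent across $\tau$, expanding the square gives
$$\mathbb{E}_k\|\tilde{x}^{k+1} - x^*\|^2 = \|\tilde{x}^k - x^*\|^2 - 2\eta\langle \nabla f(x^k), \tilde{x}^k - x^*\rangle + \eta^2\mathbb{E}_k\|G^k\|^2.$$
I would split $\tilde{x}^k - x^* = (x^k - x^*) - e^k$, bound $-\langle\nabla f(x^k), x^k - x^*\rangle$ by $\mu$-strong convexity and $L_f$-smoothness (producing the negative $f(x^k)-f(x^*)$ term), convert the resulting $-\frac{\mu}{2}\|x^k-x^*\|^2$ into a contraction of $\|\tilde{x}^k-x^*\|^2$ via $\|x^k-x^*\|^2 \ge \frac12\|\tilde{x}^k-x^*\|^2 - \|e^k\|^2$, and absorb the stray cross term $\eta\langle\nabla f(x^k), e^k\rangle$ through Young's inequality into the error energy. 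The variance term I would control by the standard distributed-SVRG estimate $\mathbb{E}_k\|G^k\|^2 \le \|\nabla f(x^k)\|^2 + \frac{O(L)}{n}\big((f(x^k)-f(x^*)) + (f(w^k)-f(x^*))\big)$, using convexity and $L$-smoothness of each $f_i^{(\tau)}$ together with $\|\nabla f(x^k)\|^2 \le 2L_f(f(x^k)-f(x^*))$.

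Next I would derive the three auxiliary recursions. For the error feedback, $e^{k+1}_\tau = (\eta g^k_\tau + e^k_\tau) - Q(\eta g^k_\tau + e^k_\tau)$, so Assumption~\ref{as:contracQQ1} gives $\mathbb{E}\|e^{k+1}_\tau\|^2 \le (1-\delta)\mathbb{E}\|\eta g^k_\tau + e^k_\tau\|^2$; a Young split with parameter $\Theta(\delta)$ yields a $(1-\frac{\delta}{2})$ contraction plus a $\frac{O(1-\delta)}{\delta}\eta^2\|g^k_\tau\|^2$ term, and I would bound $\|g^k_\tau\|^2 = \|G^k_\tau - h^k_\tau\|^2$ by splitting into the per-node SVRG-variance piece, the drift $\nabla f^{(\tau)}(x^k)-\nabla f^{(\tau)}(w^k)$, and the learning error $\nabla f^{(\tau)}(w^k)-h^k_\tau$. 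For the gradient learning, $h^{k+1}_\tau = h^k_\tau + Q_1(\nabla f^{(\tau)}(w^k) - h^k_\tau)$ combined with the random reset $w^{k+1}\in\{w^k, x^k\}$ gives, in expectation, a $(1-p)(1-\delta_1)$-contracted term plus a $p$-weighted term that at reset ($w^{k+1}=x^k$) I would bound using ${\bar L}$- and $L$-smoothness to relate $\nabla f^{(\tau)}(x^k)$ to $\nabla f^{(\tau)}(w^k)$, thereby emitting $f(x^k)-f(x^*)$ and $f(w^k)-f(x^*)$. For the reference point, $\mathbb{E}[f(w^{k+1})-f(x^*)] = (1-p)(f(w^k)-f(x^*)) + p(f(x^k)-f(x^*))$ is immediate. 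I would then take the fixed linear combination with the coefficients $\frac{12L_f\eta}{n\delta}$, $\frac{192(1-\delta)L_f\eta^3}{\delta^2\delta_1 n}$, and $\frac{4}{3p}(\cdots)$ prescribed in $\Phi_3^k$, tuned so that every induced cross term is dominated, leaving the common factor $1-\min\{\frac{\mu\eta}{2}, \frac{\delta}{4}, \frac{\delta_1}{4}, \frac{p}{4}\}$ and a single leftover negative multiple of $f(x^k)-f(x^*)$.

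The hard part will be the circular feedback between these recursions and its bookkeeping: the distance recursion emits $\|e^k\|^2$; the error recursion emits $\|g^k_\tau\|^2$, which in turn emits $f(x^k)$, $f(w^k)$, and the learning error $\|h^k_\tau-\nabla f^{(\tau)}(w^k)\|^2$; and the learning recursion feeds back $f(x^k)$ and $f(w^k)$. Closing this loop forces the Young parameters and the Lyapunov coefficients to be chosen simultaneously so that the three auxiliary terms contract at the matched rates $\frac{\delta}{4}$, $\frac{\delta_1}{4}$, $\frac{p}{4}$. The stepsize restriction $\eta \le \frac{1}{4L_f + 8L/n}$ is precisely what keeps the $\|\nabla f(x^k)\|^2$ and SVRG-variance contributions below the budget set aside by the negative $f(x^k)-f(x^*)$ term, and verifying that the explicit constants ($12, 48, 192, \ldots$) are large enough to absorb all cross terms while keeping the multiplier of $f(x^k)-f(x^*)$ in the stated form $-\frac{\eta}{2}\big(1 - \cdots\big)$ is the most delicate and calculation-heavy step.
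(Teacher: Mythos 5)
Your proposal is correct and follows essentially the same route as the paper's proof: your virtual-iterate recursion is exactly the paper's Lemma \ref{lm:itereclsvrgsmooth} (same splitting of $\tilde{x}^k - x^*$, same Young absorption of the $\langle \nabla f(x^k), e^k\rangle$ cross term, same bias--variance bound on $\mathbb{E}_k\|g^k+h^k\|^2$ under the stepsize condition $\eta \leq \tfrac{1}{4L_f+8L/n}$), and the three auxiliary recursions you describe are precisely the paper's Lemma \ref{lm:ek+1-1eclsvrg}, Lemma \ref{lm:hk+1-1eclsvrg}, and equation (\ref{eq:wk+1}), combined with the same Lyapunov weights and matched contraction rates $\tfrac{\delta}{4},\tfrac{\delta_1}{4},\tfrac{p}{4}$.
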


\begin{theorem}\label{th:eclsvrgsmooth-2}
	Let Assumption \ref{as:contracQQ1} and Assumption \ref{as:eclsvrgsmooth} hold. Define 
	\begin{eqnarray*}
		\Phi_4^{k} &\eqdef& \mathbb{E} \|{\tilde x}^{k} - x^*\|^2 + \frac{12L_f \eta}{\delta} \mathbb{E}\|e^{k}\|^2 +  \frac{96(1-\delta) L_f \eta}{n^2 \delta } \sum_{\tau=1}^n \mathbb{E}\|e^{k}_\tau\|^2 \\ 
		&& + \frac{192(1-\delta)L_f \eta^3}{\delta^2 \delta_1} \mathbb{E}\|h^{k} - \nabla f(w^{k})\|^2  + \frac{2304(1-\delta)L_f \eta^3}{\delta^2 \delta_1 n^2} \sum_{\tau=1}^n \mathbb{E}\|h^{k}_\tau - \nabla f^{(\tau)}(w^{k})\|^2  \\ 
		&& + \frac{4}{3p} \left( \frac{48(1-\delta)L_f \eta^3}{\delta} \left(  \frac{4L_f}{\delta} + \frac{32{\bar L}}{n \delta} + \frac{13L}{n}  + \frac{16p(L_f + \frac{12{\bar L}}{n})}{\delta\delta_1} \left(  1 + \frac{2p}{\delta_1}  \right)\right) + \frac{4L\eta^2}{n}  \right) \mathbb{E} [f(w^{k}) - f(x^*)]. 
	\end{eqnarray*}
	Under Assumption \ref{as:expcompressor}, if $\eta\leq \frac{1}{4L_f + 8L/n}$, then
	\begin{eqnarray*}
		\mathbb{E} [\Phi_4^{k+1}] &\leq& \left(  1 - \min\left\{  \frac{\mu\eta}{2}, \frac{\delta}{4}, \frac{\delta_1}{4}, \frac{p}{4}  \right\}  \right) \mathbb{E}[\Phi_4^k]  \\ 
		&& -  \frac{\eta}{2} \left(  1 - \frac{224(1-\delta) L_f \eta^2}{\delta} \left(  \frac{4L_f}{\delta} + \frac{32{\bar L}}{n \delta} + \frac{13L}{n}  + \frac{16p(L_f + \frac{12{\bar L}}{n})}{\delta\delta_1} \left(  1 + \frac{2p}{\delta_1}  \right)\right) -  \frac{11L\eta}{n} \right) \mathbb{E} [f(x^k) - f(x^*)]. 
	\end{eqnarray*}
\end{theorem}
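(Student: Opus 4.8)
The plan is to track the \emph{virtual iterate} $\tilde x^k = x^k - e^k$, which by the identity derived right after Algorithm \ref{alg:ec-lsvrg} (with $\psi=0$) obeys the clean, compression-free recursion $\tilde x^{k+1} = \tilde x^k - \eta(g^k+h^k)$, where $g^k+h^k = \frac1n\sum_\tau[\nabla f^{(\tau)}_{i^\tau_k}(x^k) - \nabla f^{(\tau)}_{i^\tau_k}(w^k) + \nabla f^{(\tau)}(w^k)]$ is exactly the loopless-SVRG estimator and satisfies $\Exp_k[g^k+h^k] = \nabla f(x^k)$ (the $h^k_\tau$ terms cancel). Expanding $\|\tilde x^{k+1}-x^*\|^2$ and taking conditional expectation gives $\Exp_k\|\tilde x^{k+1}-x^*\|^2 = \|\tilde x^k - x^*\|^2 - 2\eta\langle\nabla f(x^k), x^k - x^*\rangle + 2\eta\langle \nabla f(x^k), e^k\rangle + \eta^2\Exp_k\|g^k+h^k\|^2$. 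I would lower-bound the first inner product by $\mu$-strong convexity to extract $f(x^k)-f(x^*)+\tfrac\mu2\|x^k-x^*\|^2$, then relate $\|x^k-x^*\|^2$ to $\|\tilde x^k-x^*\|^2$ via $x^k-x^* = (\tilde x^k-x^*)+e^k$ to produce the $1-\tfrac{\mu\eta}{2}$ contraction on the distance block (the stray $\|e^k\|^2$ being absorbed later). The coupling term $\langle\nabla f(x^k),e^k\rangle$ I would control by Young's inequality together with $\|\nabla f(x^k)\|^2 \le 2L_f(f(x^k)-f(x^*))$ (valid since $\nabla f(x^*)=0$). This is precisely where Assumption \ref{as:expcompressor} is used: it lets me charge the coupling to the \emph{averaged} error $\|e^k\|^2$ rather than the per-node errors, and $\|e^k\|^2$ contracts far faster, which is the source of the improved $L_f$- and $1/n$-dependence relative to Theorem \ref{th:eclsvrgsmooth-1}.

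Next I would establish a one-step recursion for each remaining block of $\Phi_4^k$. (a) \textbf{SVRG variance.} Bound $\Exp_k\|g^k+h^k\|^2$ by the standard decomposition into $\|\nabla f(x^k)\|^2$ plus the sampling variance; the independence of the $i^\tau_k$ across $\tau$ gives the $1/n$ factor, and $L$-smoothness of $f^{(\tau)}_i$ converts everything into $O(L/n)\big((f(x^k)-f(x^*))+(f(w^k)-f(x^*))\big)$. (b) \textbf{Error feedback.} From $e^{k+1}_\tau = e^k_\tau + \eta g^k_\tau - Q(\eta g^k_\tau+e^k_\tau)$, the contraction of $Q$ plus Young gives $\sum_\tau\|e^{k+1}_\tau\|^2 \le (1-\tfrac\delta2)\sum_\tau\|e^k_\tau\|^2 + O(\eta^2/\delta)\sum_\tau\|g^k_\tau\|^2$, while under Assumption \ref{as:expcompressor} the unbiasedness $\Exp[Q(v)]=\delta v$ yields the sharper averaged bound $\Exp_k\|e^{k+1}\|^2 \le (1-\delta)^2\|e^k+\eta g^k\|^2 + \tfrac{(1-\delta)\delta}{n^2}\sum_\tau\|\eta g^k_\tau+e^k_\tau\|^2$, whose $1/n^2$-scaled variance term drives the $1/n$ gain. (c) \textbf{Learned gradients.} From $h^{k+1}_\tau = h^k_\tau + Q_1(\nabla f^{(\tau)}(w^k)-h^k_\tau)$, I would condition on whether $w^{k+1}=x^k$ (probability $p$) or $w^{k+1}=w^k$: the no-update branch contributes the $(1-\delta_1)$ factor, and the update branch injects $\|\nabla f^{(\tau)}(x^k)-\nabla f^{(\tau)}(w^k)\|^2$, which $\bar L$- (or $L_f$-) smoothness turns into suboptimality terms. (d) \textbf{Reference point.} Directly, $\Exp_k[f(w^{k+1})-f(x^*)] = (1-p)(f(w^k)-f(x^*)) + p(f(x^k)-f(x^*))$.

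Finally I would form the weighted combination $\Phi_4^k$ with exactly the coefficients in the statement and collect terms. The design principle is that each ``bad'' quantity produced by one recursion is swallowed by the contraction slack of another: the $\eta^2\sum_\tau\|g^k_\tau\|^2$ from the error blocks (which, since $g^k_\tau$ contains $\nabla f^{(\tau)}(w^k)-h^k_\tau$, splits into a learning-gap piece plus a sampling piece) is absorbed by the $1-\delta_1$ slack of the learning-gap blocks and by the stepsize; the smoothness leakage from the learning blocks and the $f(w^k)-f(x^*)$ output of the variance bound are absorbed by the reference-point block, whose $(1-p)$ self-contraction is chosen large enough to leave the factor $1-\tfrac p4$. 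After collection the error blocks contract by $1-\tfrac\delta4$ and $1-\tfrac{\delta_1}4$, the distance block by $1-\tfrac{\mu\eta}2$, and the residual $f(x^k)-f(x^*)$ coefficient assembles into exactly the bracket $-\tfrac\eta2\big(1 - \tfrac{224(1-\delta)L_f\eta^2}{\delta}(\cdots) - \tfrac{11L\eta}{n}\big)$.

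The hard part will be the constant tracking through the mutual entanglement of families (b) and (c): the error blocks feed on $\|g^k_\tau\|^2$, which contains the learning gap, so error leakage must be routed through the learning-gap block's contraction, which itself leaks smoothness terms on each reference update; iterating Young's inequality across these two levels is exactly what produces the large constants ($192, 2304, 418, \dots$). Closing the loop relies on the stepsize cap $\eta\le 1/(4L_f+8L/n)$, which guarantees every $\eta^2$-order leak is dominated by an $\eta$-order descent term, and I expect the only genuinely delicate bookkeeping to be checking that all cross-terms simultaneously fit under the four contraction rates $\tfrac{\mu\eta}2,\tfrac\delta4,\tfrac{\delta_1}4,\tfrac p4$.
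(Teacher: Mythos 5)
Your plan is correct and follows essentially the same route as the paper's proof: the virtual-iterate bound with the order-$\eta$ coupling $3L_f\eta\|e^k\|^2$ (the paper's Lemma \ref{lm:itereclsvrgsmooth}), the per-node and averaged error recursions (Lemmas \ref{lm:ek+1-1eclsvrg} and \ref{lm:ek+1-2eclsvrg}), the learned-gradient recursions (Lemmas \ref{lm:hk+1-1eclsvrg} and \ref{lm:hk+1-2eclsvrg}), the reference-point identity, and the identical Lyapunov assembly. One minor imprecision: Assumption \ref{as:expcompressor} is not what lets you charge the coupling term to the averaged error $\|e^k\|^2$ (that step holds unconditionally and is shared with Theorem \ref{th:eclsvrgsmooth-1}); it is used only in the evolution lemmas for the averaged quantities $e^k$ and $h^k$, where unbiasedness controls the cross-node terms — which is where your argument in fact invokes it, so nothing breaks.
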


From the above two theorems, we can get the iteration complexity. 

\begin{theorem}\label{th:eclsvrgsmooth-11}
	Let Assumption \ref{as:contracQQ1} and Assumption \ref{as:eclsvrgsmooth} hold. \\ Let $w_k = \left(  1 - \min\left\{  \frac{\mu \eta}{2}, \frac{\delta}{4}, \frac{\delta_1}{4}, \frac{p}{4}  \right\}  \right)^{-k}$, $W_k = \sum_{i=0}^k w_i$, and ${\bar x}^k = \frac{1}{W_k} \sum_{i=0}^k w_ix^i$. \\ 
	(i) Let $\eta \leq \min\left\{  \frac{1}{4L_f + 33L/n}, \frac{\delta}{60\sqrt{(1-\delta)L_f{\bar L}}}, \frac{\sqrt{\delta}}{64\sqrt{(1-\delta) L_fL}}, \frac{\delta \sqrt{\delta_1}}{120\sqrt{(1-\delta)L_f{\bar L}p\left(1 + \frac{2p}{\delta_1} \right)}}  \right\}$. If $\mu>0$, we have
	\begin{eqnarray*}
		 \mathbb{E} [f({\bar x}^k) - f(x^*)] \leq \frac{ 9\mu\|x^0-x^*\|^2 + 2(f(x^0) - f(x^*)) + \frac{1}{15{\bar L}n} \sum \limits_{\tau=1}^n \|\nabla f^{(\tau)}(x^0)  - h_\tau^0\|^2 }{1 - (1 - \min\{   \frac{\mu \eta}{2}, \frac{\delta}{4}, \frac{\delta_1}{4}, \frac{p}{4}   \})^{k+1}} \left(1 - \min\left\{   \frac{\mu \eta}{2}, \frac{\delta}{4}, \frac{\delta_1}{4}, \frac{p}{4}   \right\} \right)^k. 
	\end{eqnarray*}
	
	If $\mu=0$, we have 
	$$
	\mathbb{E}[f({\bar x}^k) - f(x^*)] \leq  \frac{1}{k+1} \left( \frac{18}{\eta} \|x^0 - x^*\|^2 + \frac{1}{3{\bar L}p n} \sum_{\tau=1}^n \|\nabla f^{(\tau)}(x^0)  - h_\tau^0\|^2  + \frac{6}{p} \left(f(x^0) - f(x^*) \right)  \right). 
	$$
	(ii) Let $\eta = \min\left\{  \frac{1}{4L_f + 33L/n}, \frac{\delta}{60\sqrt{(1-\delta)L_f{\bar L}}}, \frac{\sqrt{\delta}}{64\sqrt{(1-\delta) L_fL}}, \frac{\delta \sqrt{\delta_1}}{120\sqrt{(1-\delta)L_f{\bar L}p\left(1 + \frac{2p}{\delta_1} \right)}}  \right\}$ and $p\leq O(\delta_1)$.  If $\mu>0$ and $\epsilon \leq  9\mu\|x^0-x^*\|^2 + 2(f(x^0) - f(x^*)) + \frac{1}{15{\bar L}n} \sum_{\tau=1}^n \|\nabla f^{(\tau)}(x^0)  - h_\tau^0\|^2 $, we have $\mathbb{E}[f({\bar x}^k) - f(x^*)] \leq \epsilon$ as long as 
	$$
	k \geq O\left( \left( \frac{1}{\delta} + \frac{1}{p} + \frac{L_f}{\mu} + \frac{L}{n \mu}  +  \frac{\sqrt{(1-\delta) L_f{\bar L}}}{\delta \mu}  + \frac{\sqrt{(1-\delta)L_f L}}{\sqrt{\delta}\mu}   \right) \ln \frac{1}{\epsilon} \right). 
	$$
	If $\mu=0$, we have $\mathbb{E}[P({\bar x}^k) - P(x^*)] \leq \epsilon$ as long as  
	$$
	k \geq O\left(  \left( \frac{1}{p} + L_f + \frac{L}{n} +  \frac{\sqrt{(1-\delta) L_f{\bar L}}}{\delta}  + \frac{\sqrt{(1-\delta)L_f L}}{\sqrt{\delta}}    \right) \frac{1}{\epsilon}  \right). 
	$$
\end{theorem}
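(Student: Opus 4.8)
The plan is to turn the one-step Lyapunov recursion of Theorem~\ref{th:eclsvrgsmooth-1} into the stated explicit convergence rates in three movements: calibrate the stepsize so the term multiplying $\mathbb{E}[f(x^k)-f(x^*)]$ is controlled, run a weighted telescoping, and finally bound the initial Lyapunov value $\Phi_3^0$. Throughout write $\rho \eqdef 1-\min\{\frac{\mu\eta}{2},\frac{\delta}{4},\frac{\delta_1}{4},\frac{p}{4}\}$ for the contraction factor.

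First I would show that the four-way stepsize cap in part~(i) is engineered precisely so that the bracket $1 - \frac{224(1-\delta)L_f\eta^2}{\delta}(\frac{4{\bar L}}{\delta}+L+\frac{16{\bar L}p}{\delta\delta_1}(1+\frac{2p}{\delta_1})) - \frac{11L\eta}{n}$ is bounded below by a positive absolute constant. Each of the three summands inside the large parenthesis, after multiplication by $\frac{224(1-\delta)L_f\eta^2}{\delta}$, is dominated by exactly one prong of the stepsize minimum: $\eta\leq\frac{\delta}{60\sqrt{(1-\delta)L_f{\bar L}}}$ handles the $\frac{4{\bar L}}{\delta}$ term, $\eta\leq\frac{\sqrt{\delta}}{64\sqrt{(1-\delta)L_fL}}$ handles the $L$ term, and $\eta\leq\frac{\delta\sqrt{\delta_1}}{120\sqrt{(1-\delta)L_f{\bar L}p(1+2p/\delta_1)}}$ handles the remaining term, while $\eta\leq\frac{1}{4L_f+33L/n}$ controls $\frac{11L\eta}{n}$. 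With this calibration, Theorem~\ref{th:eclsvrgsmooth-1} collapses to $\mathbb{E}[\Phi_3^{k+1}]\leq \rho\,\mathbb{E}[\Phi_3^k] - c\eta\,\mathbb{E}[f(x^k)-f(x^*)]$ for an absolute constant $c>0$.

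Next I would run the telescoping against the geometric weights $w_k=\rho^{-k}$. Since $w_k\rho=w_{k-1}$, multiplying the reduced recursion by $w_k$ makes the $\Phi_3$ terms telescope; summing over $k$ and dropping the nonnegative tail gives $c\eta\sum_{i=0}^k w_i\,\mathbb{E}[f(x^i)-f(x^*)]\leq w_{-1}\Phi_3^0\leq\Phi_3^0$. Convexity of $f$ and Jensen's inequality applied to ${\bar x}^k=\frac{1}{W_k}\sum_{i=0}^k w_ix^i$ then yield $\mathbb{E}[f({\bar x}^k)-f(x^*)]\leq \frac{\Phi_3^0}{c\eta W_k}$. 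For $\mu>0$ I would evaluate $W_k=\frac{\rho^{-k}-\rho}{1-\rho}$, so $\frac{1}{W_k}=\frac{(1-\rho)\rho^k}{1-\rho^{k+1}}$, producing the stated $\frac{(\cdots)}{1-\rho^{k+1}}\rho^k$ shape; for $\mu=0$ one has $\rho=1$, $w_k\equiv1$, $W_k=k+1$, and the same steps give the $\frac{1}{k+1}$ rate. It then remains to bound $\Phi_3^0$: using the initialization $e^0_\tau=0$, $w^0=x^0$ (hence ${\tilde x}^0=x^0$), the error terms vanish and $\Phi_3^0$ reduces to $\|x^0-x^*\|^2$ plus the $h^0$-term and an $f(x^0)-f(x^*)$ term. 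In the prefactor $\frac{1-\rho}{\eta}$ I would bound $1-\rho$ by the prong of the minimum matching each surviving term — $\frac{\mu\eta}{2}$ for $\|x^0-x^*\|^2$, $\frac{\delta_1}{4}$ to cancel $\delta_1$ in the $h^0$-term (then reusing $\eta\leq\frac{\delta}{60\sqrt{(1-\delta)L_f{\bar L}}}$ to turn its prefactor into a constant multiple of $\frac{1}{{\bar L}n}$), and $\frac{p}{4}$ to cancel $p$ in the last term — which reproduces the explicit constants $9\mu$, $\frac{1}{15{\bar L}n}$, $2$ (and $\frac{18}{\eta}$, $\frac{1}{3{\bar L}pn}$, $\frac{6}{p}$ when $\mu=0$).

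For part~(ii) I would set $\eta$ equal to the four-way minimum, so $\frac1\eta$ equals the corresponding maximum, i.e.\ up to constants the sum $4L_f+\frac{33L}{n}+\frac{\sqrt{(1-\delta)L_f{\bar L}}}{\delta}+\frac{\sqrt{(1-\delta)L_fL}}{\sqrt\delta}$, the fourth prong being absorbed into the $\frac{\sqrt{(1-\delta)L_f{\bar L}}}{\delta}$ term once $p\leq O(\delta_1)$ makes $1+\frac{2p}{\delta_1}=O(1)$ and $\frac{\sqrt{p}}{\sqrt{\delta_1}}\leq1$. For $\mu>0$, forcing $\rho^k\leq O(\epsilon)$ requires $k=O(\frac{1}{\min\{\cdots\}}\ln\frac1\epsilon)$, and since $\frac{1}{\min\{\cdots\}}=\max\{\frac{2}{\mu\eta},\frac4\delta,\frac4{\delta_1},\frac4p\}$ with $\frac1{\delta_1}\leq\frac1p$ under $p\leq O(\delta_1)$, substituting $\frac1\eta$ reproduces the stated complexity; the $\mu=0$ bound follows identically from the $\frac{1}{k+1}$ rate with the $\frac1p$ and $\frac1\eta$ factors. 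The hard part will be the bookkeeping in the first movement: the four stepsize constraints must be calibrated so that each term of the negative bracket is individually dominated, and the very same constraints must be reused to produce the clean constants in $\Phi_3^0$, so the constants are genuinely coupled across the two steps.
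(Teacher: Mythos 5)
Your overall architecture is exactly the paper's: calibrate the four stepsize prongs so that the bracket in Theorem~\ref{th:eclsvrgsmooth-1} satisfies $1-\tfrac14-\tfrac1{18}-\tfrac14-\tfrac13 = \tfrac19$, giving the recursion $\mathbb{E}[\Phi_3^{k+1}]\le \rho\,\mathbb{E}[\Phi_3^k]-\tfrac{\eta}{18}\mathbb{E}[f(x^k)-f(x^*)]$, then telescope against $w_k=\rho^{-k}$, apply Jensen, and bound $\Phi_3^0$ prong-by-prong. However, your prong-matching for the $h^0$-term is wrong, and it breaks both stated bounds in part~(i). The coefficient of $\sum_{\tau}\|\nabla f^{(\tau)}(x^0)-h_\tau^0\|^2$ in $\frac1\eta\Phi_3^0$ is $\frac{192(1-\delta)L_f\eta^2}{\delta^2\delta_1 n}$. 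You propose to spend the $\frac{\delta_1}{4}$ prong of the minimum on it and then use $\eta\le\frac{\delta}{60\sqrt{(1-\delta)L_f{\bar L}}}$ for $\eta^2$: this gives $\frac{\delta_1}{4}\cdot\frac{192(1-\delta)L_f\eta^2}{\delta^2\delta_1 n}\le\frac{48}{3600{\bar L}n}=\frac{1}{75{\bar L}n}$, and after multiplication by the factor $18$ you end with $\frac{6}{25{\bar L}n}$, which is about $3.6$ times larger than the claimed $\frac{1}{15{\bar L}n}$; the stated inequality is not proven. The failure is worse when $\mu=0$: there $\rho=1$, so there is no $(1-\rho)$ prefactor and no ``prong of the minimum'' to use at all; with your choice of stepsize prong the $h^0$-coefficient retains an uncancelled $\frac{1}{\delta_1}$, i.e.\ you obtain a bound proportional to $\frac{1}{\delta_1{\bar L}n}$ rather than the stated $\frac{1}{3{\bar L}pn}$ --- and part~(i) assumes no relation between $p$ and $\delta_1$, so these are incomparable.

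The correct matching (the paper's) is the opposite of yours: bound $\eta^2$ in the $h^0$-coefficient by the \emph{fourth} stepsize prong $\eta\le\frac{\delta\sqrt{\delta_1}}{120\sqrt{(1-\delta)L_f{\bar L}p\left(1+\frac{2p}{\delta_1}\right)}}$, whose $\delta_1$ in the numerator cancels the $\frac{1}{\delta_1}$, giving $\frac{192(1-\delta)L_f\eta^2}{\delta^2\delta_1 n}\le\frac{192}{14400{\bar L}pn}\le\frac{1}{70{\bar L}pn}$. When $\mu=0$ this already yields $18\cdot\frac{1}{70{\bar L}pn}\le\frac{1}{3{\bar L}pn}$; when $\mu>0$ you then spend the $\frac{p}{4}$ prong of the minimum to cancel the $\frac1p$, obtaining $\frac{1}{280{\bar L}n}$ and hence $\frac{18}{280}\le\frac{1}{15}$ per ${\bar L}n$. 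In short, the $\delta_1$ must be traded away through the stepsize and the minimum must be spent on $p$, not the other way around. Everything else in your proposal --- the calibration of the negative bracket, the weighted telescoping (your index shift $w_{k+1}$ versus $w_k$ costs at most a factor $\rho\le 1$ and is harmless), the treatment of the $\|x^0-x^*\|^2$ and $f(x^0)-f(x^*)$ terms via the $\frac{\mu\eta}{2}$ and $\frac{p}{4}$ prongs, and the complexity bookkeeping in part~(ii) --- matches the paper and is correct.
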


\begin{theorem}\label{th:eclsvrgsmooth-22}
	Let Assumption \ref{as:contracQQ1} and Assumption \ref{as:eclsvrgsmooth} hold. Assume the compressors $Q$ and $Q_1$ also satisfy Assumption \ref{as:expcompressor}. Let $w_k = \left(  1 - \min\left\{  \frac{\mu \eta}{2}, \frac{\delta}{4}, \frac{\delta_1}{4}, \frac{p}{4}  \right\}  \right)^{-k}$, $W_k = \sum_{i=0}^k w_i$, and ${\bar x}^k = \frac{1}{W_k} \sum_{i=0}^k w_ix^i$. \\ 
	(i) Let 
	\begin{equation}\label{eq:eta2-eclsvrgsmooth}
	\eta \leq  \min \left\{  \frac{1}{4L_f + 33L/n}, \frac{\delta}{60\sqrt{1-\delta}L_f}, \frac{\sqrt{n \delta}}{229\sqrt{(1-\delta)L_fL}}, \frac{\sqrt{n} \delta}{360\sqrt{(1-\delta)L_f {\bar L}}} , \frac{\delta\sqrt{\delta_1}}{120\sqrt{(1-\delta)pL_f\left(  L_f + \frac{12{\bar L}}{n}  \right) \left(  1 + \frac{2p}{\delta_1}  \right)}} \right\}.
	\end{equation}
	If $\mu>0$, we have
	\begin{eqnarray*}
		\mathbb{E} [f({\bar x}^k) - f(x^*)] &\leq& \frac{ 18\mu\|x^0-x^*\|^2  + \frac{2}{L_f n}\sum \limits_{\tau=1}^n \|\nabla f^{(\tau)}(x^0)  - h_\tau^0\|^2 +  5(f(x^0) - f(x^*)) }{1 - (1 - \min\{   \frac{\mu \eta}{2}, \frac{\delta}{4}, \frac{\delta_1}{4}, \frac{p}{4}   \})^{k+1}}\\ 
		&& \cdot \left(1 - \min\left\{   \frac{\mu \eta}{2}, \frac{\delta}{4}, \frac{\delta_1}{4}, \frac{p}{4}   \right\} \right)^k, 
	\end{eqnarray*}
	
	If $\mu=0$, we have 
	$$
	\mathbb{E}[f({\bar x}^k) - f(x^*)] \leq \frac{1}{k+1} \left(  \frac{36}{\eta} \|x^0 - x^*\|^2 + \frac{6}{L_f p n} \sum_{\tau=1}^n \|\nabla f^{(\tau)}(x^0)  - h_\tau^0\|^2  + \frac{18}{p} \left(f(x^0) - f(x^*)\right)  \right). 
	$$
	(ii) Let $\eta$ equal to the upper bound in (\ref{eq:eta2-eclsvrgsmooth}) and $p\leq O(\delta_1)$.  If $\mu>0$ and $\epsilon \leq 9\mu\|x^0-x^*\|^2  + \frac{1}{L_f n}\sum_{\tau=1}^n \|\nabla f^{(\tau)}(x^0)  - h_\tau^0\|^2 +  3(f(x^0) - f(x^*))  $, we have $\mathbb{E}[f({\bar x}^k) - f(x^*)] \leq \epsilon$ as long as 
	$$
	k \geq O\left( \left( \frac{1}{\delta} + \frac{1}{p}  + \frac{L_f}{\mu} + \frac{L}{n \mu} + \frac{\sqrt{(1-\delta)}L_f}{\mu \delta}   \right) \ln \frac{1}{\epsilon} \right), 
	$$
	If $\mu=0$, we have $\mathbb{E}[P({\bar x}^k) - P(x^*)] \leq \epsilon$ as long as  
	$$
	k \geq O\left( \left(  \frac{1}{p} +  L_f + \frac{L}{n} + \frac{\sqrt{(1-\delta)}L_f}{\delta} \right)  \frac{1}{\epsilon}  \right). 
	$$
\end{theorem}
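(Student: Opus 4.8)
The plan is to derive the stated convergence rates and iteration complexities directly from the one-step Lyapunov contraction in Theorem \ref{th:eclsvrgsmooth-2}, which I treat as given. Write $\theta \eqdef \min\{\mu\eta/2, \delta/4, \delta_1/4, p/4\}$. The first step is to choose $\eta$ as in \eqref{eq:eta2-eclsvrgsmooth} so that the bracket
$$
1 - \frac{224(1-\delta)L_f\eta^2}{\delta}\left(\frac{4L_f}{\delta} + \frac{32\bar{L}}{n\delta} + \frac{13L}{n} + \frac{16p(L_f + \frac{12\bar{L}}{n})}{\delta\delta_1}\left(1 + \frac{2p}{\delta_1}\right)\right) - \frac{11L\eta}{n}
$$
multiplying $-\frac{\eta}{2}\mathbb{E}[f(x^k)-f(x^*)]$ in Theorem \ref{th:eclsvrgsmooth-2} is bounded below by a positive absolute constant $c_0$. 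This is engineered by apportioning the single step-size into one constraint per summand: $\eta \leq 1/(4L_f + 33L/n)$ forces $11L\eta/n \leq \frac13$, $\eta \leq \delta/(60\sqrt{1-\delta}L_f)$ caps the $L_f^2$ term, and the three remaining bounds (with denominators built from $\sqrt{(1-\delta)L_fL}$, $\sqrt{(1-\delta)L_f\bar{L}}$, and the $p$--$\delta_1$ expression) each hold one of the remaining summands below a fixed fraction of $1$. With the bracket $\geq c_0$, the recursion collapses to $\mathbb{E}[\Phi_4^{k+1}] \leq (1-\theta)\mathbb{E}[\Phi_4^k] - \frac{c_0\eta}{2}\mathbb{E}[f(x^k)-f(x^*)]$.

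From here I would run the standard weighted-telescoping argument, separating the two cases. For $\mu>0$ (so $\theta>0$), multiply the recursion by $w_{k+1} = (1-\theta)^{-(k+1)}$ and use $w_{k+1}(1-\theta)=w_k$ to telescope; discarding the nonnegative tail $w_{K+1}\mathbb{E}[\Phi_4^{K+1}]$ yields $\frac{c_0\eta}{2}\sum_{k=0}^K w_{k+1}\mathbb{E}[f(x^k)-f(x^*)] \leq \Phi_4^0$. Since $w_k \leq w_{k+1}$, convexity of $f$ and Jensen applied to ${\bar x}^K = W_K^{-1}\sum_i w_i x^i$ give $W_K\,\mathbb{E}[f({\bar x}^K)-f(x^*)] \leq \frac{2}{c_0\eta}\Phi_4^0$; evaluating the geometric sum $W_K = \sum_{i=0}^K(1-\theta)^{-i}$ and using $\theta \leq \mu\eta/2$ produces exactly the stated factor $(1-\theta)^k/(1-(1-\theta)^{k+1})$ together with a numerator proportional to $\mu\,\Phi_4^0$. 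For $\mu=0$ we have $\theta=0$, $w_k\equiv 1$, and ${\bar x}^K$ is the plain average; telescoping the same inequality gives $\frac{c_0\eta}{2}\sum_{k=0}^K\mathbb{E}[f(x^k)-f(x^*)] \leq \Phi_4^0$, whence the $O(1/(K+1))$ bound.

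In both cases the numerators are obtained by bounding $\Phi_4^0$ via the initialization $e^0=e^0_\tau=0$, $w^0=x^0$ (so $\tilde{x}^0=x^0$), together with Jensen's inequality $\|h^0-\nabla f(x^0)\|^2 \leq \frac1n\sum_\tau\|h^0_\tau-\nabla f^{(\tau)}(x^0)\|^2$ to merge the two $h$-terms; the step-size upper bounds then shrink the $\eta^3/(\delta^2\delta_1)$ and $1/p$ coefficients down to the absolute constants appearing in the statement. Finally, the complexities in part (ii) follow from $1/\theta = \max\{2/(\mu\eta), 4/\delta, 4/\delta_1, 4/p\}$: the hypothesis $p\leq O(\delta_1)$ absorbs $4/\delta_1$ into $4/p$, and expanding $1/\eta$ as the maximum of the five reciprocals collapses the compression terms into $\frac{\sqrt{1-\delta}L_f}{\mu\delta}$. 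The simplifications are $\bar{L}\leq nL_f$ (so $\frac{\sqrt{(1-\delta)L_f\bar{L}}}{\sqrt{n}\delta}\leq\frac{\sqrt{1-\delta}L_f}{\delta}$), $\sqrt{1-\delta}\leq 1$, $p/\delta_1=O(1)$, and the AM--GM bound $\frac{\sqrt{(1-\delta)L_fL}}{\sqrt{n\delta}}\leq\frac12\big(\frac{\sqrt{1-\delta}L_f}{\delta}+\frac{L}{n}\big)$, which dispatch the $\bar{L}$-, $(p,\delta_1)$-, and $L$-constraints respectively. The $\mu=0$ complexity comes the same way from the $\frac1\eta\|x^0-x^*\|^2$ term plus the $\frac1p$ factors.

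The main obstacle I anticipate is the first step: verifying that the specific constants in \eqref{eq:eta2-eclsvrgsmooth} genuinely force the bracket above a positive constant requires correctly matching each summand to its step-size factor and tracking the interplay of $\delta$, $\delta_1$, and $p$ (in particular the factor $1+2p/\delta_1$), where an off-by-a-constant slip would propagate into every subsequent numerator. The telescoping and the final AM--GM collapse are routine once the recursion is in the clean contractive form.
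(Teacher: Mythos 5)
Your proposal is correct and follows essentially the same route as the paper's proof: take the one-step contraction of Theorem \ref{th:eclsvrgsmooth-2} as given, check that the five step-size constraints in (\ref{eq:eta2-eclsvrgsmooth}) pin each summand of the bracket (yielding the constant $c_0 = \tfrac{1}{18}$, i.e.\ the paper's $-\tfrac{\eta}{36}\mathbb{E}[f(x^k)-f(x^*)]$ term), telescope with the weights $w_k$, apply Jensen to ${\bar x}^k$, bound $\Phi_4^0$ term-by-term against the matching components of $\min\{\tfrac{\mu\eta}{2},\tfrac{\delta}{4},\tfrac{\delta_1}{4},\tfrac{p}{4}\}$, and collapse the complexity using $\bar L \leq n L_f$, $p \leq O(\delta_1)$, and the AM--GM bound $\sqrt{\tfrac{(1-\delta)L_fL}{n\delta}} \leq \tfrac12\bigl(\tfrac{\sqrt{1-\delta}L_f}{\delta} + \tfrac{L}{n}\bigr)$, exactly as the paper does. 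The only (cosmetic) deviation is that you control the initial term $\|h^0-\nabla f(x^0)\|^2$ by Jensen against $\tfrac1n\sum_\tau\|h^0_\tau-\nabla f^{(\tau)}(x^0)\|^2$, whereas the paper writes it as $\|\nabla f(x^0)\|^2$ and absorbs it into $f(x^0)-f(x^*)$ via $L_f$-smoothness; both yield the stated numerator.
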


Same as the composite case, the iteration complexity in Theorem \ref{th:eclsvrgsmooth-22} could be better than that in Theorem \ref{th:eclsvrgsmooth-11}. On the other hand, if $L_f = {\bar L} = L$, then both iteration complexities in Theorem \ref{th:eclsvrgsmooth-11} and Theorem \ref{th:eclsvrgsmooth-22} in the strongly convex case become 
\begin{equation}\label{eq:itersmooth}
O\left(   \left(  \frac{1}{\delta} + \frac{1}{p} + \frac{L}{\mu} + \frac{\sqrt{(1-\delta)} L}{\delta \mu}  \right)  \ln\frac{1}{\epsilon}  \right). 
\end{equation}

\section{Error Compensated Quartz and Error Compensated SDCA}\label{sec:ecQuartz}

In this section, we study the following problem: 
\begin{equation}\label{primal-sdca}
	\min_{x\in \mathbb{R}^d} P(x) = \frac{1}{N} \sum_{\tau=1}^n \sum_{i=1}^m \phi_{i \tau}(A_{i\tau}^\top x) + \lambda g(x),
\end{equation}
where $N = mn$ and $A_{i\tau} \in \R^{d\times t}$.  

The corresponding dual problem of problem (\ref{primal-sdca}) is 
$$
\max_{\alpha \in \mathbb{R}^{tN}} D(\alpha) = - {\tilde f} (\alpha) - {\tilde \psi}(\alpha), 
$$
where ${\tilde f} (\alpha) \eqdef \lambda g^*\left(\frac{1}{\lambda N} \sum_{\tau=1}^n \sum_{i=1}^m A_{i\tau}\alpha_{i\tau} \right)$, ${\tilde \psi}(\alpha) \eqdef \frac{1}{N} \sum_{\tau=1}^n \sum_{i=1}^m \phi_{i\tau}^*(-\alpha_{i\tau})$, $\alpha_{i\tau} \in \R^t$,  
$$\alpha = (\alpha_{11}^\top, \dots, \alpha_{m1}^\top, \alpha_{12}^\top, \dots, \alpha_{m2}^\top, \dots, \alpha_{n1}^\top, \dots, \alpha_{mn}^\top)^\top \in \R^{tN},$$ 
$\phi_{i\tau}^*$ and $g^*$ are the conjugate functions of $\phi_{i\tau}$ and $g$ respectively. Generally, for any vector $h \in \R^{tN}$, we use $h_{i\tau} \in \R^t$ with $i\in [m]$ and $\tau \in [n]$ to denote the $i+ m(\tau-1)$-th block vector of $h$. 

We need the following assumptions in this section. 

\begin{assumption}\label{as:contracQ}
	The compressor $Q$ in Algorithm \ref{alg:ec-quartz}  is a contraction compressor with parameters $\delta$. 
\end{assumption}

\begin{assumption}\label{as:ec-Quartz}
$\phi_{i\tau}$ is $\frac{1}{\gamma}$-smooth. $g$ is $1$-strongly convex. $\frac{R^2}{\gamma} \geq \lambda >0$. 
\end{assumption}

The error compensated Quartz and error compensated SDCA are described in Algorithm \ref{alg:ec-quartz}. Quartz is a variance reduced primal-dual method, and also a minibatch version of SDCA. The updates of $x^k$ in Quartz and SDCA are slightly different, and the rest steps are the same. In distributed Quartz, each node needs to communicate $\frac{1}{\lambda m}A_{i_k^\tau \tau} \Delta\alpha_{i^\tau_{k} \tau}^{k+1}$ with each other at each step. The error feedback technique can be applied easily in this case. We maintain an accumulated vector $e_\tau^k$ on each node, and add it to $\frac{1}{\lambda m}A_{i_k^\tau \tau} \Delta\alpha_{i^\tau_{k} \tau}^{k+1}$ before compression. $e^{k+1}_\tau$ is updated by the compression error at iteration $k$ for each node. All nodes maintain the same copies of $x^k$ and $u^k$. The rest of Algorithm \ref{alg:ec-quartz} is the same as Quartz and SDCA. 

\begin{algorithm}[tb]
	\caption{Error compensated Quartz (EC-Quartz) and error compensated SDCA (EC-SDCA)}
	\label{alg:ec-quartz}
	\begin{algorithmic}[1]
		\STATE {\bfseries Parameters:} $\theta>0$; $R_m \eqdef \max_{i, \tau} \|A_{i\tau}\|$; ${\bar R}^2 \eqdef \max_{\tau \in [n]} \{  \frac{1}{m}\lambda_{\rm max}(\sum_{i=1}^m A_{i\tau}A_{i\tau}^\top) \}$; $R^2 \eqdef \frac{1}{N} \lambda_{\rm max} (\sum_{\tau=1}^n \sum_{i=1}^m A_{i\tau} A_{i\tau}^\top)$; $p_{i\tau} = \frac{1}{m} \in \R$ for $i\in [m]$ and $\tau \in [n]$; positive constants $v_{i\tau} = R_m^2 + nR^2 \in \R$ for $i\in [m]$ and $\tau \in [n]$ 
		\STATE {\bfseries Initialization:}
		$\alpha^0 \in \R^{tN}$; $x^0 \in \R^d$; $u^0 = \frac{1}{\lambda N} \sum_{\tau=1}^n \sum_{i=1}^m A_{i\tau} \alpha^0_{i\tau} \in \R^d$; $e^0_\tau=0 \in \R^d$ for $\tau \in [n]$
		\FOR{ $k = 0, 1, 2, \dots$} 
		\FOR{ $\tau = 1, \dots, n$} 
		\STATE {\color{blue} EC-Quartz: } \hspace{0.05em} $x^{k+1} = (1-\theta) x^k + \theta \nabla g^*(u^k)$ 
		\STATE {\color{blue} EC-SDCA: }  \quad $x^{k+1} = \nabla g^*(u^k)$ 
		\STATE $\alpha_{i\tau}^{k+1} = \alpha_{i\tau}^k$ for $i \in [m]$
		\STATE Sample $i_k^\tau$ uniformly and independently in $[m]$ on each node 
		\STATE $\Delta\alpha_{i^\tau_k \tau}^{k+1} = -\theta p_{i^\tau_k \tau}^{-1} \alpha_{i^\tau_k \tau}^{k} - \theta p_{i_k^\tau\tau}^{-1} \nabla \phi_{i^\tau_k \tau} (A_{i_k^\tau \tau}^\top x^{k+1}) $
		\STATE $\alpha_{i^\tau_{k} \tau}^{k+1} = \alpha_{i^\tau_{k} \tau}^{k} + \Delta\alpha_{i^\tau_{k} \tau}^{k+1}$
		\STATE $y_\tau^k = Q\left(  \frac{1}{\lambda m}A_{i_k^\tau \tau} \Delta\alpha_{i^\tau_{k} \tau}^{k+1} + e_\tau^k  \right)$
		\STATE $e_{\tau}^{k+1} = e_\tau^k + \frac{1}{\lambda m}A_{i_k^\tau \tau} \Delta\alpha_{i^\tau_{k} \tau}^{k+1} - y_\tau^k$
		\STATE Send $y^k_{\tau}$ to the other nodes 
		\STATE Receive $y^k_{\tau}$ from the other nodes
		\STATE $u^{k+1} = u^k + \frac{1}{n}\sum_{\tau=1}^n y_\tau^k$
		\ENDFOR
		\ENDFOR
	\end{algorithmic}
\end{algorithm}

\subsection{Convergence of EC-Quartz}

\begin{theorem}\label{th:ecQuartz-1}
Let Assumption \ref{as:contracQ} and Assumption \ref{as:ec-Quartz} hold. Assume $\delta<1$. Define 
$$
\Psi_1^{k} \eqdef P(x^k) - D(\alpha^k) + \frac{2(\rho + \theta \lambda)}{\delta n} \sum_{\tau=1}^n \|e_\tau^k\|^2,  
$$
where $\alpha^k \eqdef ((\alpha^k_{11})^\top, \dots, (\alpha_{m1}^k)^\top, (\alpha_{12}^k)^\top, \dots, (\alpha_{m2}^k)^\top, \dots, (\alpha_{n1}^k)^\top, \dots, (\alpha_{mn}^k)^\top)^\top \in \R^{tN}$, $\rho = \frac{\delta \lambda R}{2 \sqrt{a_1}}$ and  $a_1 = (1-\delta) (2{\bar R}^2 + \delta R_m^2)$. Let 
\begin{equation}\label{eq:theta-ecQuartz-1}
\theta = \min\left\{  \frac{2\delta \lambda \gamma}{\delta \lambda \gamma m + \sqrt{\delta^2 \lambda^2 \gamma^2 m^2 + 48\lambda \gamma a_1 }},  \frac{N\lambda \gamma p_{i\tau}}{3v_{i\tau} + N\lambda \gamma}, \frac{\delta \lambda \gamma}{\delta \lambda \gamma m + 12R \sqrt{a_1} }  \right\}.  
\end{equation} 
Then $\mathbb{E}[\Psi_1^k] \leq \left(  1 - \min\left\{  \theta, \frac{\delta}{4}  \right\}  \right)^k \Psi_1^0$, and we have  $\mathbb{E}[\Psi_1^k] \leq \epsilon$ as long as 
\begin{equation}\label{eq:iterc-ecQuartz-1}
k  \geq O \left( \left(  \frac{1}{\delta} + m + \frac{R_m^2}{n\lambda \gamma} + \frac{R^2}{\lambda \gamma}   +   \frac{\sqrt{1-\delta} R{\bar R}}{\delta \lambda \gamma}  + \frac{\sqrt{1-\delta} RR_m}{\lambda \gamma \sqrt{\delta}}  \right) \ln\frac{1}{\epsilon}  \right). 
\end{equation}

\end{theorem}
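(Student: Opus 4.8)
The plan is to run a Lyapunov (potential-function) argument that couples the primal--dual gap of Quartz with a contracting bound on the accumulated compression error, exactly as the definition of $\Psi_1^k$ suggests. The organizing observation is that the error-feedback bookkeeping admits a \emph{virtual} dual-average iterate. Writing $c_\tau^k \eqdef \frac{1}{\lambda m} A_{i_k^\tau \tau} \Delta\alpha_{i_k^\tau \tau}^{k+1}$ for the true (uncompressed) local direction and $e^k \eqdef \frac1n\sum_\tau e_\tau^k$, the error update $e_\tau^{k+1} = e_\tau^k + c_\tau^k - y_\tau^k$ telescopes so that $\tilde u^k \eqdef u^k + e^k = \frac{1}{\lambda N}\sum_{\tau,i} A_{i\tau}\alpha_{i\tau}^k$ is precisely the exact dual average that uncompressed Quartz maintains (one checks $\tilde u^{k+1}=\tilde u^k+\frac1n\sum_\tau c_\tau^k$ and $\tilde u^0=u^0$). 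Thus the only effect of compression on the primal step is that EC-Quartz evaluates $\nabla g^*(u^k)=\nabla g^*(\tilde u^k-e^k)$ instead of $\nabla g^*(\tilde u^k)$; since $g$ is $1$-strongly convex, $g^*$ is $1$-smooth, so this discrepancy is controlled by $\|e^k\|$.

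First I would establish a perturbed one-step descent for the gap. Following the standard Quartz/SDCA dual-ascent estimate, the per-coordinate dual update produces an expected increase of $D$ which, combined with $x^{k+1}=(1-\theta)x^k+\theta\nabla g^*(u^k)$, yields a recursion of the form
\begin{equation*}
\Exp[P(x^{k+1})-D(\alpha^{k+1})] \le (1-\theta)\,\Exp[P(x^k)-D(\alpha^k)] + (\text{terms in }\Exp\|e^k\|^2),
\end{equation*}
where the error terms arise solely from replacing $\nabla g^*(\tilde u^k)$ by $\nabla g^*(u^k)$ and are bounded via the $1$-smoothness of $g^*$ together with the $\tfrac1\gamma$-smoothness of each $\phi_{i\tau}$. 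The step-size restriction through $v_{i\tau}=R_m^2+nR^2$ and $p_{i\tau}=\tfrac1m$ is what makes the factor $1-\theta$ legitimate; this is where the improved expected-separable-overapproximation (ESO) estimate enters and is what separates the three matrix-norm scales $R_m$, $\bar R$, $R$.

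Second I would set up the error recursion. The contraction property \eqref{eq:contractor} applied to $y_\tau^k=Q(c_\tau^k+e_\tau^k)$ gives $\Exp_k\|e_\tau^{k+1}\|^2\le(1-\delta)\|c_\tau^k+e_\tau^k\|^2$; splitting with Young's inequality (weight $\approx\delta/2$) yields a $(1-\tfrac\delta2)$ contraction of $\sum_\tau\|e_\tau^k\|^2$ plus a residual proportional to $\sum_\tau\|c_\tau^k\|^2$. I would then bound $\Exp\|c_\tau^k\|^2=\Exp\|\frac{1}{\lambda m}A_{i_k^\tau\tau}\Delta\alpha_{i_k^\tau\tau}^{k+1}\|^2$ in terms of the dual suboptimality (hence the gap) using $\|A_{i\tau}\|\le R_m$, the node-level and global constants $\bar R$, $R$, and the identity $\Delta\alpha=-\theta m(\alpha^k+\nabla\phi(A^\top x^{k+1}))$, whose value vanishes at the optimum $(x^*,\alpha^*)$. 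Adding $\tfrac{2(\rho+\theta\lambda)}{\delta n}\sum_\tau\|e_\tau^k\|^2$ to the gap and choosing the free parameter so that all cross terms cancel forces the common rate $\min\{\theta,\delta/4\}$; the value $\rho=\frac{\delta\lambda R}{2\sqrt{a_1}}$ with $a_1=(1-\delta)(2\bar R^2+\delta R_m^2)$ is exactly the AM--GM-optimal balance, which is why the $\sqrt{1-\delta}$ factors appear. Unrolling $\Exp[\Psi_1^{k+1}]\le(1-\min\{\theta,\delta/4\})\Exp[\Psi_1^k]$ and substituting the three branches of \eqref{eq:theta-ecQuartz-1} (with $N=mn$, $p_{i\tau}=\tfrac1m$, $v_{i\tau}=R_m^2+nR^2$) gives $\tfrac1\theta\lesssim m+\tfrac{R_m^2}{n\lambda\gamma}+\tfrac{R^2}{\lambda\gamma}+\tfrac{\sqrt{1-\delta}R\bar R}{\delta\lambda\gamma}+\tfrac{\sqrt{1-\delta}RR_m}{\lambda\gamma\sqrt\delta}$, which together with the $1/\delta$ from the $\delta/4$ branch yields \eqref{eq:iterc-ecQuartz-1}.

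The hard part will be the joint calibration in the last two steps: the residual $\sum_\tau\|c_\tau^k\|^2$ from the error recursion must be dominated by the $\theta$-weighted gap decrease while, simultaneously, the $\|e^k\|^2$ perturbation appearing in the gap recursion is absorbed by the contracting error term, and achieving both with a \emph{single} $\theta$ is what forces the precise balance encoded in $\rho$ and $a_1$. Obtaining the sharp constants---rather than merely a qualitative linear rate---is the real labor here.
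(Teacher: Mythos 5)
Your skeleton matches the paper's proof in its main lines: the virtual iterate $\tilde u^k = u^k+e^k = \frac{1}{\lambda N}\sum_{\tau,i}A_{i\tau}\alpha_{i\tau}^k$, a perturbed gap recursion in which compression only replaces $\nabla g^*(\tilde u^k)$ by $\nabla g^*(u^k)$ (handled by $1$-smoothness of $g^*$ and a Young parameter $\rho$), an error-contraction recursion, and a Lyapunov function with the same $\rho$, $a_1$, $\theta$. The genuine gap is in how you neutralize the error residual $\sum_\tau\|c_\tau^k\|^2$, where $c_\tau^k=\frac{1}{\lambda m}A_{i_k^\tau\tau}\Delta\alpha_{i_k^\tau\tau}^{k+1}$. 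You propose to bound it by the dual suboptimality and dominate it with the $\theta$-weighted gap decrease. But $\Delta\alpha_{i\tau}^{k+1}=-\theta m\,(\alpha_{i\tau}^k+\nabla\phi_{i\tau}(A_{i\tau}^\top x^{k+1}))$ does not vanish at the dual optimum alone: it depends on $x^{k+1}$, hence on $x^k$ and $e^k$, so you would also need the primal suboptimality of the \emph{next} iterate, $P(x^{k+1})-P(x^*)$, and the error norm, neither of which sits on the right-hand side of your recursion. Worse, if the conversion is done as your wording suggests ($\|A_{i\tau}\|\le R_m$ followed by strong convexity of $P$), the $\nabla\phi$ part costs $\frac{R_m^2}{\gamma^2}\|x^{k+1}-x^*\|^2\le\frac{2R_m^2}{\lambda\gamma^2}\,(P(x^{k+1})-P(x^*))$, an extra factor $\frac{R_m^2}{\lambda\gamma}\ge 1$ beyond what the theorem tolerates, so the admissible $\theta$, and hence the complexity, degrades by a condition number. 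The paper's mechanism is different and is the missing idea: in the dual-ascent estimate (Lemma~\ref{lm:Dalpha-2}, inherited from the $\gamma$-strong convexity of $\phi^*_{i\tau}$), a negative term $-\frac{N\lambda\gamma p_{i\tau}^2(1-\theta p_{i\tau}^{-1})}{\theta}\|\Delta\alpha_{i\tau}^{k+1}\|^2$ survives into the gap recursion \eqref{eq:PDk+1}, and both the error residual and the $\rho$-Young term are cancelled against it by forcing the total coefficient of $\|\Delta\alpha_{i\tau}^{k+1}\|^2$ to be nonpositive; this cancellation is exactly what generates the three branches of \eqref{eq:theta-ecQuartz-1}. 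Your route can be repaired (use cocoercivity, $\|\nabla\phi_{i\tau}(a)-\nabla\phi_{i\tau}(b)\|^2\le\frac{2}{\gamma}(\phi_{i\tau}(a)-\phi_{i\tau}(b)-\langle\nabla\phi_{i\tau}(b),a-b\rangle)$, and absorb $P(x^{k+1})-P(x^*)$ into $\Psi_1^{k+1}$ on the left), but that repair is substantive work your plan does not contain.

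A second, smaller gap concerns the error recursion itself. You apply Young's inequality with weight $\approx\delta/2$ to the whole residual, which produces $\frac{2}{\delta}\|c_\tau^k\|^2$ and, in expectation, only the scale $\frac{R_m^2}{\delta}$. To reach $a_1=(1-\delta)(2\bar R^2+\delta R_m^2)$ — hence the two separate terms $\frac{\sqrt{1-\delta}R\bar R}{\delta\lambda\gamma}$ and $\frac{\sqrt{1-\delta}RR_m}{\sqrt{\delta}\lambda\gamma}$ in \eqref{eq:iterc-ecQuartz-1} — the paper (Lemma~\ref{lm:ek+1-1ecQuartz}) first takes the conditional bias--variance decomposition $\mathbb{E}_k\|e_\tau^k+c_\tau^k\|^2=\|e_\tau^k+\mathbb{E}_k c_\tau^k\|^2+\mathbb{E}_k\|c_\tau^k-\mathbb{E}_k c_\tau^k\|^2$ and applies Young only to the mean part, which is controlled by the node-level constant $\bar R$; the variance part, controlled by $R_m$, enters without any $\frac{1}{\delta}$ factor. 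Without this split you obtain $\frac{\sqrt{1-\delta}RR_m}{\delta\lambda\gamma}$ in place of the two terms above, which is strictly worse than the claimed rate when $\bar R\ll R_m$ and $\delta$ is small.
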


\begin{theorem}\label{th:ecQuartz-2}
	Let Assumption \ref{as:expcompressor}, Assumption \ref{as:contracQ}, and Assumption \ref{as:ec-Quartz} hold. Assume $\delta<1$. Define 
	$$
	\Psi_2^{k} \eqdef P(x^k) - D(\alpha^k) + \frac{2(\rho + \theta \lambda)}{\delta } \|e^k\|^2 + \frac{16(1-\delta) (\rho + \theta \lambda)}{\delta n^2} \sum_{\tau=1}^n \|e_\tau^k\|^2,  
	$$
	where $e^k = \frac{1}{n}\sum_{\tau=1}^n e_\tau^k$, $\rho = \frac{\delta \lambda R}{2\sqrt{a_2}}$ and $a_2 = (1-\delta) (2R^2 + \frac{16{\bar R}^2}{n} + \frac{9\delta R_m^2}{n})$ . Let 
	\begin{equation}\label{eq:theta-ecQuartz-2}
	\theta = \min\left\{  \frac{2\delta \lambda \gamma}{\delta \lambda \gamma m + \sqrt{\delta^2 \lambda^2 \gamma^2 m^2 + 48\lambda \gamma a_2 }},  \frac{N\lambda \gamma p_{i\tau}}{3v_{i\tau} + N\lambda \gamma}, \frac{\delta \lambda \gamma}{\delta \lambda \gamma m + 12R \sqrt{a_2} }  \right\}.  
	\end{equation} 
	Then $\mathbb{E}[\Psi_2^k] \leq \left(  1 - \min\left\{  \theta, \frac{\delta}{4}  \right\}  \right)^k \Psi_2^0$, and we have  $\mathbb{E}[\Psi_2^k] \leq \epsilon$ as long as 
	\begin{equation}\label{eq:iterc-ecQuartz-2}
	k  \geq O \left( \left(  \frac{1}{\delta} + m + \frac{R_m^2}{n\lambda \gamma} + \frac{R^2}{\lambda \gamma}    +  \frac{\sqrt{1-\delta}}{\delta} \frac{R^2}{\lambda \gamma}   \right) \ln\frac{1}{\epsilon}  \right). 
	\end{equation}
	
\end{theorem}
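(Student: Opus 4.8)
The plan is to follow the error-feedback strategy already used for Theorem~\ref{th:ecQuartz-1}, but to refine the bookkeeping of the accumulated errors using Assumption~\ref{as:expcompressor} so that the \emph{average} error obeys a variance-reduced recursion. First I would introduce the virtual (uncompressed) iterate $\tilde u^k \eqdef u^k + e^k$ with $e^k = \frac{1}{n}\sum_{\tau=1}^n e_\tau^k$, and verify by induction that $\tilde u^k = \frac{1}{\lambda N}\sum_{\tau,i} A_{i\tau}\alpha_{i\tau}^k$, i.e. exactly the $u$-variable that uncompressed Quartz/SDCA would maintain. Writing $d_\tau^k \eqdef \frac{1}{\lambda m}A_{i_k^\tau \tau}\Delta\alpha_{i_k^\tau \tau}^{k+1}$, the update lines of Algorithm~\ref{alg:ec-quartz} give the standard error-feedback identity $e_\tau^{k+1} = (I-Q)(d_\tau^k + e_\tau^k)$ together with $\tilde u^{k+1} = \tilde u^k + \frac{1}{n}\sum_\tau d_\tau^k$. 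Thus the scheme is precisely Quartz/SDCA run on $\tilde u^k$, except that the primal map is built from the corrupted $u^k = \tilde u^k - e^k$; the entire effect of compression is routed through $e^k$.

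Next I would establish the primal--dual one-step inequality by running the usual Quartz (resp.\ SDCA) duality-gap analysis on the virtual trajectory, bounding $\mathbb{E}[P(x^{k+1}) - D(\alpha^{k+1})]$ against $P(x^k) - D(\alpha^k)$ through the expected separable overapproximation (ESO) with the constants $v_{i\tau} = R_m^2 + nR^2$. This produces a contraction by a factor $\approx (1-\theta)$ plus one extra term stemming from the fact that $x^{k+1} = \nabla g^*(u^k)$ uses $u^k$ rather than $\tilde u^k$. Since $u^k = \tilde u^k - e^k$ and $g$ is $1$-strongly convex (so $g^*$ is $1$-smooth), this discrepancy is controlled by $\|e^k\|^2$, which is exactly the contribution that the Lyapunov coefficient $\tfrac{2(\rho+\theta\lambda)}{\delta}$ is designed to absorb. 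The same computation expresses $\mathbb{E}\|d_\tau^k\|^2$ in terms of $R_m$, ${\bar R}$, $R$ and the per-coordinate dual progress.

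I would then derive the two error recursions. For the per-node errors, Assumption~\ref{as:contracQ} with Young's inequality and a suitable parameter gives $\mathbb{E}\|e_\tau^{k+1}\|^2 \le (1-\tfrac{\delta}{2})\|e_\tau^k\|^2 + \tfrac{2(1-\delta)}{\delta}\mathbb{E}\|d_\tau^k\|^2$. The crucial new step is the recursion for the \emph{average} error: using $e^{k+1} = \frac{1}{n}\sum_\tau (I-Q)(d_\tau^k + e_\tau^k)$ and the identity $\mathbb{E}[Q(x)] = \delta x$ from Assumption~\ref{as:expcompressor}, I would split each summand into its conditional mean $(1-\delta)(d_\tau^k + e_\tau^k)$ and a zero-mean fluctuation. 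Because both the sampling $i_k^\tau$ and the compressors are independent across the $n$ nodes, the mean part contracts $e^k$ like $(1-\delta)^2$ (modulo $\bar d^k$), while the fluctuations contribute only at order $1/n^2$ summed, i.e. effectively $O(1/n)$. This is exactly what demotes ${\bar R}^2$ and $R_m^2$ to $O(1/n)$ and promotes $R^2$ to the leading error term, yielding $a_2 = (1-\delta)(2R^2 + \tfrac{16{\bar R}^2}{n} + \tfrac{9\delta R_m^2}{n})$ in place of the $a_1 = (1-\delta)(2{\bar R}^2 + \delta R_m^2)$ of Theorem~\ref{th:ecQuartz-1}.

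Finally I would assemble the Lyapunov function $\Psi_2^k$ with the stated coefficients, add the gap inequality to the two error recursions, absorb the $\mathbb{E}\|d_\tau^k\|^2$ terms into the dual progress, and route the gap's $\|e^k\|^2$ cross term into the $\tfrac{\delta}{4}$ contraction budget carried by the two error blocks. Choosing $\rho = \tfrac{\delta\lambda R}{2\sqrt{a_2}}$ balances the two residual error contributions, and the three-way $\min$ defining $\theta$ in \eqref{eq:theta-ecQuartz-2} is precisely the largest stepsize for which every remaining term is nonpositive, giving $\mathbb{E}[\Psi_2^{k+1}] \le (1-\min\{\theta, \tfrac{\delta}{4}\})\mathbb{E}[\Psi_2^k]$; the complexity \eqref{eq:iterc-ecQuartz-2} then follows by reading off the three terms of $\theta$ together with the $\tfrac{\delta}{4}$ factor. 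I expect the main obstacle to be the average-error recursion: making the variance-reduction split rigorous—correctly isolating the $O(1/n)$ fluctuation and tracking how the non-fresh quantity $e_\tau^k$ interacts with the independent compression randomness across nodes and across iterations—together with the simultaneous tuning of $\rho$, $\theta$, and the Lyapunov coefficients so that all cross terms cancel.
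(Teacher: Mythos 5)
Your plan reproduces the paper's architecture for this theorem: the virtual iterate $\tilde u^k = u^k + e^k$ satisfying $\tilde u^k = \frac{1}{\lambda N}\sum_{\tau,i}A_{i\tau}\alpha^k_{i\tau}$, the duality-gap inequality in which the mismatch $u^k = \tilde u^k - e^k$ is charged to a $\frac{\rho+\theta\lambda}{2}\|e^k\|^2$ term via the $1$-smoothness of $g^*$ (this is (\ref{eq:PDk+1}), built from Lemmas~\ref{lm:eso}, \ref{lm:Dalpha}, \ref{lm:Dalpha-2}), the average-error recursion exploiting $\mathbb{E}[Q(x)]=\delta x$ and independence across nodes (Lemma~\ref{lm:ek+1-2ecQuartz}), and the final simultaneous tuning of $\rho$, $\theta$ and the Lyapunov weights.

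However, the per-node error recursion you wrote down is too weak to deliver the theorem as stated, and this is a genuine gap rather than a loose constant. You propose $\mathbb{E}\|e^{k+1}_\tau\|^2 \le (1-\frac{\delta}{2})\|e^k_\tau\|^2 + \frac{2(1-\delta)}{\delta}\mathbb{E}\|d^k_\tau\|^2$, i.e.\ you push the whole transmitted vector $d^k_\tau = \frac{1}{\lambda m}A_{i_k^\tau\tau}\Delta\alpha^{k+1}_{i_k^\tau\tau}$ through the Young factor $2/\delta$. Since $\mathbb{E}\|d^k_\tau\|^2$ can only be bounded through $R_m^2$ (each block $A_{i\tau}\Delta\alpha^{k+1}_{i\tau}$ individually sees the worst-case norm), your recursion carries $\frac{2(1-\delta)R_m^2}{\delta}$. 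The paper's Lemma~\ref{lm:ek+1-1ecQuartz} instead first splits $d^k_\tau$ into its conditional mean over the local sampling, $\frac{1}{\lambda m^2}\sum_i A_{i\tau}\Delta\alpha^{k+1}_{i\tau}$, whose norm is governed by the aggregated constant ${\bar R}^2$, plus a zero-mean sampling fluctuation; only the mean incurs the $2/\delta$ factor, while the fluctuation (an $R_m^2$ quantity) enters with factor $1$. This yields $\frac{2{\bar R}^2}{\delta}+R_m^2$ in place of your $\frac{2R_m^2}{\delta}$, and after assembling $\Psi_2$ it is exactly what produces $a_2 = (1-\delta)\bigl(2R^2+\frac{16{\bar R}^2}{n}+\frac{9\delta R_m^2}{n}\bigr)$ --- note both ${\bar R}^2$ replacing $R_m^2$ and, crucially, the factor $\delta$ damping $R_m^2/n$. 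With your recursion, $a_2$ inflates to roughly $(1-\delta)\bigl(2R^2+\frac{16R_m^2}{n}\bigr)$ with no $\delta$ damping, so the first and third branches of $\theta$ contribute $\frac{\sqrt{1-\delta}\,R_m}{\delta\sqrt{n\lambda\gamma}}$ and $\frac{\sqrt{1-\delta}\,RR_m}{\delta\sqrt{n}\,\lambda\gamma}$ to $1/\theta$. These cannot be absorbed into (\ref{eq:iterc-ecQuartz-2}): the paper's absorption $\frac{2\sqrt{1-\delta}RR_m}{\sqrt{\delta n}\lambda\gamma}\le\frac{\sqrt{1-\delta}R^2}{\delta\lambda\gamma}+\frac{\sqrt{1-\delta}R_m^2}{n\lambda\gamma}$ uses precisely the $\sqrt{\delta}$ saved by the $\delta R_m^2/n$ term, and without it any AM-GM leaves a remainder like $\frac{\sqrt{1-\delta}R_m^2}{\delta n\lambda\gamma}$, which exceeds every term of (\ref{eq:iterc-ecQuartz-2}) by an unbounded factor $\min\{m,\frac{\sqrt{1-\delta}}{\delta}\}$ when, e.g., a single data block dominates ($R_m^2 = mnR^2$, ${\bar R}^2 = nR^2$). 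Relatedly, you attribute the demotion of ${\bar R}^2$ and $R_m^2$ to $O(1/n)$ entirely to the average-error recursion; that recursion (through its weak coupling $\frac{2(1-\delta)\delta}{n^2}\sum_\tau\|e^k_\tau\|^2$) is what allows the per-node block in $\Psi_2$ to carry the small weight $\frac{16(1-\delta)(\rho+\theta\lambda)}{\delta n^2}$, but the ${\bar R}^2$-versus-$R_m^2$ distinction and the $\delta$ damping come from the refined per-node lemma. Once you adopt that refinement (which the strategy of Theorem~\ref{th:ecQuartz-1} already requires), the rest of your plan goes through as in the paper.
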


Noticing that $R^2 \leq {\bar R}^2 \leq nR^2$ and ${\bar R}^2 \leq R_m^2 \leq m{\bar R}^2$, the iteration complexity in Theorem \ref{th:ecQuartz-2} could be better than that in Theorem \ref{th:ecQuartz-1}. On the other hand, if $R^2 = {\bar R}^2 = R_m^2$, then both iteration complexities in Theorem \ref{th:ecQuartz-1} and Theorem \ref{th:ecQuartz-2} become 
\begin{equation}\label{eq:iterQuartz}
O \left( \left(  \frac{1}{\delta} + m  + \frac{R^2}{\lambda \gamma}    +  \frac{\sqrt{1-\delta}}{\delta} \frac{R^2}{\lambda \gamma}   \right) \ln\frac{1}{\epsilon}  \right). 
\end{equation}

\subsection{Convergence of EC-SDCA}

Define $\epsilon_P^k \eqdef P(x^k) - P(x^*)$ and $\epsilon_D^k \eqdef D(\alpha^*) - D(\alpha^k)$ for $k\geq 0$, where $x^*$ and $\alpha^*$ are the optimal solutions of the primal and dual problems. 

\begin{theorem}\label{th:ecSDCA-1}
	Let Assumption \ref{as:contracQ} and Assumption \ref{as:ec-Quartz} hold. Assume $\delta<1$. Define 
	$$
	\Psi_3^{k} \eqdef \epsilon_D^k + \frac{2(\rho + \theta \lambda)}{\delta n} \sum_{\tau=1}^n \|e_\tau^k\|^2,  
	$$
	where $\rho = \frac{\delta \lambda R}{2 \sqrt{a_1}}$ and  $a_1 = (1-\delta) (2{\bar R}^2 + \delta R_m^2)$. Choose $\theta$ as in (\ref{eq:theta-ecQuartz-1}). Then $\mathbb{E}[\Psi_3^k] \leq \left(  1 - \min\left\{  \theta, \frac{\delta}{4}  \right\}  \right)^k \Psi_3^0$, and we have  $\mathbb{E}[\Psi_3^k] \leq \epsilon$ as long as $k$ satisfies (\ref{eq:iterc-ecQuartz-1}). Let $w_k = \left(  1 - \min\left\{  \theta, \frac{\delta}{4}  \right\}  \right)^{-k}$, $W_k = \sum_{i=1}^k w_i$, and ${\bar x}^k = \frac{1}{W_k} \sum_{i=1}^kw_i x^i$. Then we have 
	\begin{equation}\label{eq:P-ecsdca}
	\mathbb{E}[P({\bar x}^k) - P(x^*)] \leq  \frac{\left(  1 - \min\left\{  \theta, \frac{\delta}{4}  \right\}  \right)^k \epsilon_D^0}{1 - \left(  1 - \min\left\{  \theta, \frac{\delta}{4}  \right\}  \right)^k}, 
	\end{equation}
	and for $\epsilon \leq \epsilon_D^0$, $\mathbb{E}[P({\bar x}^k) - P(x^*)] \leq \epsilon$ as  long as $k$ satisfies (\ref{eq:iterc-ecQuartz-1}). 
\end{theorem}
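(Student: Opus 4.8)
The plan is to reduce everything to a single one-step recursion for $\Psi_3^k$ and then recover the primal bound by a weighted-averaging (telescoping) argument. The first observation is that EC-SDCA and EC-Quartz differ only in the primal update (Line 5 versus Line 6 of Algorithm \ref{alg:ec-quartz}): the dual variables $\alpha^k$, the aggregate $u^k$, and the error vectors $e_\tau^k$ evolve by exactly the same rules, and $\rho$, $a_1$, $\theta$ are chosen identically. Consequently the dual-side one-step estimate established for EC-Quartz in Theorem \ref{th:ecQuartz-1} can be imported almost verbatim, and the only genuinely new ingredient is the passage from the dual suboptimality $\epsilon_D^k$ to the primal suboptimality $\epsilon_P^k$.

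First I would set up the error-corrected iterate. Writing $e^k = \frac{1}{n}\sum_{\tau=1}^n e_\tau^k$ and defining the virtual aggregate $\tilde{u}^k \eqdef u^k - e^k$, the update $u^{k+1} = u^k + \frac{1}{n}\sum_\tau y_\tau^k$ together with the error-feedback line $e_\tau^{k+1} = e_\tau^k + \frac{1}{\lambda m}A_{i_k^\tau\tau}\Delta\alpha_{i_k^\tau\tau}^{k+1} - y_\tau^k$ shows that $\tilde{u}^k$ follows the uncompressed SDCA dynamics, so that the compression is invisible on this trajectory. On it I would invoke the standard SDCA dual-ascent inequality, which uses the $\tfrac{1}{\gamma}$-smoothness of each $\phi_{i\tau}$ and the $1$-strong convexity of $g$ (Assumption \ref{as:ec-Quartz}) to lower-bound $D(\alpha^{k+1})-D(\alpha^k)$ by a multiple of the duality gap minus curvature terms; in parallel, the contraction property (\ref{eq:contractor}) bounds $\sum_\tau\mathbb{E}\|e_\tau^{k+1}\|^2$ by $(1-\delta)$ times the quantity fed to $Q$, and the weight $\tfrac{2(\rho+\theta\lambda)}{\delta n}$ is precisely the factor that absorbs these error terms. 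With $\rho=\tfrac{\delta\lambda R}{2\sqrt{a_1}}$ and $\theta$ as in (\ref{eq:theta-ecQuartz-1}), the three branches of the minimum defining $\theta$ are tuned so that all cross terms combine into the refined one-step inequality (writing $\sigma = \min\{\theta,\tfrac{\delta}{4}\}$)
\[
\mathbb{E}[\Psi_3^{k+1}] \;\le\; (1-\sigma)\,\mathbb{E}[\Psi_3^{k}] \;-\; \theta\,\mathbb{E}\!\left[P(x^{k+1})-P(x^*)\right],
\]
where the gap term is justified by $P(x^{k+1})-D(\alpha^{k+1}) \ge P(x^{k+1})-D(\alpha^*) = \epsilon_P^{k+1} \ge 0$ (weak duality and $D(\alpha^{k+1})\le D(\alpha^*)=P(x^*)$). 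Dropping the last term yields $\mathbb{E}[\Psi_3^{k+1}]\le(1-\sigma)\mathbb{E}[\Psi_3^k]$, hence the geometric decay $\mathbb{E}[\Psi_3^k]\le(1-\sigma)^k\Psi_3^0$. The iteration complexity (\ref{eq:iterc-ecQuartz-1}) then follows by lower-bounding $\sigma$: the third branch of (\ref{eq:theta-ecQuartz-1}) contributes $m+\tfrac{\sqrt{1-\delta}R\bar{R}}{\delta\lambda\gamma}+\tfrac{\sqrt{1-\delta}RR_m}{\lambda\gamma\sqrt{\delta}}$ via $\sqrt{a_1}\le\sqrt{1-\delta}(\sqrt{2}\bar{R}+\sqrt{\delta}R_m)$, the second branch contributes $m+\tfrac{R_m^2}{n\lambda\gamma}+\tfrac{R^2}{\lambda\gamma}$ (using $Np_{i\tau}=n$, $v_{i\tau}=R_m^2+nR^2$), the first branch is dominated using $R^2\ge\lambda\gamma$ from Assumption \ref{as:ec-Quartz}, and the $\tfrac{\delta}{4}$ term gives $\tfrac{1}{\delta}$.

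For the primal bound I would use the refined inequality directly. Multiplying it by $w_{k+1}=(1-\sigma)^{-(k+1)}$ and using $(1-\sigma)w_{k+1}=w_k$ gives $w_{k+1}\mathbb{E}[\Psi_3^{k+1}]+\theta\,w_{k+1}\mathbb{E}[\epsilon_P^{k+1}]\le w_k\mathbb{E}[\Psi_3^k]$. Summing over $i=0,\dots,k-1$ telescopes the $\Psi_3$ terms and, after discarding $w_k\mathbb{E}[\Psi_3^k]\ge 0$, yields $\theta\sum_{i=1}^k w_i\mathbb{E}[\epsilon_P^i]\le w_0\Psi_3^0=\Psi_3^0$. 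Crucially, the initialization $e_\tau^0=0$ makes $\Psi_3^0=\epsilon_D^0$. Convexity of $P$ (Jensen applied to $\bar{x}^k=\tfrac{1}{W_k}\sum_{i=1}^k w_i x^i$) then gives $\mathbb{E}[P(\bar{x}^k)-P(x^*)]\le\tfrac{1}{\theta W_k}\epsilon_D^0$, and evaluating the geometric sum $W_k=\sum_{i=1}^k(1-\sigma)^{-i}=\tfrac{(1-\sigma)^{-k}-1}{\sigma}$ together with $\sigma\le\theta$ produces exactly (\ref{eq:P-ecsdca}). The final claim then follows because, for $\epsilon\le\epsilon_D^0$, the right-hand side of (\ref{eq:P-ecsdca}) falls below $\epsilon$ at the same rate $1-\sigma$ that governs $\mathbb{E}[\Psi_3^k]$, so the same number of iterations (\ref{eq:iterc-ecQuartz-1}) suffices.

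The main obstacle is the one-step error-feedback estimate feeding into the refined inequality: the contraction compressor controls only a single step of error, so $\sum_\tau\|e_\tau^{k+1}\|^2$ must be absorbed through carefully balanced Young's-inequality splits whose constants are dictated by the specific choices of $\rho$ and $\theta$, with $a_1$ encoding precisely this balance. Since this balancing is identical to the EC-Quartz analysis, importing that estimate is the efficient route, and the distinctive work here is only verifying that the duality-gap term survives with coefficient $\theta$ on the right-hand side — the ingredient that powers the averaging argument and is not needed for EC-Quartz, whose Lyapunov function already contains the gap.
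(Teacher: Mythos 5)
Your overall route is the same as the paper's: establish the refined one-step recursion $\mathbb{E}_k[\Psi_3^{k+1}] \le \left(1-\min\{\theta,\tfrac{\delta}{4}\}\right)\Psi_3^k - \theta\,\epsilon_P^{k+1}$ by combining an SDCA dual-ascent estimate with the EC-Quartz error-compensation machinery (the error-vector recursion, Lemma \ref{lm:ek+1-1ecQuartz}, does transfer verbatim because the dual and error updates of the two methods coincide), drop $\epsilon_P^{k+1}\ge 0$ to get the geometric decay of $\mathbb{E}[\Psi_3^k]$, and recover the primal guarantee by weighted telescoping plus Jensen, using $\Psi_3^0=\epsilon_D^0$ and $\min\{\theta,\tfrac{\delta}{4}\}\le\theta$. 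Your telescoping, the evaluation of $W_k$, and the complexity accounting over the three branches of (\ref{eq:theta-ecQuartz-1}) all match the paper's proofs of Theorems \ref{th:ecQuartz-1} and \ref{th:ecSDCA-1}.

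However, the one step you yourself single out as ``the distinctive work'' is exactly where your justification, as written, fails. The SDCA dual-ascent estimate gives a bound of the form $\mathbb{E}_k[D(\alpha^{k+1})-D(\alpha^k)] \ge \theta\,\bigl(P(x^{k+1})-D(\alpha^k)\bigr) - (\text{error/curvature terms})$, i.e.\ the gap that appears is the one at the pair $(x^{k+1},\alpha^k)$. To obtain the refined recursion you must split this gap as $P(x^{k+1})-D(\alpha^k) \ge \epsilon_P^{k+1}+\epsilon_D^k$, which follows from writing $P(x^{k+1})-D(\alpha^k) = [P(x^{k+1})-P(x^*)]+[P(x^*)-D(\alpha^*)]+[D(\alpha^*)-D(\alpha^k)]$ and invoking duality; it is the simultaneous presence of \emph{both} terms that produces the contraction factor $(1-\theta)$ on $\epsilon_D^k$ together with the payload $-\theta\epsilon_P^{k+1}$. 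The chain you give instead, $P(x^{k+1})-D(\alpha^{k+1}) \ge P(x^{k+1})-D(\alpha^*) = \epsilon_P^{k+1}$, concerns the wrong dual iterate and retains only the $\epsilon_P$ part; fed into the dual-ascent bound it would only yield $\mathbb{E}_k[\epsilon_D^{k+1}] \le \epsilon_D^k - \theta\epsilon_P^{k+1} + (\text{errors})$, with no $(1-\theta)$ factor, and the claimed geometric decay of $\Psi_3^k$ would not follow. This decomposition is precisely the content of the paper's Lemma \ref{lm:ecsdca} (inequality (\ref{eq:epsilonD-ecsdca})), whose derivation also shows that the main inequality cannot be imported ``almost verbatim'' from (\ref{eq:PDk+1}): it relies on the SDCA primal update $x^{k+1}=\nabla g^*(u^k)$ rather than the Quartz one. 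A second, minor slip: the virtual aggregate must be $\tilde u^k = u^k + e^k$, not $u^k - e^k$; only with this sign does $\tilde u^{k+1}=\tilde u^k+\frac{1}{\lambda N}\sum_{\tau}A_{i_k^\tau\tau}\Delta\alpha^{k+1}_{i_k^\tau\tau}$ hold, so that $\tilde u^k=\frac{1}{\lambda N}\sum_{\tau,i}A_{i\tau}\alpha^k_{i\tau}$ follows the uncompressed dynamics as you intend.
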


\begin{theorem}\label{th:ecSDCA-2}
	Let Assumption \ref{as:expcompressor}, Assumption \ref{as:contracQ}, and Assumption \ref{as:ec-Quartz} hold. Assume $\delta<1$. Define 
	$$
	\Psi_4^{k} \eqdef \epsilon_D^k + \frac{2(\rho + \theta \lambda)}{\delta } \|e^k\|^2 + \frac{16(1-\delta) (\rho + \theta \lambda)}{\delta n^2} \sum_{\tau=1}^n \|e_\tau^k\|^2,  
	$$
	where $e^k = \frac{1}{n}\sum_{\tau=1}^n e_\tau^k$, $\rho = \frac{\delta \lambda R}{2\sqrt{a_2}}$ and $a_2 = (1-\delta) (2R^2 + \frac{16{\bar R}^2}{n} + \frac{9\delta R_m^2}{n})$ . Choose $\theta$ as in (\ref{eq:theta-ecQuartz-2}). Then $\mathbb{E}[\Psi_4^k] \leq \left(  1 - \min\left\{  \theta, \frac{\delta}{4}  \right\}  \right)^k \Psi_4^0$, and we have  $\mathbb{E}[\Psi_4^k] \leq \epsilon$ as long as $k$ satisfies (\ref{eq:iterc-ecQuartz-2}). Let $w_k = \left(  1 - \min\left\{  \theta, \frac{\delta}{4}  \right\}  \right)^{-k}$, $W_k = \sum_{i=1}^k w_i$, and ${\bar x}^k = \frac{1}{W_k} \sum_{i=1}^kw_i x^i$. Then $\mathbb{E}[P({\bar x}^k) - P(x^*)]$ satisfies (\ref{eq:P-ecsdca}), and for $\epsilon \leq \epsilon_D^0$, $\mathbb{E}[P({\bar x}^k) - P(x^*)] \leq \epsilon$ as  long as $k$ satisfies (\ref{eq:iterc-ecQuartz-2}).
	
\end{theorem}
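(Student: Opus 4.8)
The plan is to mirror the proof of Theorem~\ref{th:ecQuartz-2}, exploiting the fact that EC-SDCA and EC-Quartz run \emph{identical} dual ($\alpha^k$) and error-feedback ($e_\tau^k$, $u^k$) dynamics; the two algorithms differ only in how the primal iterate is formed from $u^k$ (in SDCA one takes $x^{k+1}=\nabla g^*(u^k)$, without the $(1-\theta)$ averaging). Consequently the machinery that controls the dual objective and the accumulated compression error carries over essentially verbatim, and the only genuinely new ingredient is that the Lyapunov function $\Psi_4^k$ tracks the dual suboptimality $\epsilon_D^k$ rather than the full duality gap, so a separate argument is needed to recover a primal guarantee (exactly as in Theorem~\ref{th:ecSDCA-1}). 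Throughout, Assumption~\ref{as:expcompressor} (i.e.\ $\mathbb{E}[Q(x)]=\delta x$) is what lets the accumulated error be split into a ``mean'' part $\|e^k\|^2$, with $e^k=\tfrac1n\sum_\tau e_\tau^k$, and a ``per-node'' part $\tfrac1{n^2}\sum_\tau\|e_\tau^k\|^2$, which is precisely what produces the sharper constant $a_2=(1-\delta)(2R^2+\tfrac{16\bar R^2}{n}+\tfrac{9\delta R_m^2}{n})$ in place of $a_1$.

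First I would establish a one-step dual-ascent inequality. Using $\tfrac1\gamma$-smoothness of each $\phi_{i\tau}$ and $1$-strong convexity of $g$ (Assumption~\ref{as:ec-Quartz}), together with the expected separable overapproximation (ESO) for the uniform minibatch sampling $p_{i\tau}=\tfrac1m$ with weights $v_{i\tau}=R_m^2+nR^2$, I would lower-bound $\mathbb{E}[D(\alpha^{k+1})-D(\alpha^k)]$ by a term proportional to $\theta\,(P(x^{k+1})-D(\alpha^k))$, minus a second-order remainder, minus the perturbation caused by feeding the \emph{compressed} direction into $u^k$. Second, using the error-feedback recursion $e_\tau^{k+1}=e_\tau^k+\tfrac1{\lambda m}A_{i_k^\tau\tau}\Delta\alpha^{k+1}_{i_k^\tau\tau}-y_\tau^k$ with the contraction property~(\ref{eq:contractor}) and Assumption~\ref{as:expcompressor}, I would derive two coupled recursions, one for $\mathbb{E}\|e^{k+1}\|^2$ and one for $\tfrac1{n^2}\sum_\tau\mathbb{E}\|e_\tau^{k+1}\|^2$. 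Third, forming $\Psi_4^k$ with the stated weights $\tfrac{2(\rho+\theta\lambda)}{\delta}$ and $\tfrac{16(1-\delta)(\rho+\theta\lambda)}{\delta n^2}$, and choosing $\rho=\tfrac{\delta\lambda R}{2\sqrt{a_2}}$ so that the cross terms between the dual-ascent remainder and the error growth cancel, the three pieces combine into $\mathbb{E}[\Psi_4^{k+1}]\le(1-\min\{\theta,\tfrac\delta4\})\mathbb{E}[\Psi_4^k]-\theta\,\mathbb{E}[P(x^{k+1})-D(\alpha^k)]$; dropping the nonnegative last term and iterating yields $\mathbb{E}[\Psi_4^k]\le(1-\min\{\theta,\tfrac\delta4\})^k\Psi_4^0$. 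Plugging the choice of $\theta$ from~(\ref{eq:theta-ecQuartz-2}) and simplifying $\min\{\theta,\tfrac\delta4\}$ gives the iteration complexity~(\ref{eq:iterc-ecQuartz-2}), just as in Theorem~\ref{th:ecQuartz-2}.

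For the primal bound I would retain, rather than discard, the nonnegative gap term produced above. Multiplying the one-step inequality by $w_{k+1}=(1-\min\{\theta,\tfrac\delta4\})^{-(k+1)}$ turns $(1-\min\{\theta,\tfrac\delta4\})w_{k+1}\Psi_4^k$ into $w_k\Psi_4^k$, so that $\theta\,w_{k+1}\,\mathbb{E}[P(x^{k+1})-D(\alpha^k)]\le w_k\mathbb{E}[\Psi_4^k]-w_{k+1}\mathbb{E}[\Psi_4^{k+1}]$ telescopes. Summing over $k$, using $P(x^{k+1})-D(\alpha^k)\ge P(x^{k+1})-D(\alpha^*)=\epsilon_P^{k+1}\ge0$ (strong duality) and the convexity bound $P(\bar x^k)-P(x^*)\le\tfrac1{W_k}\sum_i w_i(P(x^i)-P(x^*))$, and noting $\Psi_4^0=\epsilon_D^0$ since $e_\tau^0=0$, I obtain~(\ref{eq:P-ecsdca}) after evaluating the geometric sum $W_k=\tfrac1{\min\{\theta,\delta/4\}}((1-\min\{\theta,\tfrac\delta4\})^{-k}-1)$ and using $\theta\ge\min\{\theta,\tfrac\delta4\}$. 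The threshold $\epsilon\le\epsilon_D^0$ then delivers the same iteration count~(\ref{eq:iterc-ecQuartz-2}) for the primal gap.

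I expect the main obstacle to be this primal-recovery step: unlike EC-Quartz, whose Lyapunov function already contains the gap $P(x^k)-D(\alpha^k)$, EC-SDCA controls only $\epsilon_D^k$, so the duality gap must be \emph{extracted} from the dual-ascent inequality with exactly the factor $\theta$ and then made to telescope against the geometrically growing weights $w_k$, all while the two compression-error terms are simultaneously kept dominated by the decaying $\Psi_4^k$ contribution so they do not spoil the telescoping. Lining up these constants is the delicate part; the dual-ascent and error-feedback estimates themselves are, by contrast, routine adaptations of the EC-Quartz analysis.
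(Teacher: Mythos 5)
Your overall route is the paper's own: the paper disposes of this theorem by noting it is proved exactly as Theorem \ref{th:ecSDCA-1}, i.e., by combining the one-step dual-ascent inequality of Lemma \ref{lm:ecsdca} with the error recursions of Lemmas \ref{lm:ek+1-1ecQuartz} and \ref{lm:ek+1-2ecQuartz} (in their Assumption \ref{as:expcompressor} form, with $a_2$, $\rho=\frac{\delta \lambda R}{2\sqrt{a_2}}$ and $\theta$ as in (\ref{eq:theta-ecQuartz-2}), exactly as in Theorem \ref{th:ecQuartz-2}), and then telescoping against the weights $w_k$ to extract the primal guarantee. Your three steps, the role you assign to Assumption \ref{as:expcompressor}, the choice of $\rho$, the evaluation of $W_k$, and the use of $\Psi_4^0=\epsilon_D^0$ all match the paper.

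However, the combined one-step inequality you state is too strong and cannot be derived from these lemmas. You claim
$$
\mathbb{E}[\Psi_4^{k+1}]\le\left(1-\min\left\{\theta,\tfrac{\delta}{4}\right\}\right)\mathbb{E}[\Psi_4^k]-\theta\,\mathbb{E}\left[P(x^{k+1})-D(\alpha^k)\right].
$$
The dual-ascent lemma gives $\mathbb{E}_k[\epsilon_D^{k+1}]\le \epsilon_D^k-\theta\left(P(x^{k+1})-D(\alpha^k)\right)+\frac{\rho+\theta\lambda}{2}\|e^k\|^2+\cdots$ with coefficient $1$ on $\epsilon_D^k$; the contraction factor on the $\epsilon_D^k$-part of $\Psi_4^k$ is manufactured precisely by splitting, via strong duality, $\theta\left(P(x^{k+1})-D(\alpha^k)\right)=\theta\epsilon_P^{k+1}+\theta\epsilon_D^k$ and spending the $\theta\epsilon_D^k$ piece on that contraction. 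Keeping the full gap term while also writing the contracted coefficient double-counts $\theta\epsilon_D^k$: your inequality would amount to a contraction factor $1-\theta-\min\{\theta,\delta/4\}$ on $\epsilon_D^k$, strictly faster than what the dual-ascent lemma provides. The provable statement — and the one the paper uses, cf.\ (\ref{eq:Psi3-ecsdca}) with $\Psi_4$ in place of $\Psi_3$ — is
$$
\mathbb{E}[\Psi_4^{k+1}]\le\left(1-\min\left\{\theta,\tfrac{\delta}{4}\right\}\right)\mathbb{E}[\Psi_4^k]-\theta\,\mathbb{E}[\epsilon_P^{k+1}].
$$
This costs you nothing downstream: in your telescoping you immediately lower-bound $P(x^{k+1})-D(\alpha^k)$ by $\epsilon_P^{k+1}\ge 0$ anyway, so performing that replacement one step earlier repairs the argument verbatim, and the linear rate for $\Psi_4^k$, the bound (\ref{eq:P-ecsdca}), and the complexity (\ref{eq:iterc-ecQuartz-2}) follow unchanged.
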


\begin{figure}[H]
	\vspace{-0.25cm}
	\centering
	\begin{tabular}{cccc}
		\includegraphics[width=3.8cm]{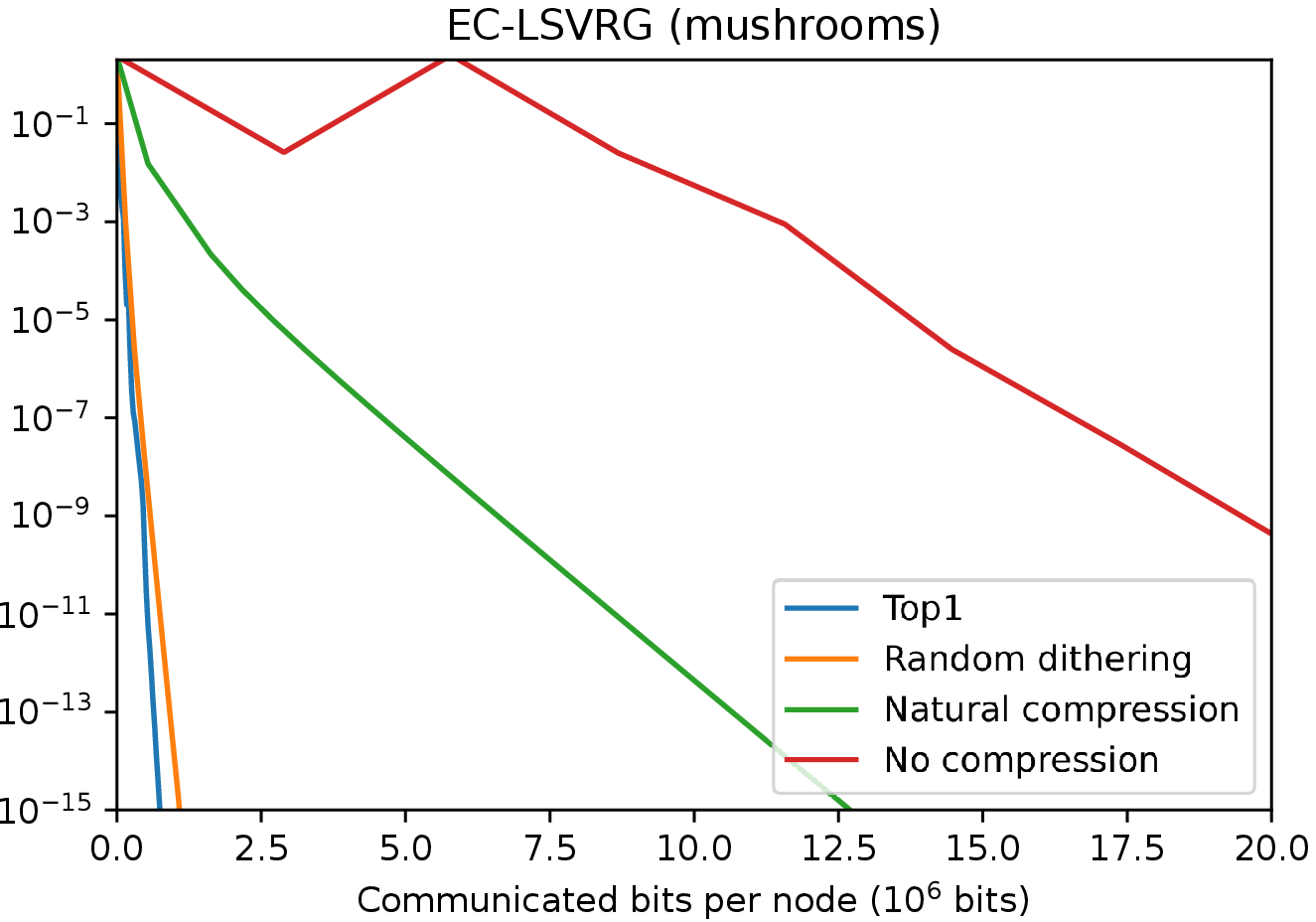}&
		\includegraphics[width=3.8cm]{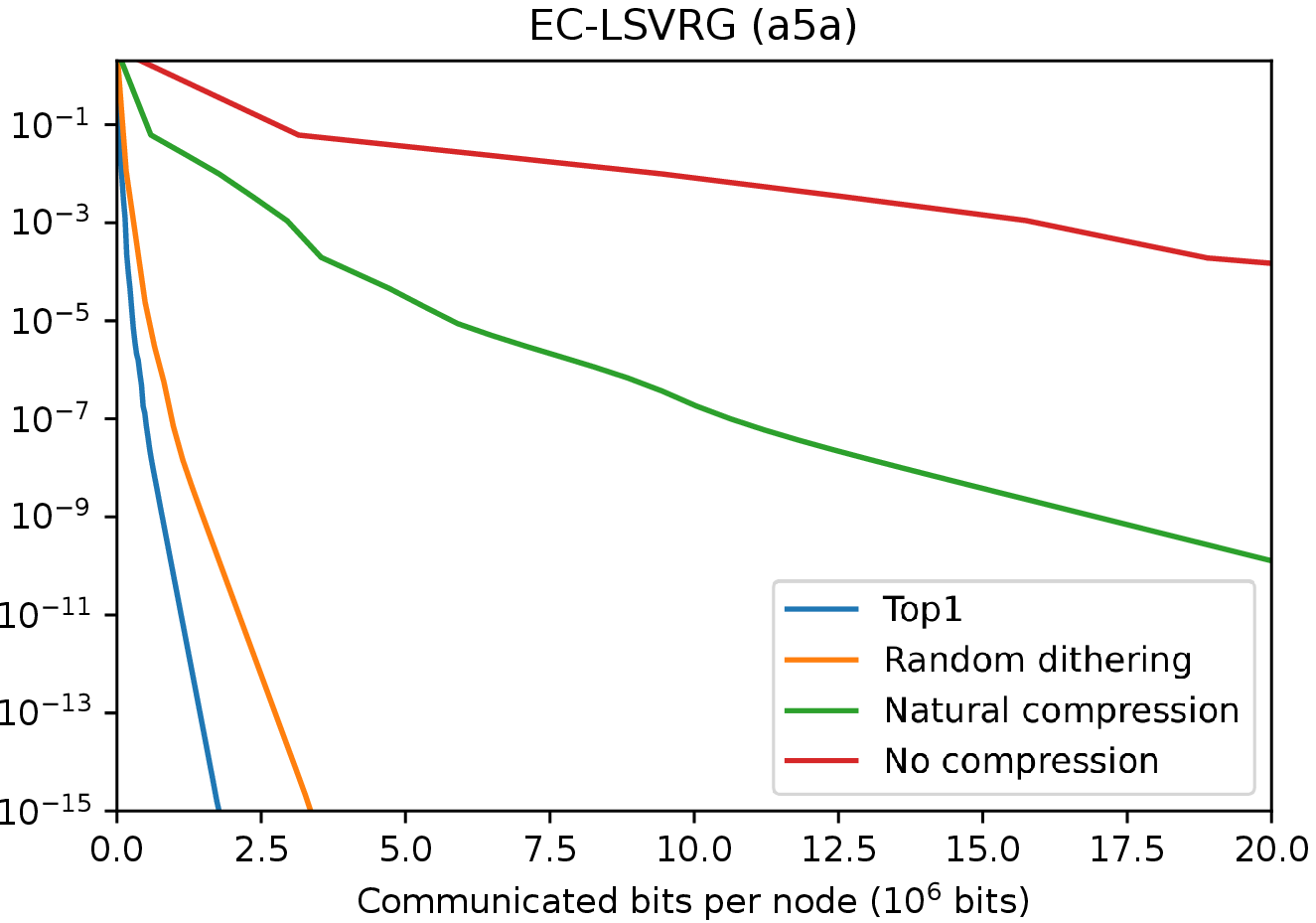}&
		\includegraphics[width=3.8cm]{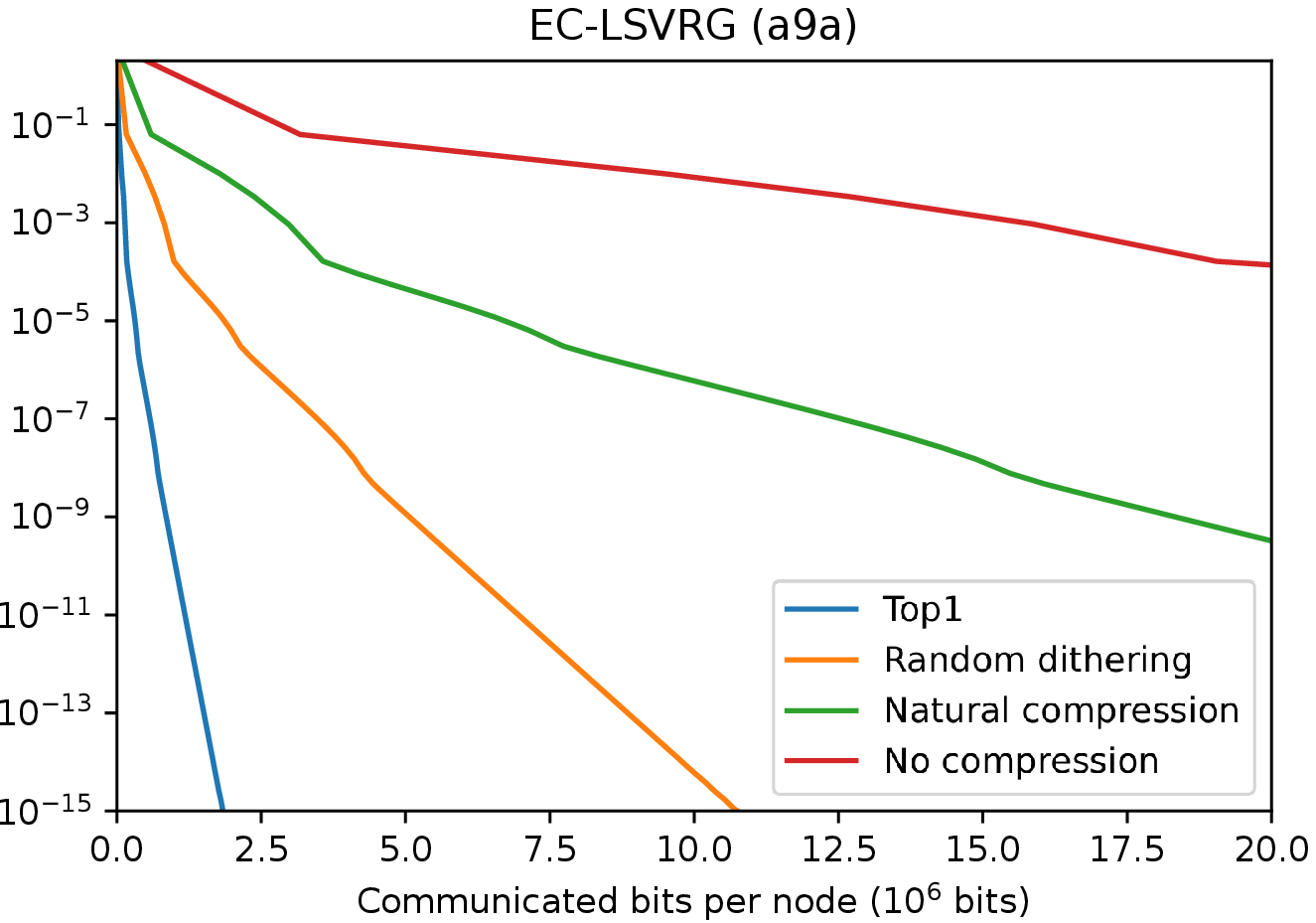}&
		\includegraphics[width=3.8cm]{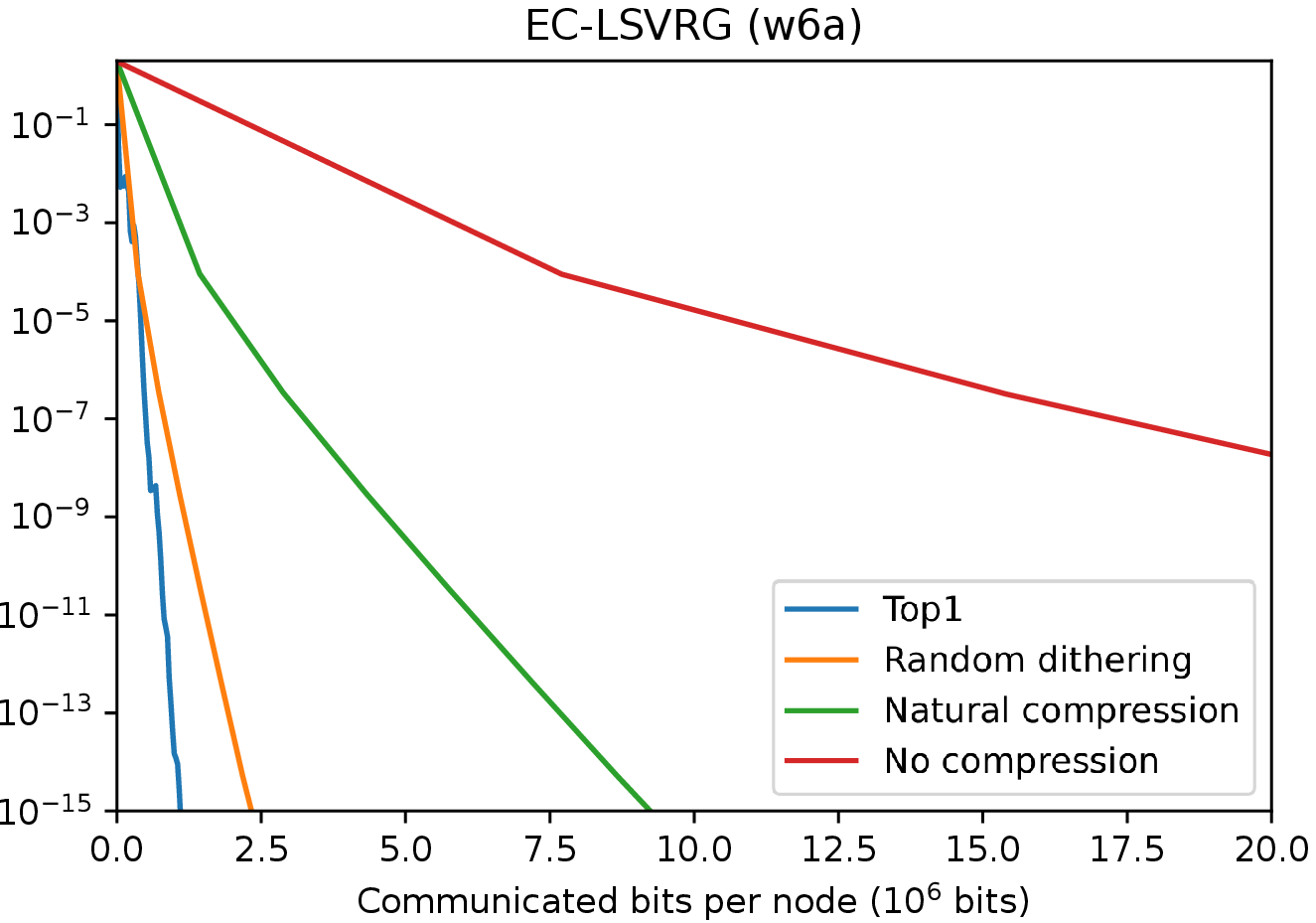}
		\\
		\includegraphics[width=3.8cm]{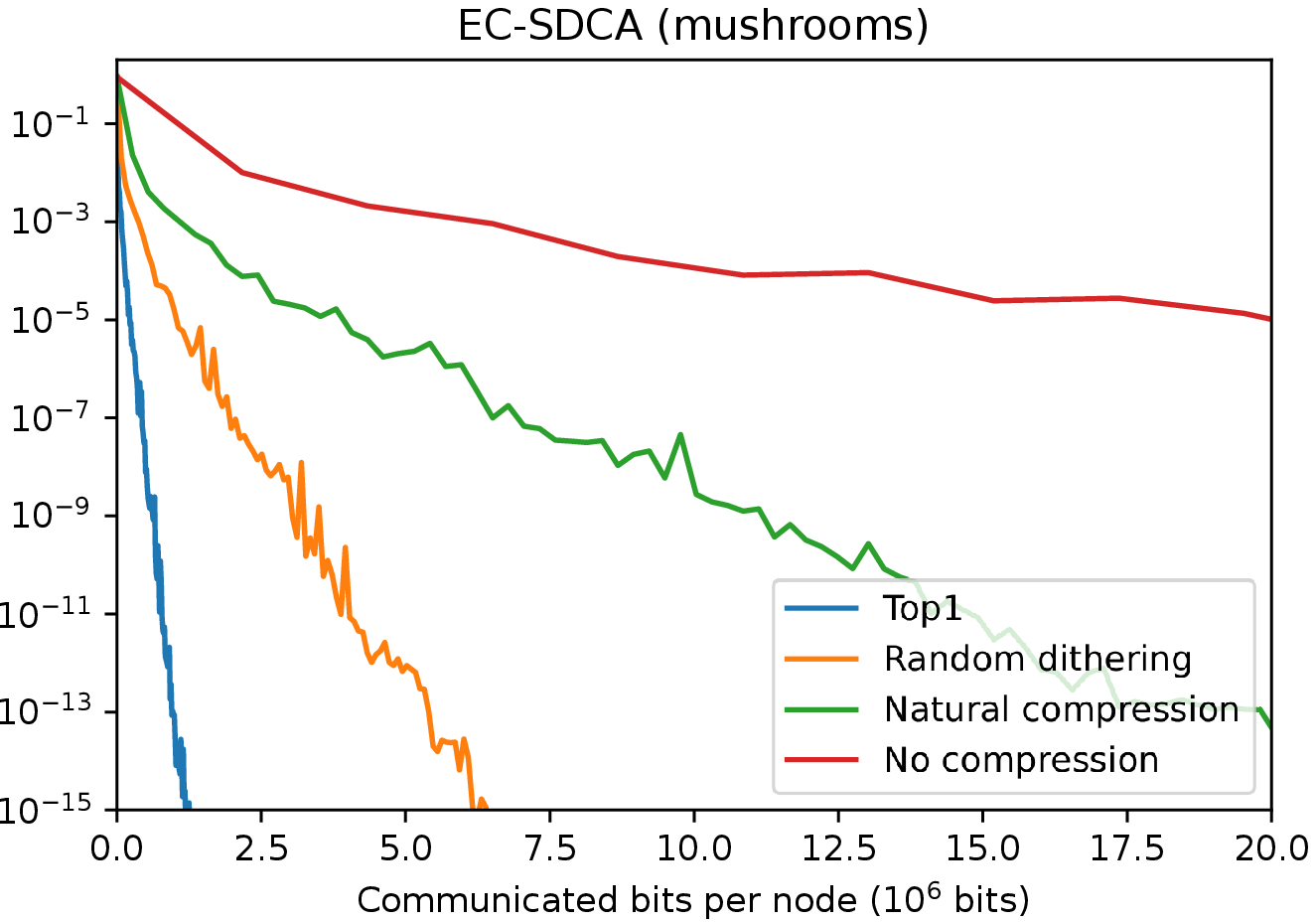}&
		\includegraphics[width=3.8cm]{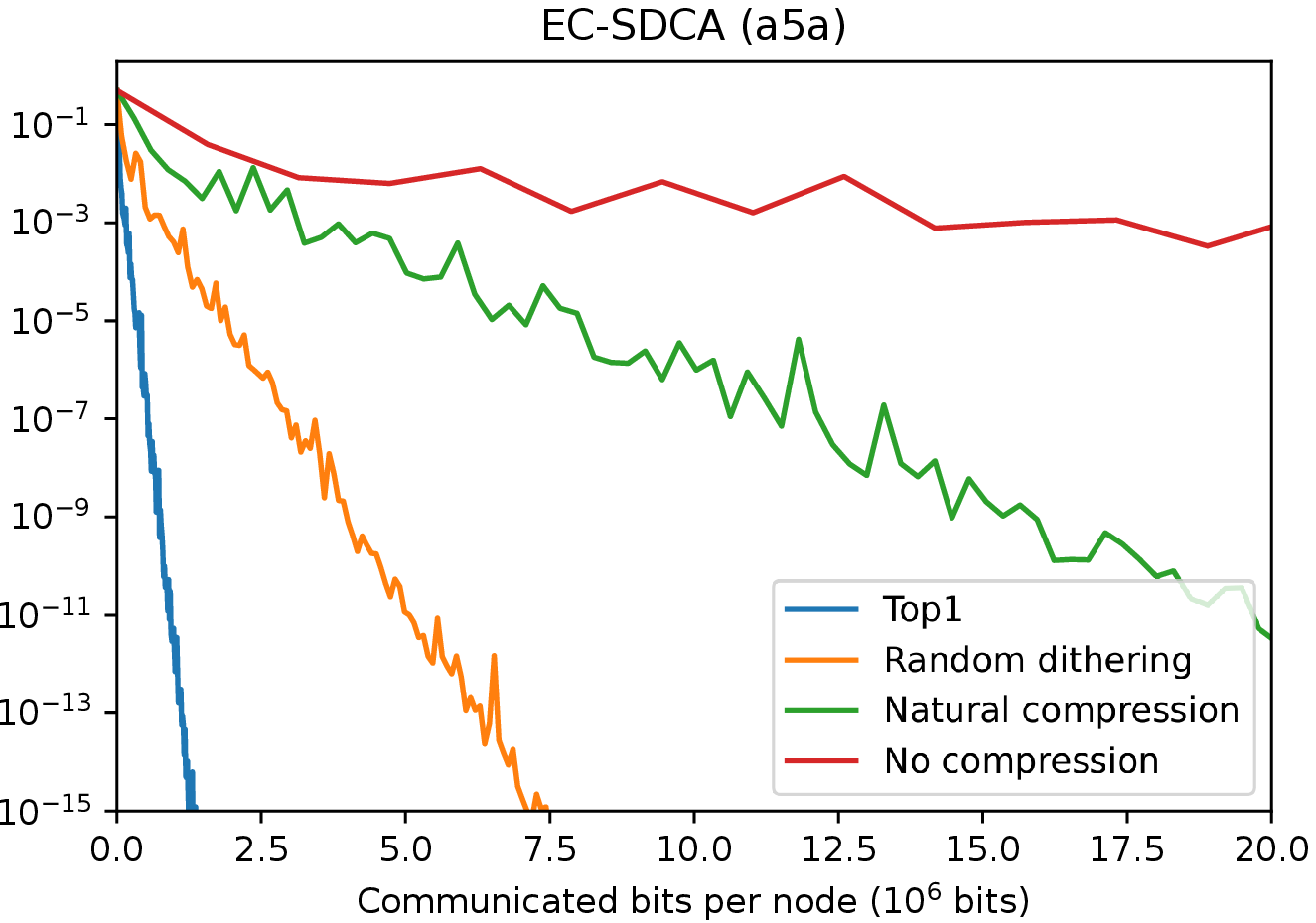}&
		\includegraphics[width=3.8cm]{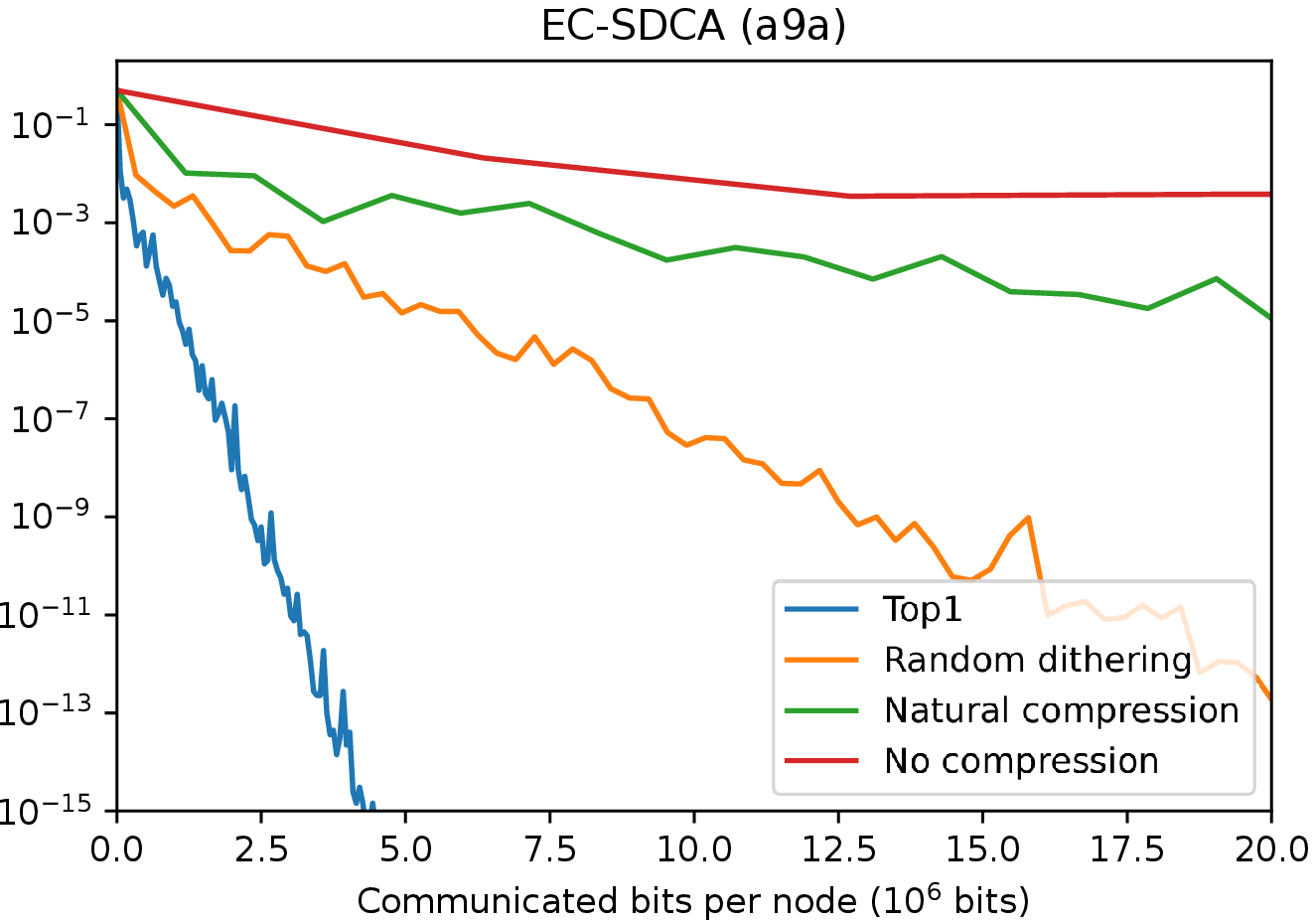}&
		\includegraphics[width=3.8cm]{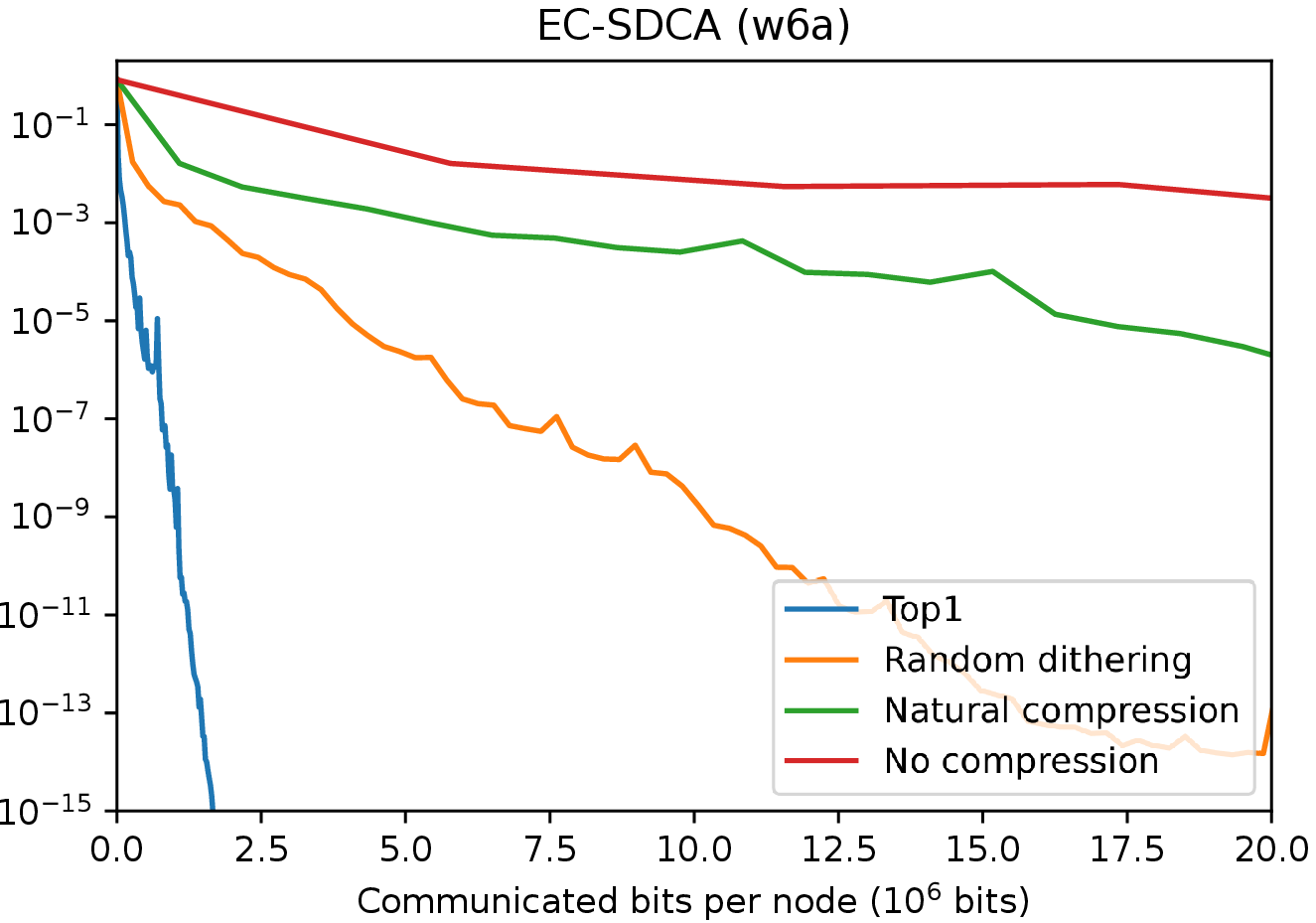}
	\end{tabular}
	\caption{ EC-LSVRG and EC-SDCA with different compressors}\label{fig:compressors}
	\vspace{-0.25cm}
\end{figure}

\section{Experiments}

In this section, we run experiments with EC-LSVRG, EC-Quartz, and EC-SDCA to demonstrate the empirical effectiveness. In particular, we should highlight the linear convergence rate of our algorithm with the biased compressor and non-smooth objective function. Also, the communication complexity performance is competitive to other compressed algorithms.

\paragraph{Setting.} We implement the logistic regression problem with $L_1$-$L_2$ regularization 
\begin{equation*}
\min_{x\in\R^d} \bigg\{\frac{1}{n}\sum \limits_{i=1}^{n}\log \big(1 + \exp(-b_i a_i^\top x)\big) + {\lambda_1}\|x\|_1\bigg\}+ \frac{\lambda_2}{2}\|x\|_2^2,
\end{equation*}
where $\{a_i, b_i\}_{i\in[n]}$ are data samples. 
We use Python 3.7 to perform experiments on a server with 2 processors (Intel Xeon Gold 5120 @ 2.20GHz), 28 cores in total. Library include numpy, sklearn. We search the optimal step size from $\{10^t,3\times10^t\}$, where $t\in\{-4,\cdots,0,1\}$ for all tested algorithms. We choose the same contraction compressors for $Q$ and $Q_1$ for EC-LSVRG. Without more speficiation, we use $L_1$-$L_2$ regularization with $\lambda_1=\lambda_2 = 10^{-3}$ and $p=\delta$ for EC-LSVRG. For EC-LSVRG-DIANA \citep{gorbunov2020linearly}, we choose $p=\nicefrac{1}{m}$. For VR-DIANA \citep{Samuel19}, we use the optimal $\alpha = \nicefrac{1}{(w+1)}$. For smooth experiments, we use $\lambda_1=0$ and $\lambda_2 = 10^{-3}$. 

\paragraph{Datasets.} We have four real data sets: \textbf{a5a}, \textbf{a9a}, \textbf{mushrooms}, and \textbf{w6a}, from the LIBSVM library \citep{chang2011libsvm}. 

\paragraph{Compressors.} We use RandK, TopK, random dithering in \citep{Alistarh17}, natural compression in \citep{horvath2019natural}, RTopK, and NTopK. It should be noticed that the random dithering and natural compression are both unbiased compressors. When we use them as contraction compressors, we mean the ones scaled by $\nicefrac{1}{(\omega+1)}$. RandK is a contraction compressor. When we use it as an unbiased compressor, we mean the one scaled by $\nicefrac{d}{K}$. For RandK and TopK, $\delta = \nicefrac{K}{d}$, and the number of communicated bits for the comprssed vector in $\R^d$ is $(64 + \lceil \log d \rceil)K$. For random dithering, we choose the level $s=\sqrt{d}$, the number of communicated bits for the comprssed vector is $2.8d+64$, and $\omega=1$. For natural compression, the number of communicated bits for the comprssed vector is $12d$, and $\omega = \nicefrac{1}{8}$. For the random dithering in RTopK, we choose $s = \sqrt{K}$. Then the number of communicated bits for the comprssed vector using RTopK is $2.8K+64+ K \lceil \log d \rceil$ and $\delta = \nicefrac{K}{2d}$. For NTopK, the number of communicated bits for the comprssed vector is $12K+K \lceil \log d \rceil$, and $\delta = \nicefrac{8K}{9d}$. 

\subsection{TopK, random dithering, natural compression vs no compression}

We firstly compare our algorithm with different compressors in Figure \ref{fig:compressors}. It shows that, for the communication complexity, EC-LSVRG and EC-SDCA with contraction compressors are superior to the uncompressed ones, especially for Top1 compressor.

\subsection{Comparison with  ECSGD, ECGD, and EC-LSVRG-DIANA}

We also compare our algorithms with the baseline EC-GD and state-of-the-art competitor EC-LSVRG-DIANA \citep{gorbunov2020linearly}, VR-DIANA \citep{Samuel19} in Figures \ref{fig:non-smooth} $-$ \ref{fig:smooth_w6a}. 
 For the subfigures where the title is ``Top1/Rand1'', we use Top1 in EC-GD, EC-LSVRG-DIANA and our algorithms and Rand1 in VR-DIANA. For EC-LSVRG-DIANA, an unbiased compressor is also needed. Thus, for the Top1/Rand1 case, we use Rand1; for random dithering and natural compression cases, we use random dithering and natural compression, respectively, for the unbiased compressor in EC-LSVRG-DIANA. 
 
 Because of the compression error, EC-GD could not converge to the optimal solution. For EC-LSVRG-DIANA, it converges linearly to the optimal solution when the objective function is smooth, and the communication complexity performance of it is almost the same as that of EC-LSVRG. However, EC-LSVRG-DIANA does not support non-smooth objective function well, leading to a biased solution. These figures show that in the most cases, EC-LSVRG or EC-SDCA performs the best. VR-DIANA is not compatible  with Top1 compressor, which is extremely efficient. While our methods, including EC-LSVRG and EC-SDCA, perform well on either smooth or non-smooth case with Top1 compressor.

\begin{figure}[H]
	\vspace{-0.35cm}	
	\centering
	\begin{tabular}{ccc}
		\includegraphics[width=5cm]{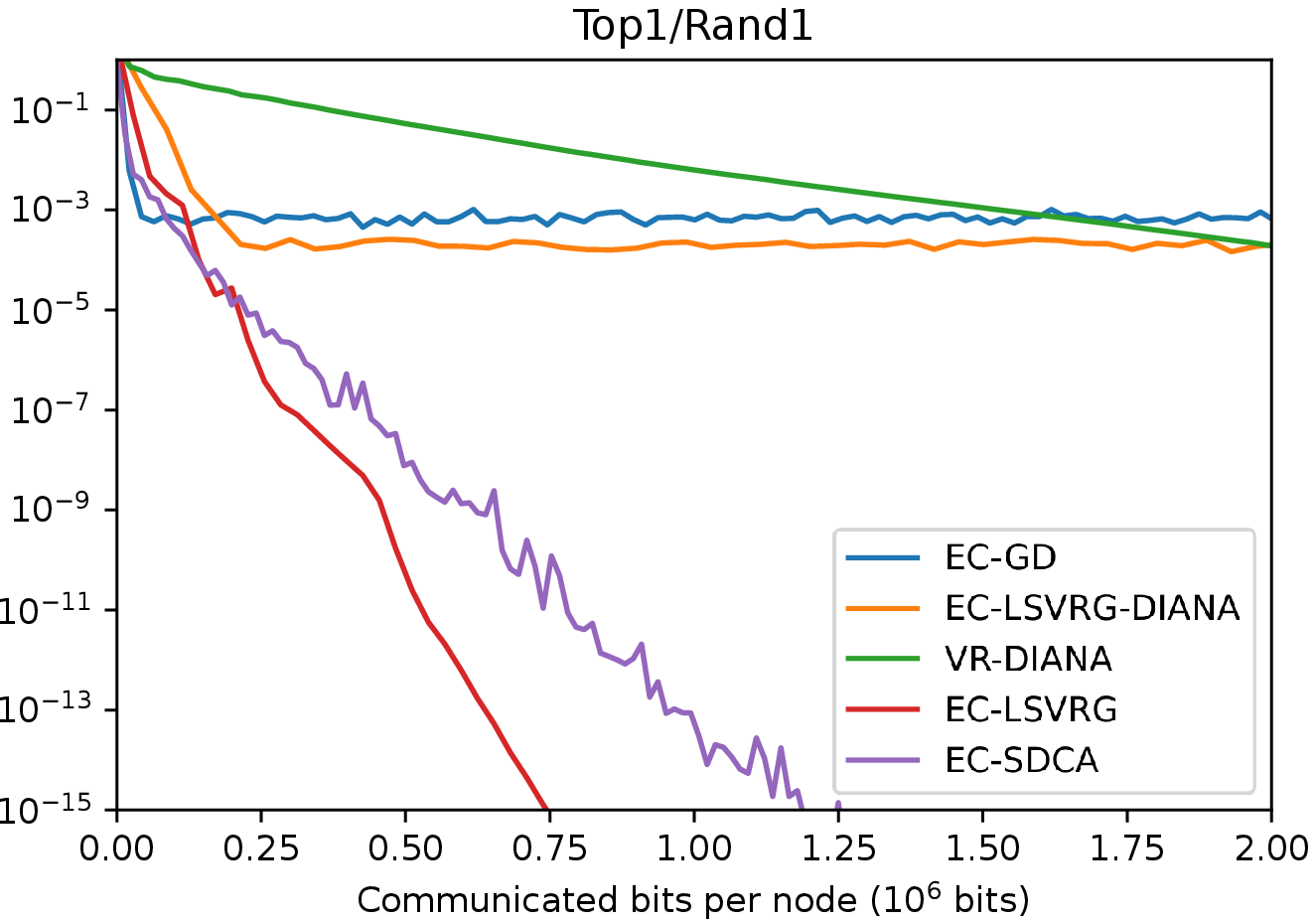}&
		\includegraphics[width=5cm]{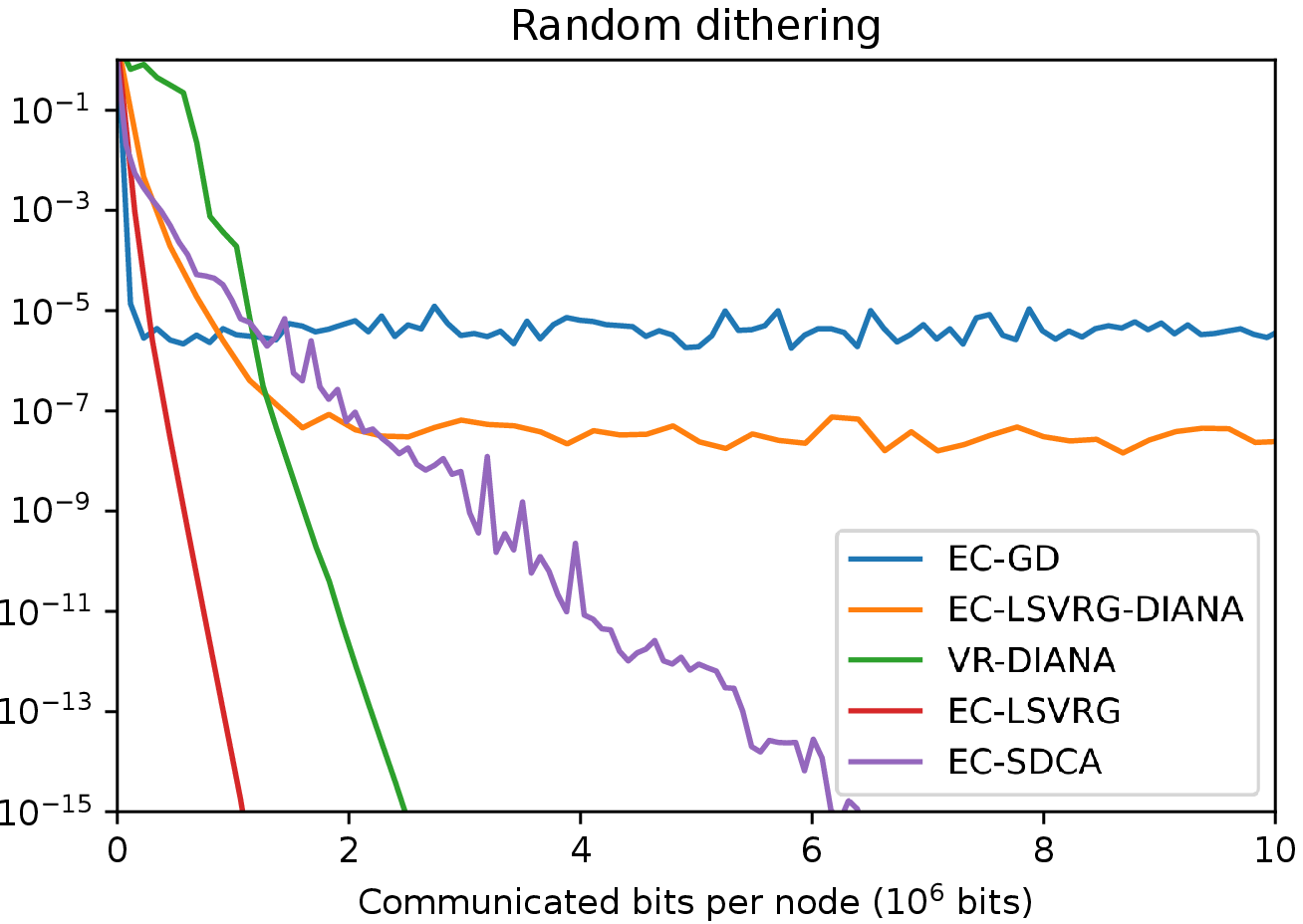}&
		\includegraphics[width=5cm]{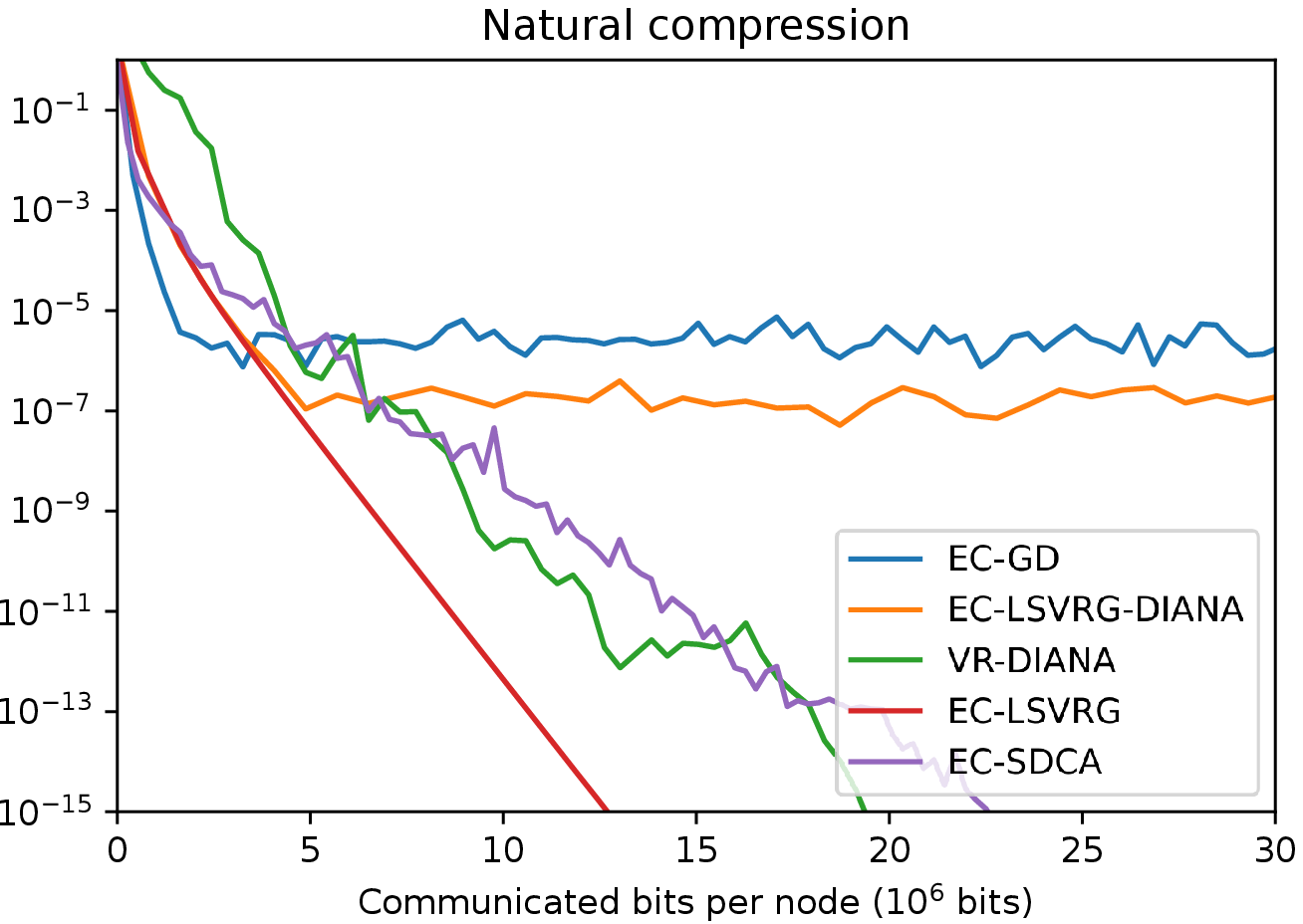}
	\end{tabular}
	\vspace{-0.35cm}
	\caption{Comparison with  ECGD, EC-LSVRG-DIANA, VR-DIANA on  \textbf{mushrooms} (non-smooth case)}\label{fig:non-smooth}
	\vspace{-0.35cm}	
\end{figure}

\begin{figure}[H]
	\vspace{-0.35cm}
	\centering
	\begin{tabular}{ccc}
		\includegraphics[width=5cm]{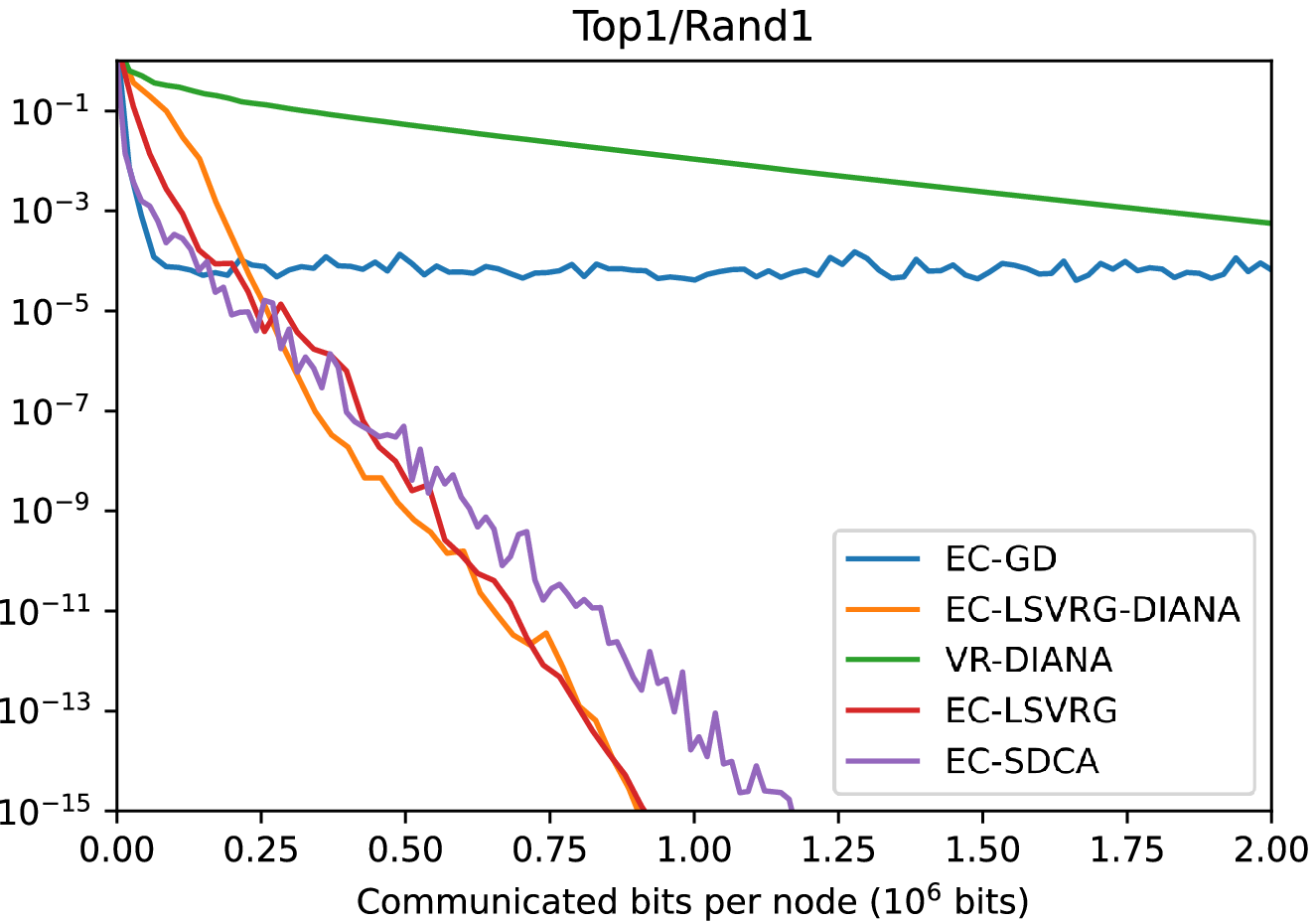}&
		\includegraphics[width=5cm]{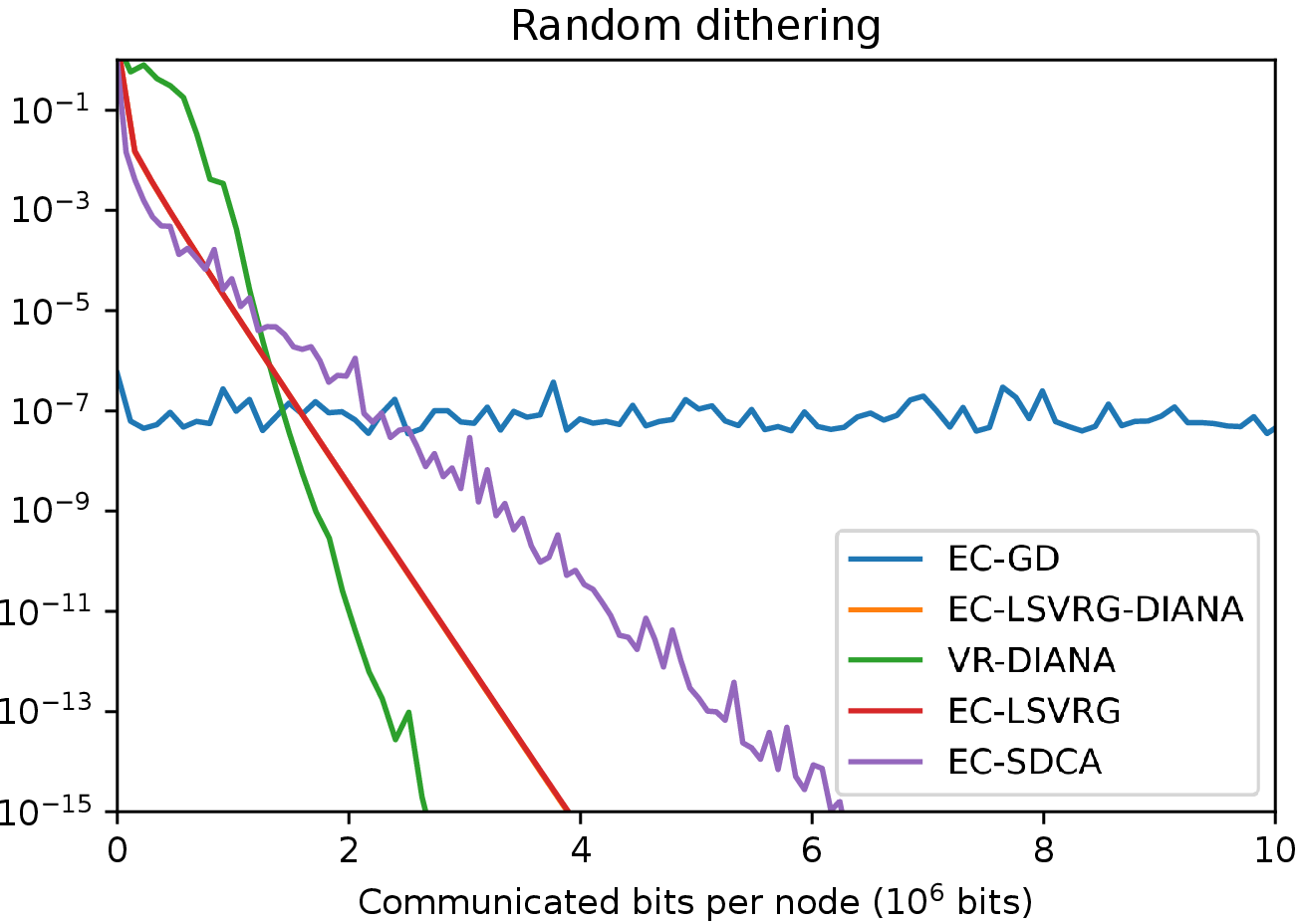}&
		\includegraphics[width=5cm]{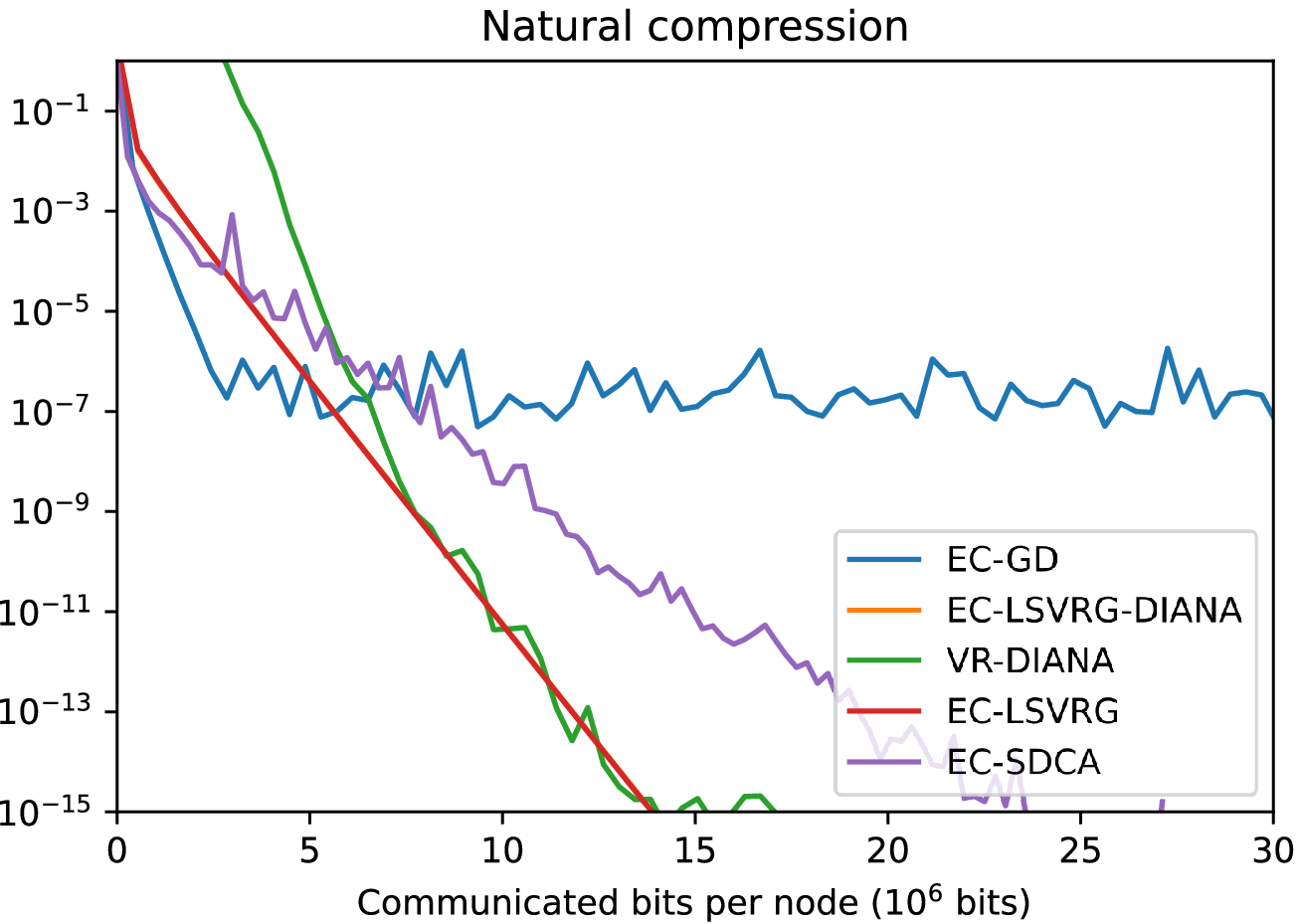}
	\end{tabular}
	\vspace{-0.35cm}
	\caption{Comparison with  ECSGD, ECGD, and EC-LSVRG-DIANA on  \textbf{mushrooms} (smooth case)}\label{fig:smooth}
	\vspace{-0.35cm}
\end{figure}

\begin{figure}[H]
	\vspace{-0.25cm}
	\centering
	\begin{tabular}{ccc}
		\includegraphics[width=5cm]{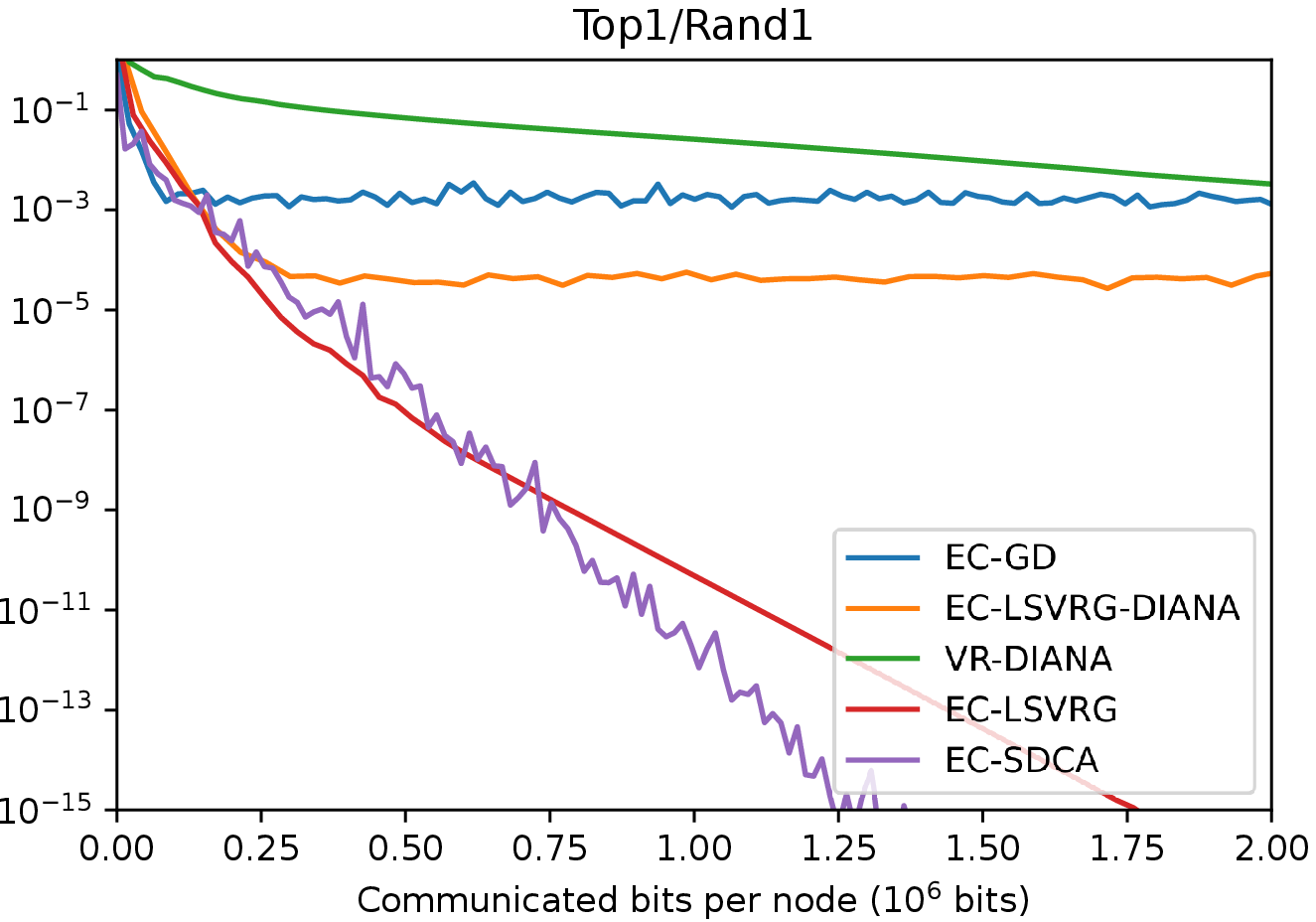}&
		\includegraphics[width=5cm]{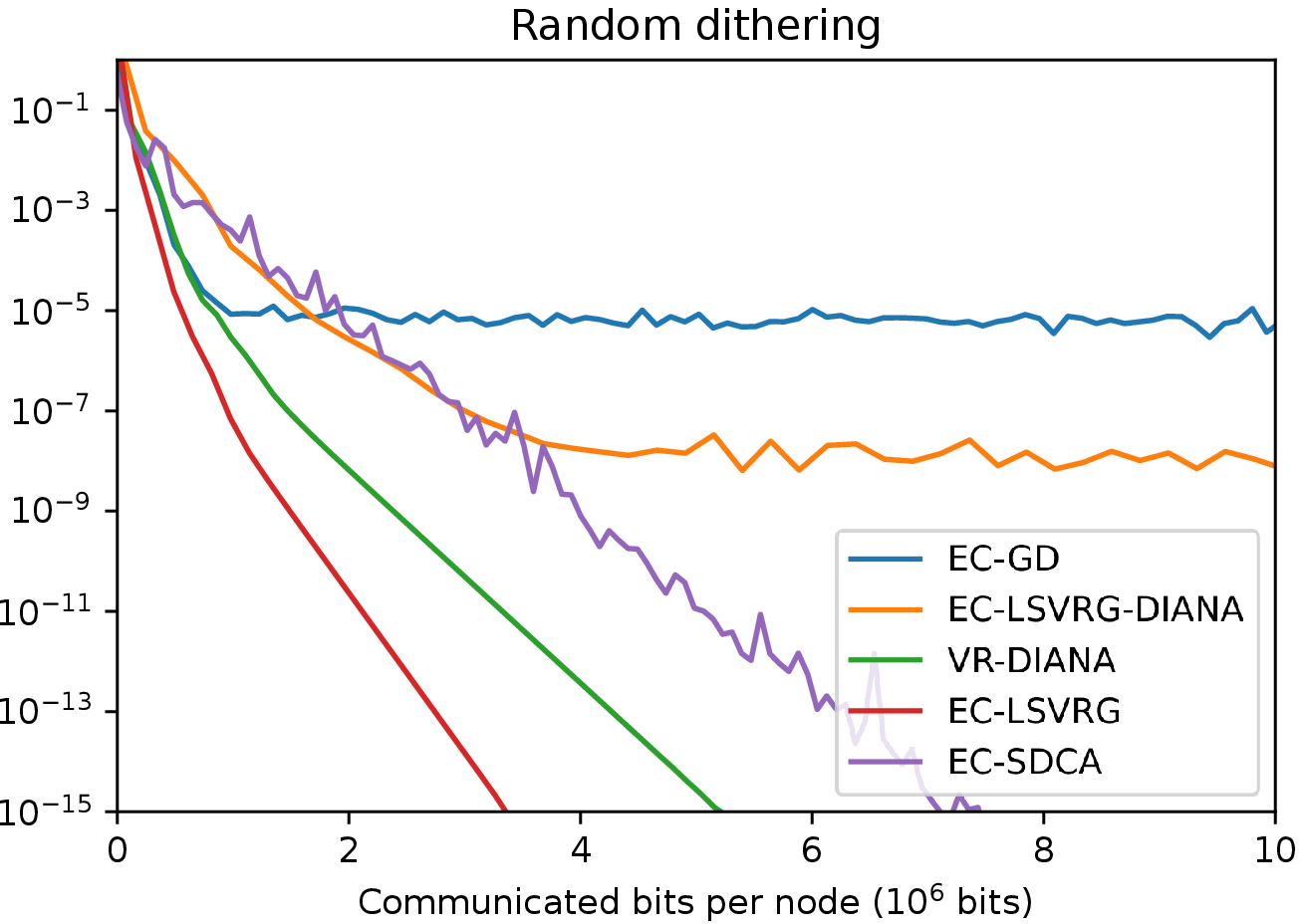}&
		\includegraphics[width=5cm]{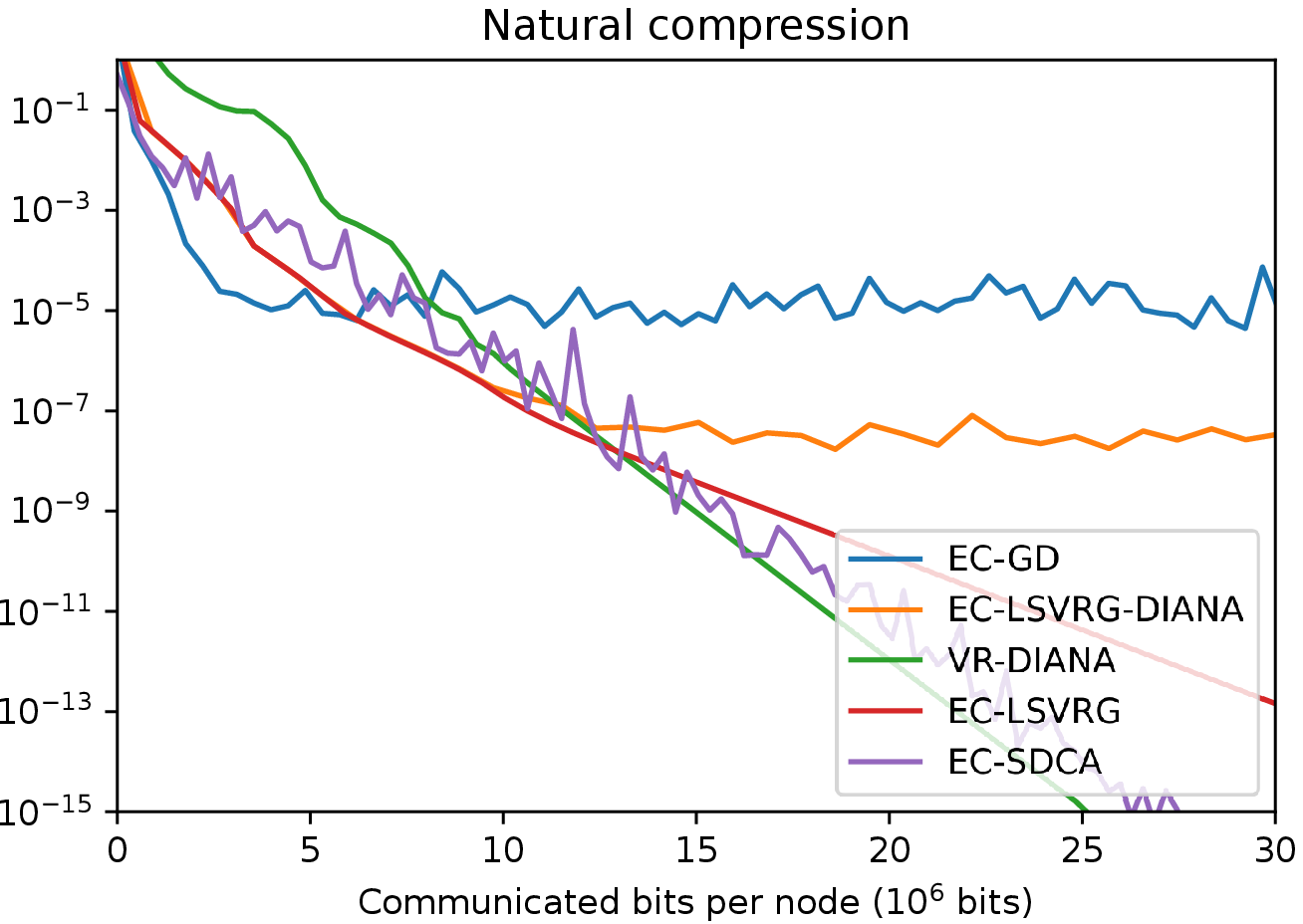}
	\end{tabular}
	\vspace{-0.25cm}
	\caption{Comparison with  ECGD, EC-LSVRG-DIANA, VR-DIANA on  \textbf{a5a} (non-smooth case)}\label{fig:non-smooth_a5a}
	\vspace{-0.25cm}
\end{figure}

\begin{figure}[H]
	\vspace{-0.25cm}
	\centering
	\begin{tabular}{ccc}
		\includegraphics[width=5cm]{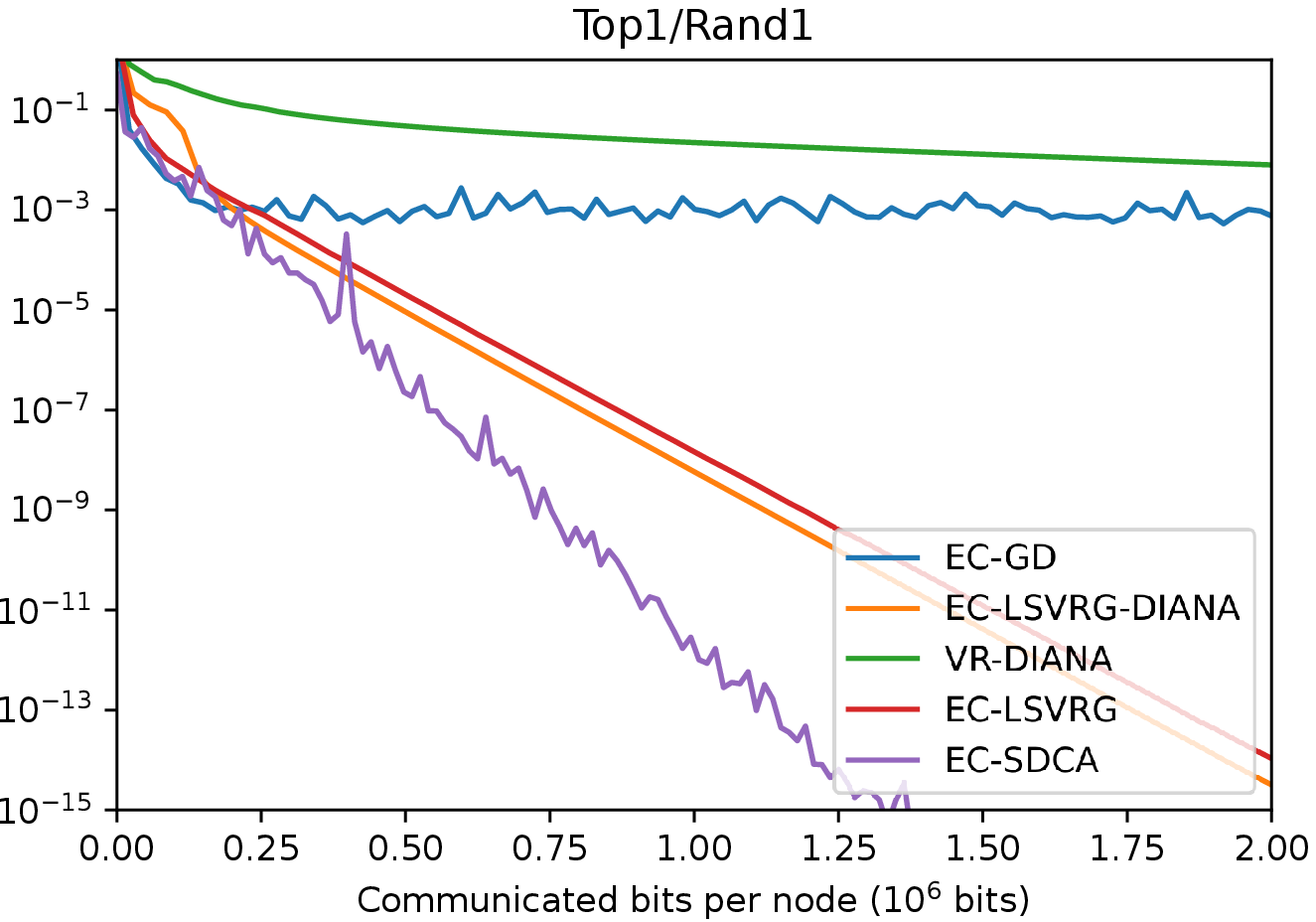}&
		\includegraphics[width=5cm]{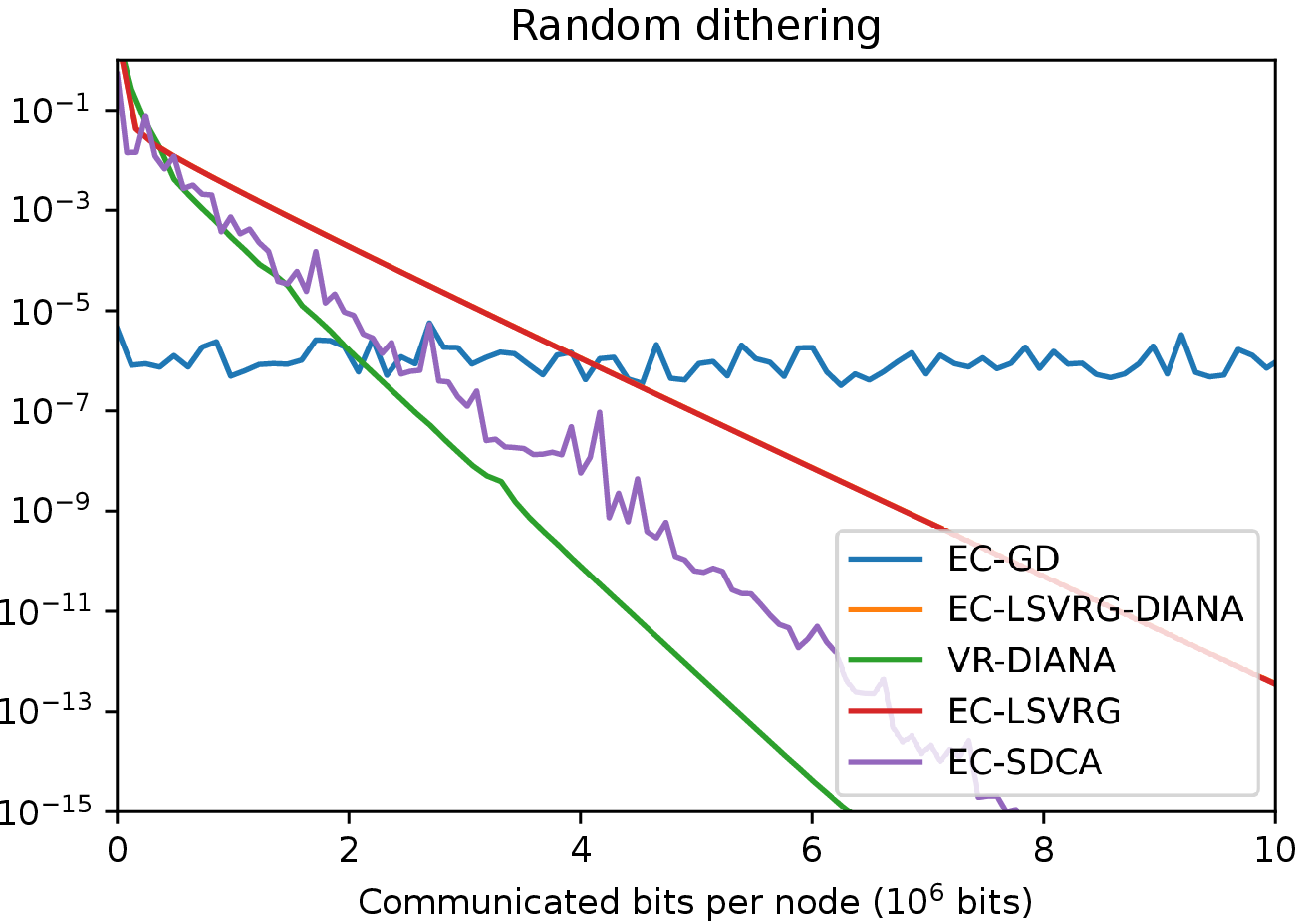}&
		\includegraphics[width=5cm]{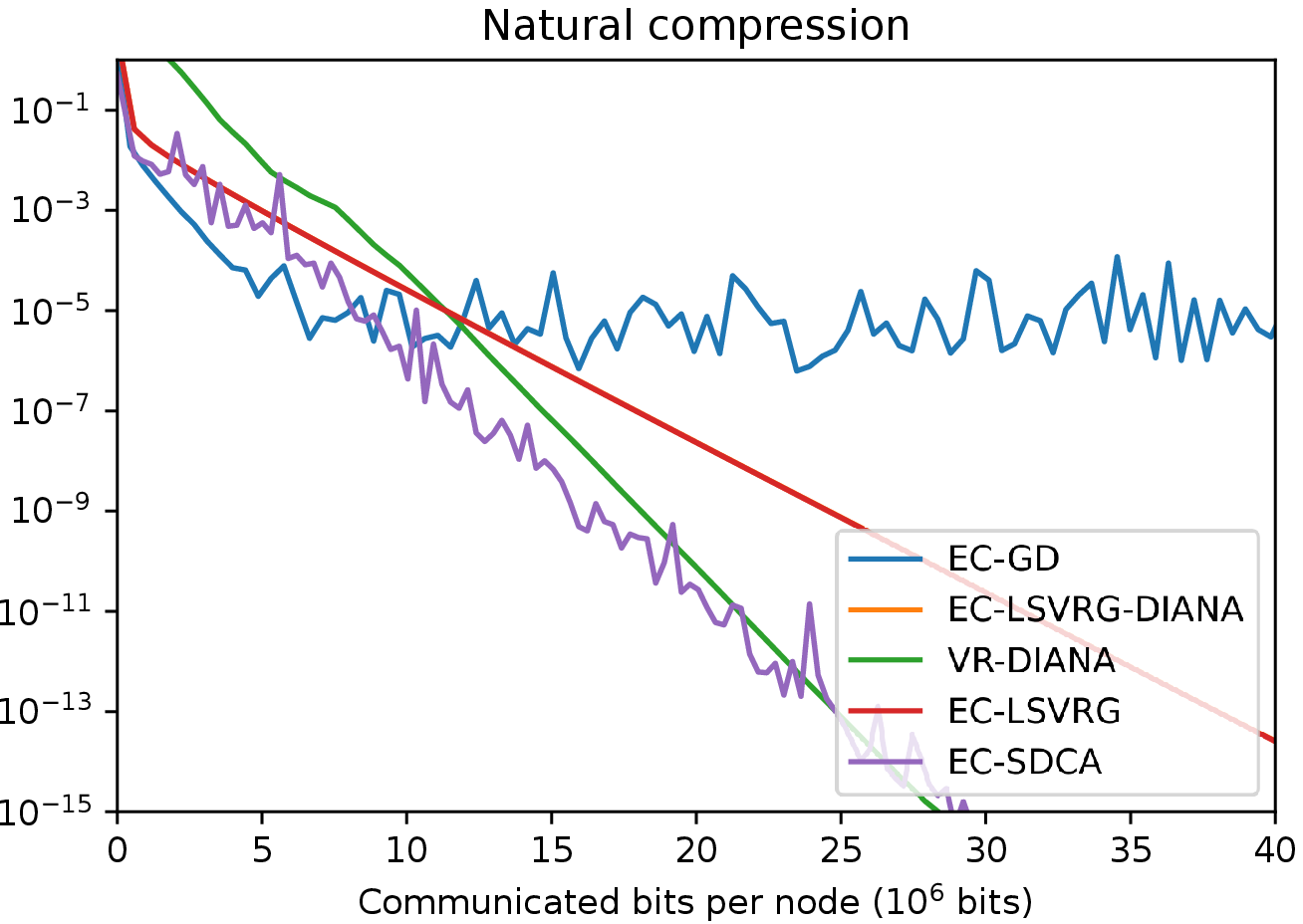}
	\end{tabular}
	\vspace{-0.25cm}
	\caption{Comparison with  ECSGD, ECGD, and EC-LSVRG-DIANA on  \textbf{a5a} (smooth case)}\label{fig:smooth_a5a}
	\vspace{-0.25cm}
\end{figure}

\begin{figure}[H]
	\vspace{-0.25cm}
	\centering
	\begin{tabular}{ccc}
		\includegraphics[width=5cm]{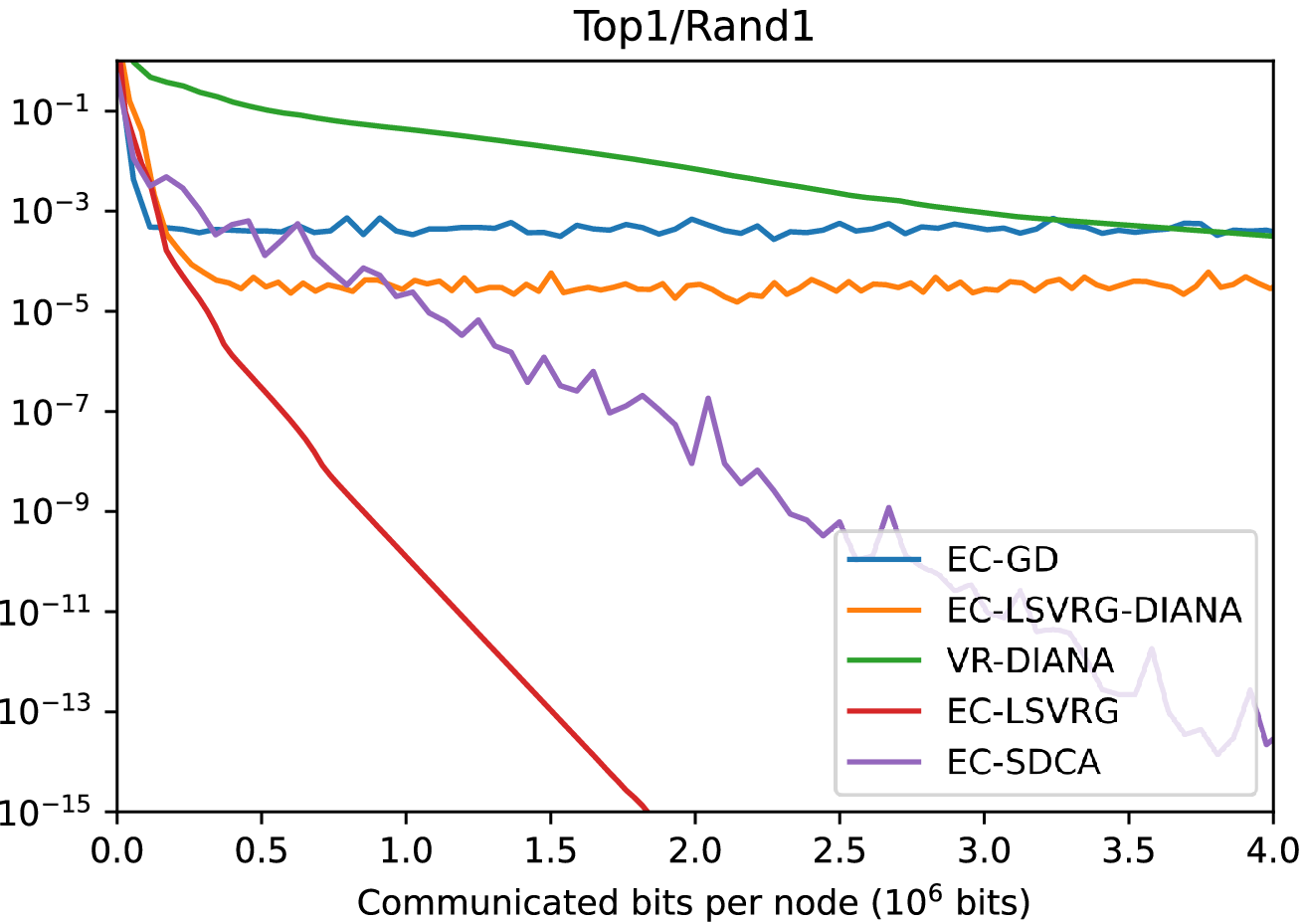}&
		\includegraphics[width=5cm]{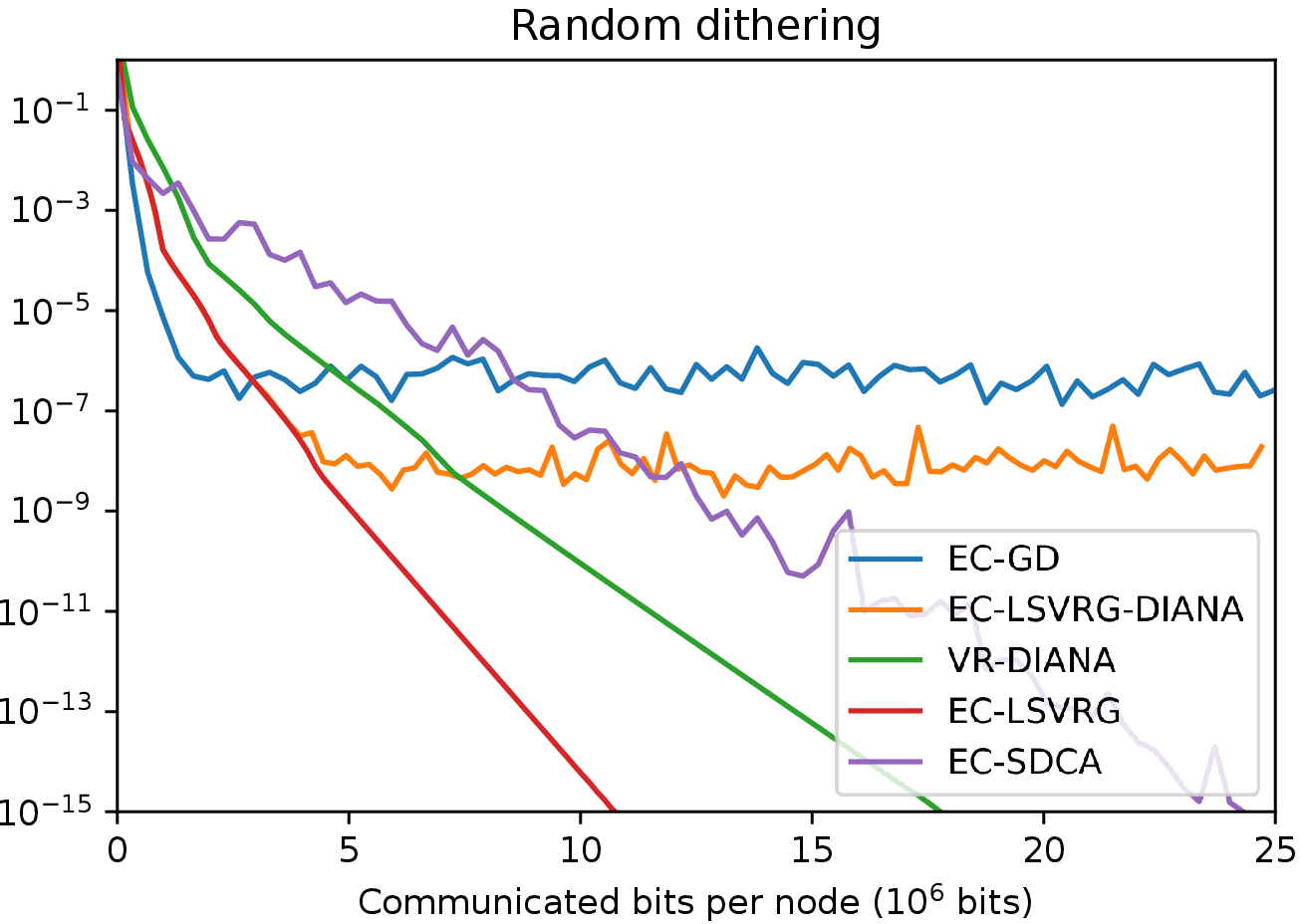}&
		\includegraphics[width=5cm]{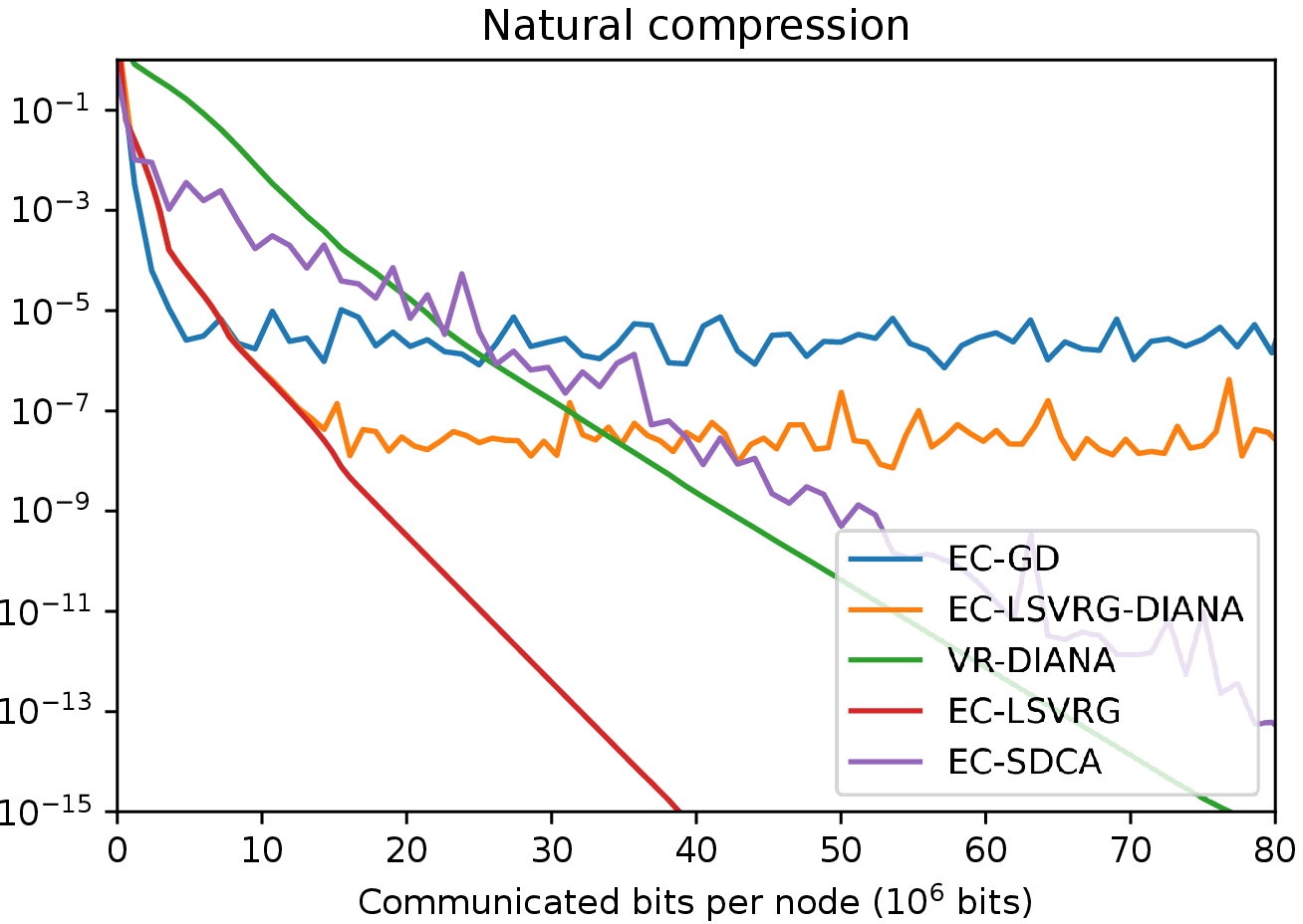}
	\end{tabular}
	\vspace{-0.25cm}
	\caption{Comparison with  ECGD, EC-LSVRG-DIANA, VR-DIANA on  \textbf{a5a} (non-smooth case)}\label{fig:non-smooth_a9a}
	\vspace{-0.25cm}
\end{figure}

\begin{figure}[H]
	\vspace{-0.25cm}
	\centering
	\begin{tabular}{ccc}
		\includegraphics[width=5cm]{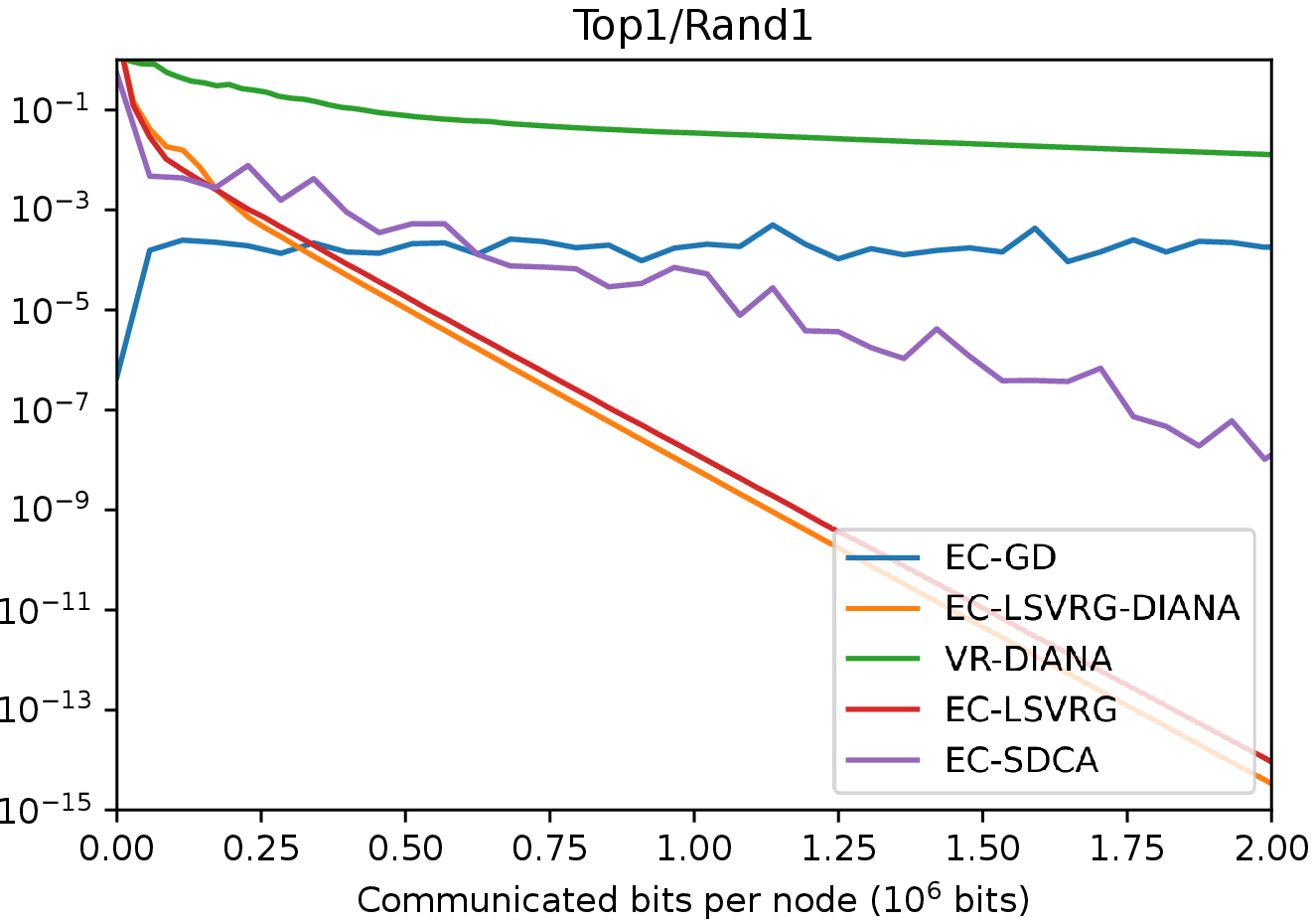}&
		\includegraphics[width=5cm]{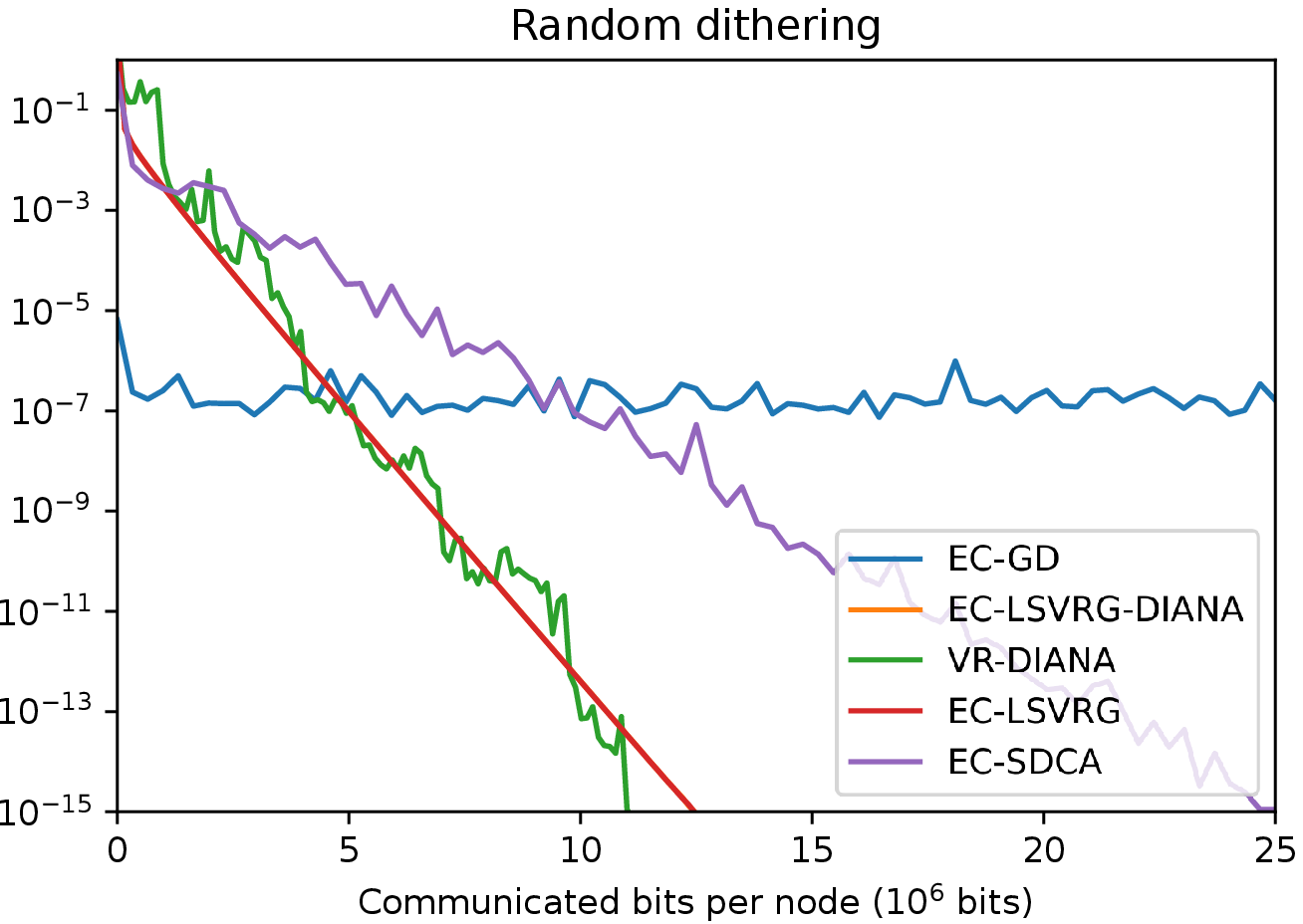}&
		\includegraphics[width=5cm]{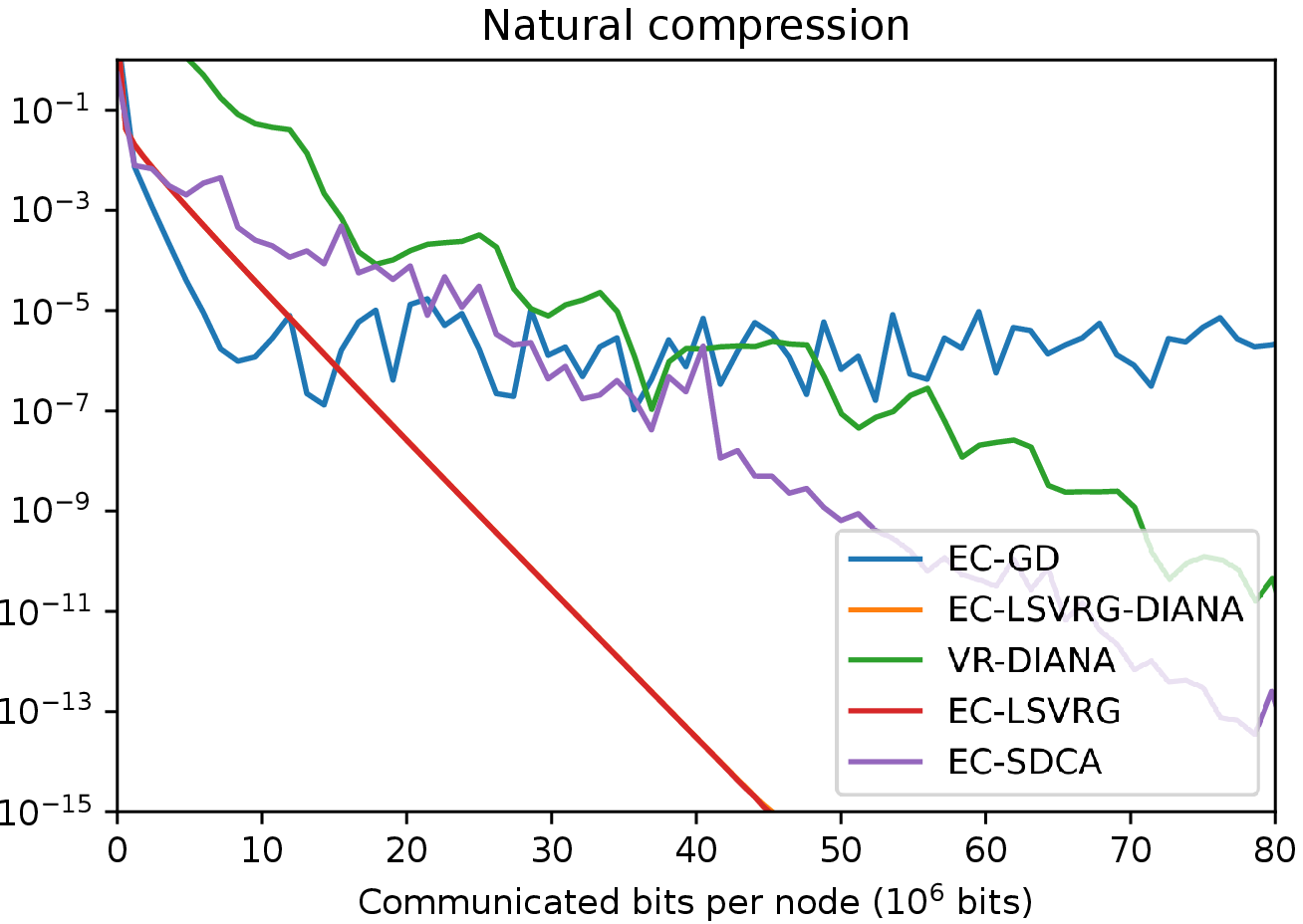}
	\end{tabular}
	\vspace{-0.25cm}
	\caption{Comparison with  ECSGD, ECGD, and EC-LSVRG-DIANA on  \textbf{a9a} (smooth case)}\label{fig:smooth_a9a}
	\vspace{-0.25cm}
\end{figure}

\begin{figure}[H]
	\vspace{-0.25cm}
	\centering
	\begin{tabular}{ccc}
		\includegraphics[width=5cm]{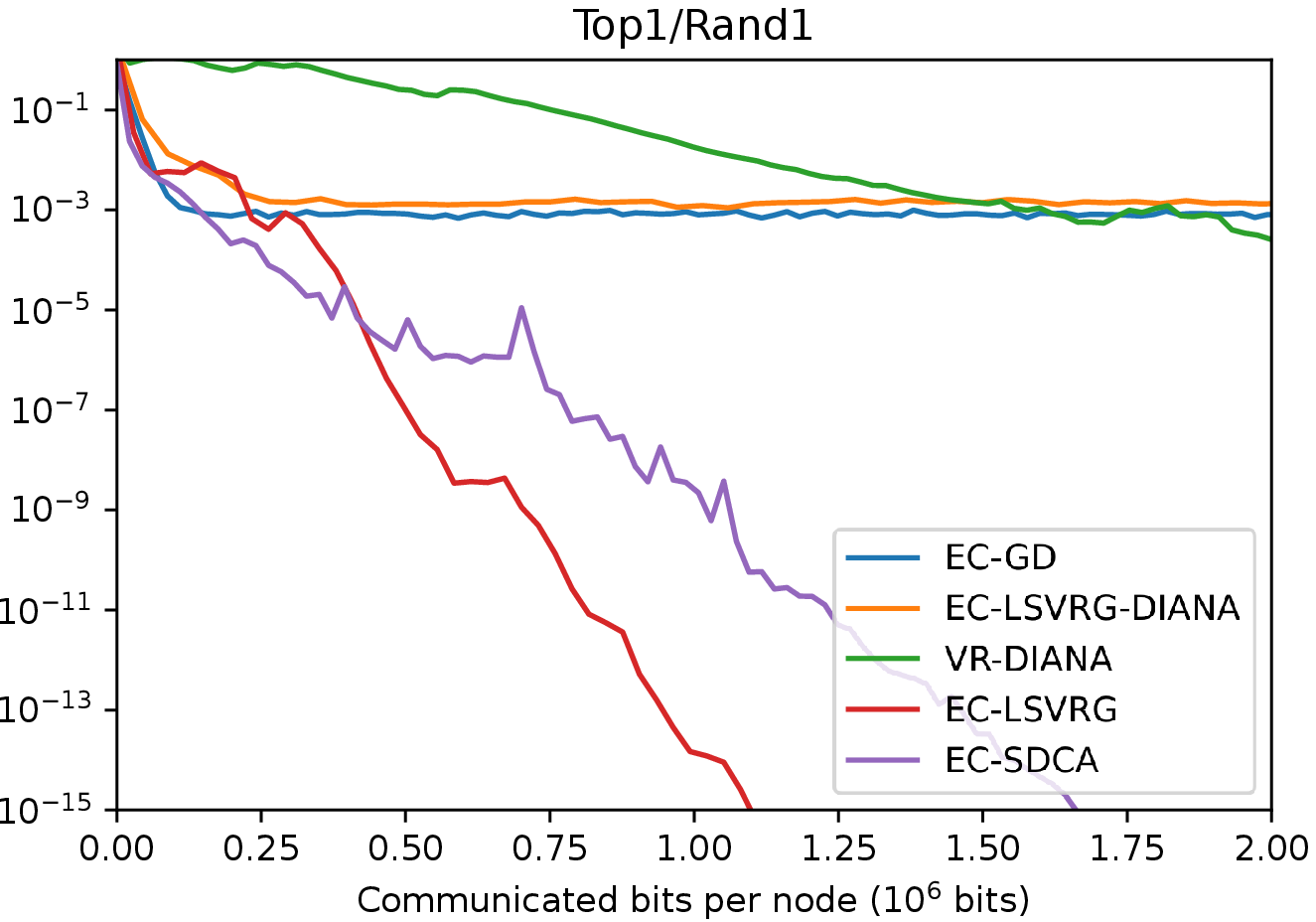}&
		\includegraphics[width=5cm]{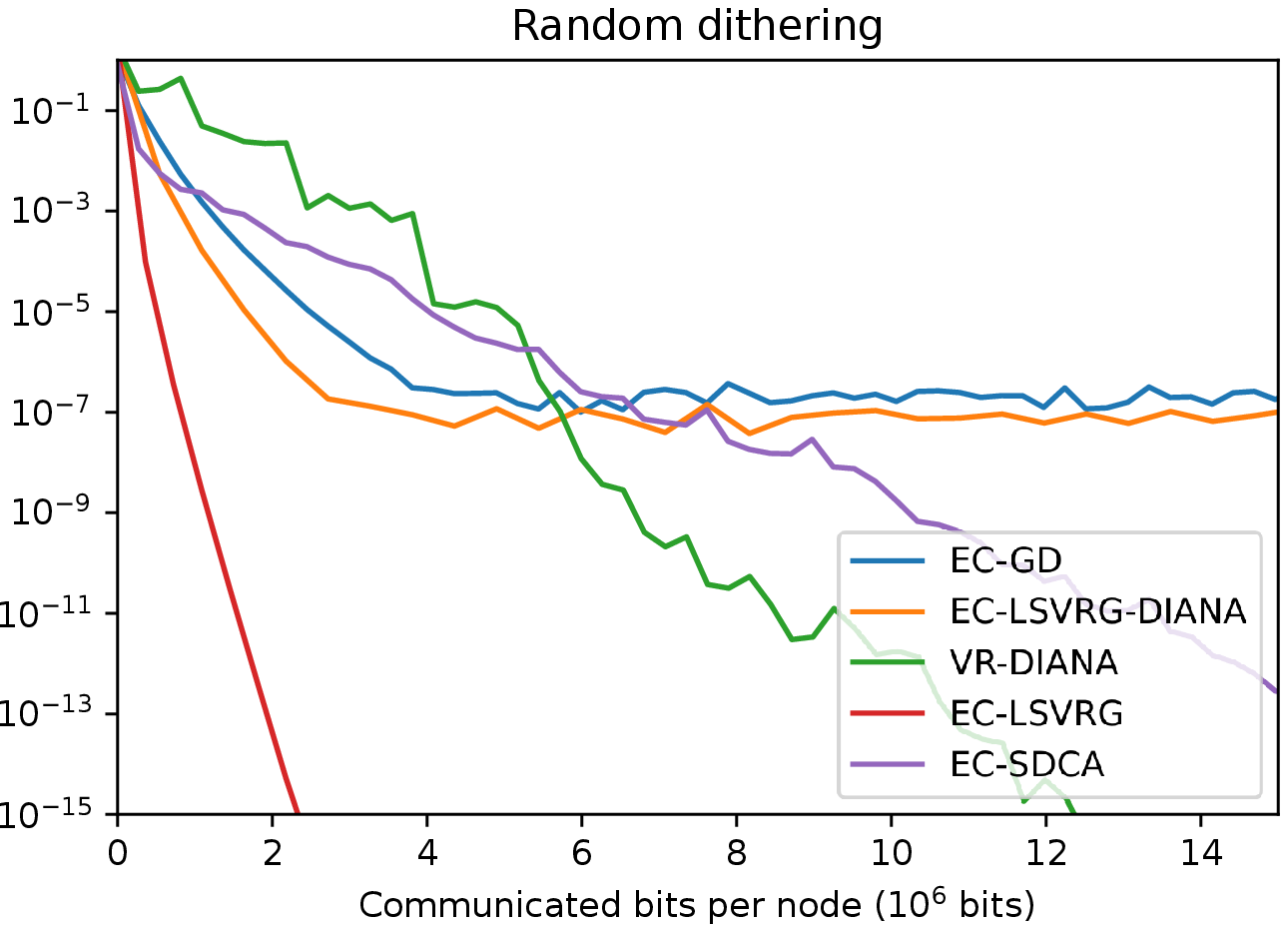}&
		\includegraphics[width=5cm]{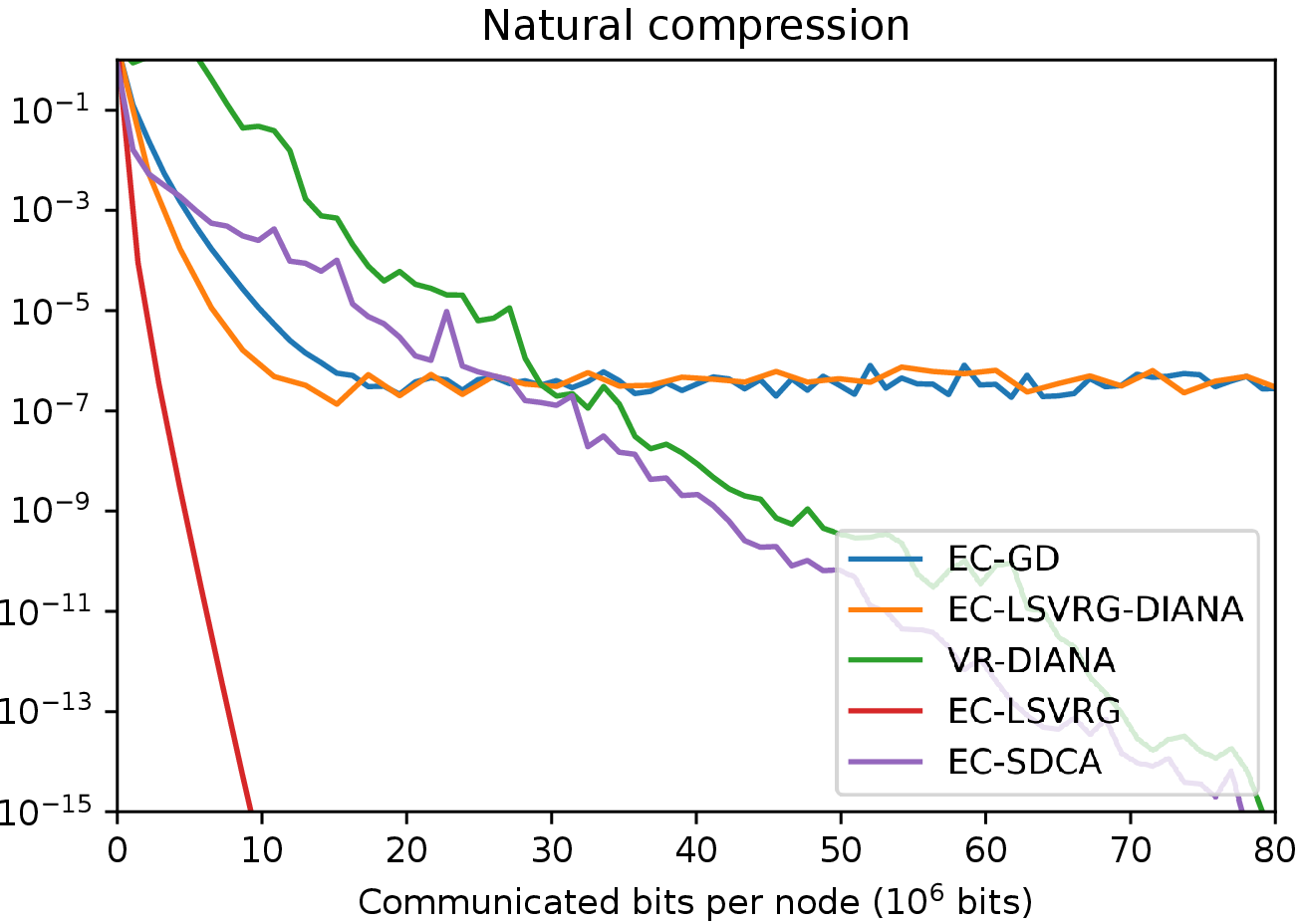}
	\end{tabular}
	\vspace{-0.25cm}
	\caption{Comparison with  ECGD, EC-LSVRG-DIANA, VR-DIANA on  \textbf{w6a} (non-smooth case)}\label{fig:non-smooth_w6a}
	\vspace{-0.25cm}
\end{figure}

\begin{figure}[H]
	\vspace{-0.35cm}
	\centering
	\begin{tabular}{ccc}
		\includegraphics[width=5cm]{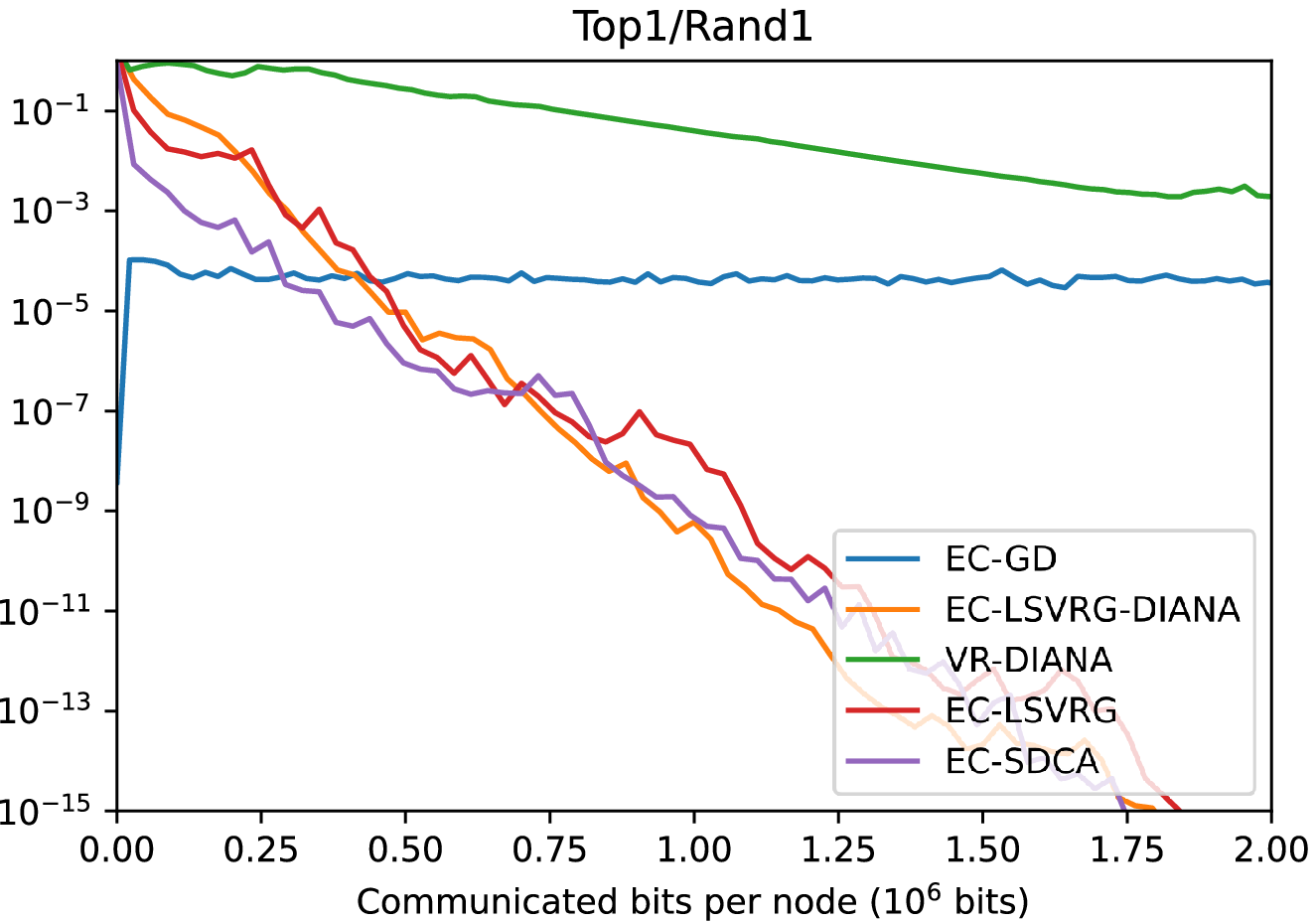}&
		\includegraphics[width=5cm]{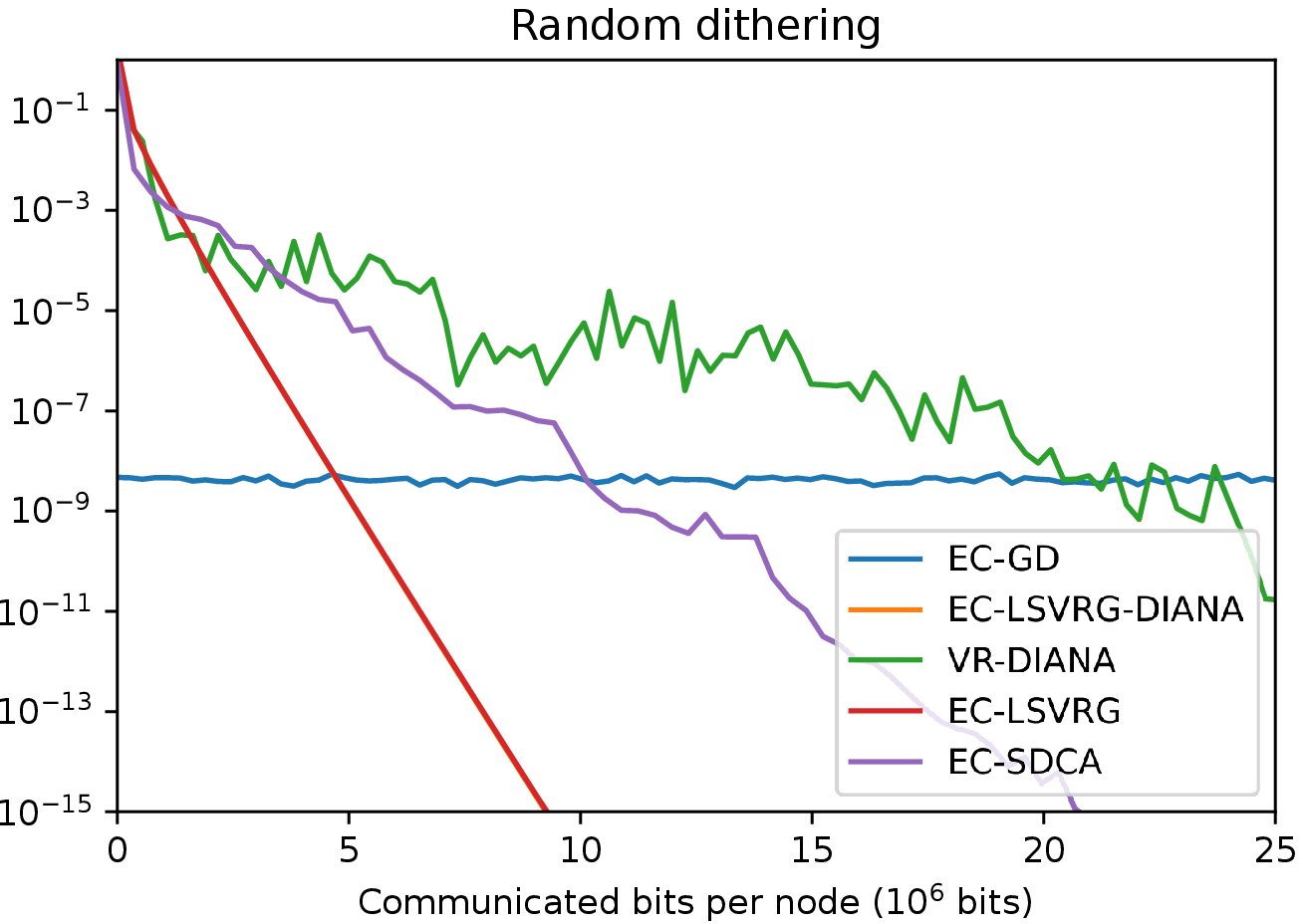}&
		\includegraphics[width=5cm]{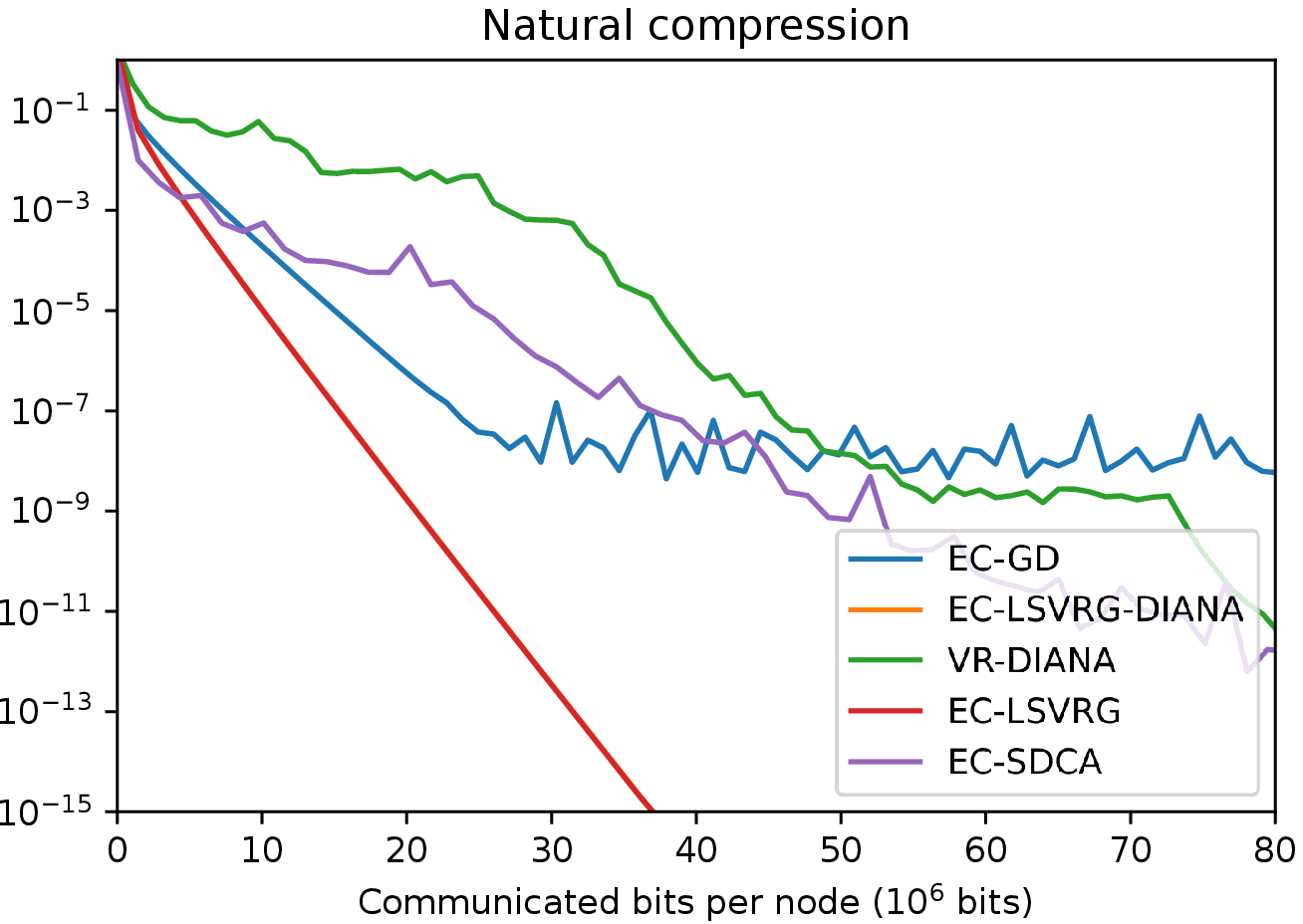}
	\end{tabular}
	\vspace{-0.35cm}
	\caption{Comparison with  ECSGD, ECGD, and EC-LSVRG-DIANA on  \textbf{w6a} (smooth case)}\label{fig:smooth_w6a}
	\vspace{-0.55cm}
\end{figure}

\subsection{TopK vs NTopK vs RTopK}

Previous experiments have shown the efficiency of the contraction compressor. In this context, we consider using random dithering + TopK (RTopK) and natural compression + TopK (NTopK) to further improve the performance. It should be noted that NTopK is suitable for any $K$, while for RTopK, we usually require $K>1$. By Figures \ref{fig:topk} $-$ \ref{fig:topk_w6a}, we can notice that either NTopK or RTopK reduces the communication costs than TopK only.

\begin{figure}[H]
	\vspace{-0.35cm}
	\centering
	\begin{tabular}{ccc}
		\includegraphics[width=5cm]{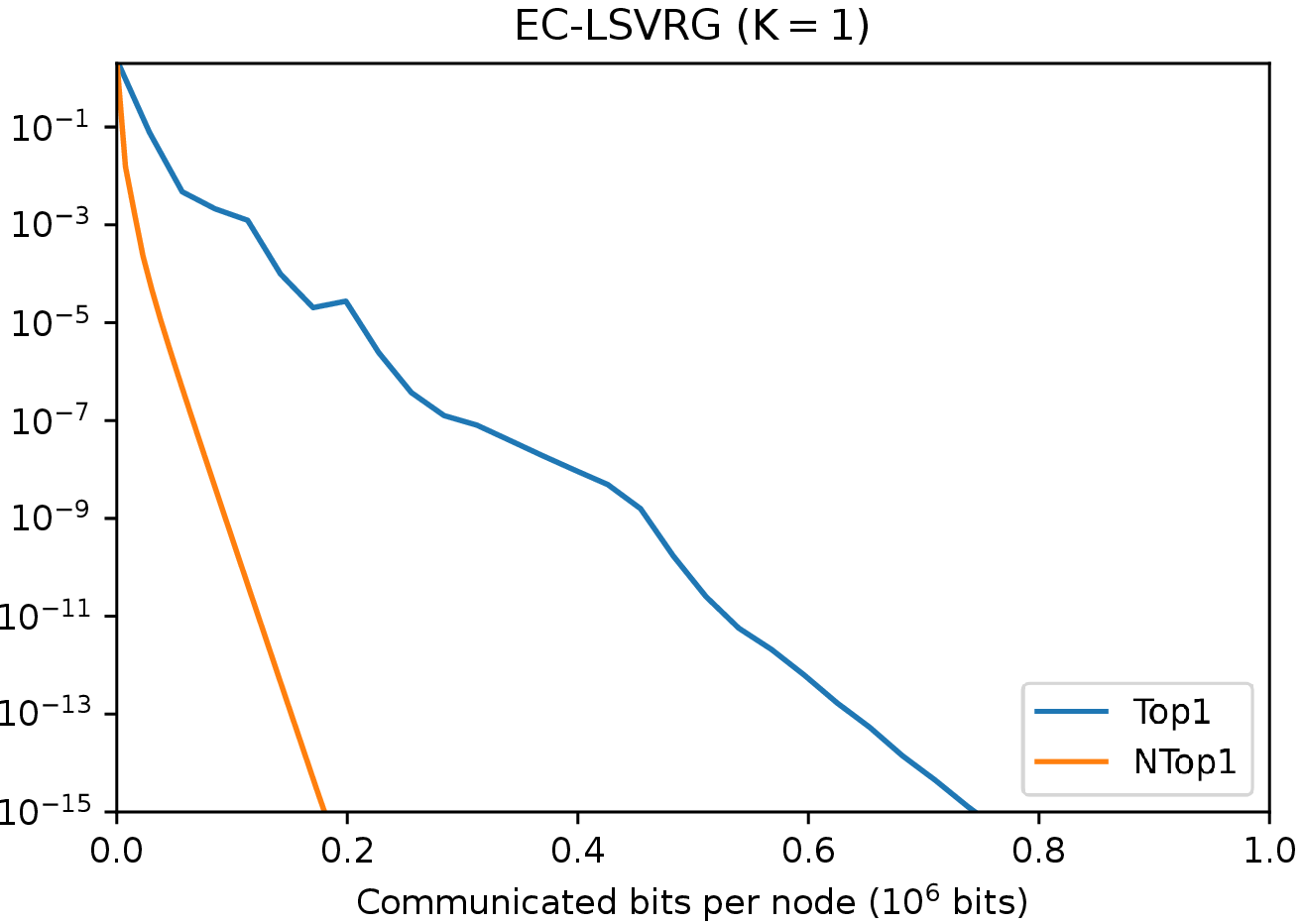}&
		\includegraphics[width=5cm]{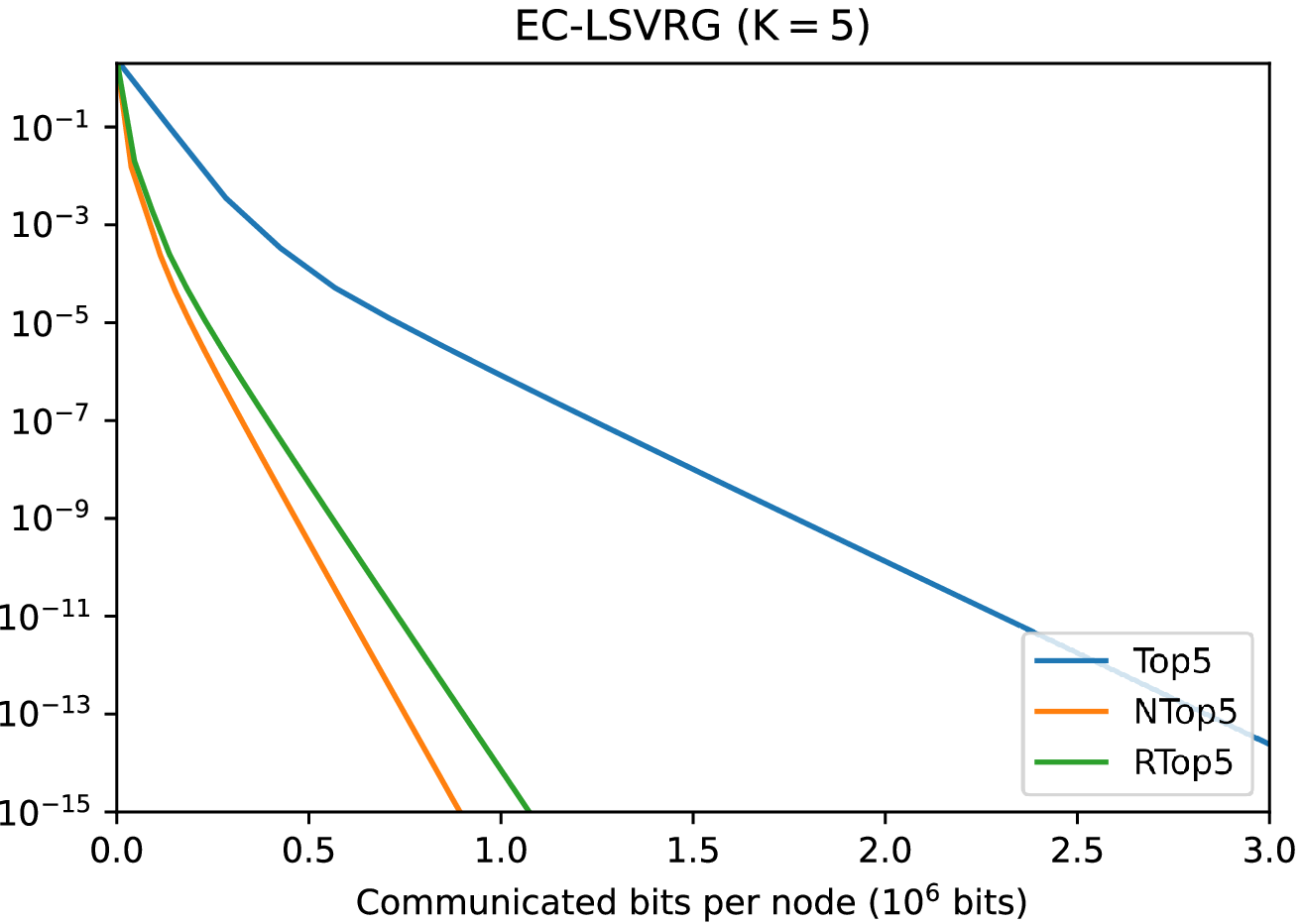}&
		\includegraphics[width=5cm]{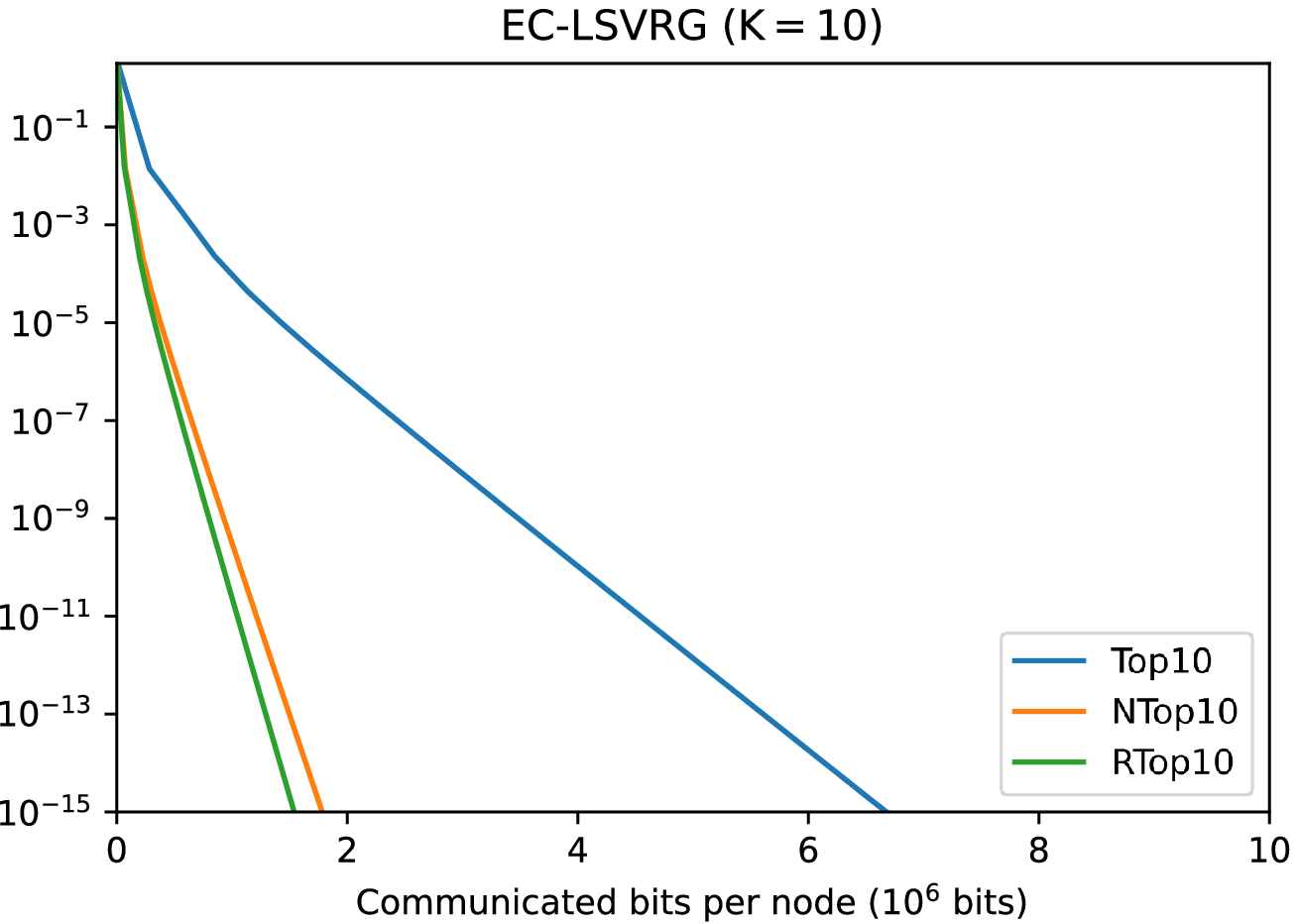}\\
				\includegraphics[width=5cm]{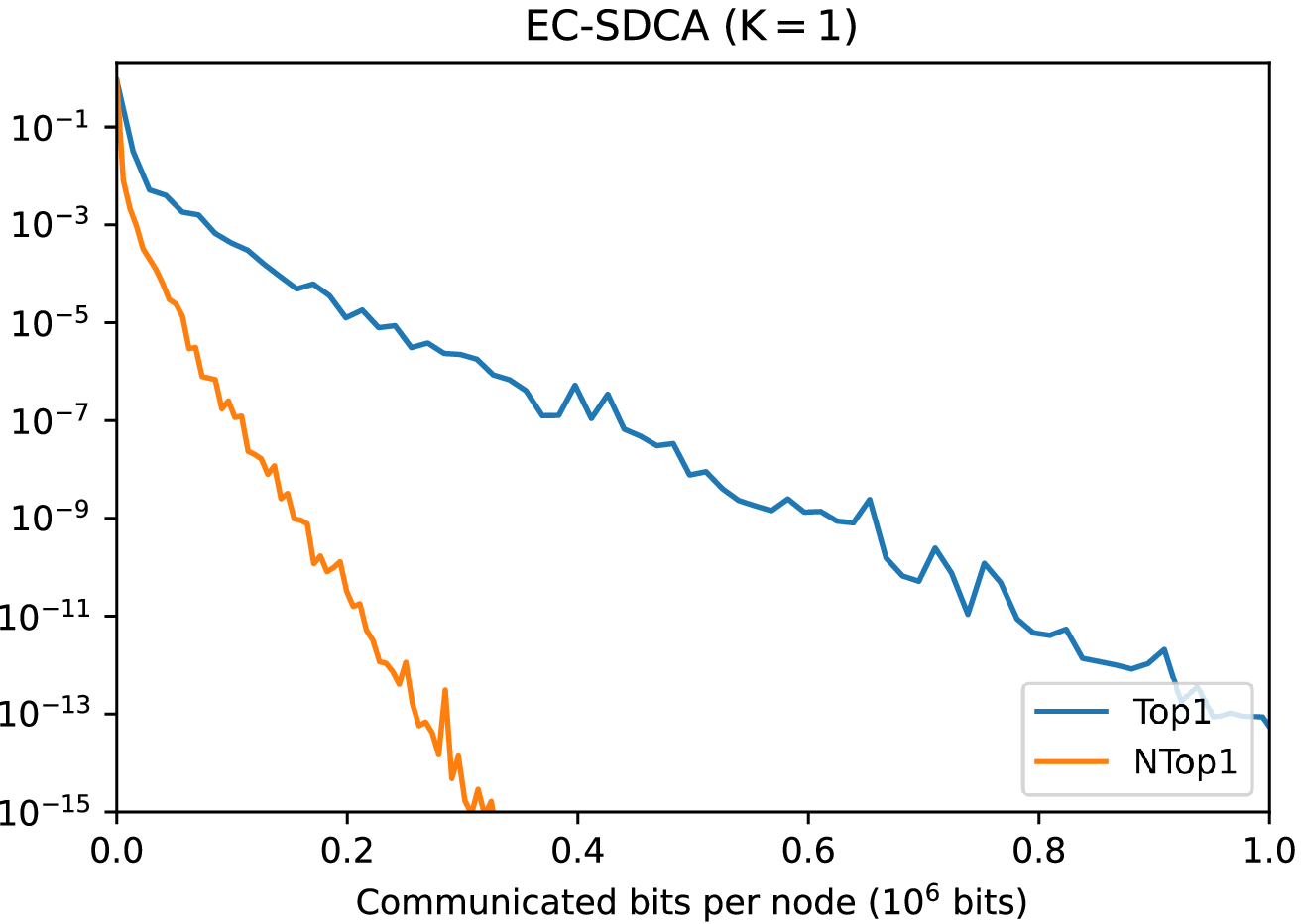}&
		\includegraphics[width=5cm]{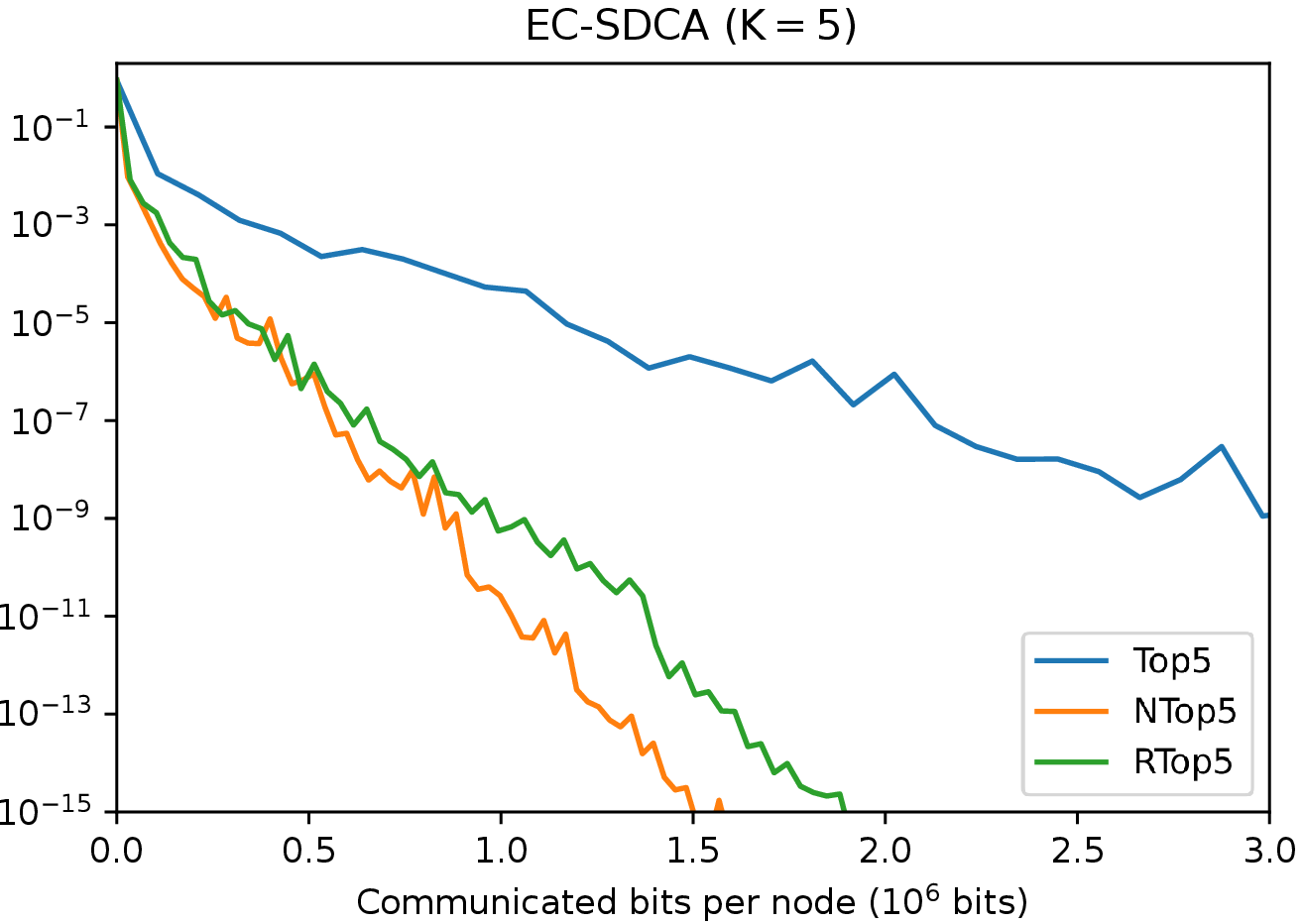}&
		\includegraphics[width=5cm]{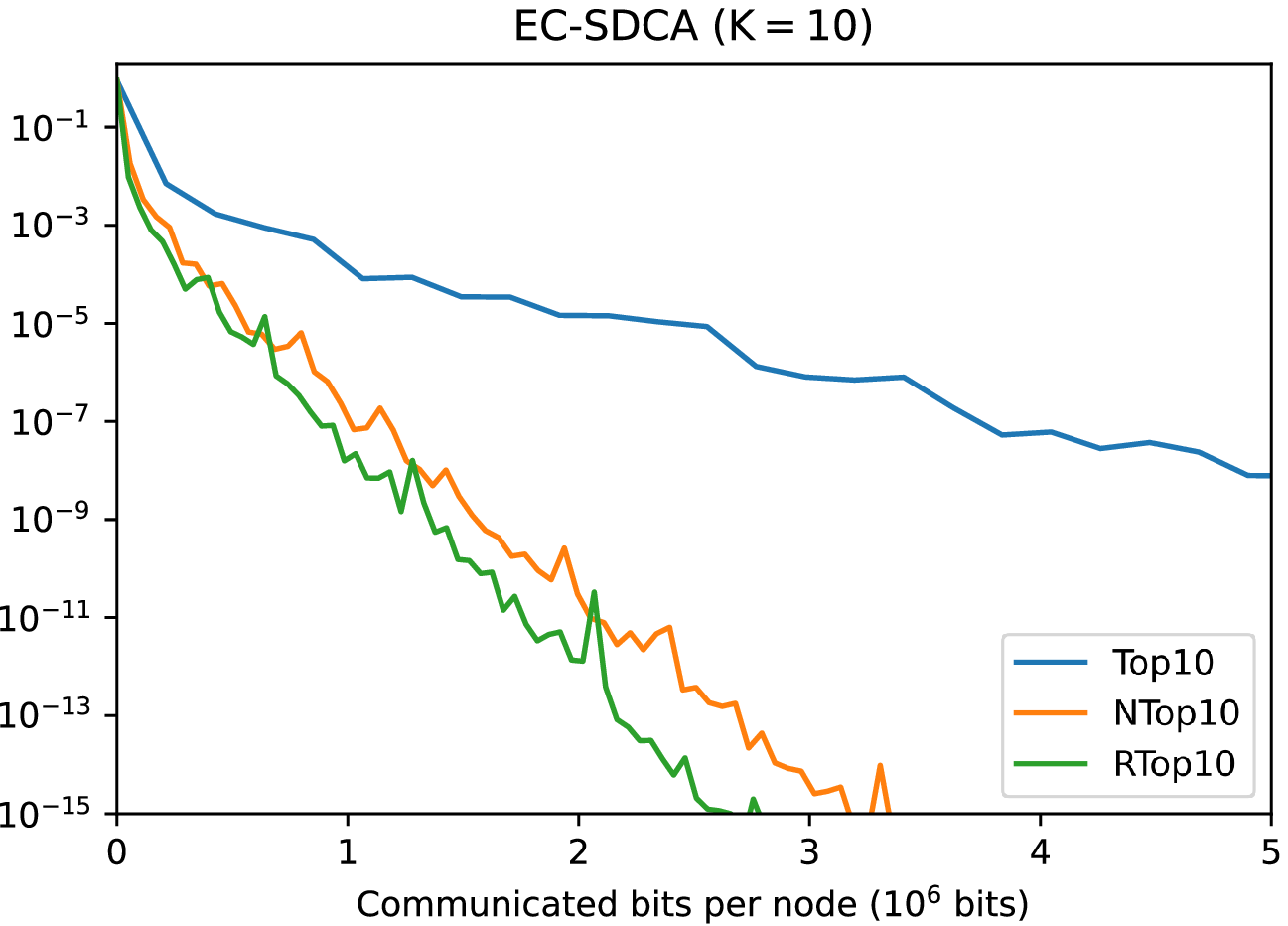}
	\end{tabular}
	\vspace{-0.35cm}
	\caption{Comparison among  TopK, NTopK, and RTopK on \textbf{mushrooms} }\label{fig:topk}
	\vspace{-0.35cm}
\end{figure}

\begin{figure}[H]
	\vspace{-0.35cm}
	\centering
	\begin{tabular}{ccc}
		\includegraphics[width=5cm]{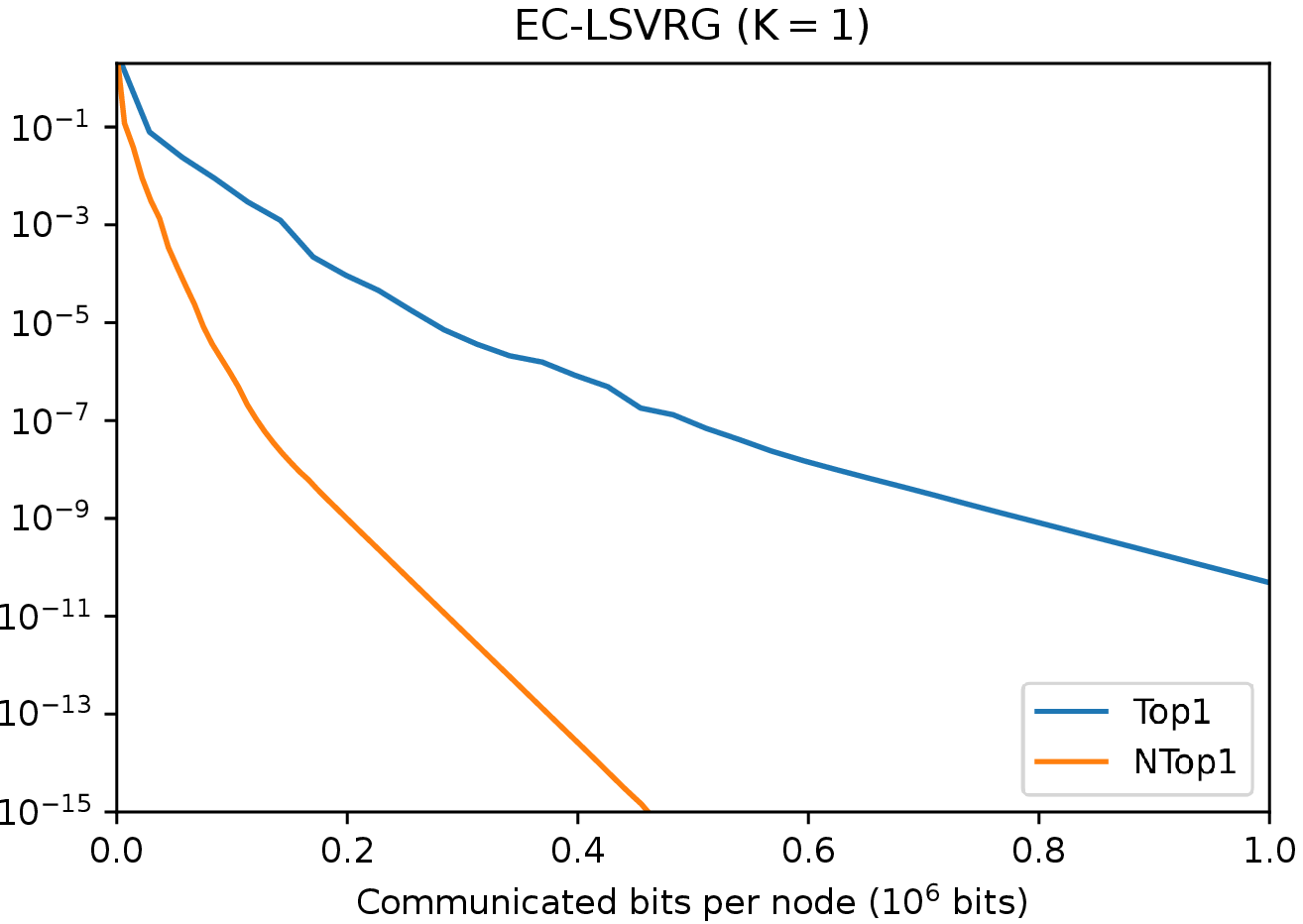}&
		\includegraphics[width=5cm]{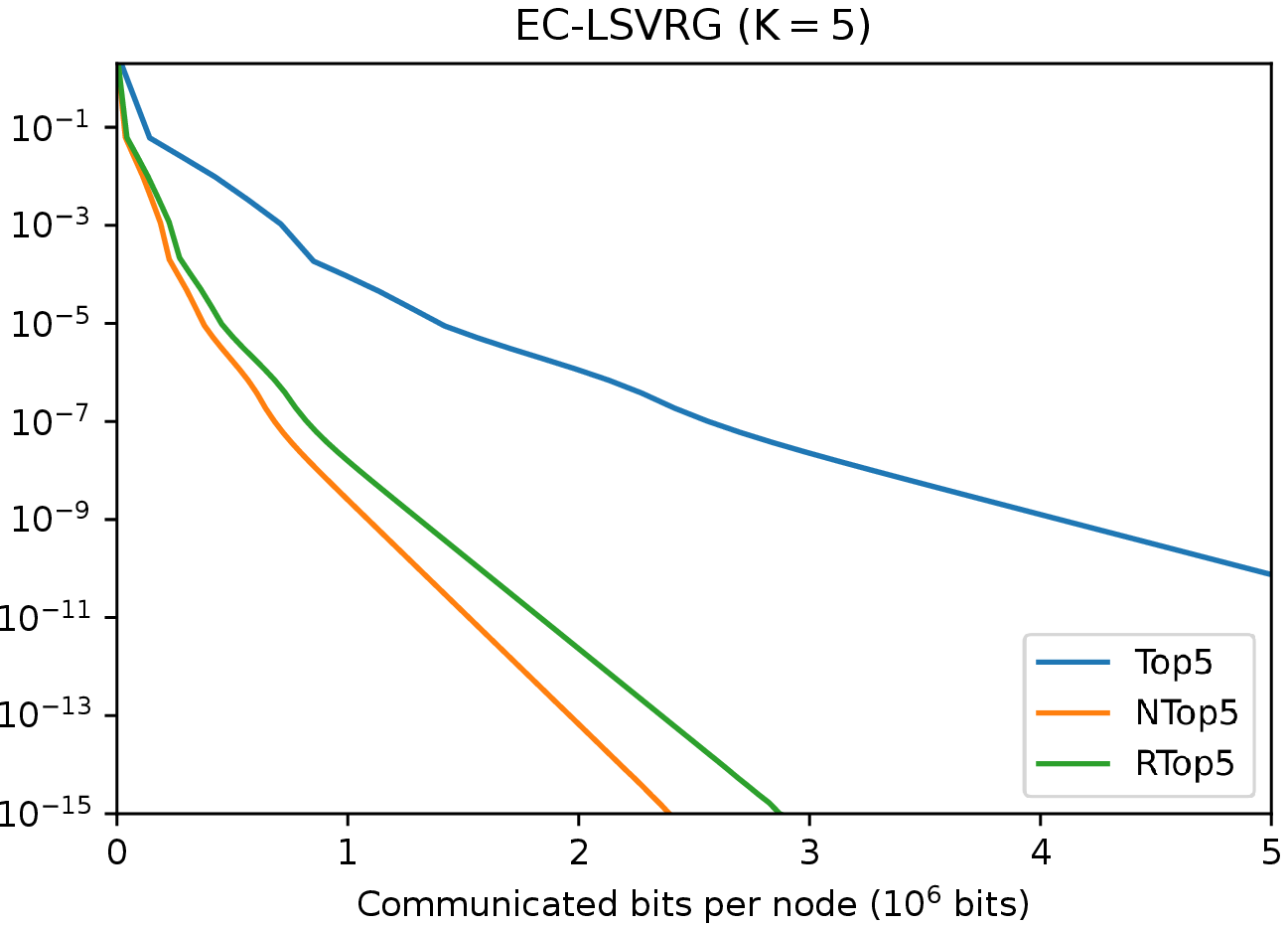}&
		\includegraphics[width=5cm]{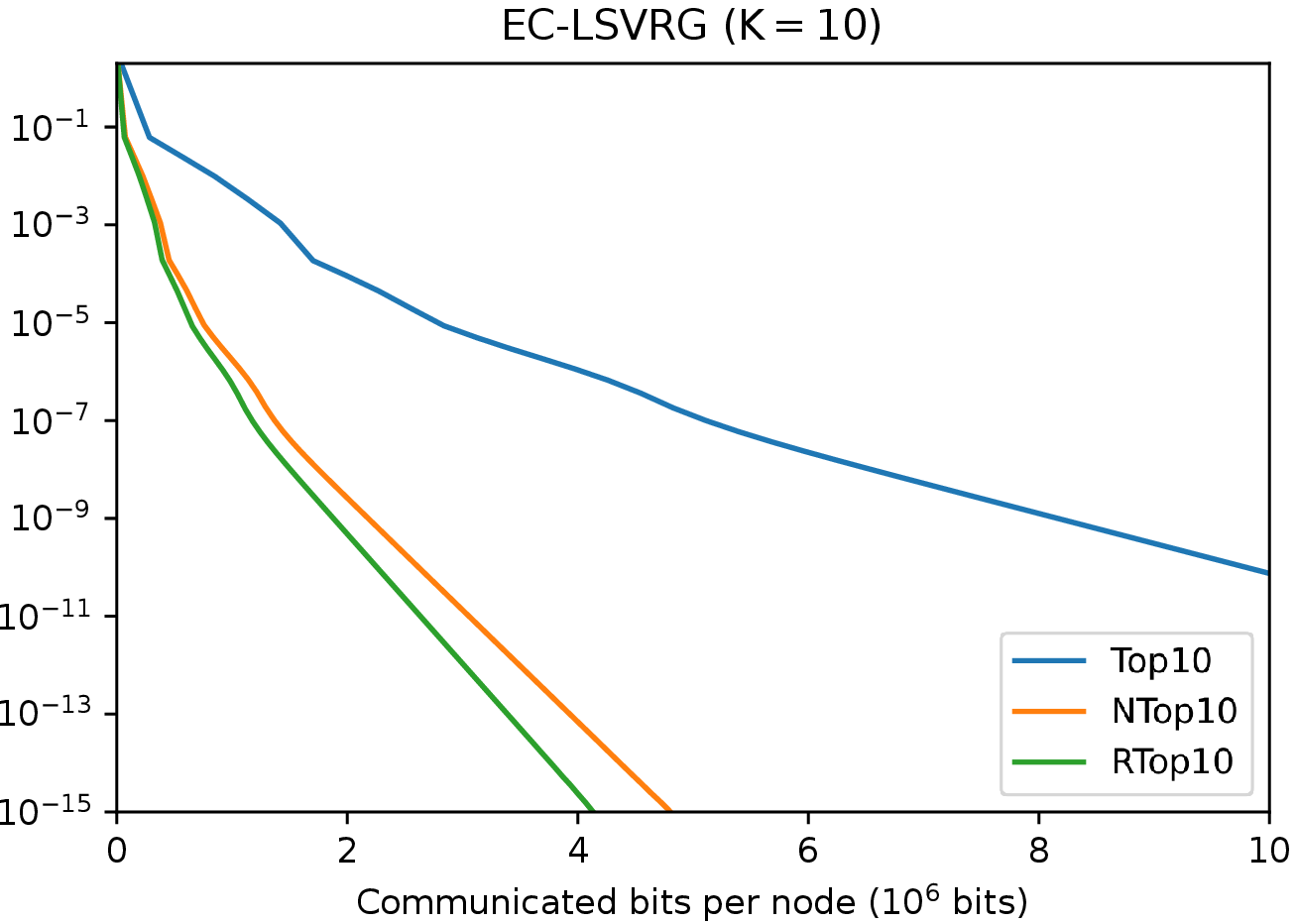}\\
		\includegraphics[width=5cm]{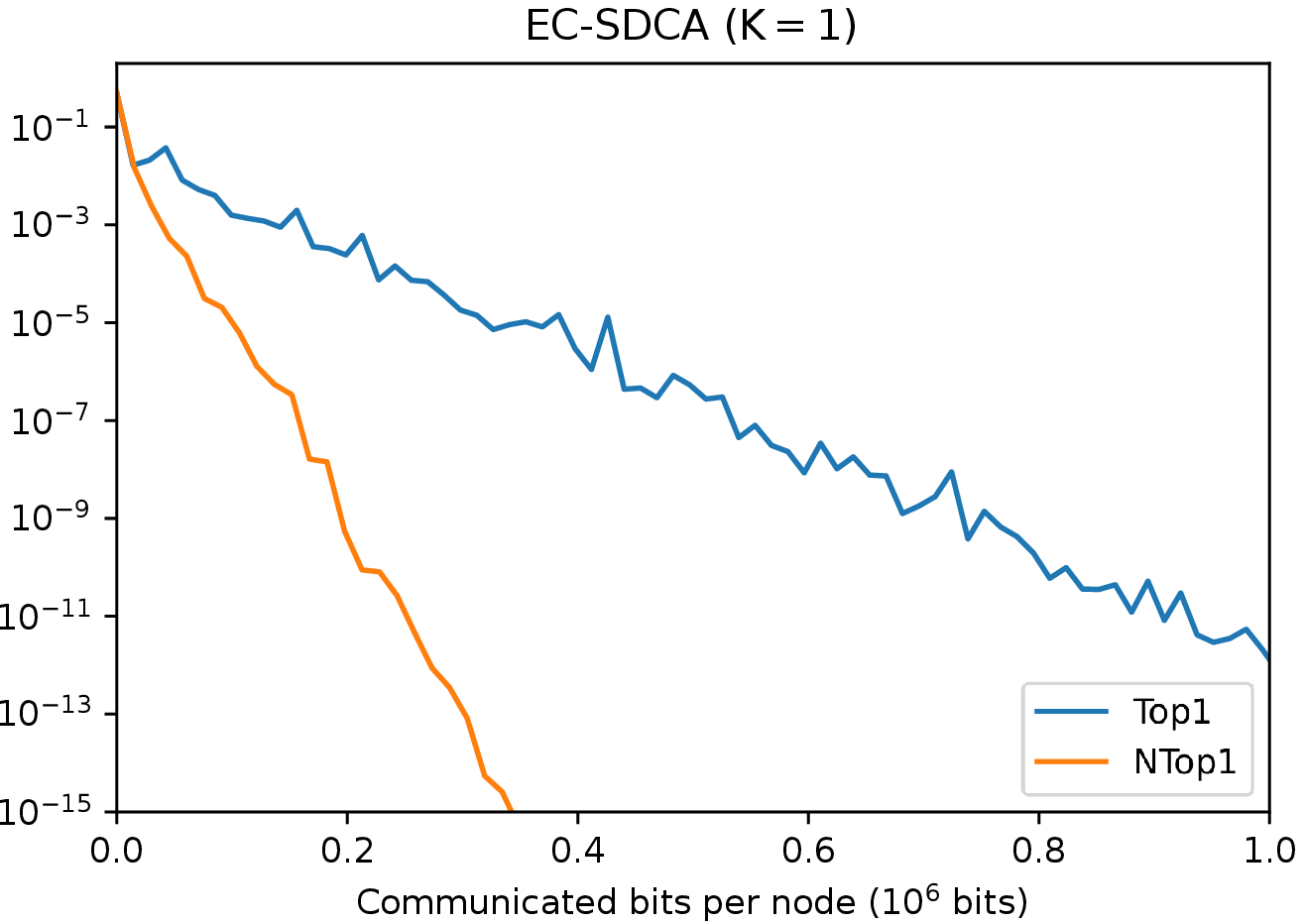}&
		\includegraphics[width=5cm]{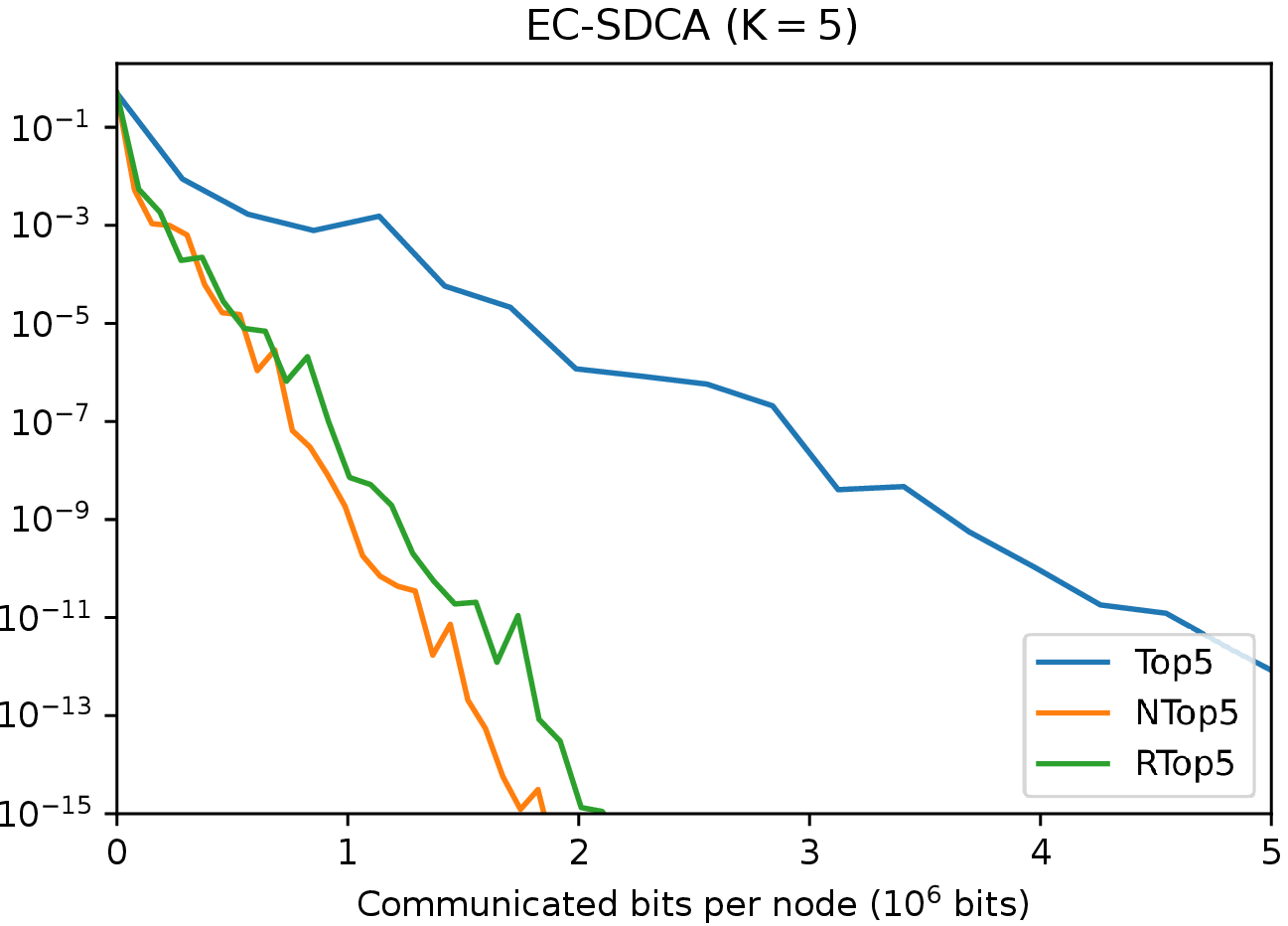}&
		\includegraphics[width=5cm]{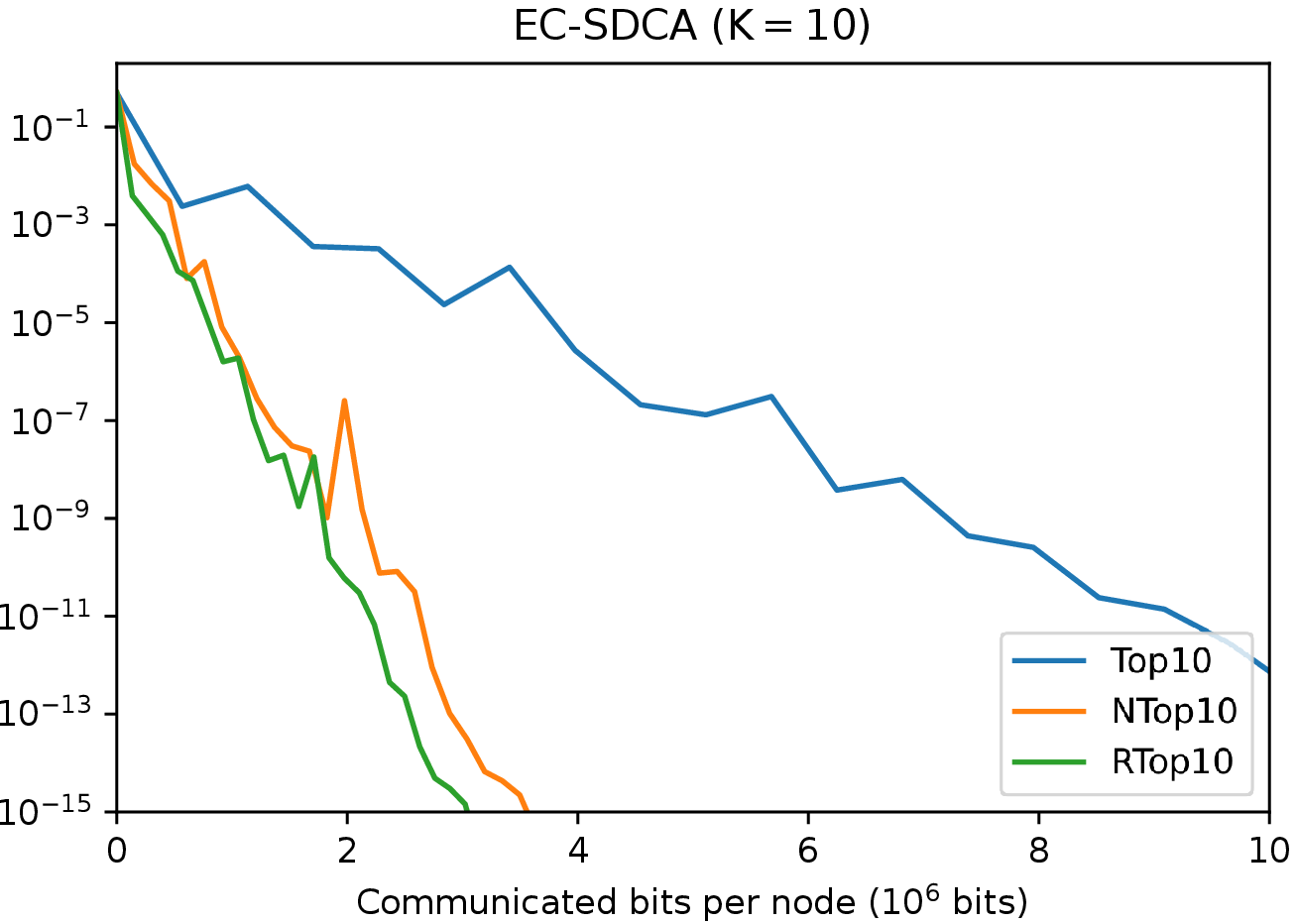}
	\end{tabular}
	\vspace{-0.35cm}
	\caption{Comparison among  TopK, NTopK, and RTopK on \textbf{a5a} }\label{fig:topk_a5a}
	\vspace{-0.35cm}
\end{figure}

\begin{figure}[H]
	\vspace{-0.25cm}
	\centering
	\begin{tabular}{ccc}
		\includegraphics[width=5cm]{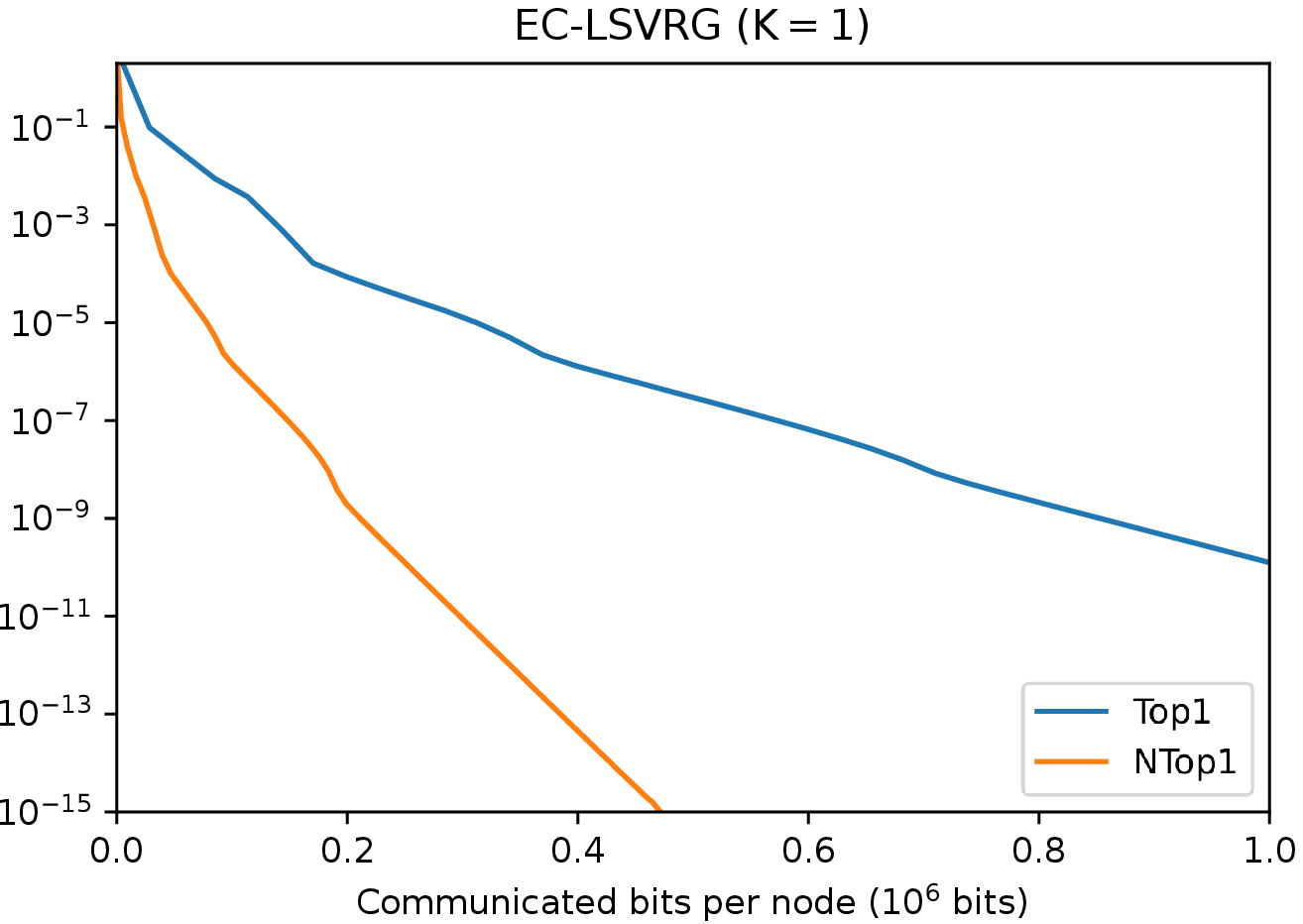}&
		\includegraphics[width=5cm]{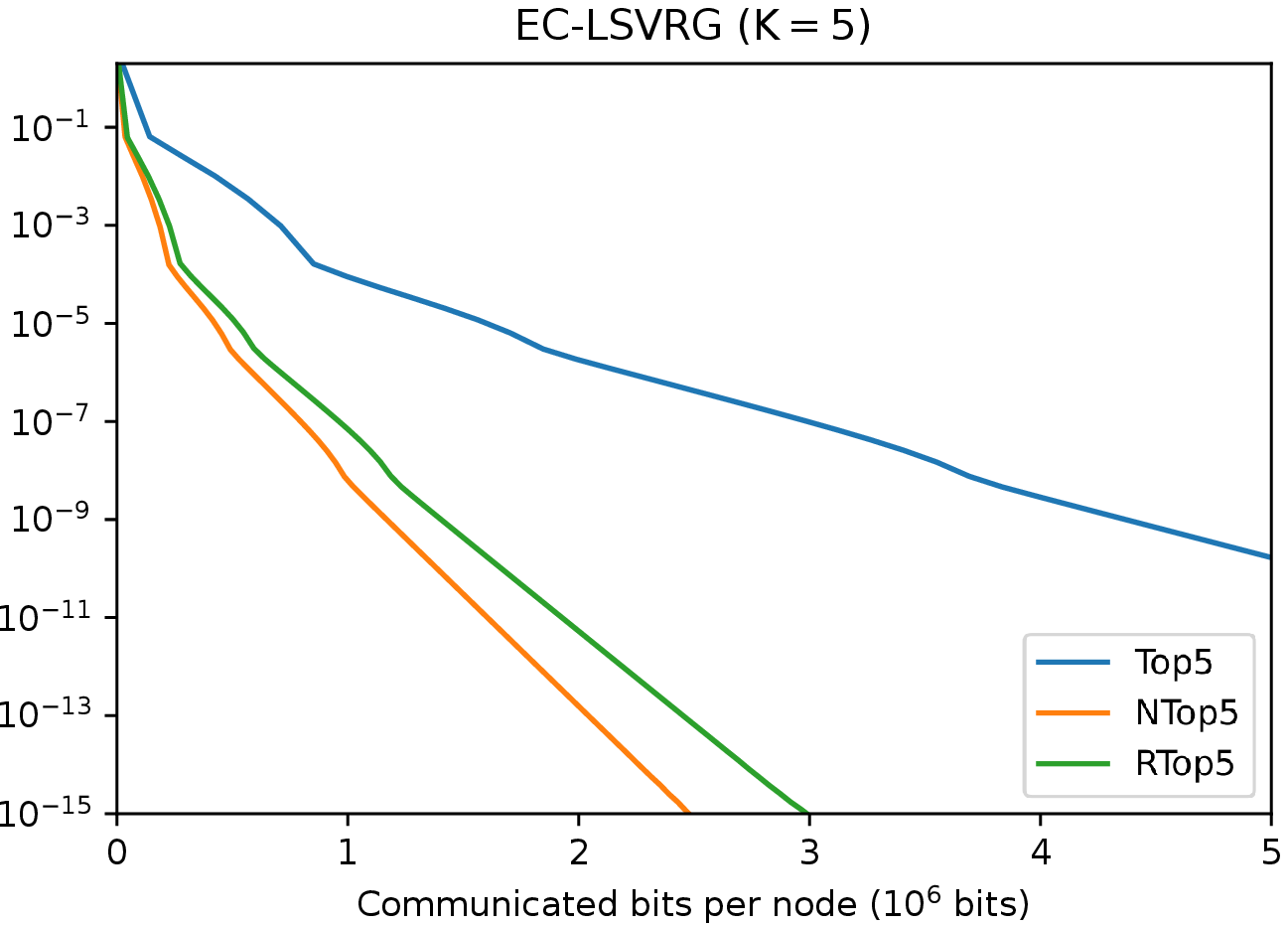}&
		\includegraphics[width=5cm]{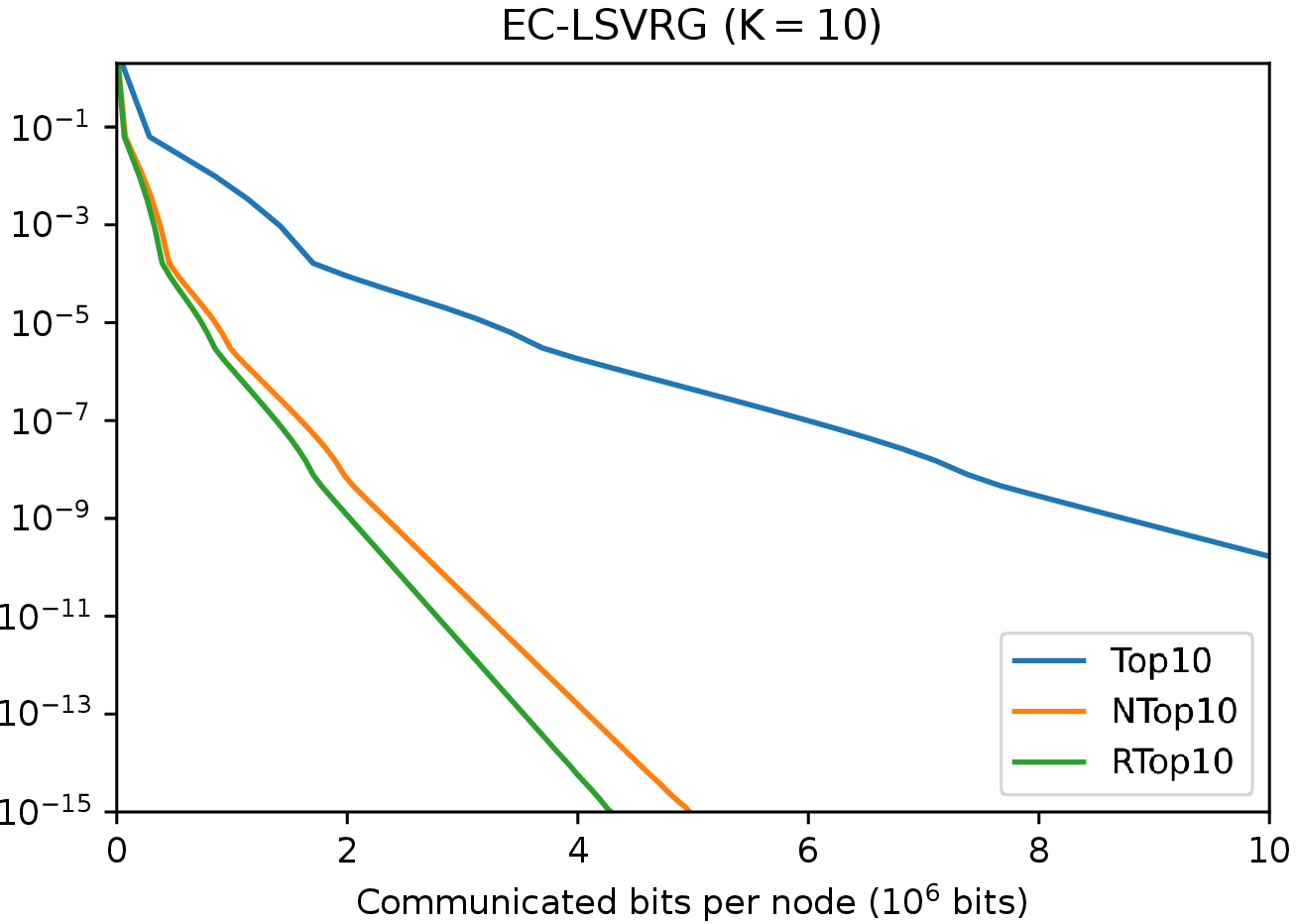}\\
		\includegraphics[width=5cm]{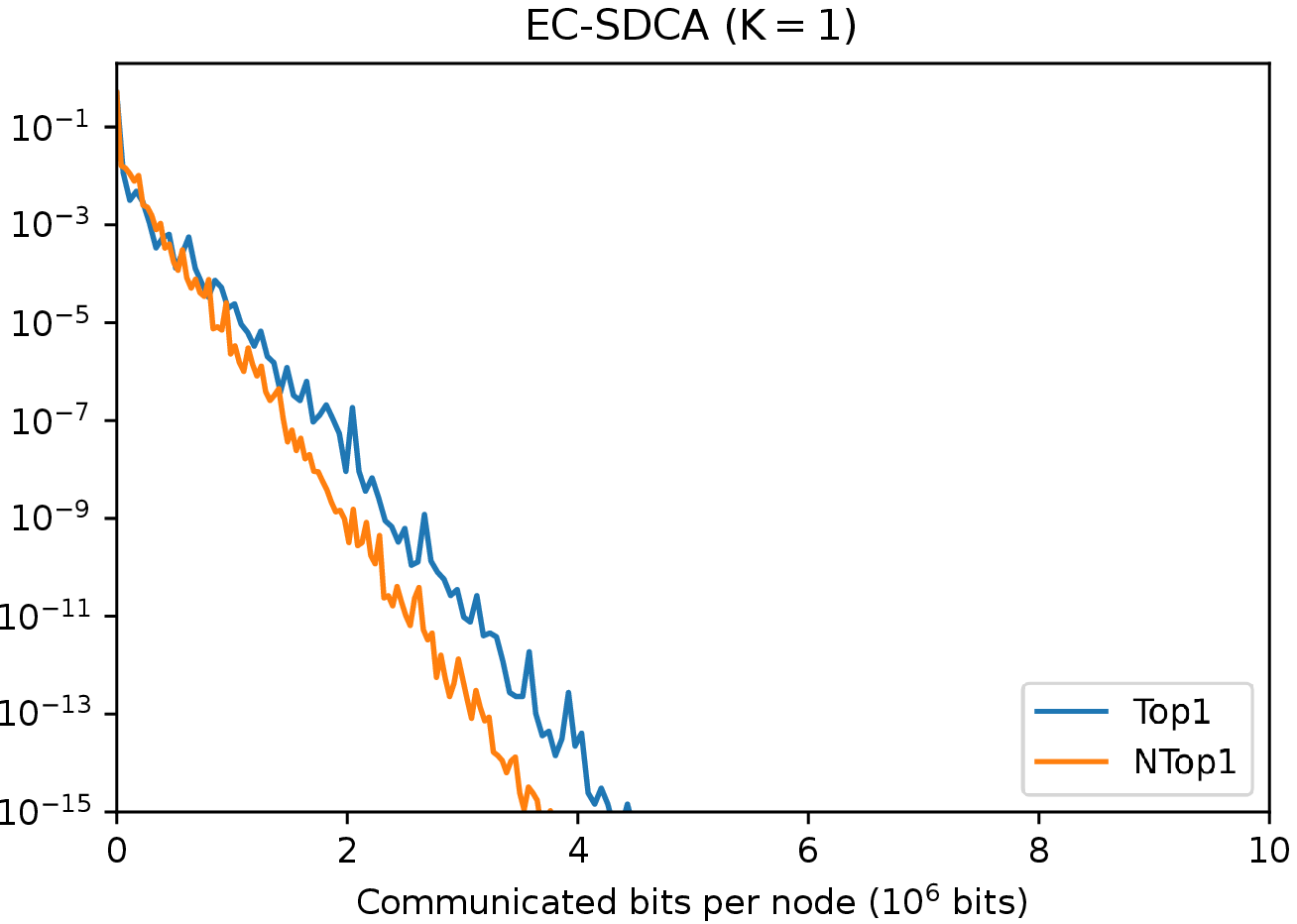}&
		\includegraphics[width=5cm]{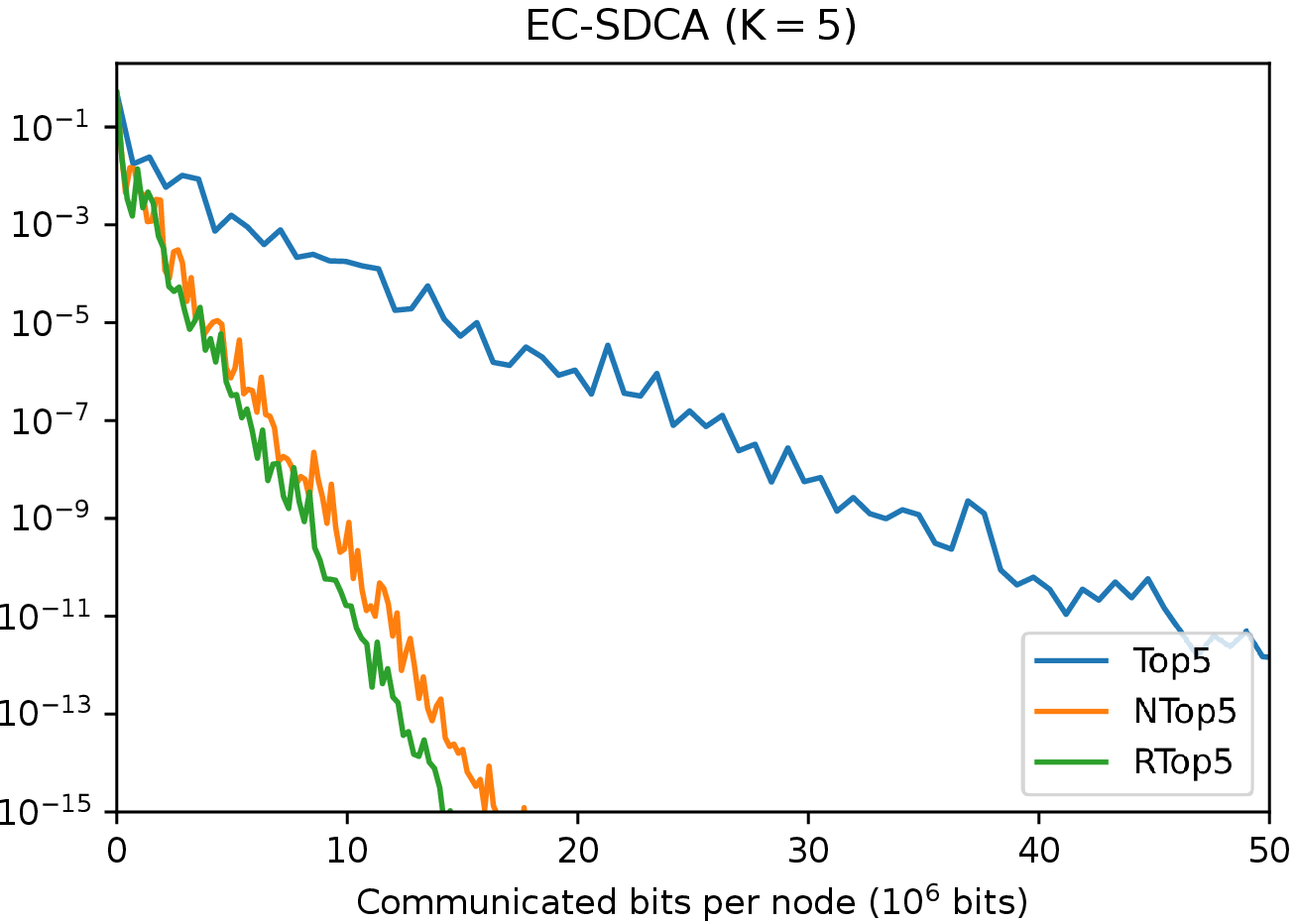}&
		\includegraphics[width=5cm]{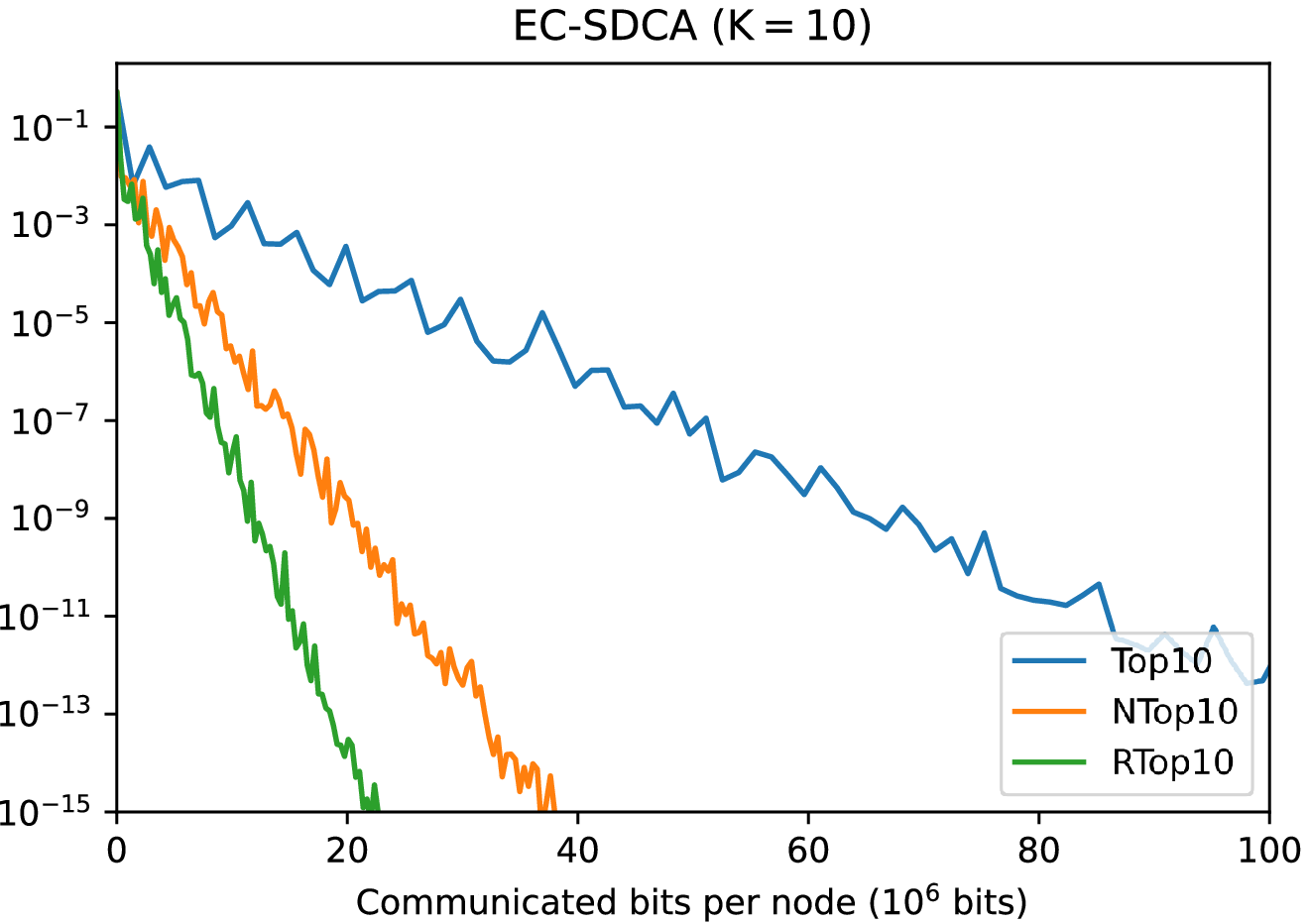}
	\end{tabular}
	\vspace{-0.25cm}
	\caption{Comparison among  TopK, NTopK, and RTopK on \textbf{a9a} }\label{fig:topk_a9a}
	\vspace{-0.25cm}
\end{figure}

\begin{figure}[H]
	\vspace{-0.25cm}
	\centering
	\begin{tabular}{ccc}
		\includegraphics[width=5cm]{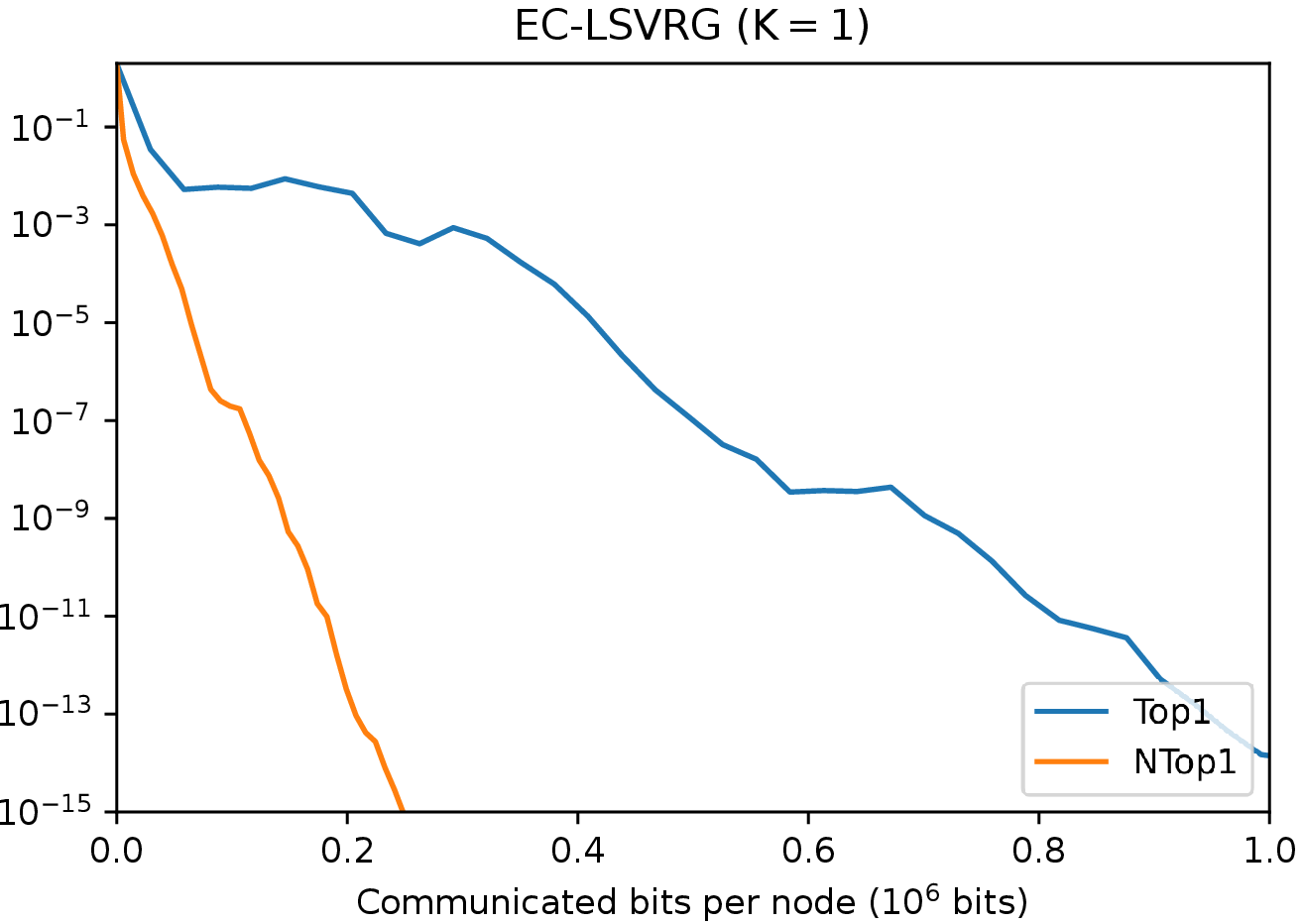}&
		\includegraphics[width=5cm]{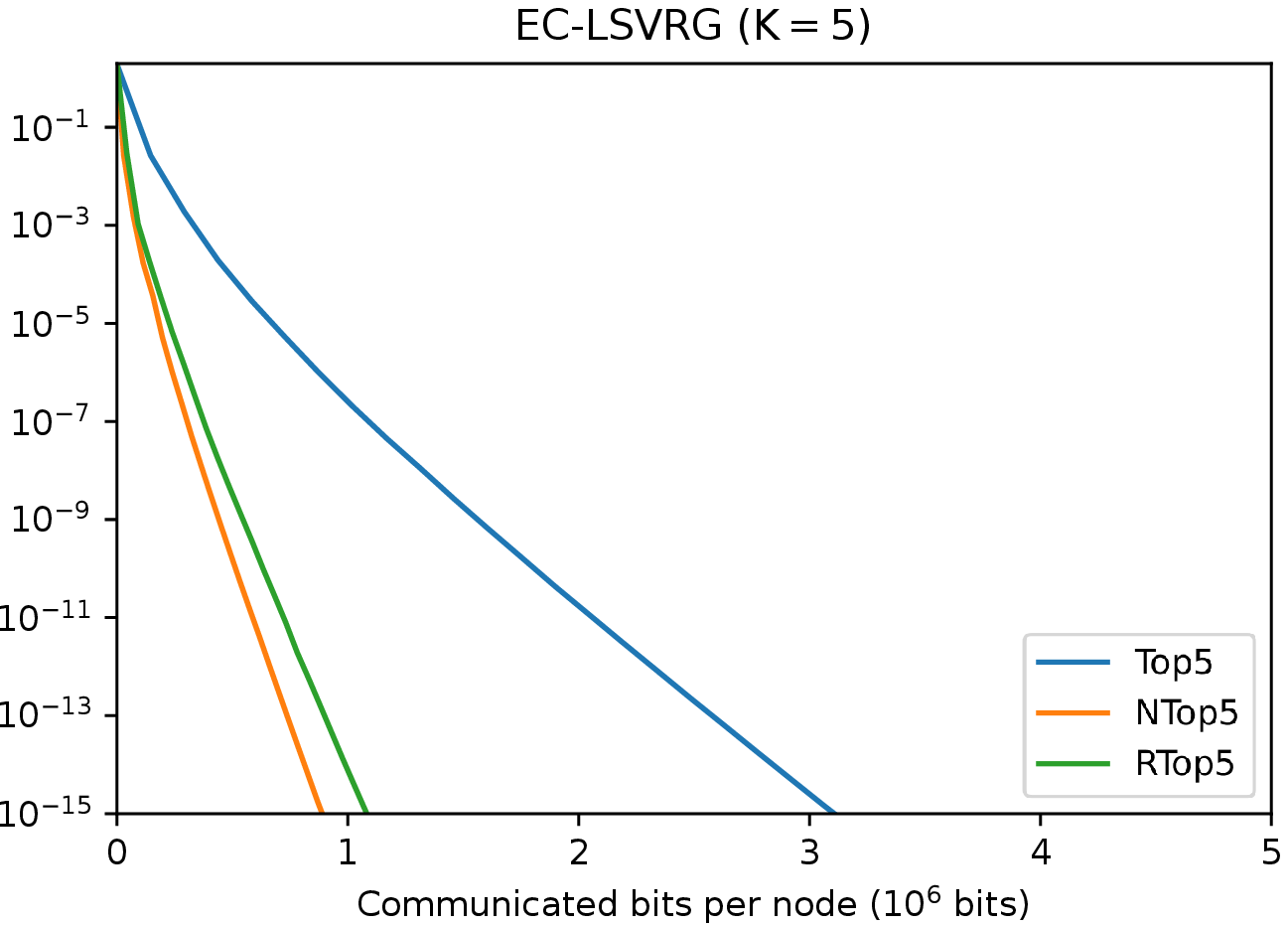}&
		\includegraphics[width=5cm]{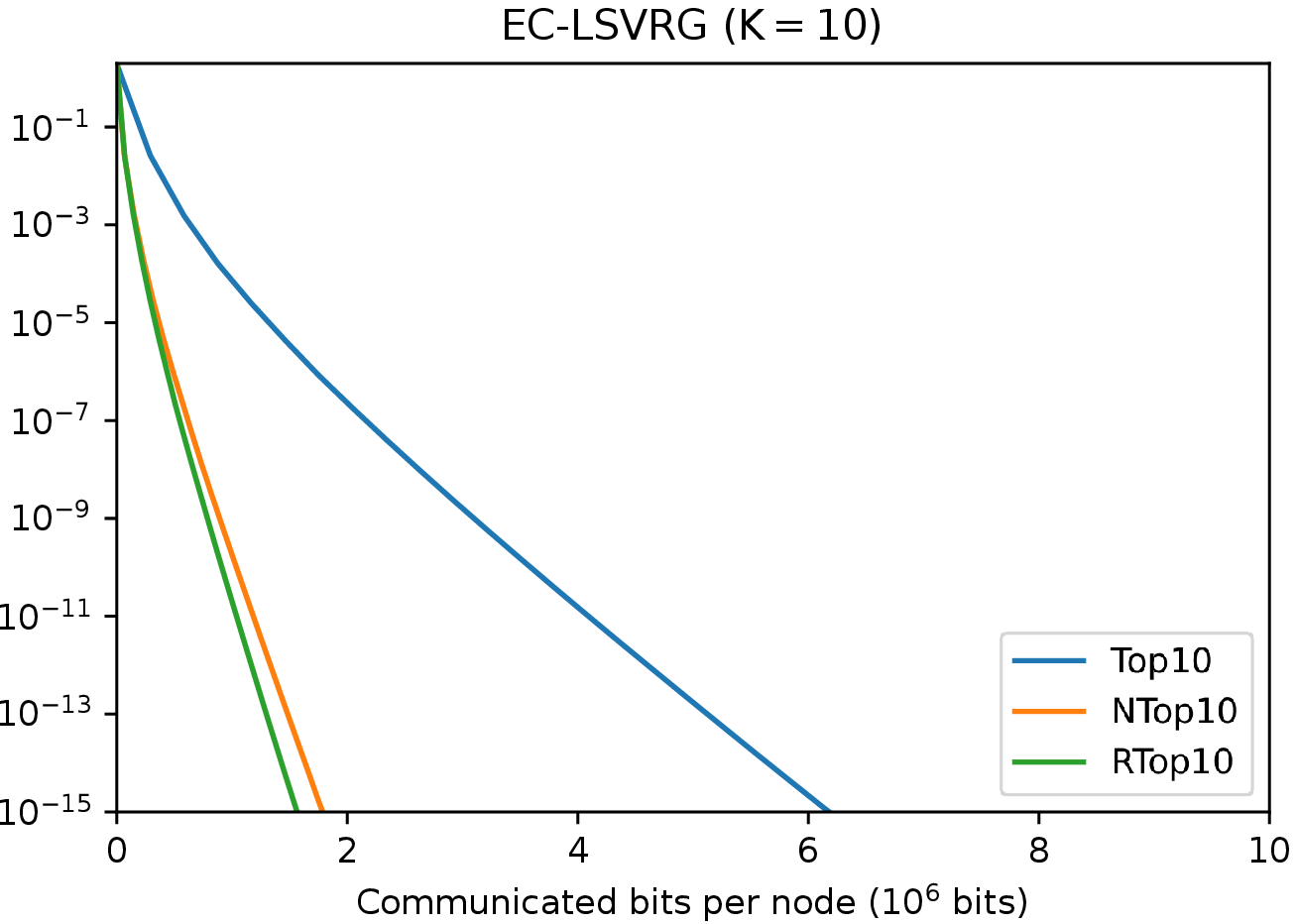}\\
		\includegraphics[width=5cm]{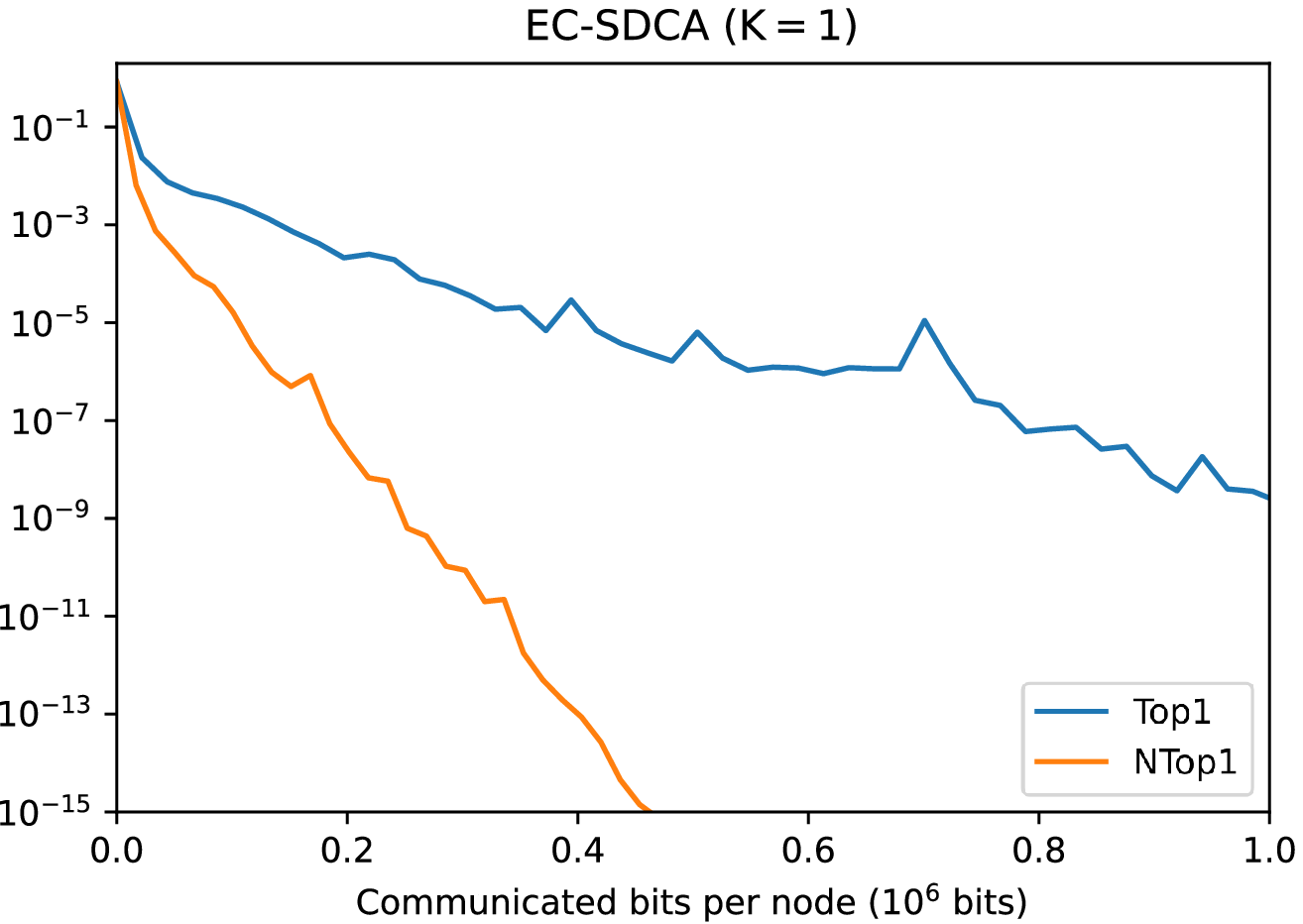}&
		\includegraphics[width=5cm]{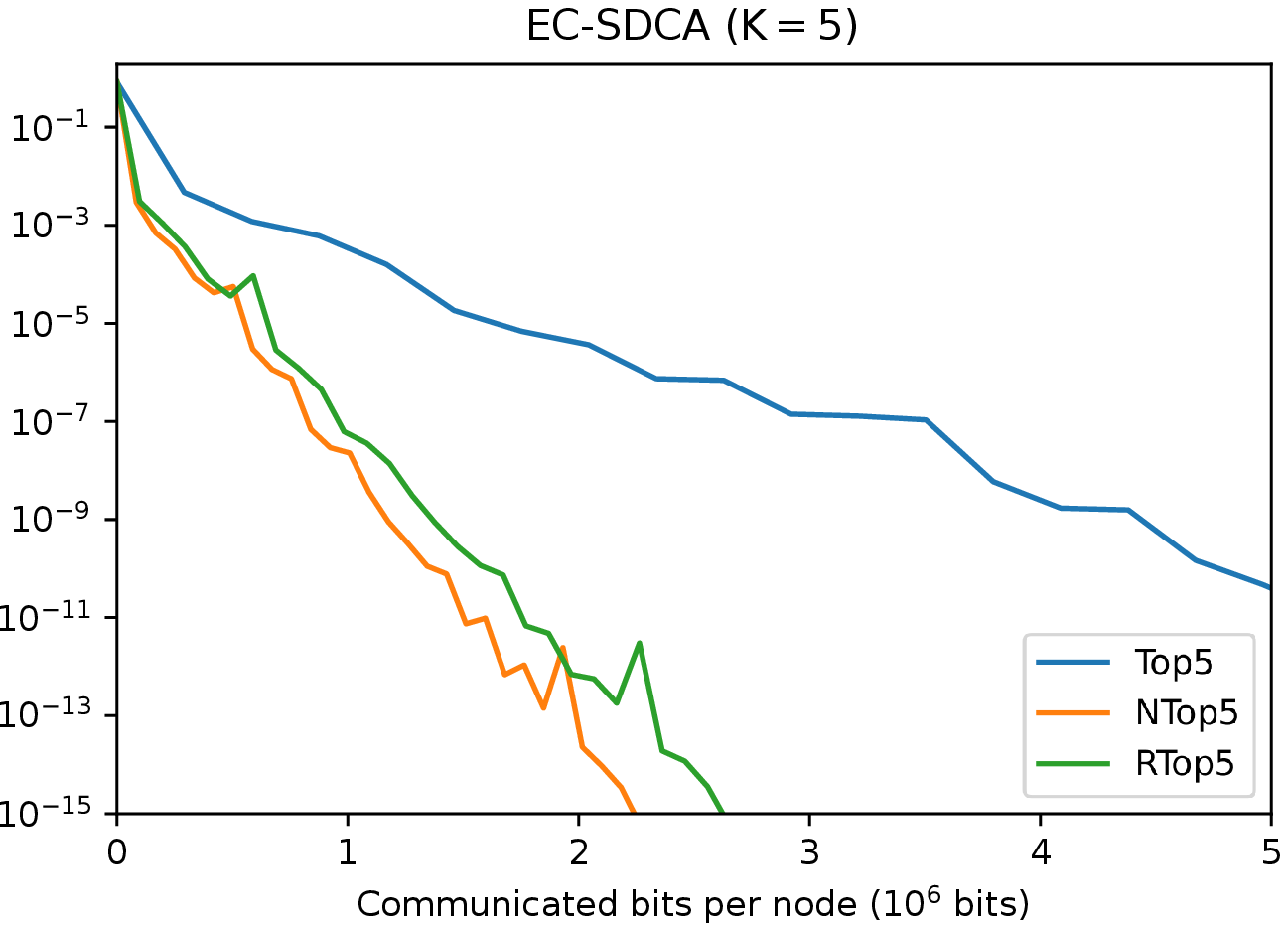}&
		\includegraphics[width=5cm]{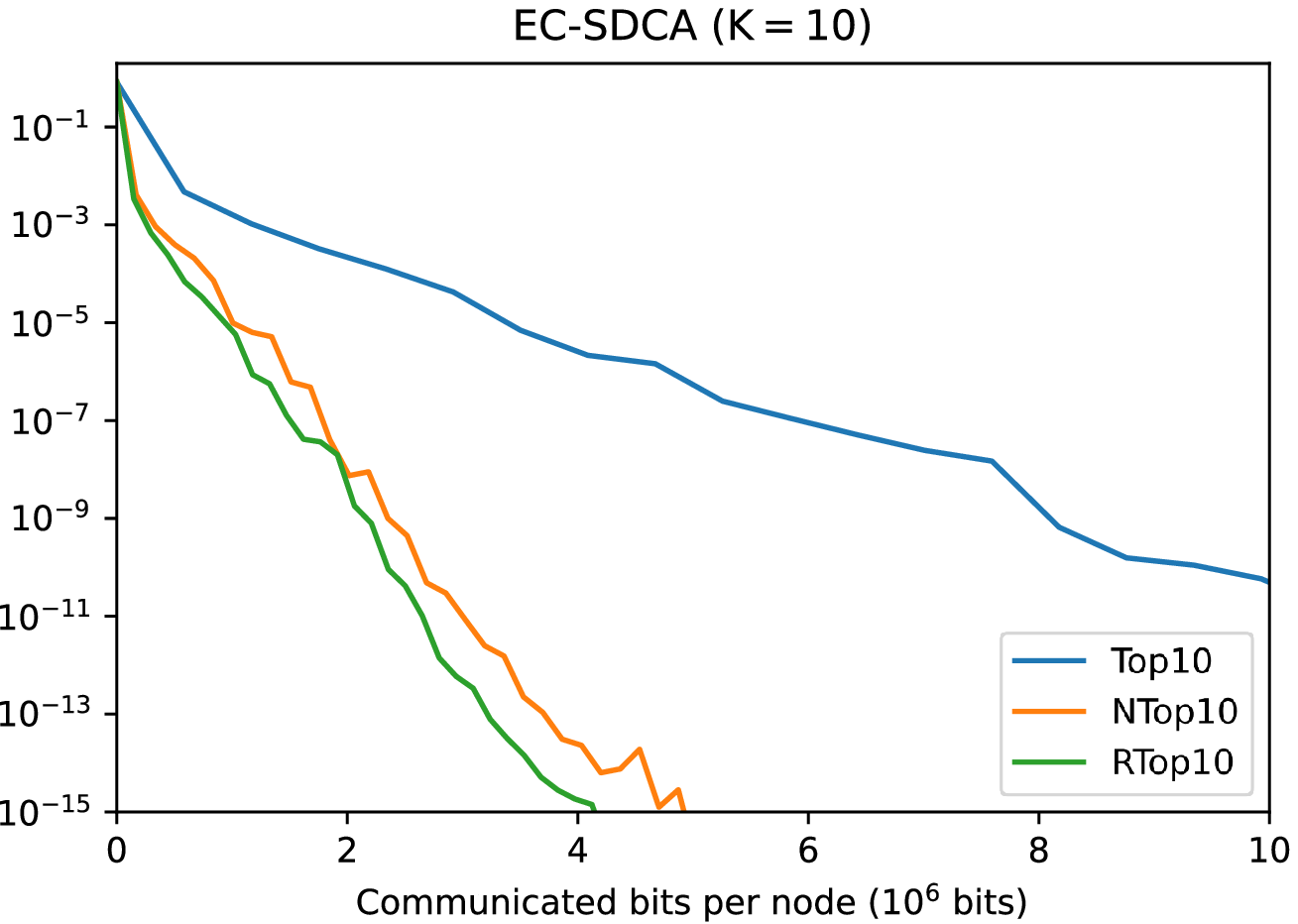}
	\end{tabular}
	\vspace{-0.25cm}
	\caption{Comparison among  TopK, NTopK, and RTopK on \textbf{w6a} }\label{fig:topk_w6a}
	\vspace{-0.25cm}
\end{figure}

\subsection{Impact of the update frequency parameter $p$}

Our default setting in EC-LSVRG is $p=\delta$. In this part, we investigate the impact of the update frequency $p$. From the theoretical results, it is easy to verify that an optimal choice of $p$ is $\Theta(\delta)$, and too large or small $p$ may lead to a slower convergence. By Figure \ref{fig:impact}, when $p=\delta/3$, the convergence is usually much slower (\textbf{mushrooms}, \textbf{w6a}). When $p=1$, the performance is no better than $p = \delta$, generally. In particular, large $p$ makes convergence slower on  \textbf{a5a} and \textbf{a9a}.

\begin{figure}[H]
	\vspace{-0.35cm}
	\centering
	\begin{tabular}{cccc}
		\includegraphics[width=3.8cm]{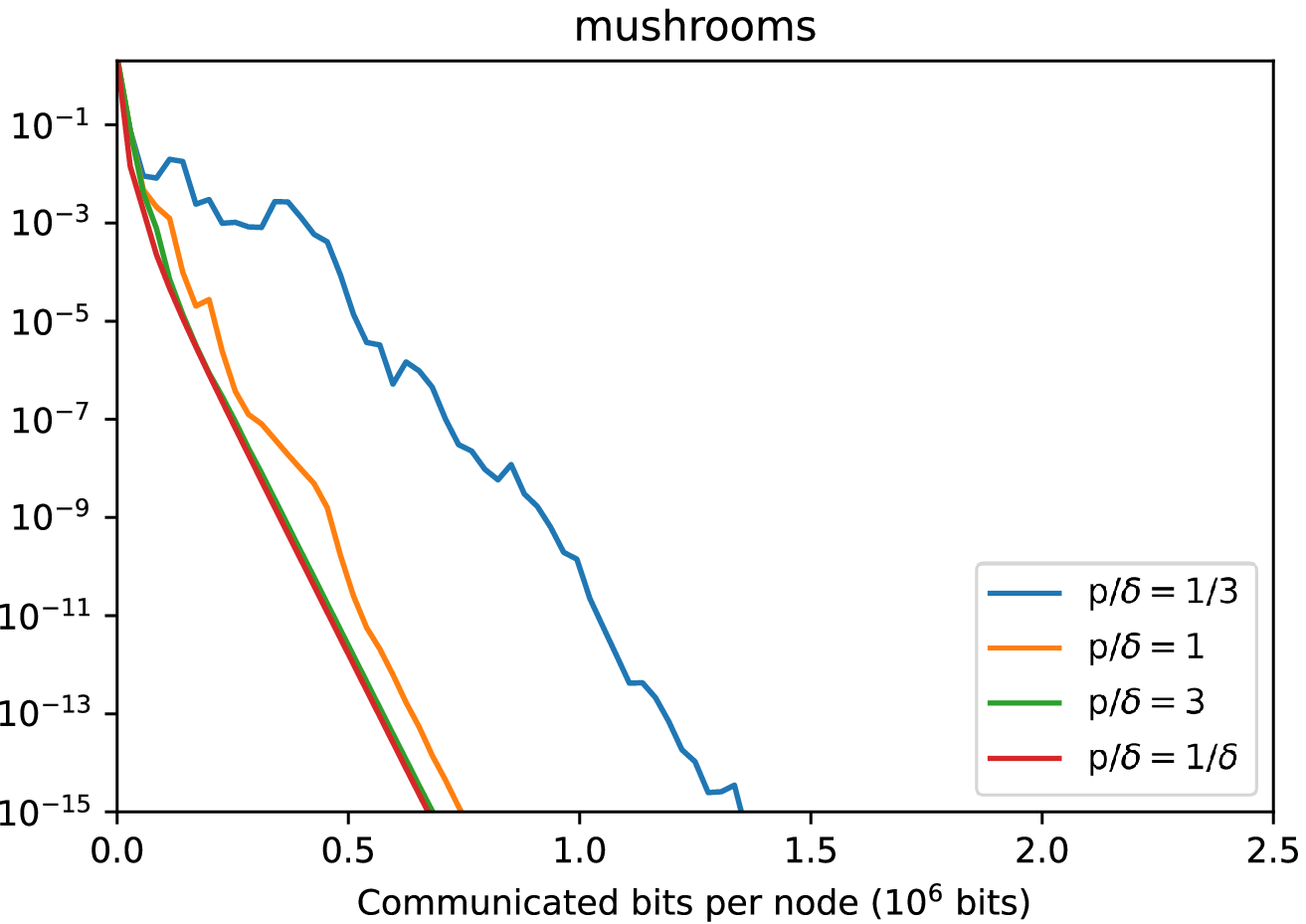}&
		\includegraphics[width=3.8cm]{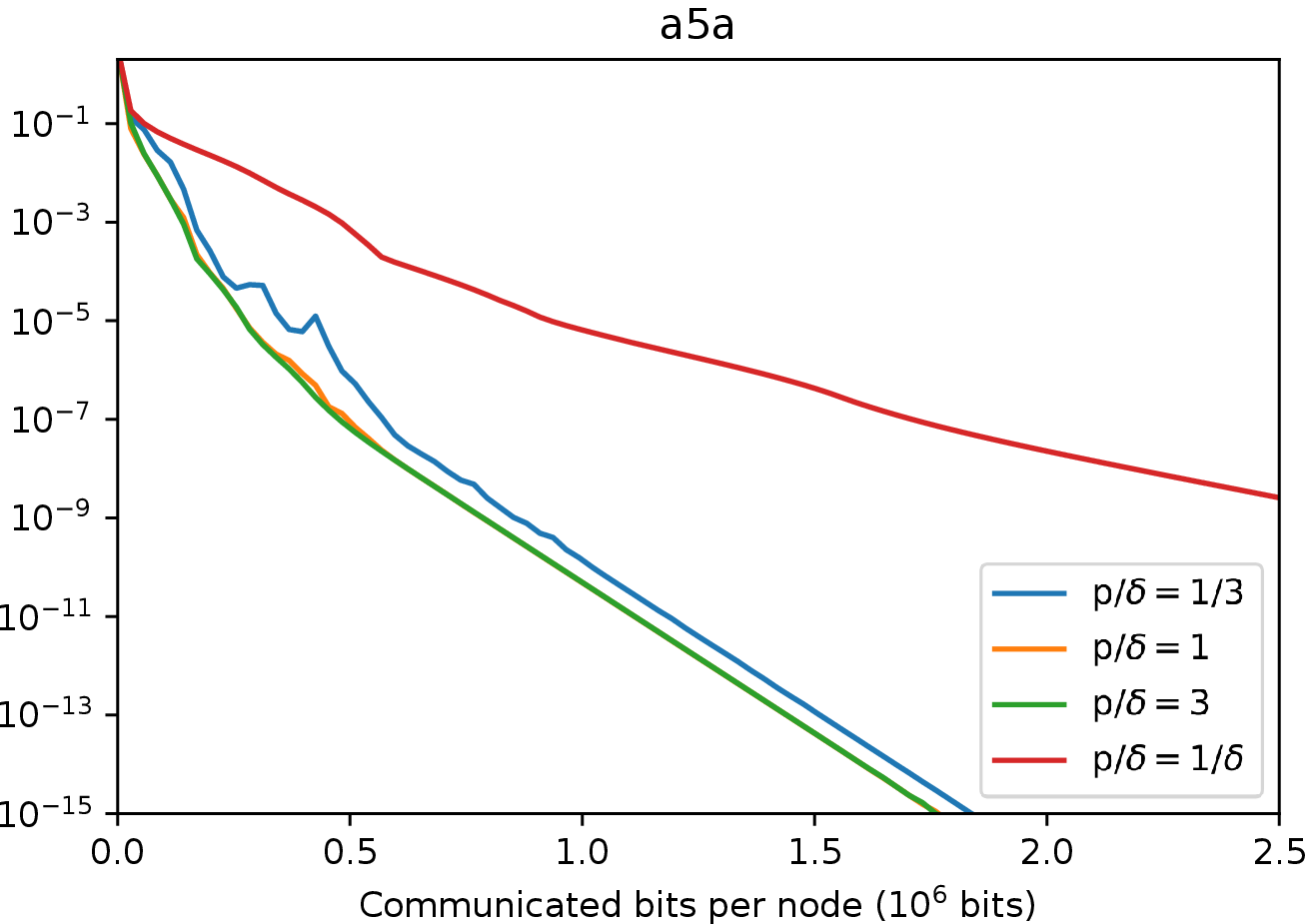}&
				\includegraphics[width=3.8cm]{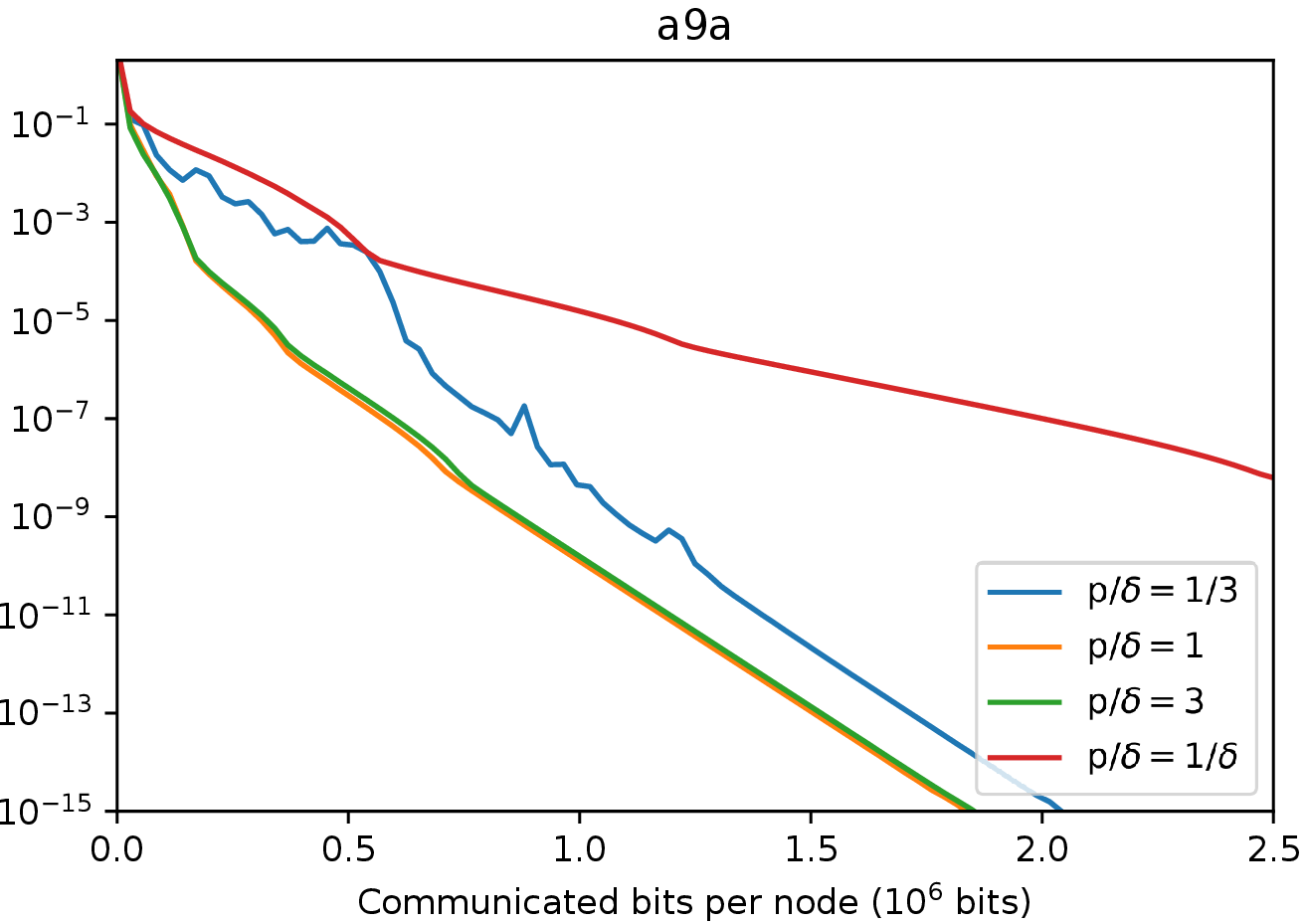}&
		\includegraphics[width=3.8cm]{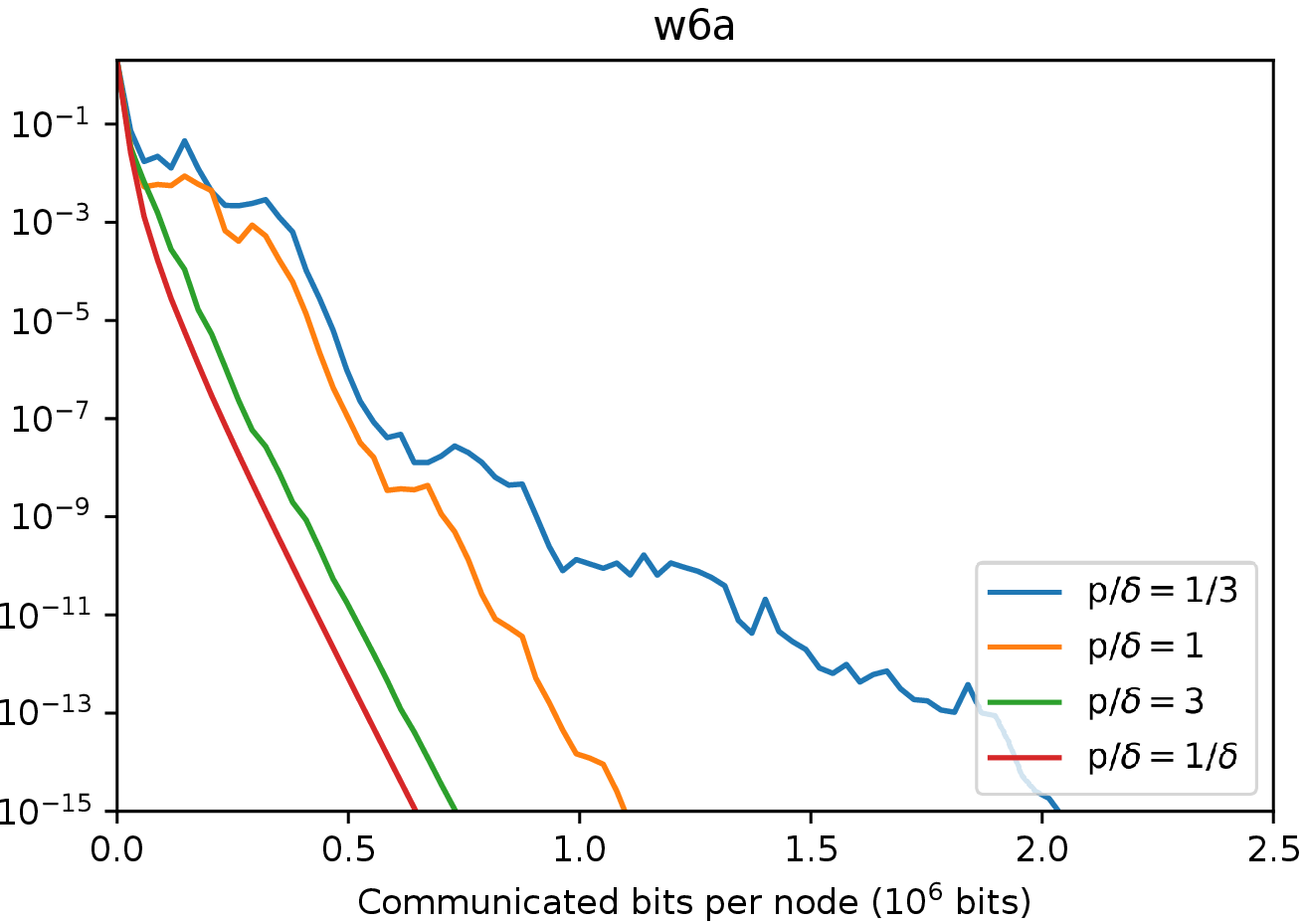}
	\end{tabular}
	\vspace{-0.35cm}
	\caption{Impact of the update frequency $p$}\label{fig:impact}
	\vspace{-0.35cm}
\end{figure}

\subsection{EC-SDCA vs EC-Quartz}

Although in theory, EC-SDCA and EC-Quartz have the same iteration complexities, they perform differently in actual data. Figures \ref{fig:quartz} $-$ \ref{fig:quartz_w6a} show that EC-SDCA is usually comparable to EC-Quartz or better than EC-Quartz, and sometimes much better, especially for Top1 compressor. Thus, we prefer EC-SDCA for more general scenarios.

\begin{figure}[H]

	\centering
	\begin{tabular}{cccc}
		\includegraphics[width=3.8cm]{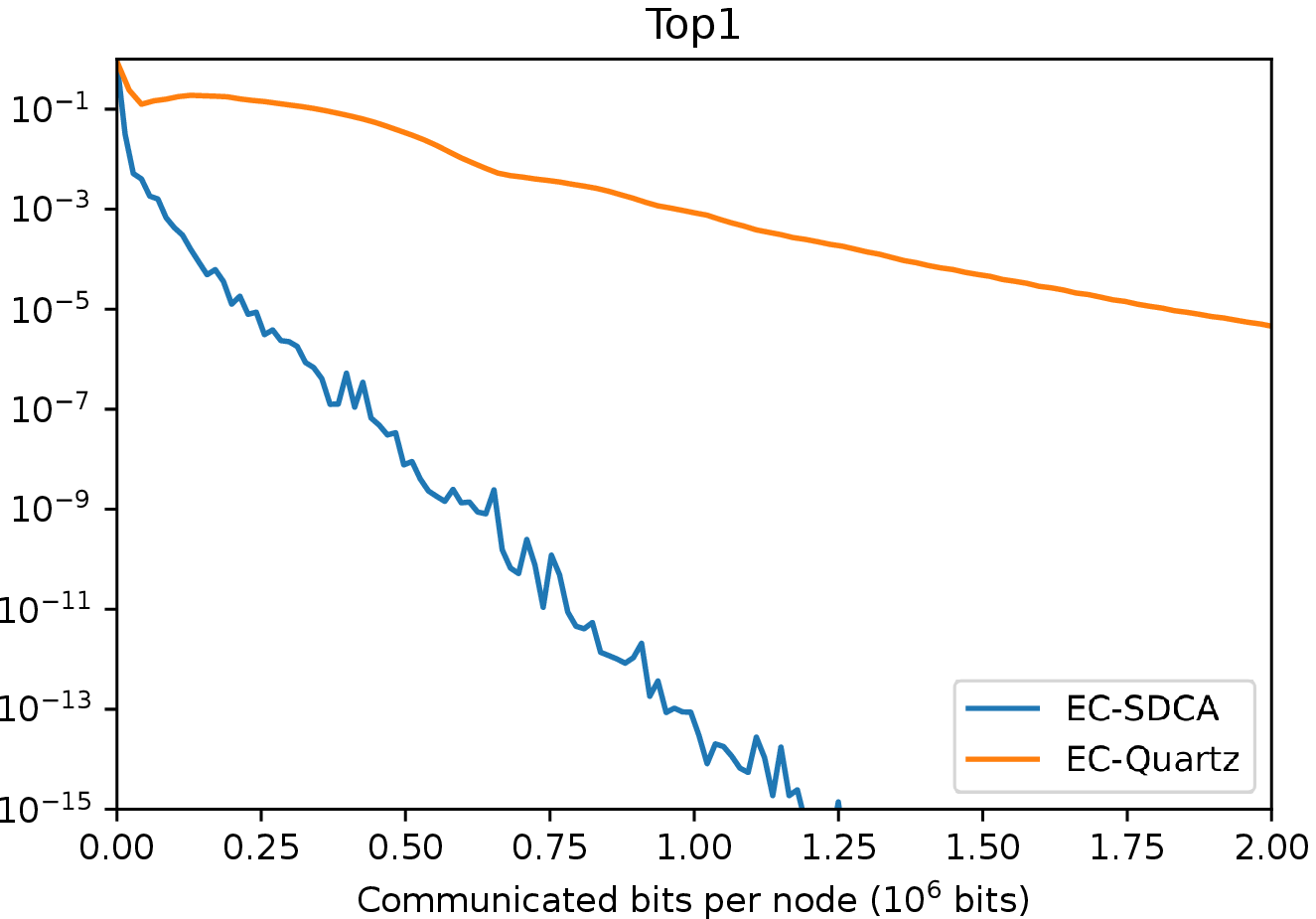}&
		\includegraphics[width=3.8cm]{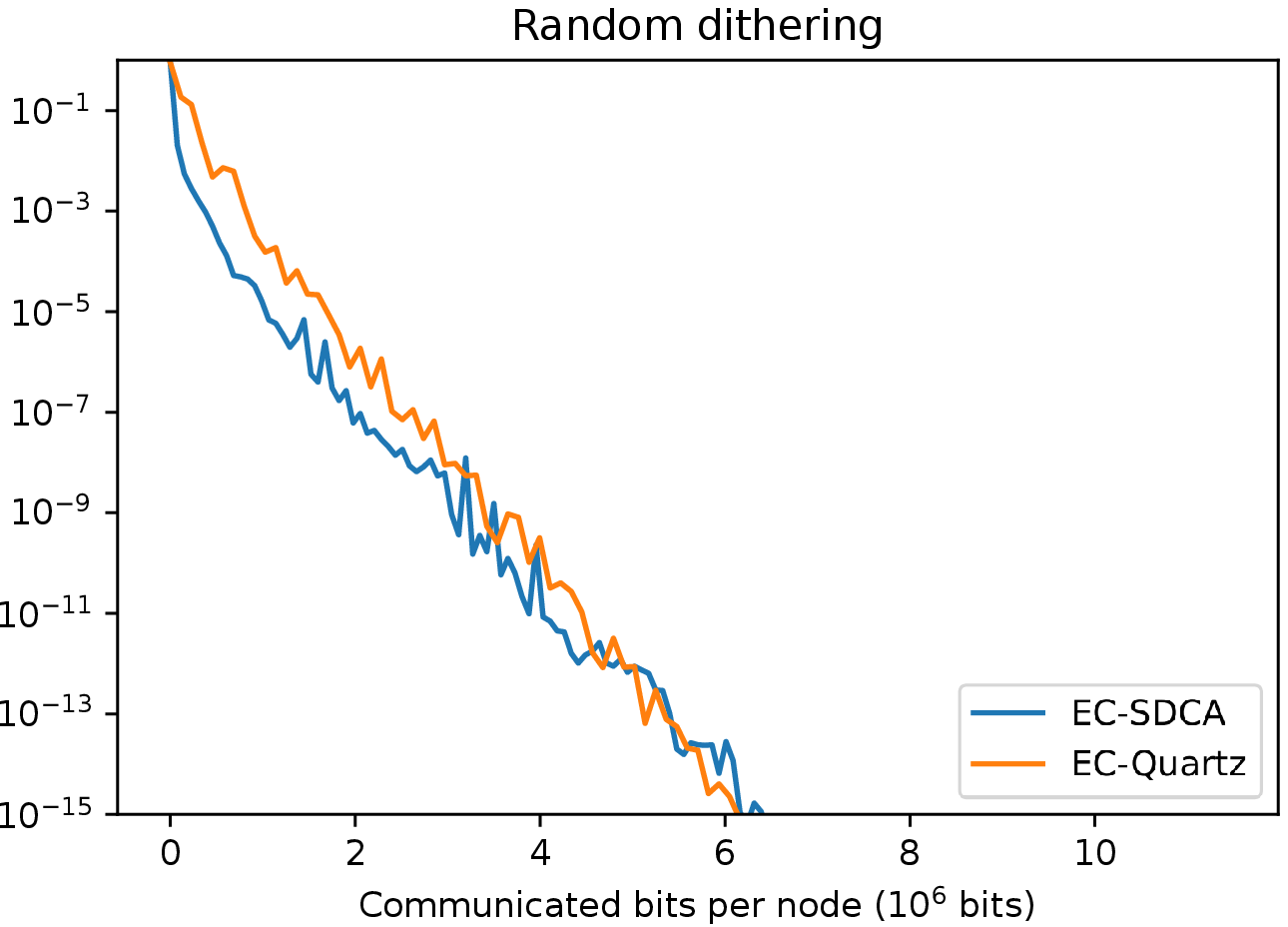}&
		\includegraphics[width=3.8cm]{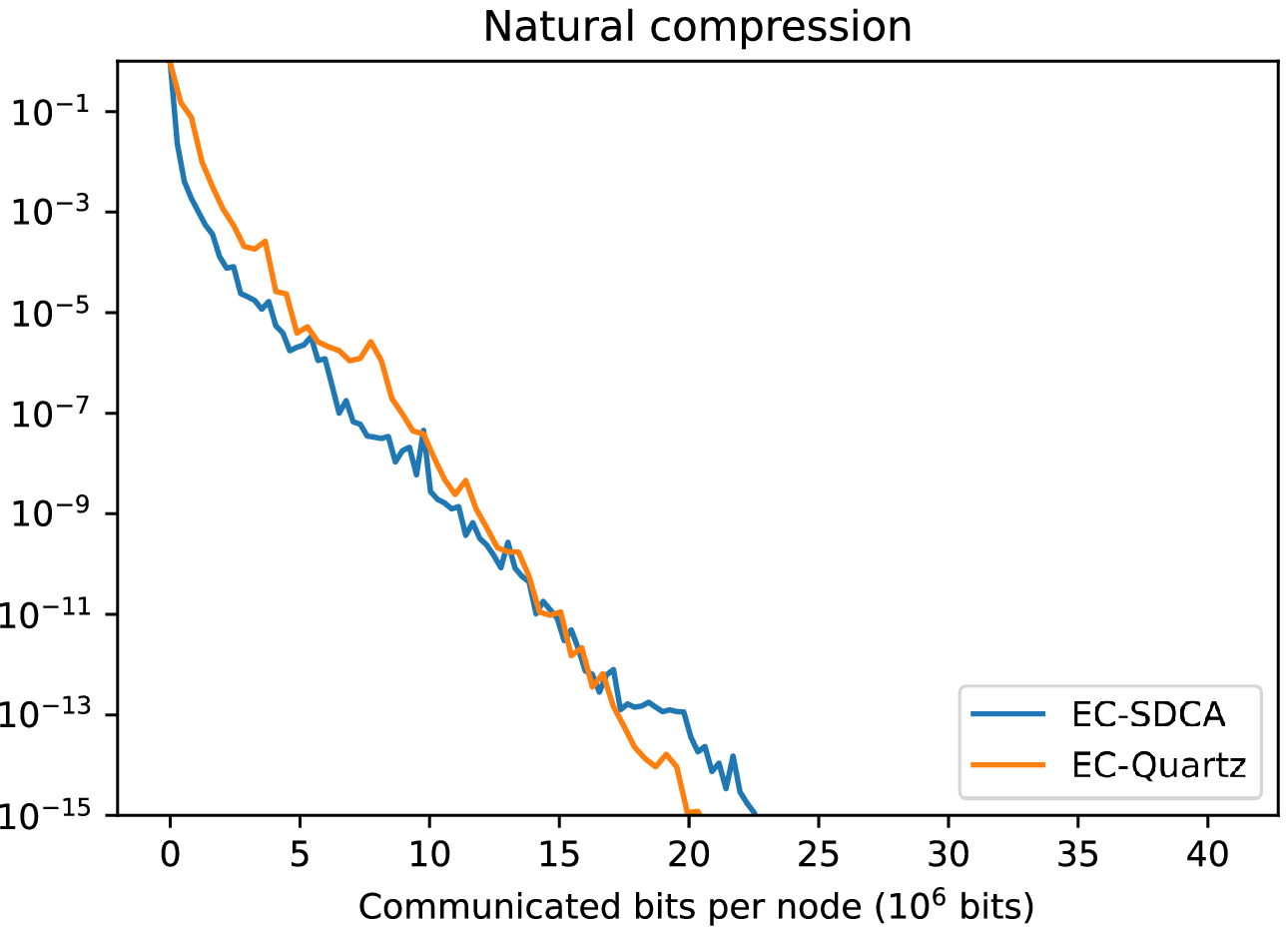}&
		\includegraphics[width=3.8cm]{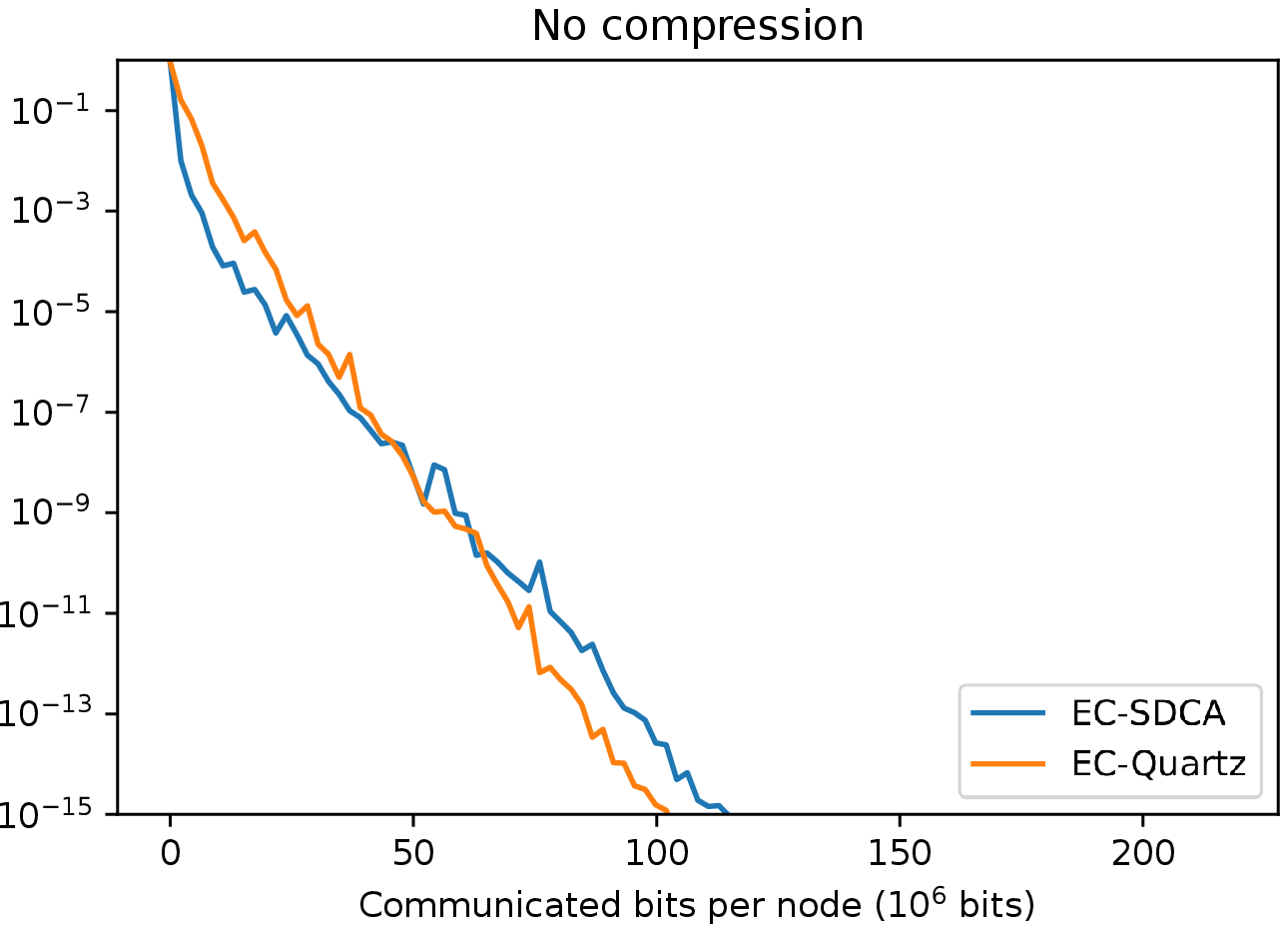}
	\end{tabular}

	\caption{EC-SDCA vs EC-Quartz on  \textbf{mushrooms}}\label{fig:quartz}

\end{figure}

\begin{figure}[H]

	\centering
	\begin{tabular}{cccc}
		\includegraphics[width=3.8cm]{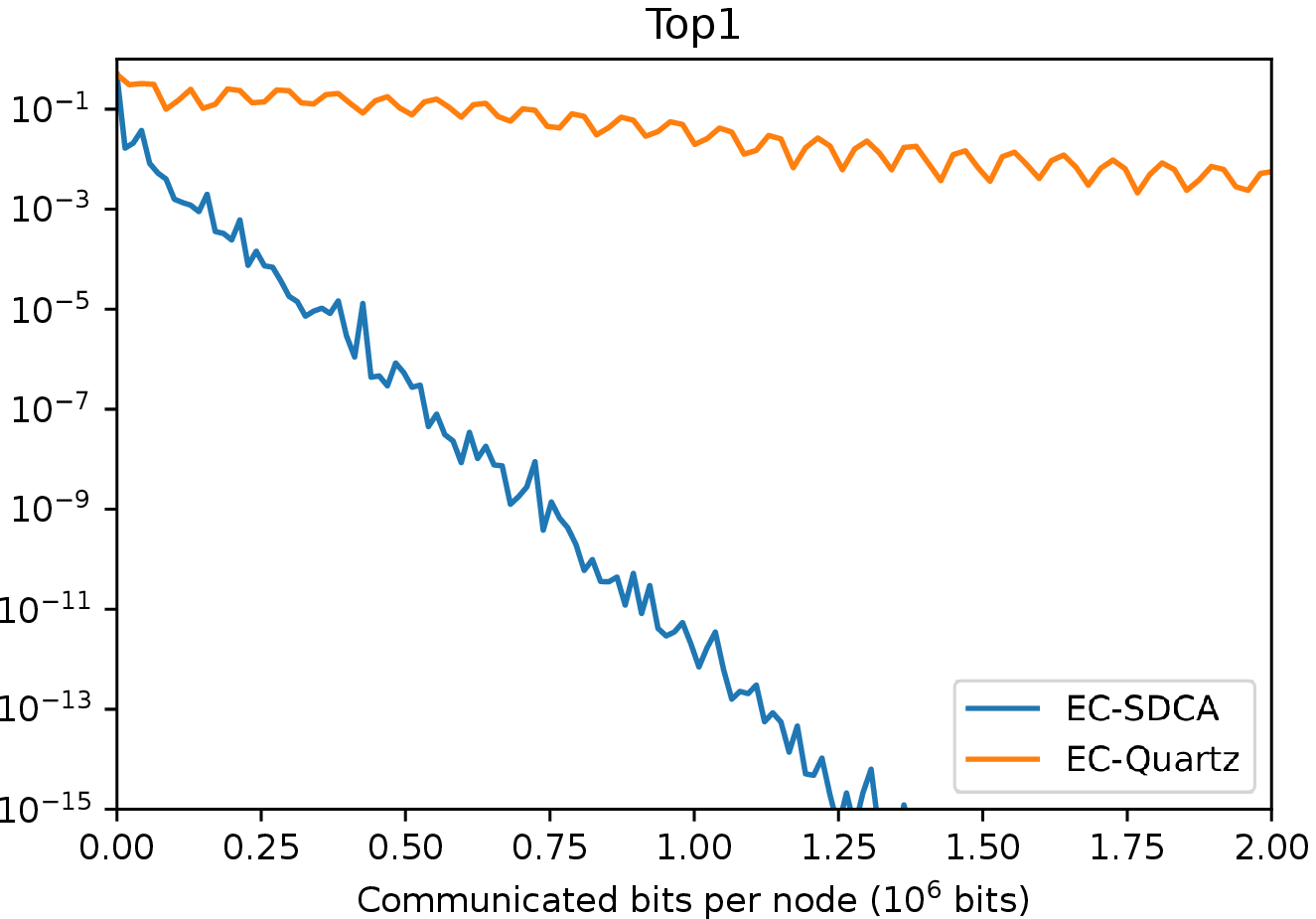}&
		\includegraphics[width=3.8cm]{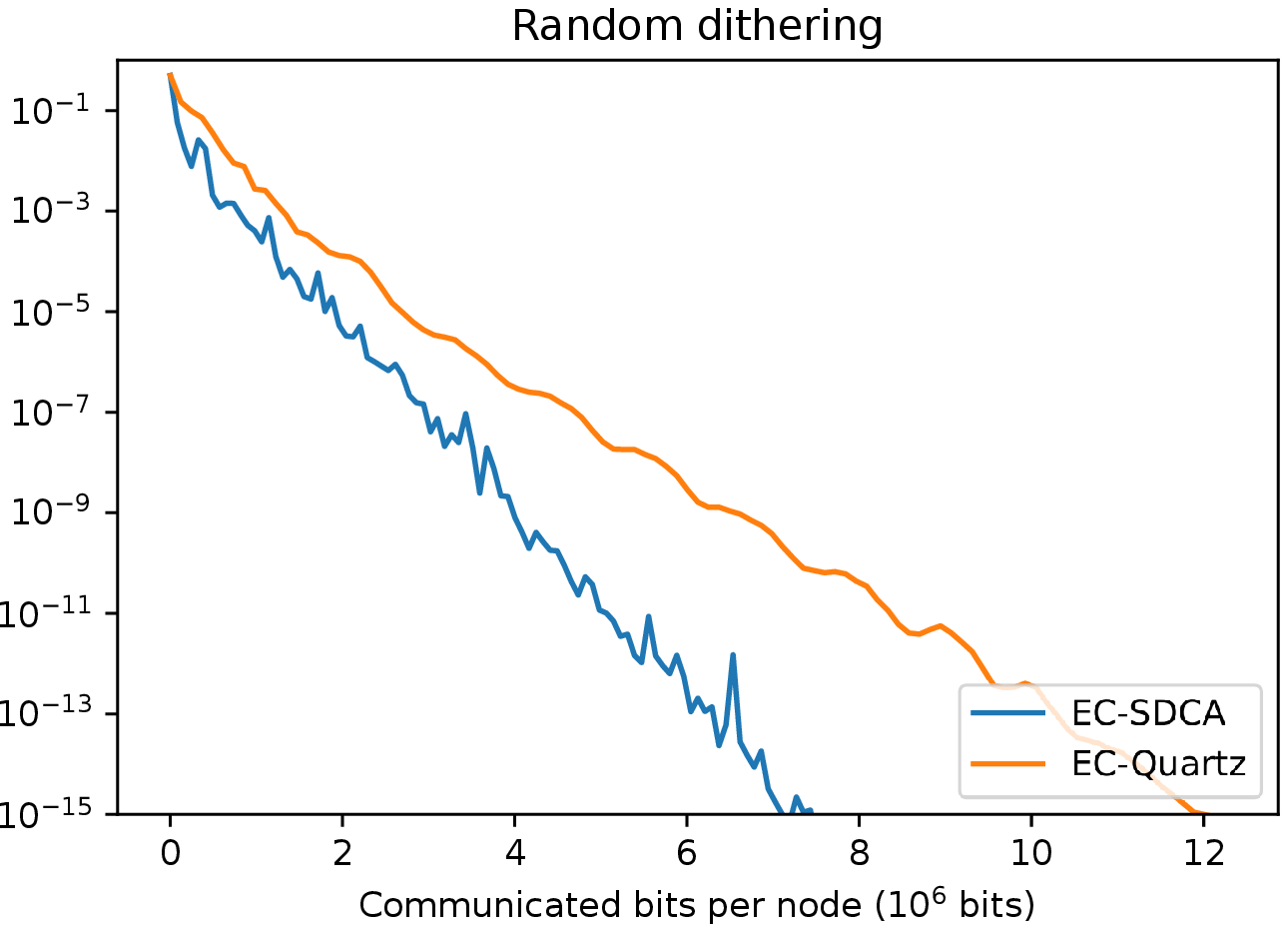}&
		\includegraphics[width=3.8cm]{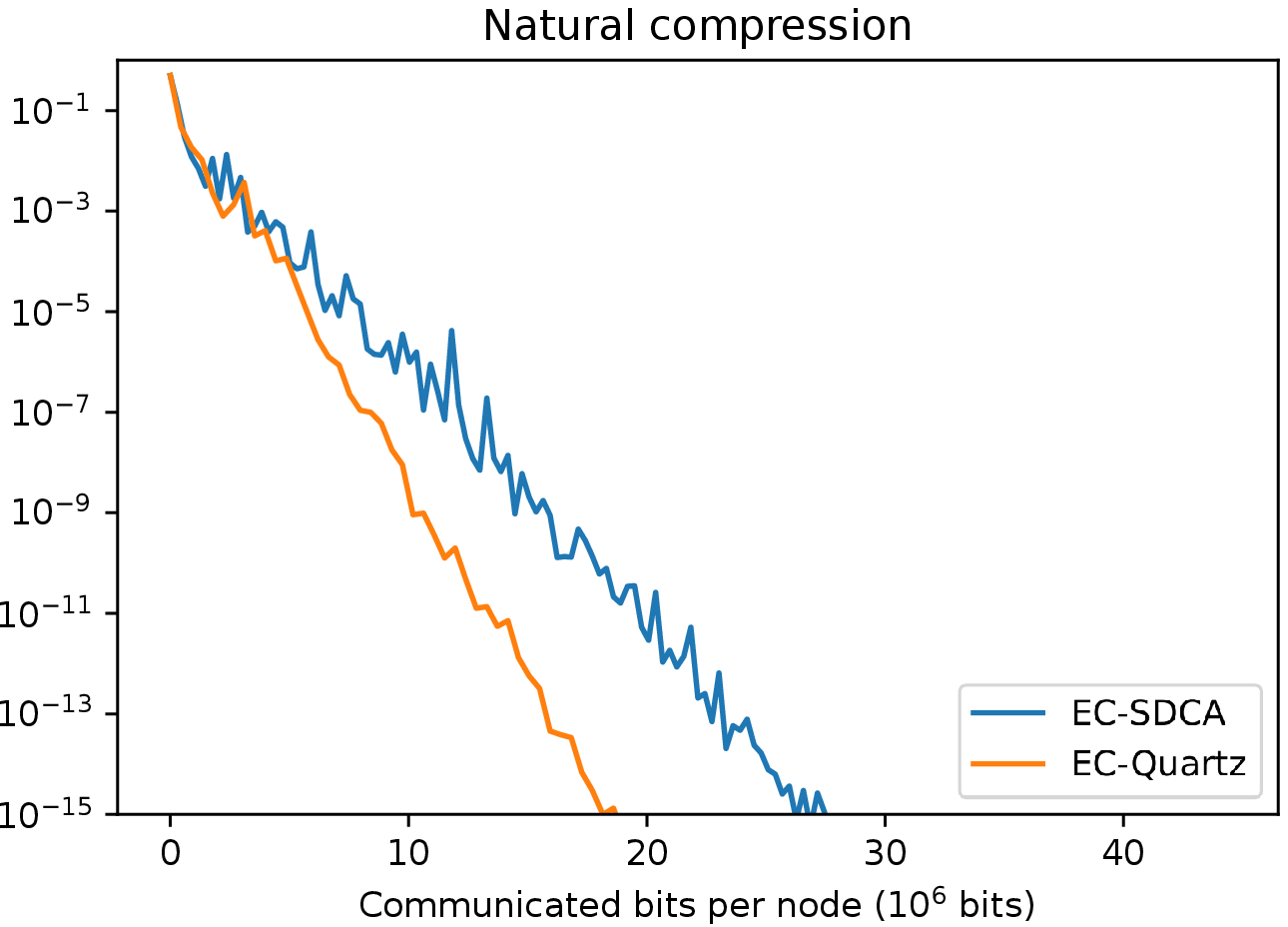}&
		\includegraphics[width=3.8cm]{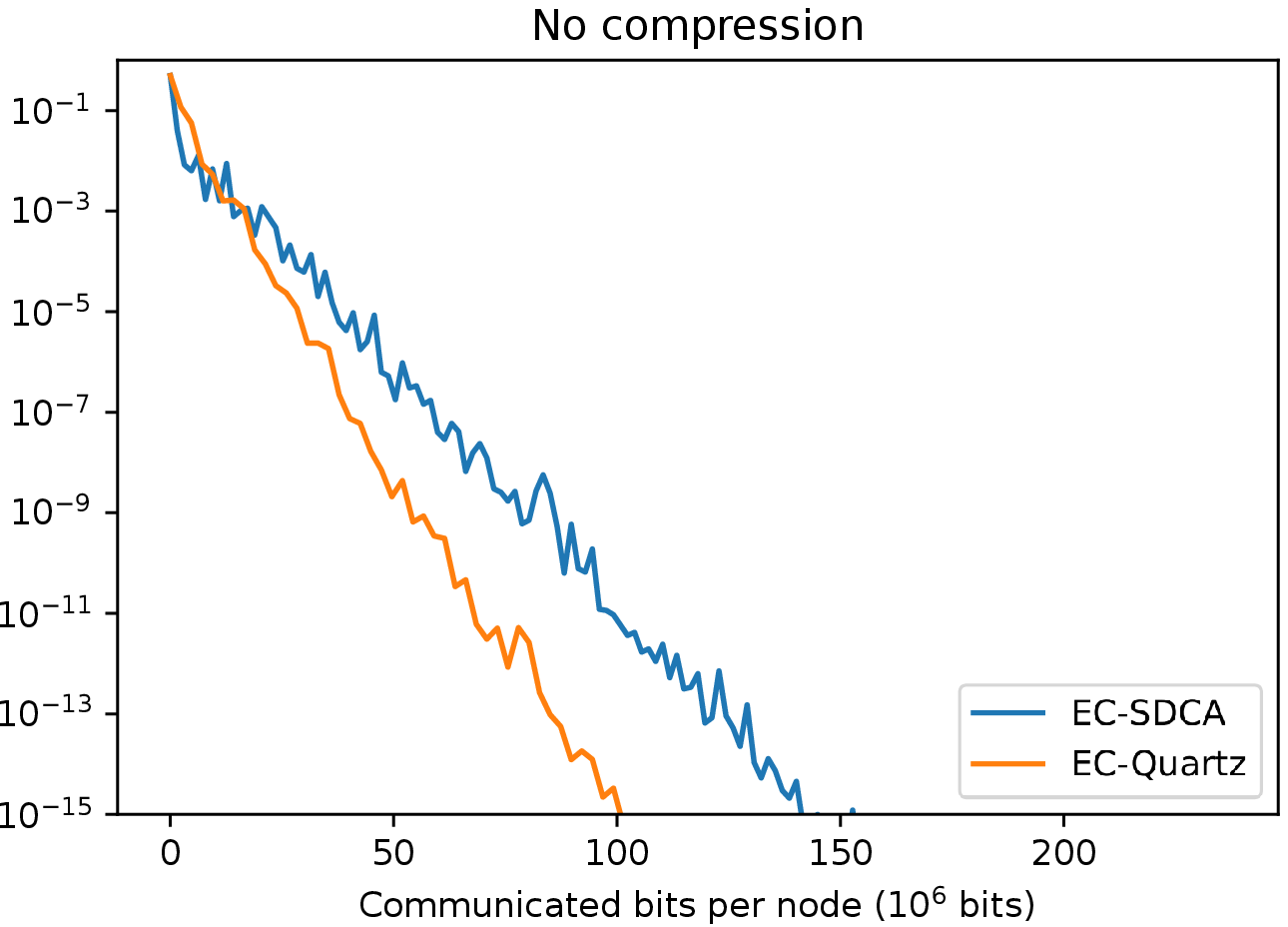}
	\end{tabular}

	\caption{EC-SDCA vs EC-Quartz on  \textbf{a5a}}\label{fig:quartz_a5a}

\end{figure}

\begin{figure}[H]

	\centering
	\begin{tabular}{cccc}
		\includegraphics[width=3.8cm]{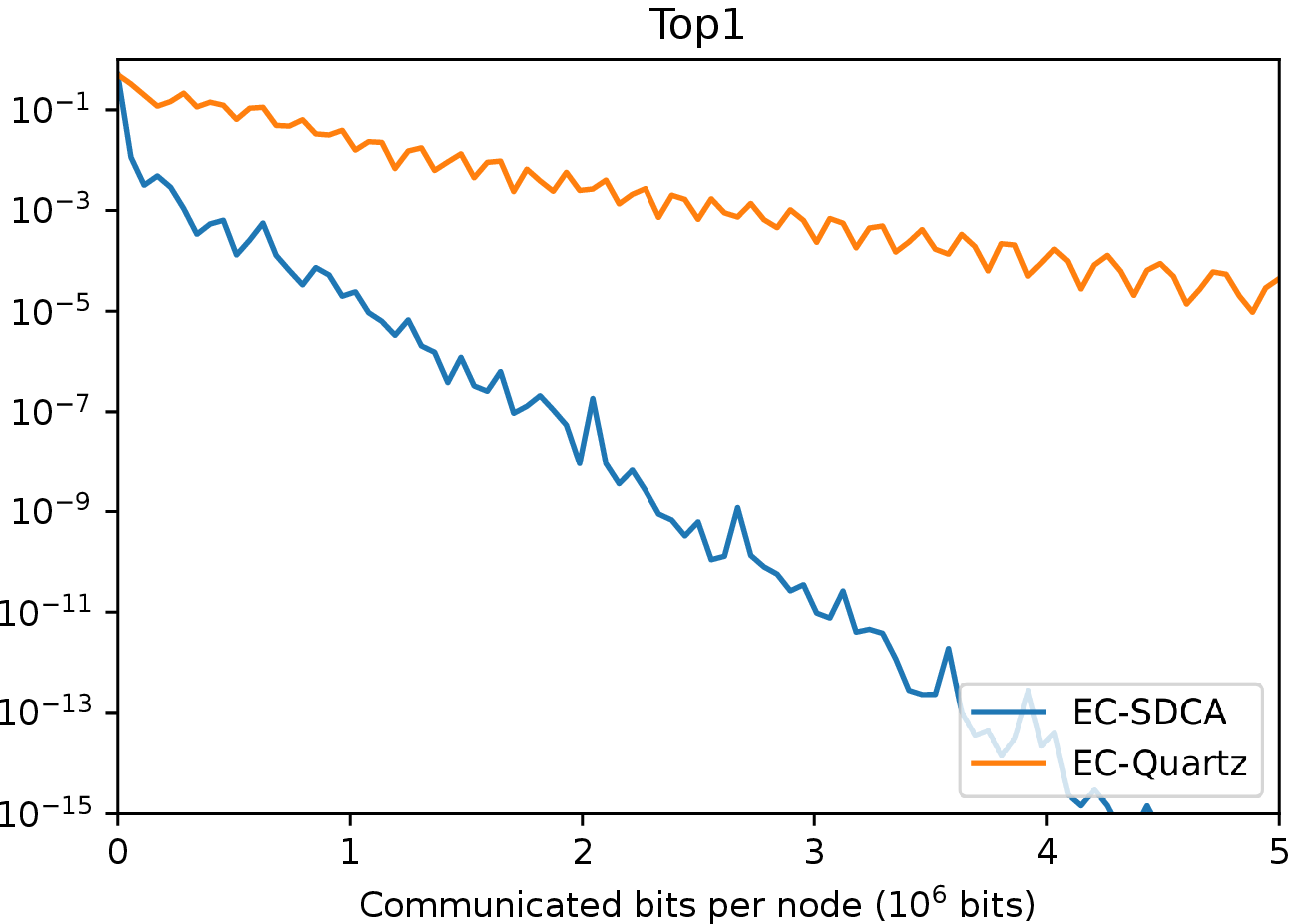}&
		\includegraphics[width=3.8cm]{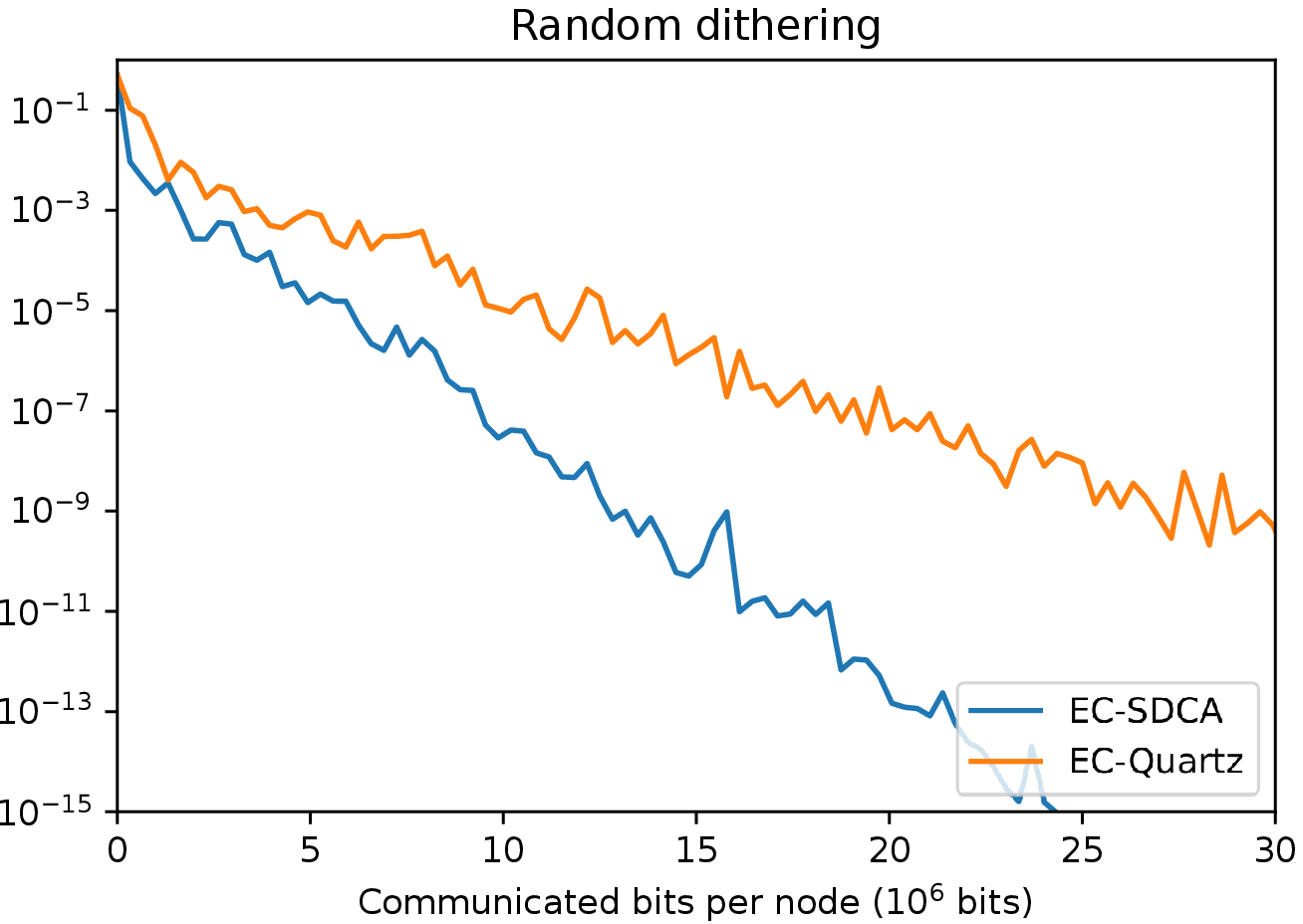}&
		\includegraphics[width=3.8cm]{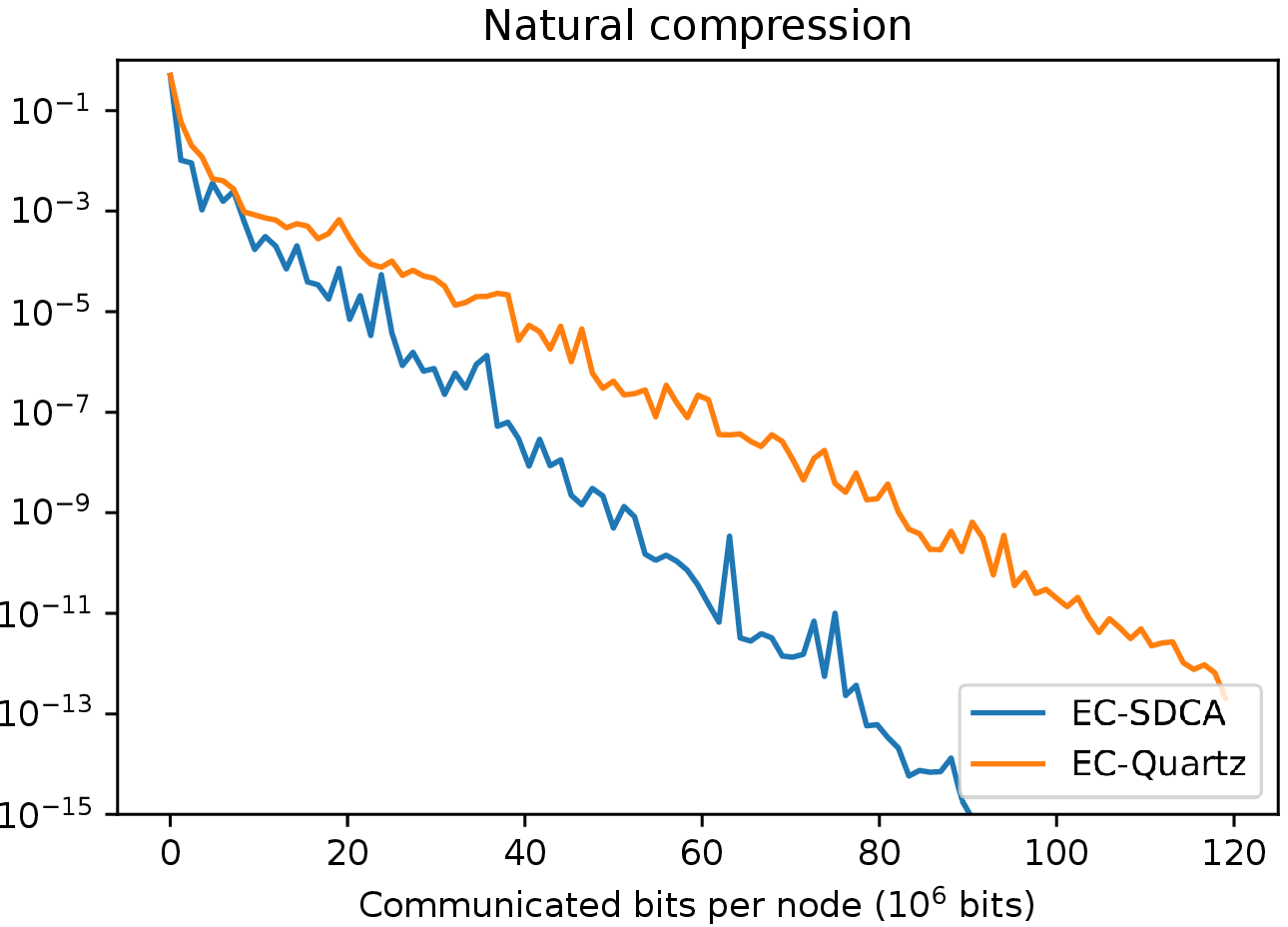}&
		\includegraphics[width=3.8cm]{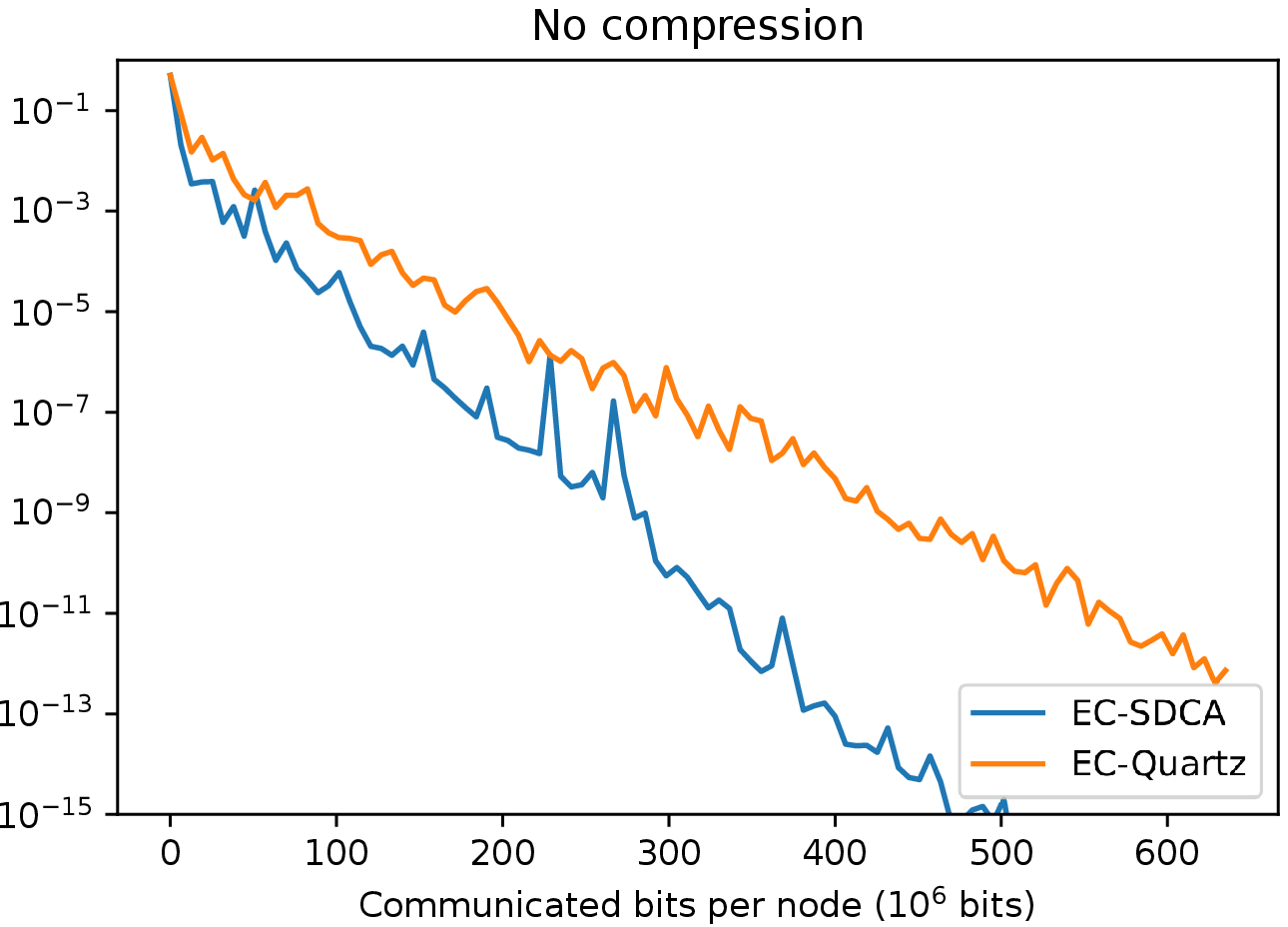}
	\end{tabular}

	\caption{EC-SDCA vs EC-Quartz on  \textbf{a9a}}\label{fig:quartz_a9a}

\end{figure}

\begin{figure}[H]

	\centering
	\begin{tabular}{cccc}
		\includegraphics[width=3.8cm]{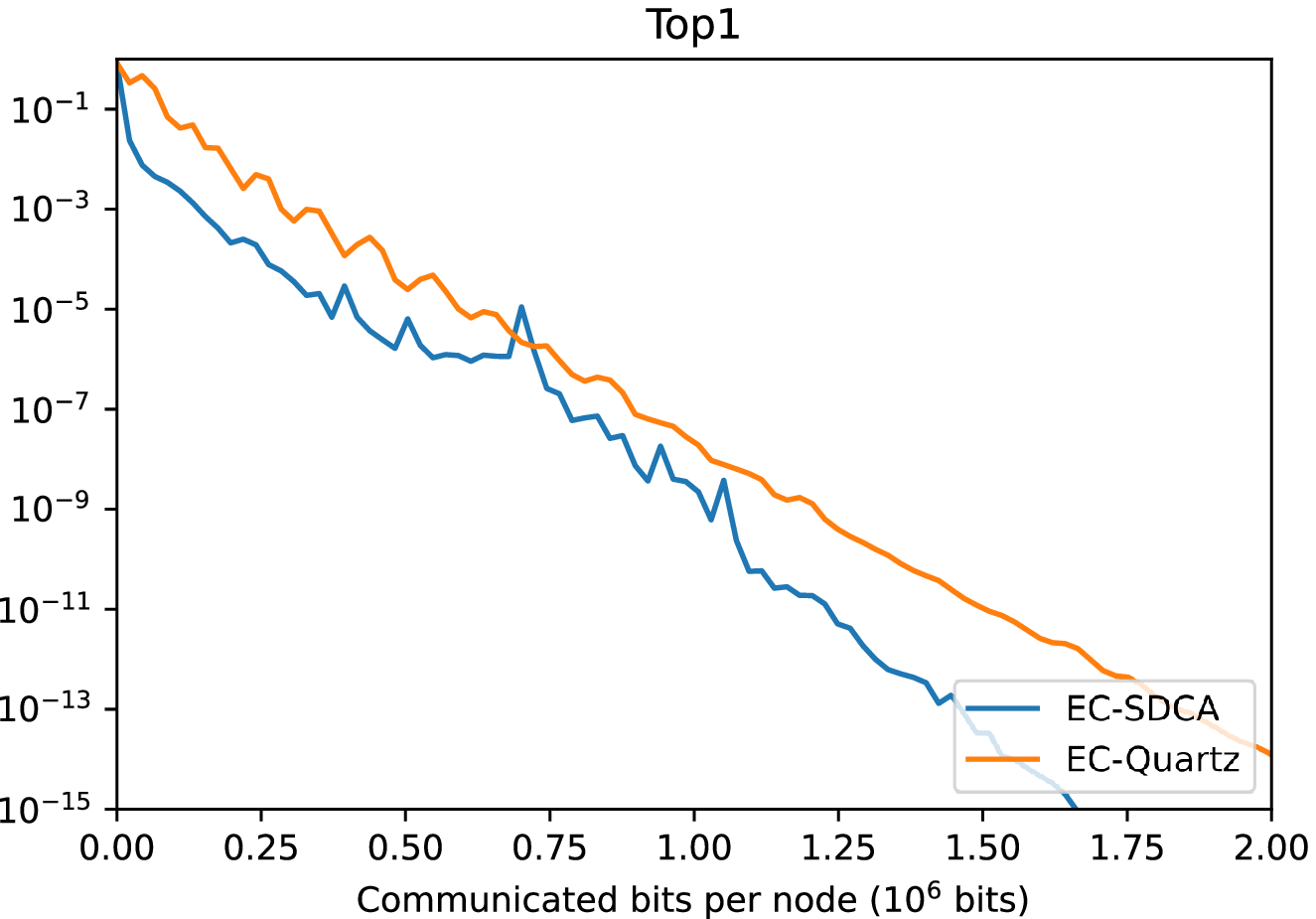}&
		\includegraphics[width=3.8cm]{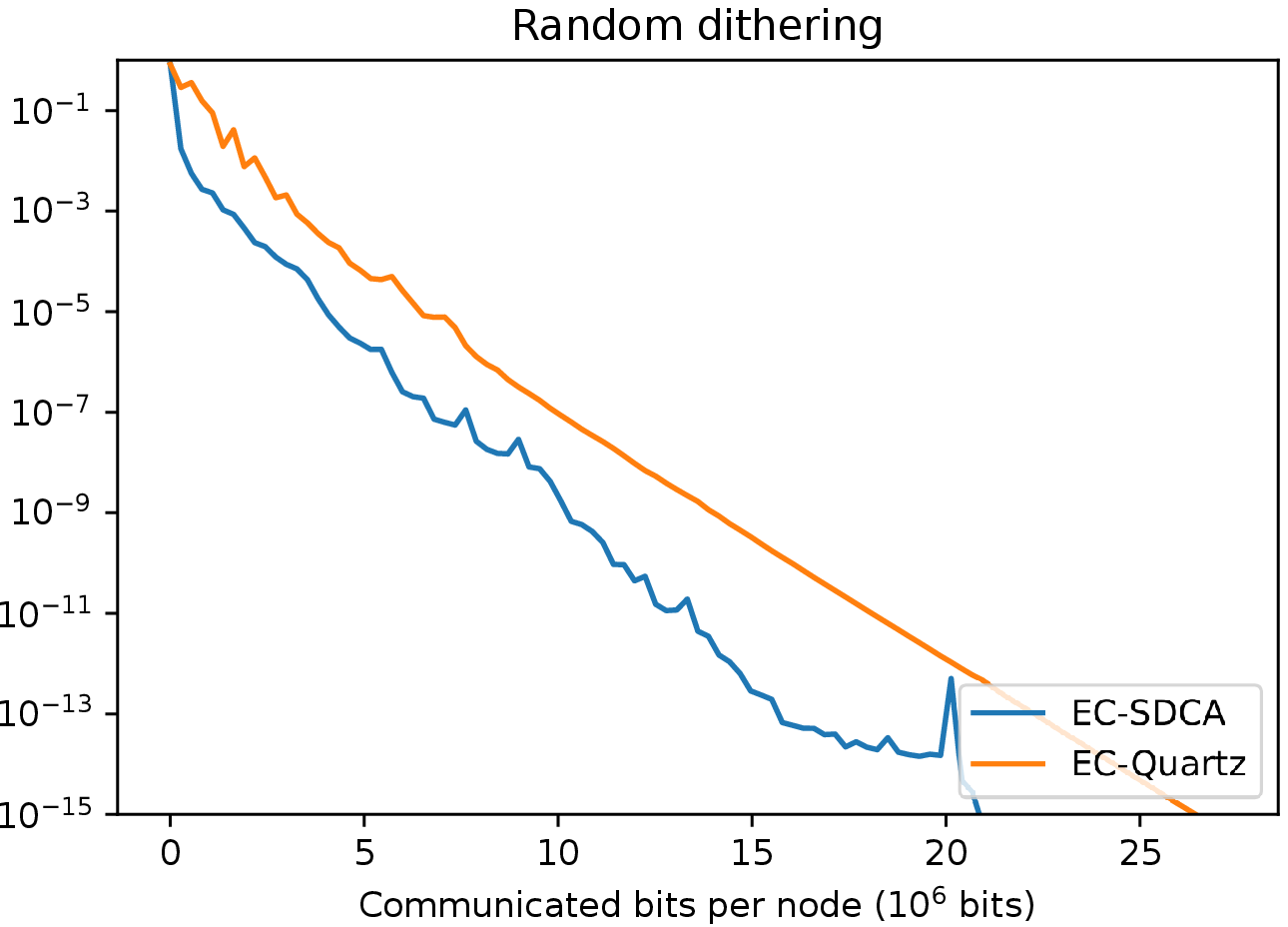}&
		\includegraphics[width=3.8cm]{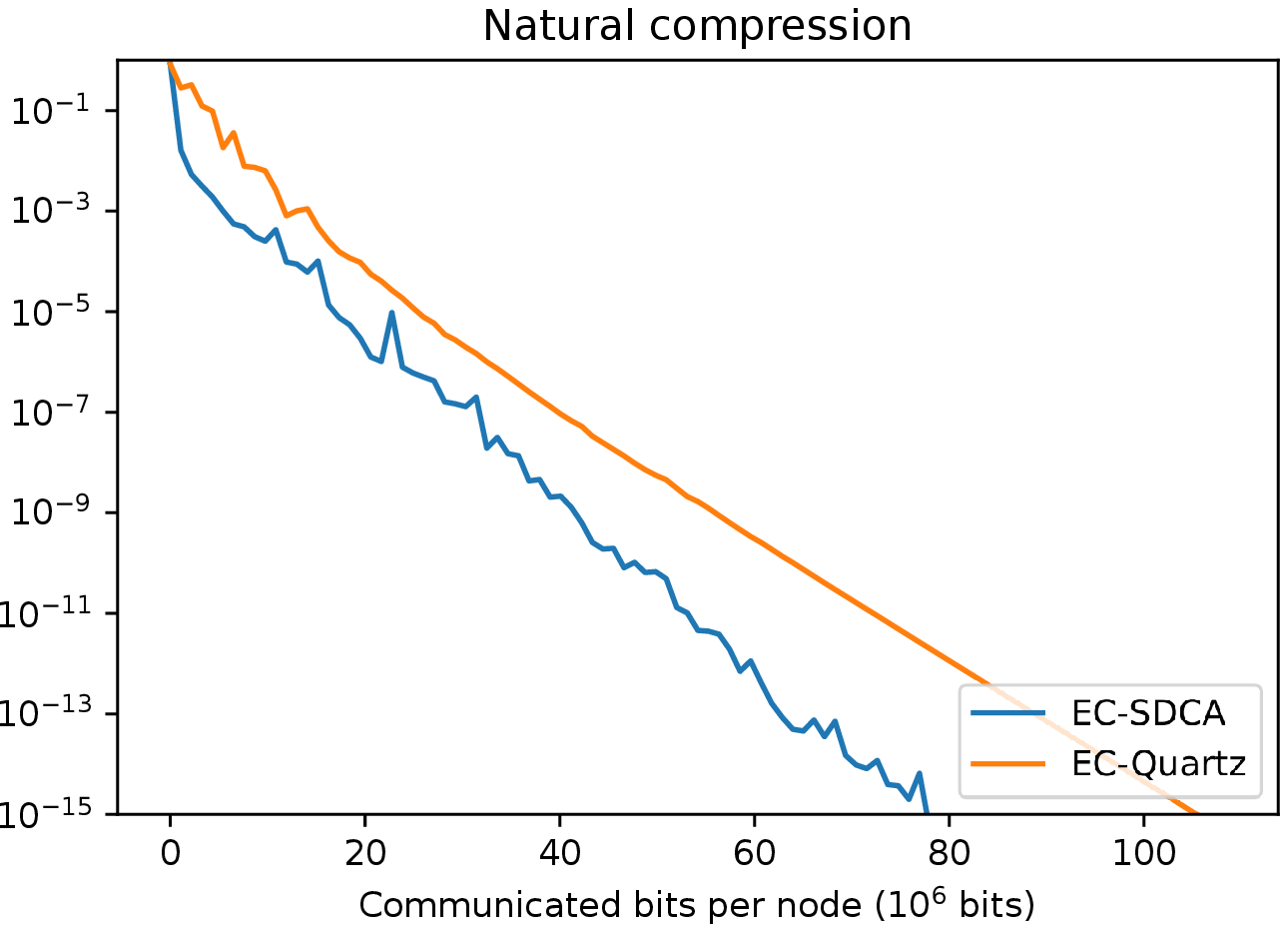}&
		\includegraphics[width=3.8cm]{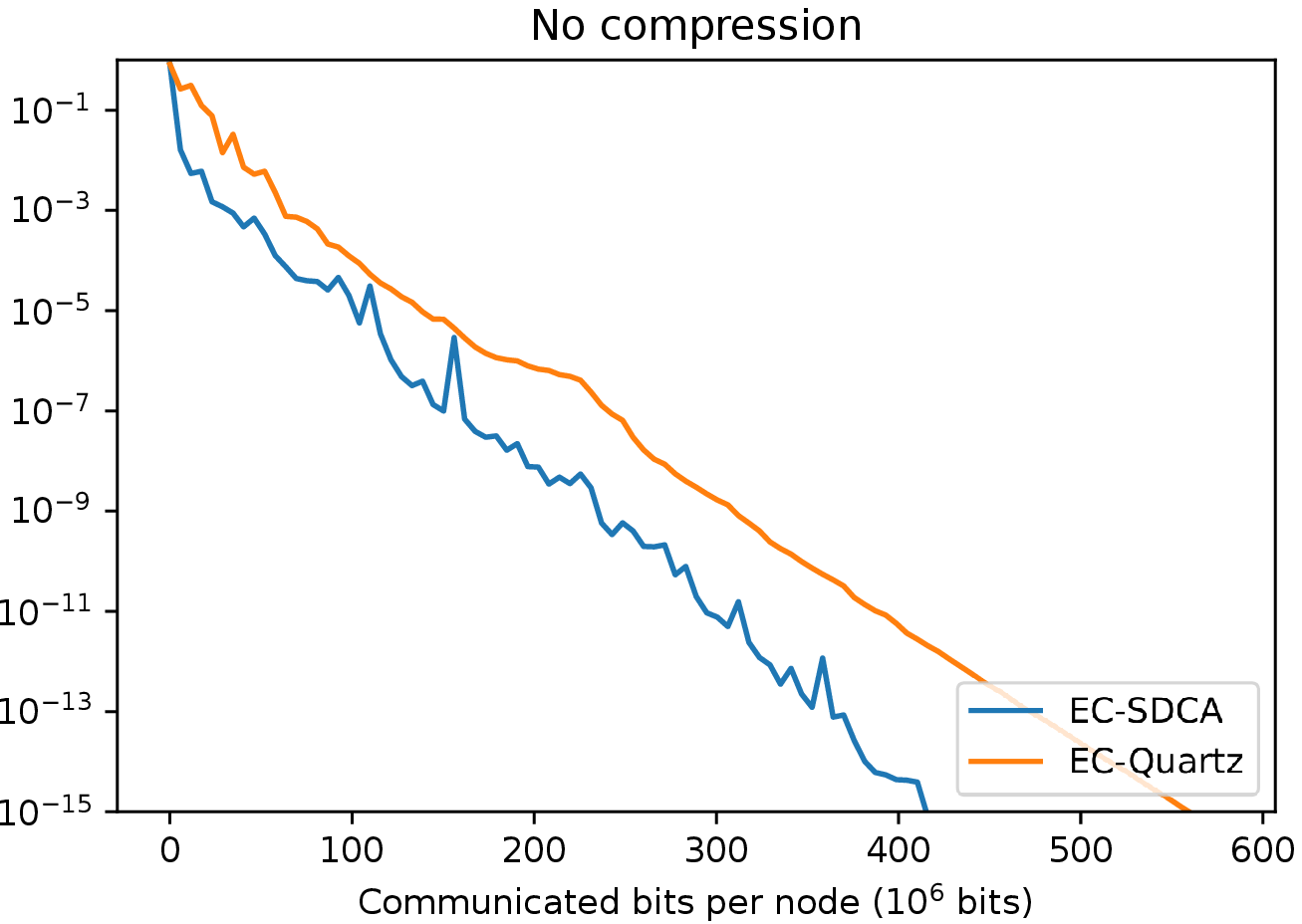}
	\end{tabular}

	\caption{EC-SDCA vs EC-Quartz on  \textbf{w6a}}\label{fig:quartz_w6a}

\end{figure}

\clearpage

\bibliography{error_compensated_LSVRG_and_SDCA}

\begin{thebibliography}{30}
\providecommand{\natexlab}[1]{#1}
\providecommand{\url}[1]{\texttt{#1}}
\expandafter\ifx\csname urlstyle\endcsname\relax
  \providecommand{\doi}[1]{doi: #1}\else
  \providecommand{\doi}{doi: \begingroup \urlstyle{rm}\Url}\fi

\bibitem[Agarwal and Duchi(2011)]{Agarwal11}
A.~Agarwal and J.~C. Duchi.
\newblock Distributed delayed stochastic optimization.
\newblock \emph{Advances in Neural Information Processing Systems}, pages
  873--881, 2011.

\bibitem[Alistarh et~al.(2017)Alistarh, Grubic, Li, Tomioka, and
  Vojnovic]{Alistarh17}
D.~Alistarh, D.~Grubic, J.~Li, R.~Tomioka, and M.~Vojnovic.
\newblock {QSGD}: Communication-efficient {SGD} via gradient quantization and
  encoding.
\newblock \emph{Advances in Neural Information Processing Systems}, pages
  1709--1720, 2017.

\bibitem[Alistarh et~al.(2018)Alistarh, Hoefler, Johansson, Konstantinov,
  Khirirat, and Renggli]{Alistarh18}
D.~Alistarh, T.~Hoefler, M.~Johansson, N.~Konstantinov, S.~Khirirat, and
  C.~Renggli.
\newblock The convergence of sparsified gradient methods.
\newblock \emph{Advances in Neural Information Processing Systems}, pages
  5973--5983, 2018.

\bibitem[Bernstein et~al.(2018)Bernstein, Wang, Azizzadenesheli, and
  Anandkumar]{Bernstein18}
J.~Bernstein, Y.~X. Wang, K.~Azizzadenesheli, and A.~Anandkumar.
\newblock Sign{SGD}: Compressed optimisation for non-convex problems.
\newblock \emph{The 35th International Conference on Machine Learning}, pages
  560--569, 2018.

\bibitem[Beznosikov et~al.(2020)Beznosikov, Horv\'{a}th, Richt\'{a}rik, and
  Safaryan]{biased2020}
A.~Beznosikov, S.~Horv\'{a}th, P.~Richt\'{a}rik, and M.~Safaryan.
\newblock On biased compression for distributed learning.
\newblock \emph{arXiv:2002.12410}, 2020.

\bibitem[Chang and Lin(2011)]{chang2011libsvm}
Chih-Chung Chang and Chih-Jen Lin.
\newblock {LIBSVM}: A library for support vector machines.
\newblock \emph{{ACM} {T}ransactions on {I}ntelligent {S}ystems and
  {T}echnology (TIST)}, 2\penalty0 (3):\penalty0 1--27, 2011.

\bibitem[Gorbunov et~al.(2020)Gorbunov, Kovalev, Makarenko, and
  Richt{\'a}rik]{gorbunov2020linearly}
Eduard Gorbunov, Dmitry Kovalev, Dmitry Makarenko, and Peter Richt{\'a}rik.
\newblock Linearly converging error compensated {SGD}.
\newblock \emph{arXiv preprint arXiv:2010.12292}, 2020.

\bibitem[Goyal et~al.(2017)Goyal, Doll\'{a}r, Girshick, Noordhuis, Wesolowski,
  Kyrola, Tulloch, Jia, and He]{Goyal17}
P.~Goyal, P.~Doll\'{a}r, R.~Girshick, P.~Noordhuis, L.~Wesolowski, A.~Kyrola,
  A.~Tulloch, Y.~Jia, and K.~He.
\newblock Accurate, large minibatch {SGD}: Training imagenet in 1 hour.
\newblock \emph{arXiv: 1706.2677}, 2017.

\bibitem[Horv\'{a}th et~al.(2019{\natexlab{a}})Horv\'{a}th, Kovalev,
  Mishchenko, Stich, and Richt\'{a}rik]{Samuel19}
S.~Horv\'{a}th, D.~Kovalev, K.~Mishchenko, S.~Stich, and P.~Richt\'{a}rik.
\newblock Stochastic distributed learning with gradient quantization and
  variance reduction.
\newblock \emph{arXiv: 1904.05115}, 2019{\natexlab{a}}.

\bibitem[Horv\'{a}th et~al.(2019{\natexlab{b}})Horv\'{a}th, Ho, Horvath, Sahu,
  Canini, and Richt\'{a}rik]{horvath2019natural}
Samuel Horv\'{a}th, Chen-Yu Ho, \v{L}udov\'{i}t Horvath, Atal~Narayan Sahu,
  Marco Canini, and Peter Richt\'{a}rik.
\newblock Natural compression for distributed deep learning.
\newblock \emph{arXiv preprint arXiv:1905.10988}, 2019{\natexlab{b}}.

\bibitem[Karimireddy et~al.(2019)Karimireddy, Rebjock, Stich, and
  Jaggi]{karimireddy2019error}
Sai~Praneeth Karimireddy, Quentin Rebjock, Sebastian~U Stich, and Martin Jaggi.
\newblock Error feedback fixes {SignSGD} and other gradient compression
  schemes.
\newblock \emph{arXiv preprint arXiv:1901.09847}, 2019.

\bibitem[Kovalev et~al.(2019)Kovalev, Horv\'{a}th, and Richt\'{a}rik]{LSVRG}
D.~Kovalev, S.~Horv\'{a}th, and P.~Richt\'{a}rik.
\newblock Don't jump through hoops and remove those loops: Svrg and katyusha
  are better without the outer loop.
\newblock \emph{arXiv: 1901.08689}, 2019.

\bibitem[Lian et~al.(2015)Lian, Huang, Li, and Liu]{Lian15}
X.~Lian, Y.~Huang, Y.~Li, and J.~Liu.
\newblock Asynchronous parallel stochastic gradient for nonconvex optimization.
\newblock \emph{Advances in Neural Information Processing Systems}, pages
  2737--2745, 2015.

\bibitem[Ma et~al.(2017)Ma, Kone\v{c}n\'{y}, Jaggi, Smith, Jordan,
  Richt\'{a}rik, and Tak\'{a}\v{c}]{COCOA+journal}
Chenxin Ma, Jakub Kone\v{c}n\'{y}, Martin Jaggi, Virginia Smith, Michael~I.
  Jordan, Peter Richt\'{a}rik, and Martin Tak\'{a}\v{c}.
\newblock Distributed optimization with arbitrary local solvers.
\newblock \emph{Optimization Methods and Software}, 32\penalty0 (4):\penalty0
  813--848, 2017.

\bibitem[Mishchenko et~al.(2019)Mishchenko, Gorbunov, Tak\'{a}\v{c}, and
  Richt\'{a}rik]{Mish19}
K.~Mishchenko, E.~Gorbunov, M.~Tak\'{a}\v{c}, and P.~Richt\'{a}rik.
\newblock Distributed learning with compressed gradient differences.
\newblock \emph{arXiv: 1901.09269}, 2019.

\bibitem[Nesterov(2004)]{NesterovBook}
Yurii Nesterov.
\newblock \emph{Introductory Lectures on Convex Optimization: A Basic Course
  (Applied Optimization)}.
\newblock Kluwer Academic Publishers, 2004.

\bibitem[Qian et~al.(2019{\natexlab{a}})Qian, Qu, and Richt{\'{a}}rik]{SAGA-AS}
Xun Qian, Zheng Qu, and Peter Richt{\'{a}}rik.
\newblock {SAGA} with arbitrary sampling.
\newblock In \emph{Proceedings of the 36th International Conference on Machine
  Learning, {ICML} 2019, 9-15 June 2019, Long Beach, California, {USA}}, pages
  5190--5199, 2019{\natexlab{a}}.
\newblock URL \url{http://proceedings.mlr.press/v97/qian19a.html}.

\bibitem[Qian et~al.(2019{\natexlab{b}})Qian, Qu, and
  Richt{\'a}rik]{qian2019svrg}
Xun Qian, Zheng Qu, and Peter Richt{\'a}rik.
\newblock L-svrg and l-katyusha with arbitrary sampling.
\newblock \emph{arXiv preprint arXiv:1906.01481}, 2019{\natexlab{b}}.

\bibitem[Qu et~al.(2015)Qu, Richt\'{a}rik, and Zhang]{Quartz}
Zheng Qu, Peter Richt\'{a}rik, and Tong Zhang.
\newblock Quartz: Randomized dual coordinate ascent with arbitrary sampling.
\newblock In C.~Cortes, N.~D. Lawrence, D.~D. Lee, M.~Sugiyama, and R.~Garnett,
  editors, \emph{Advances in Neural Information Processing Systems 28}, pages
  865--873. Curran Associates, Inc., 2015.

\bibitem[Recht et~al.(2011)Recht, Re, Wright, and Niu]{Recht11}
B.~Recht, C.~Re, S.~Wright, and F.~Niu.
\newblock Hogwild: A lock-free approach to parallelizing stochastic gradient
  descent.
\newblock \emph{Advances in Neural Information Processing Systems}, pages
  693--701, 2011.

\bibitem[Seide et~al.(2014)Seide, Fu, Droppo, Li, and Yu]{Seide14}
F.~Seide, H.~Fu, J.~Droppo, G.~Li, and D.~Yu.
\newblock 1-bit stochastic gradient descent and its application to data-
  parallel distributed training of speech dnns.
\newblock \emph{Fifteenth Annual Conference of the International Speech
  Communication Association}, 2014.

\bibitem[Shalev-Shwartz and Zhang(2013)]{SDCA}
Shai Shalev-Shwartz and Tong Zhang.
\newblock Stochastic dual coordinate ascent methods for regularized loss.
\newblock \emph{Journal of Machine Learning Research}, 14\penalty0
  (1):\penalty0 567--599, 2013.

\bibitem[Stich and Karimireddy(2019)]{Stich19}
S.~U. Stich and S.~P. Karimireddy.
\newblock The error-feedback framework: Better rates for {SGD} with delayed
  gradients and compressed communication.
\newblock \emph{arXiv: 1909.05350}, 2019.

\bibitem[Stich et~al.(2018)Stich, Cordonnier, and Jaggi]{Stich18}
S.~U. Stich, J.~B. Cordonnier, and M.~Jaggi.
\newblock Sparsified {SGD} with memory.
\newblock \emph{Advances in Neural Information Processing Systems}, pages
  4447--4458, 2018.

\bibitem[Stich(2020)]{localSGD-Stich}
Sebastian~U. Stich.
\newblock Local {SGD} converges fast and communicates little.
\newblock In \emph{International Conference on Learning Representations
  (ICLR)}, 2020.

\bibitem[Tang et~al.(2018)Tang, Gan, Zhang, Zhang, and Liu]{Tang18}
H.~Tang, S.~Gan, C.~Zhang, T.~Zhang, and J.~Liu.
\newblock Communication compression for decentralized training.
\newblock \emph{Advances in Neural Information Processing Systems}, pages
  7652--7662, 2018.

\bibitem[Tang et~al.(2019)Tang, Lian, Zhang, and Liu]{Tang19}
H.~Tang, X.~Lian, T.~Zhang, and J.~Liu.
\newblock Doublesqueeze: Parallel stochastic gradient descent with double-pass
  error-compensated compression.
\newblock \emph{The 36th International Conference on Machine Learning}, pages
  6155--6165, 2019.

\bibitem[Wen et~al.(2017)Wen, Xu, Yan, Wu, Wang, and Li]{Wen17}
W.~Wen, C.~Xu, F.~Yan, C.~Wu, Y.~Wang, and H.~Li.
\newblock Terngrad: Ternary gradients to reduce communication in distributed
  deep learning.
\newblock \emph{Advances in Neural Information Processing Systems}, pages
  1509--1519, 2017.

\bibitem[Wu et~al.(2018)Wu, Huang, Huang, and Zhang]{Wu18}
J.~Wu, W.~Huang, J.~Huang, and T.~Zhang.
\newblock Error compensated quantized {SGD} and its applications to large-scale
  distributed optimization.
\newblock \emph{The 35th International Conference on Machine Learning}, pages
  5321--5329, 2018.

\bibitem[You et~al.(2017)You, Gitman, and Ginsburg]{You17}
Y.~You, I.~Gitman, and B.~Ginsburg.
\newblock Scaling {SGD} batch size to 32k for imagenet training.
\newblock \emph{arXiv: 1708.03888}, 2017.

\end{thebibliography}
\bibliographystyle{plainnat}

\clearpage
\appendix
\part*{Appendix}

\section{ESO Estimation for Arbitrary Sampling for Quartz}

For simplicity, in this section we consider problem (\ref{primal-sdca}) with $m=1$, and replace $\phi_{i\tau}$ and $A_{i\tau}$ with $\phi_\tau$ and $A_{\tau}$ respectively. We consider arbitrary proper set sampling, i.e., $S \in [n]$ with $p_i \eqdef \mathbb{P}[i\in S] >0$ for all $i\in [n] \eqdef \{  1, ..., n  \}$. 

\begin{assumption}\label{as:AB}
	There exist constants ${\cal A}_i\geq 0$ for  each $i \in [n]$ and $0\leq {\cal B}\leq 1$ such that for any matrix ${\bf M} \in \R^{t\times n}$ and sampling $S$,
	\begin{equation}\label{eq:AB}
	\mathbb{E}\left [\left\| \sum_{i \in S} \frac{1}{p_i}{{\bf M}_{:i}} \right\|^2\right ] \leq \sum_{i=1}^n {\cal A}_i  \left\|{\bf M}_{: i}\right\|^2 + {\cal B}\left\| \sum_{i=1}^n {\bf M}_{:i}\right\|^2,
	\end{equation}
	where ${\bf M}_{: i}$ denotes the $i$th column vector of ${\bf M}$.
\end{assumption}

Assumption~\ref{as:AB} appeared in~\citep{SAGA-AS} and \citep{qian2019svrg} for the convergence analysis of SAGA and L-SVRG. The estimations of ${\cal A}_i$ and ${\cal B}$ for arbitrary set sampling, $\tau$-nice sampling, and group sampling can be found in \citep{qian2019svrg}. 

\begin{lemma}\label{lm:esoAS}
Under Assumption~\ref{as:AB}, the following inequality holds for all $h_i \in \R^t$ with $i\in [n]$, 
$$
\mathbb{E} \left[  \left\| \sum_{i\in S} A_i h_i \right\|^2  \right] \leq \sum_{i=1}^n p_i v_i \|h_i\|^2, 
$$
where $v_i = {\cal A}_i p_i R_m^2  + {\cal B} n p_i R^2$. 
\end{lemma}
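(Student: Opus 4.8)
The plan is to apply Assumption~\ref{as:AB} directly with the choice $\mathbf{M}_{:i} = p_i A_i h_i$, so that $\sum_{i\in S}\frac{1}{p_i}\mathbf{M}_{:i} = \sum_{i\in S} A_i h_i$ reproduces exactly the quantity we wish to bound. One subtlety is that the columns $p_i A_i h_i$ live in $\R^d$, whereas Assumption~\ref{as:AB} is phrased for $\mathbf{M}\in\R^{t\times n}$. This is not a genuine obstruction: the left-hand side equals $\sum_{i,j}\frac{\Prob[i,j\in S]}{p_ip_j}\langle \mathbf{M}_{:i},\mathbf{M}_{:j}\rangle$, and both terms on the right of \eqref{eq:AB} depend on $\mathbf{M}$ only through the Gram matrix of its columns, so the inequality holds for matrices with an arbitrary number of rows, in particular for columns in $\R^d$. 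Applying it and using $\|\mathbf{M}_{:i}\|^2 = p_i^2\|A_ih_i\|^2$ gives
\[
\mathbb{E}\left[\left\|\sum_{i\in S}A_ih_i\right\|^2\right] \le \sum_{i=1}^n \mathcal{A}_i p_i^2 \|A_ih_i\|^2 + \mathcal{B}\left\|\sum_{i=1}^n p_i A_ih_i\right\|^2 .
\]

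Next I would bound the two terms separately. For the separable (first) term, the submultiplicativity bound $\|A_ih_i\|\le \|A_i\|\,\|h_i\| \le R_m\|h_i\|$ (recall $R_m = \max_{i}\|A_i\|$ in the $m=1$ reduction) yields $\sum_i \mathcal{A}_i p_i^2\|A_ih_i\|^2 \le \sum_i \mathcal{A}_i p_i^2 R_m^2\|h_i\|^2$, which equals $\sum_i p_i(\mathcal{A}_i p_i R_m^2)\|h_i\|^2$ and matches the first part of $p_iv_i$. For the joint (second) term, I would substitute $w_i \eqdef p_i h_i$ and write $\sum_i p_iA_ih_i = \mathbf{A}w$ with $\mathbf{A} = [A_1,\dots,A_n]$ and $w = (w_1^\top,\dots,w_n^\top)^\top$; then $\|\mathbf{A}w\|^2 \le \lambda_{\max}(\mathbf{A}\mathbf{A}^\top)\|w\|^2$, and since $\mathbf{A}\mathbf{A}^\top = \sum_i A_iA_i^\top$ the definition of $R^2$ gives $\lambda_{\max}(\mathbf{A}\mathbf{A}^\top) = nR^2$. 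Hence the joint term is at most $\mathcal{B}\,nR^2\sum_i\|w_i\|^2 = \mathcal{B}\,nR^2\sum_i p_i^2\|h_i\|^2$.

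Combining the two bounds produces $\sum_i(\mathcal{A}_i p_i^2 R_m^2 + \mathcal{B} n p_i^2 R^2)\|h_i\|^2 = \sum_i p_i(\mathcal{A}_i p_i R_m^2 + \mathcal{B} n p_i R^2)\|h_i\|^2 = \sum_i p_i v_i\|h_i\|^2$, which is the claim. The only genuinely delicate point is the dimension remark in the first paragraph, namely justifying that Assumption~\ref{as:AB} may be invoked with $d$-dimensional columns; everything after that is routine, relying only on the operator-norm inequality and the spectral identity $\lambda_{\max}(\sum_i A_iA_i^\top) = nR^2$. I therefore expect no serious obstacle beyond making that extension explicit.
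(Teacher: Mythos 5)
Your proof is correct and follows essentially the same route as the paper's: apply Assumption~\ref{as:AB} with $\mathbf{M}_{:i} = p_i A_i h_i$, bound the separable term via $\|A_i\| \le R_m$, and bound the coupled term via $\left\|\sum_i p_i A_i h_i\right\|^2 \le \lambda_{\max}\left(\sum_i A_i A_i^\top\right)\sum_i p_i^2\|h_i\|^2 = nR^2 \sum_i p_i^2\|h_i\|^2$. The dimension subtlety you flag is one the paper silently ignores, and your conclusion there is right, though to be fully rigorous the Gram-matrix remark needs one more step: the inequality \eqref{eq:AB} is linear in the Gram matrix $G = \mathbf{M}^\top\mathbf{M}$, and every PSD $G$ is a conic combination of rank-one PSD matrices, each realizable by a matrix with $t$ rows, so validity for $\mathbf{M}\in\R^{t\times n}$ extends to columns in $\R^d$.
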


\begin{proof}

By applying Assumption~\ref{as:AB} with ${\bf M}_{:i} = p_iA_ih_i$, we have 
\begin{align*}
\mathbb{E} \left[  \left\| \sum_{i\in S} A_i h_i \right\|^2  \right] & = \mathbb{E} \left[  \left\| \sum_{i\in S} \frac{1}{p_i} \cdot p_i A_i h_i \right\|^2  \right] \\
& \leq \sum_{i=1}^n {\cal A}_i \|p_i A_ih_i\|^2 + {\cal B} \left\| \sum_{i=1}^n p_iA_ih_i\right\|^2 \\ 
& \leq \sum_{i=1}^n {\cal A}_i p_i^2 \|A_i\|^2 \|h_i\|^2 + {\cal B} \left\| \sum_{i=1}^n p_iA_ih_i\right\|^2 \\ 
& \leq \sum_{i=1}^n {\cal A}_i p_i^2 R_m^2 \|h_i\|^2 + {\cal B} \left\| \sum_{i=1}^n p_iA_ih_i\right\|^2, 
\end{align*}
where we use $R_m = \max_i \{ \|A_i\|  \}$ in the last inequality. 

For $\left\| \sum_{i=1}^n p_iA_ih_i\right\|^2$, since $\sum_{i=1}^n p_iA_ih_i = [A_1, ..., A_n] [p_1h_1^\top, ..., p_nh_n^\top]^\top$, we have 
\begin{align*}
\left\| \sum_{i=1}^n p_iA_ih_i\right\|^2 &\leq \| [A_1, ..., A_n]\|^2 \cdot \| [p_1h_1^\top, ..., p_nh_n^\top]^\top\|^2 \\ 
& = \lambda_{\rm max} ([A_1, ..., A_n] [A_1, ..., A_n]^\top) \cdot \sum_{i=1}^n p_i^2 \|h_i\|^2 \\ 
& = \lambda_{\rm max} \left(\sum_{i=1}^n A_iA_i^\top \right) \cdot \sum_{i=1}^n p_i^2 \|h_i\|^2 \\
& \leq \sum_{i=1}^n nR^2 p_i^2 \|h_i\|^2, 
\end{align*}
where we use $R^2 = \frac{1}{n} \lambda_{\rm max}\left(\sum_{i=1}^n A_iA_i^\top \right)$ in the last inequality. Combining the above two inequalities, we arrive at 
\begin{align*}
\mathbb{E} \left[  \left\| \sum_{i\in S} A_i h_i \right\|^2  \right] &  \leq \sum_{i=1}^n {\cal A}_i p_i^2 R_m^2 \|h_i\|^2 + {\cal B} \sum_{i=1}^n nR^2 p_i^2 \|h_i\|^2\\ 
& =   \sum_{i=1}^n p_i \left({\cal A}_i p_i R_m^2  + {\cal B} n p_i R^2 \right)\|h_i\|^2. 
\end{align*}

\end{proof}

\section{Proofs for EC-LSVRG in the Composite Case }

\subsection{Lemmas}

Let $\mathbb{E}_k[\cdot]$ denote the expectation conditional on $x^k$, $w^k$, $h^k$, $u^k$, and $e^k_\tau$. 

The following lemma shows the progress at iteration $k$ for the auxiliary points ${\tilde x}^k$ and ${\tilde x}^{k+1}$.

\begin{lemma}\label{lm:itereclsvrg}
	If $\eta \leq \tfrac{1}{4L_f}$, then 
	\begin{eqnarray*}
		\left(1 + \frac{\eta \mu}{2} \right)\mathbb{E}_k \|\tilde{x}^{k+1} - x^*\|^2 &\leq& \|\tilde{x}^k - x^*\|^2 + 2\eta \mathbb{E}_k [P(x^*) - P(x^{k+1})] + \|e^k\|^2 \\ 
		&& + (1 + \eta \mu) \mathbb{E}_k \|e^{k+1}\|^2 + 4\eta^2 \mathbb{E}_k [\|g^k + h^k - \nabla f(x^k)\|^2]. 
	\end{eqnarray*}
\end{lemma}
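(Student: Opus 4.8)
The plan is to work entirely with the virtual iterate ${\tilde x}^k = x^k - e^k$, for which the excerpt already derived the clean recursion ${\tilde x}^{k+1} = {\tilde x}^k - \eta(g^k + h^k + \xi^{k+1})$, where $\xi^{k+1} \in \partial\psi(x^{k+1})$ is the subgradient produced by the proximal step (so that $x^{k+1} = x^{k+0.5} - \eta\xi^{k+1}$). Writing $G^k \eqdef g^k + h^k$, two facts drive everything: first, taking the expectation over $i_k^\tau$ gives $\mathbb{E}_k[G^k] = \nabla f(x^k)$, since the control-variate terms cancel the bias; second, ${\tilde x}^{k+1}$ and $x^{k+1}$ differ exactly by $e^{k+1}$, i.e. $x^{k+1} = {\tilde x}^{k+1} + e^{k+1}$. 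I would begin by expanding $\|{\tilde x}^{k+1}-x^*\|^2 = \|{\tilde x}^k - x^*\|^2 - 2\eta\langle G^k + \xi^{k+1}, {\tilde x}^k - x^*\rangle + \eta^2\|G^k+\xi^{k+1}\|^2$.

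The first manipulation is to re-center the linear term at $x^{k+1}$ rather than ${\tilde x}^k$, since the convexity/smoothness inequalities will be anchored at $x^{k+1}$ and $x^k$. Using ${\tilde x}^k - x^* = (x^{k+1}-x^*) + ({\tilde x}^k - x^{k+1})$ together with $\eta(G^k+\xi^{k+1}) = {\tilde x}^k - {\tilde x}^{k+1}$, the component of the linear term along ${\tilde x}^k - x^{k+1}$ combines with the quadratic $\eta^2\|G^k+\xi^{k+1}\|^2$, and after completing the square against $e^{k+1}$ (via $x^{k+1} = {\tilde x}^{k+1} + e^{k+1}$) it collapses to $-\|(x^k - x^{k+1}) - e^k\|^2 + \|e^{k+1}\|^2$. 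This single step manufactures the $\|e^{k+1}\|^2$ term and, upon expanding the negative square, produces $-\|e^k\|^2$, a cross term $2\langle x^k - x^{k+1}, e^k\rangle$, and $-\|x^k - x^{k+1}\|^2$. The identity now reads $\|{\tilde x}^{k+1}-x^*\|^2 = \|{\tilde x}^k - x^*\|^2 - 2\eta\langle G^k + \xi^{k+1}, x^{k+1}-x^*\rangle - \|(x^k-x^{k+1})-e^k\|^2 + \|e^{k+1}\|^2$.

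Next I would bound $-2\eta\langle G^k+\xi^{k+1}, x^{k+1}-x^*\rangle$ in two pieces. For the subgradient piece, $\mu$-strong convexity of $\psi$ gives $-2\eta\langle\xi^{k+1},x^{k+1}-x^*\rangle \le -2\eta(\psi(x^{k+1})-\psi(x^*)) - \eta\mu\|x^{k+1}-x^*\|^2$. For the gradient piece I would split $x^{k+1}-x^* = (x^k-x^*) + (x^{k+1}-x^k)$: the term $-2\eta\langle G^k, x^k-x^*\rangle$ becomes $-2\eta\langle\nabla f(x^k),x^k-x^*\rangle \le -2\eta(f(x^k)-f(x^*))$ after taking $\mathbb{E}_k$ and using convexity, while $-2\eta\langle\nabla f(x^k), x^{k+1}-x^k\rangle \le -2\eta(f(x^{k+1})-f(x^k)) + \eta L_f\|x^{k+1}-x^k\|^2$ by the descent lemma; the two $f$-terms telescope to $-2\eta(f(x^{k+1})-f(x^*))$, which with the $\psi$-terms yields $2\eta(P(x^*)-P(x^{k+1}))$. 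The leftover stochastic cross term $-2\eta\langle G^k - \nabla f(x^k), x^{k+1}-x^k\rangle$ is handled by Young's inequality with weight chosen to produce exactly $4\eta^2\|G^k-\nabla f(x^k)\|^2 + \tfrac14\|x^{k+1}-x^k\|^2$, and the residual $2\langle x^k-x^{k+1},e^k\rangle$ by a second Young step with weight $\tfrac12$, which converts $-\|e^k\|^2$ into $+\|e^k\|^2$.

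Finally I would collect coefficients. All $\|x^{k+1}-x^k\|^2$ contributions sum to $(-1 + \eta L_f + \tfrac14 + \tfrac12)\|x^{k+1}-x^k\|^2 \le 0$ precisely when $\eta \le \tfrac{1}{4L_f}$, so they are dropped; the $\|e^k\|^2$ terms combine to coefficient $1$. For the strong-convexity penalty I would rewrite $-\eta\mu\|x^{k+1}-x^*\|^2$ via $x^{k+1}={\tilde x}^{k+1}+e^{k+1}$ and $\|{\tilde x}^{k+1}-x^*\|^2 \le 2\|x^{k+1}-x^*\|^2 + 2\|e^{k+1}\|^2$ as $-\tfrac{\eta\mu}{2}\|{\tilde x}^{k+1}-x^*\|^2 + \eta\mu\|e^{k+1}\|^2$, so the $\|e^{k+1}\|^2$ terms total $1+\eta\mu$ and moving $\tfrac{\eta\mu}{2}\|{\tilde x}^{k+1}-x^*\|^2$ to the left produces the factor $(1+\tfrac{\eta\mu}{2})$. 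The main obstacle is the correlation between $G^k$ and $x^{k+1}$: one cannot replace $G^k$ by $\nabla f(x^k)$ inside $\langle G^k, x^{k+1}-x^*\rangle$, because $x^{k+1}$ depends on $g^k$. The resolution is to take the expectation only on the measurable part $\langle G^k, x^k-x^*\rangle$ and to treat the $x^{k+1}-x^k$ part pathwise through smoothness and Young's inequality; aligning the three Young weights so that the $\|x^{k+1}-x^k\|^2$ coefficient is nonpositive under $\eta\le\tfrac{1}{4L_f}$ while the error terms land on exactly $1$ and $1+\eta\mu$ is the delicate bookkeeping.
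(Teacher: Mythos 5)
Your proof is correct and is essentially the paper's own argument assembled in the opposite direction: where you expand $\|\tilde{x}^{k+1}-x^*\|^2$ and collapse the quadratic term exactly into $\|e^{k+1}\|^2 - \|(x^k-x^{k+1})-e^k\|^2$, the paper reaches the same algebra through polarization of $\langle \tilde{x}^k - x^{k+1}, x^*-x^{k+1}\rangle$ and $\langle x^{k+1}-\tilde{x}^{k+1}, x^*-x^{k+1}\rangle$ before plugging into a convexity bound for $f$. All substantive ingredients coincide — strong convexity of $\psi$ at $x^{k+1}$, convexity of $f$ at $x^k$ via $\mathbb{E}_k[g^k+h^k]=\nabla f(x^k)$, the descent lemma, and Young steps with the same weights (contributing $\tfrac14$ and $\tfrac12$ to the $\|x^{k+1}-x^k\|^2$ coefficient, hence the same threshold $\eta\le\tfrac{1}{4L_f}$, and the coefficients $1$ and $1+\eta\mu$ on the error terms) — so this is the same proof.
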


\begin{proof}

Since $\tilde{x}^{k+1} = \tilde{x}^k - \eta(g^k + h^k + \partial \psi(x^{k+1}))$, we have 

\begin{eqnarray*}
	&& \langle \eta ( g^k + h^k), x^* - x^{k+1} \rangle \\ 
	&=& \langle \tilde{x}^k - \tilde{x}^{k+1} - \eta\partial \psi(x^{k+1}), x^* - x^{k+1} \rangle \\ 
	&=& \langle \tilde{x}^k - x^{k+1}, x^*-x^{k+1} \rangle + \langle x^{k+1} - \tilde{x}^{k+1}, x^* - x^{k+1} \rangle - \eta \langle \partial \psi(x^{k+1}), x^* - x^{k+1} \rangle \\ 
	&\geq& \frac{1}{2} \left(  -\|\tilde{x}^k - x^*\|^2 + \|\tilde{x}^k - x^{k+1}\|^2 + \|x^{k+1} - x^*\|^2  \right)  + \frac{1}{2}\left(  \|\tilde{x}^{k+1}  - x^*\|^2 \right. \\ 
	&& \left. - \|x^{k+1} - \tilde{x}^{k+1}\|^2 - \|x^{k+1} - x^*\|^2  \right) + \eta \left(  \psi(x^{k+1}) - \psi(x^*) + \frac{\mu}{2}\|x^{k+1}-x^*\|^2  \right) \\ 
	&=& \frac{1}{2}\|\tilde{x}^{k+1}-x^*\|^2 - \frac{1}{2}\|\tilde{x}^k - x^*\|^2 + \frac{1}{2}\|\tilde{x}^k - x^{k+1}\|^2 - \frac{1}{2}\|\tilde{x}^{k+1} - x^{k+1}\|^2 \\
	&& + \eta \left(  \psi(x^{k+1}) - \psi(x^*) + \frac{\mu}{2}\|x^{k+1}-x^*\|^2  \right). 
\end{eqnarray*}

From $\|\tilde{x}^k - x^{k+1}\|^2 \geq \frac{1}{2}\|x^{k+1} - x^k\|^2 - \|\tilde{x}^k - x^k\|^2$, and $\|x^{k+1}-x^*\|^2 \geq \frac{1}{2}\|\tilde{x}^{k+1}-x^*\|^2 - \|\tilde{x}^{k+1} - x^{k+1}\|^2$, we arrive at 
\begin{eqnarray}
&& \langle \eta(g^k +  h^k), x^* - x^{k+1} \rangle \nonumber \\ 
&\geq& \frac{1 + \eta\mu/2}{2} \|\tilde{x}^{k+1}-x^*\|^2 - \frac{1}{2}\|\tilde{x}^k - x^*\|^2 + \frac{1}{4}\|x^{k+1}-x^k\|^2 - \frac{1}{2}\|\tilde{x}^k - x^k\|^2 \nonumber \\ 
&& - \frac{1 + \eta \mu}{2}\|\tilde{x}^{k+1} - x^{k+1}\|^2 + \eta (\psi(x^{k+1}) - \psi(x^*))  \label{eq:nfxstar-xk+1}. 
\end{eqnarray}

Since $f$ is convex and $\mathbb{E}_k[g^k + h^k] = \nabla f(x^k)$, we have 
\begin{eqnarray*}
	f(x^*) &\geq& f(x^k) + \langle \nabla f(x^k), x^* - x^k\rangle \\ 
	&=& f(x^k) + \mathbb{E}_k[ \langle g^k + h^k, x^* - x^{k+1} + x^{k+1} -x^k \rangle ] \\ 
	&=& f(x^k) + \mathbb{E}_k [\langle g^k + h^k, x^* - x^{k+1} \rangle ] + \mathbb{E}_k[\langle g^k + h^k - \nabla f(x^k), x^{k+1}-x^k \rangle] \\ 
	&& + \mathbb{E}_k[\langle \nabla f(x^k), x^{k+1}-x^k \rangle] \\ 
	&\geq& \mathbb{E}_k[f(x^{k+1})] - \frac{L_f}{2}\mathbb{E}_k[\|x^{k+1}-x^k\|^2] + \mathbb{E}_k [\langle g^k + h^k, x^* - x^{k+1} \rangle ]  \\ 
	&& + \mathbb{E}_k[\langle g^k + h^k - \nabla f(x^k), x^{k+1}-x^k \rangle]   \\ 
	&\geq& \mathbb{E}_k[f(x^{k+1})] - \frac{L_f}{2}\mathbb{E}_k[\|x^{k+1}-x^k\|^2] + \mathbb{E}_k [\langle g^k + h^k, x^* - x^{k+1} \rangle ]  \\ 
	&& -\frac{1}{2\beta} \mathbb{E}_k [\|g^k + h^k - \nabla f(x^k)\|^2] - \frac{\beta}{2} \mathbb{E}_k[\|x^{k+1} - x^k\|^2], 
\end{eqnarray*}
where the second inequality comes from that $f$ is $L_f$-smooth and the last inequality comes from Young's inequality with any $\beta>0$. 

By choosing $\beta = \frac{1}{4\eta}$, we have 
\begin{eqnarray*}
	&&  f(x^*)  \\ 
	&\geq& \mathbb{E}_k[f(x^{k+1})] - \left(\frac{L_f}{2} + \frac{1}{8\eta} \right)\mathbb{E}_k[\|x^{k+1}-x^k\|^2] + \mathbb{E}_k [\langle g^k + h^k, x^* - x^{k+1} \rangle ] \\ 
	&& - 2\eta \mathbb{E}_k [\|g^k + h^k - \nabla f(x^k)\|^2]   \\ 
	&\overset{(\ref{eq:nfxstar-xk+1})}{\geq}& \mathbb{E}_k[f(x^{k+1})] + \left( \frac{1}{4\eta} - \frac{L_f}{2} - \frac{1}{8\eta} \right)\mathbb{E}_k[\|x^{k+1}-x^k\|^2]  + \frac{1+\eta\mu/2}{2\eta} \mathbb{E}_k\|\tilde{x}^{k+1} - x^*\|^2 \\ 
	&& - \frac{1}{2\eta}\|\tilde{x}^k-x^*\|^2 - \frac{1}{2\eta}\|\tilde{x}^k - x^k\|^2 - \frac{1+\eta\mu}{2\eta} \mathbb{E}_k\|\tilde{x}^{k+1} - x^{k+1}\|^2 \\ 
	&& + \mathbb{E}_k[\psi(x^{k+1})] - \psi(x^*) - 2\eta \mathbb{E}_k [\|g^k + h^k - \nabla f(x^k)\|^2].  
\end{eqnarray*}

Noticing that $\frac{1}{4\eta} - \frac{L_f}{2}-\frac{1}{8\eta} \geq 0$ if $\eta \leq \frac{1}{4L_f}$, we can get the result after rearrangement. 

\end{proof}

\begin{lemma}\label{lm:gk2}
	We have 
	\begin{equation}
	\frac{1}{n} \sum_{\tau=1}^n \mathbb{E}_k \left\|\nabla f_{i_k^\tau}^{(\tau)}(x^k) - \nabla f_{i_k^\tau}^{(\tau)}(w^k) \right\|^2  \leq 4L [P(x^k) - P(x^*)] + 4L [P(w^k) - P(x^*)], \label{eq:gktau2}
	\end{equation}
	and 
	\begin{equation}
	\frac{1}{n} \sum_{\tau=1}^n  \left\| \nabla f^{(\tau)}(x^k) - \nabla f^{(\tau)}(w^k) \right\|^2 \leq 4{\bar L}  [P(x^k) - P(x^*)]  + 4{\bar L} [P(w^k) - P(x^*)], \label{eq:nftau}
	\end{equation}
	and 
	\begin{eqnarray}
	\mathbb{E}_k \left\| \frac{1}{n} \sum_{\tau=1}^n \left(  \nabla f_{i_k^\tau}^{(\tau)}(x^k) - \nabla f_{i_k^\tau}^{(\tau)}(w^k)  \right) \right\|^2 &\leq& 4\left(L_f + \frac{L}{n} \right) [P(x^k) - P(x^*)] \nonumber \\ 
	&& + 4\left(L_f + \frac{L}{n} \right) [P(w^k) - P(x^*)].  \label{eq:gk2}
	\end{eqnarray}
	and 
	\begin{eqnarray}
	&& \mathbb{E}_k \left\|\frac{1}{n} \sum_{\tau=1}^n \left(  \nabla f_{i_k^\tau}^{(\tau)}(x^k) - \nabla f_{i_k^\tau}^{(\tau)}(w^k)  \right) + \nabla f(w^k) - \nabla f(x^k) \right\|^2 \nonumber \\ 
	&\leq& \frac{4L}{n} [P(x^k) - P(x^*) + P(w^k) - P(x^*)] .  \label{eq:gk2variance}
	\end{eqnarray}
\end{lemma}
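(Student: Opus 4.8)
The plan is to establish each of the four inequalities by combining standard smoothness/convexity facts with the key variance-reduction identity that an $L$-smooth convex function $\phi$ with minimizer $x^*$ satisfies $\|\nabla\phi(x)-\nabla\phi(x^*)\|^2 \le 2L(\phi(x)-\phi(x^*))$. The subtlety throughout is that our functions $f^{(\tau)}_i$ are only individually convex and $L$-smooth, whereas the bounds are stated in terms of the \emph{objective} suboptimalities $P(x^k)-P(x^*)$ and $P(w^k)-P(x^*)$, which involve the regularizer $\psi$ and the full average. So the main conceptual move is to pass from per-component gradient deviations to $P$-suboptimality, paying a constant factor and using $x^*$ (the minimizer of $P$) as the common reference point rather than each component's own minimizer.

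First I would prove (\ref{eq:gktau2}). I would expand $\|\nabla f^{(\tau)}_{i_k^\tau}(x^k)-\nabla f^{(\tau)}_{i_k^\tau}(w^k)\|^2 \le 2\|\nabla f^{(\tau)}_{i}(x^k)-\nabla f^{(\tau)}_{i}(x^*)\|^2 + 2\|\nabla f^{(\tau)}_{i}(w^k)-\nabla f^{(\tau)}_{i}(x^*)\|^2$, then take $\mathbb{E}_k$ over the uniform sampling of $i_k^\tau$ and average over $\tau$. For each term I would apply the smoothness inequality $\|\nabla f^{(\tau)}_i(x)-\nabla f^{(\tau)}_i(x^*)\|^2 \le 2L\,(f^{(\tau)}_i(x)-f^{(\tau)}_i(x^*)-\langle\nabla f^{(\tau)}_i(x^*),x-x^*\rangle)$ (the Bregman form, valid for $L$-smooth convex functions). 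Averaging over $i$ and $\tau$ collapses the sum to $f(x)-f(x^*)-\langle\nabla f(x^*),x-x^*\rangle$; the first-order optimality of $x^*$ for $P$ gives $\nabla f(x^*)\in-\partial\psi(x^*)$, so the inner-product term combines with convexity of $\psi$ to yield $\le P(x)-P(x^*)$. Tracking the constant $2L\cdot 2$ gives the factor $4L$. The same argument with $f^{(\tau)}$ in place of $f^{(\tau)}_i$ and using that $f^{(\tau)}$ is $\bar L$-smooth proves (\ref{eq:nftau}); here there is no sampling expectation since the per-node full gradients are deterministic given $x^k,w^k$.

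For (\ref{eq:gk2}) I would again split into $x^k$ and $w^k$ contributions and bound the variance of the averaged stochastic gradient. The point is that $\mathbb{E}_k\|\frac1n\sum_\tau(\nabla f^{(\tau)}_{i_k^\tau}(x^k)-\nabla f^{(\tau)}_{i_k^\tau}(w^k))\|^2$ decomposes, by independence of the $i_k^\tau$ across $\tau$, into the squared norm of the mean plus the sum of per-node variances scaled by $1/n^2$; the mean is $\nabla f(x^k)-\nabla f(w^k)$, contributing the $L_f$-smoothness term, and the variance sum contributes the $L/n$ term via (\ref{eq:gktau2}). The last inequality (\ref{eq:gk2variance}) is the cleanest: the quantity inside is exactly the deviation of the stochastic gradient estimator from its mean, so $\mathbb{E}_k\|\cdot\|^2 = \frac{1}{n^2}\sum_\tau \mathbb{E}_k\|\nabla f^{(\tau)}_{i_k^\tau}(x^k)-\nabla f^{(\tau)}_{i_k^\tau}(w^k) - (\nabla f^{(\tau)}(x^k)-\nabla f^{(\tau)}(w^k))\|^2$, and bounding variance by second moment reduces it to $\frac{1}{n}$ times (\ref{eq:gktau2}), giving the $4L/n$ factor.

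The main obstacle I anticipate is \textbf{the careful bookkeeping in the independence/variance decomposition} for (\ref{eq:gk2}) and (\ref{eq:gk2variance}): one must verify that cross terms across nodes vanish (using $\mathbb{E}_k[\nabla f^{(\tau)}_{i_k^\tau}(\cdot)]=\nabla f^{(\tau)}(\cdot)$ and independence of samples on different nodes), and keep the $1/n$ versus $1/n^2$ scalings straight so that the leading $L_f$ term survives without an $n$-penalty while the per-sample fluctuation is correctly damped by $1/n$. The convexity-to-$P$-suboptimality step is routine once the optimality condition for $x^*$ is invoked, so the real work is purely in the variance algebra.
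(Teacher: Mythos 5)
Your proposal is correct and matches the paper's own proof in all essentials: the Bregman-form smoothness bound centered at $x^*$ combined with the optimality condition $-\nabla f(x^*)\in\partial\psi(x^*)$ to pass to $P$-suboptimality (the paper's inequality (\ref{eq:fP})), and the independence-based mean--variance decomposition for (\ref{eq:gk2}) and (\ref{eq:gk2variance}), where the variance is bounded by the second moment via (\ref{eq:gktau2}). The paper writes the decomposition by expanding cross terms explicitly rather than quoting the bias--variance identity, but this is the same argument with the same constants.
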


\begin{proof}

Since $f_i^{(\tau)}$ is $L$-smooth and $f$ is $L_f$-smooth, we have (\citep{NesterovBook}, Theorem 2.1.5)
$$
\|\nabla f_i^{(\tau)}(x) - \nabla f_i^{(\tau)}(y)\|^2 \leq 2L (f_i^{(\tau)}(x) - f_i^{(\tau)}(y) - \langle \nabla f_i^{(\tau)}(y), x-y\rangle ), 
$$
and 
$$
\| \nabla f(x) - \nabla f(y) \|^2 \leq 2L_f \left(  f(x) - f(y) - \langle \nabla f(y), x-y \rangle  \right), 
$$
for any $x, y \in \R^d$. Therefore, 
\begin{eqnarray*}
	 \mathbb{E}_k \|\nabla f_{i_k^\tau}^{(\tau)}(x^k) - \nabla f_{i_k^\tau}^{(\tau)}(w^k) \|^2 
	&\leq& 2\mathbb{E}_k \|\nabla f_{i_k^\tau}^{(\tau)}(x^k) - \nabla f_{i_k^\tau}^{(\tau)} (x^*)\|^2 + 2\mathbb{E}_k \|\nabla f_{i_k^\tau}^{(\tau)}(w^k) - \nabla f_{i_k^\tau}^{(\tau)}(x^*)\|^2 \\ 
	&\leq& 4L [f^{(\tau)}(x^k) - f^{(\tau)}(x^*) - \langle \nabla f^{(\tau)}(x^*), x^k-x^* \rangle] \\ 
	&& + 4L [f^{(\tau)}(w^k) - f^{(\tau)}(x^*) - \langle \nabla f^{(\tau)}(x^*), w^k-x^* \rangle ], 
\end{eqnarray*}

and 

\begin{eqnarray*}
	&& \mathbb{E}_k \left\| \frac{1}{n} \sum_{\tau=1}^n \left(  \nabla f_{i_k^\tau}^{(\tau)}(x^k) - \nabla f_{i_k^\tau}^{(\tau)}(w^k)  \right) \right\|^2 \\ 
	&=& \mathbb{E}_k \left\| \frac{1}{n} \sum_{\tau=1}^n q^k_\tau \right\|^2 \\
	&=& \frac{1}{n^2} \mathbb{E}_k \left \langle \sum_{\tau=1}^n q^k_{\tau},  \sum_{\tau=1}^n q^k_{\tau} \right \rangle \\ 
	&=& \frac{1}{n^2} \sum_{\tau_1, \tau_2=1}^n \mathbb{E}_k \left\langle  q^k_{\tau_1}, q^k_{\tau_2} \right\rangle \\ 
	&=& \frac{1}{n^2} \sum_{\tau=1}^n \mathbb{E}_k \|q^k_{\tau}\|^2 + \frac{1}{n^2} \sum_{\tau_1 \neq \tau_2}  \left \langle \nabla f^{(\tau_1)}(x^k) - \nabla f^{(\tau_1)}(w^k),  \nabla f^{(\tau_2)}(x^k) - \nabla f^{(\tau_2)}(w^k) \right \rangle \\
	&=& \frac{1}{n^2} \sum_{\tau=1}^n \mathbb{E}_k \|q^k_{\tau}\|^2 +  \| \nabla f(x^k) - \nabla f(w^k) \|^2 - \frac{1}{n^2} \sum_{\tau=1}^n  \|\nabla f^{(\tau)}(x^k) - \nabla f^{(\tau)}(w^k) \|^2 \\ 
	&\leq&  \frac{1}{n^2} \sum_{\tau=1}^n \mathbb{E}_k \|q^k_{\tau}\|^2 + 2 \| \nabla f(x^k) - \nabla f(x^*)\|^2 + 2 \|\nabla f(w^k) - \nabla f(x^*)\|^2 \\ 
	&\leq& \left(  \frac{4L}{n} + 4L_f  \right) [f(x^k) - f(x^*) - \langle \nabla f(x^*), x^k-x^* \rangle ] \\ 
	&& + \left(  \frac{4L}{n} + 4L_f  \right) [f(w^k) - f(x^*) - \langle \nabla f(x^*), w^k-x^* \rangle ], 
\end{eqnarray*}
where we denote $q^k_{\tau} =  \nabla f_{i_k^\tau}^{(\tau)}(x^k) - \nabla f_{i_k^\tau}^{(\tau)}(w^k)$. 

Since $x^*$ is an optimal solution, we have $-\nabla f(x^*) \in \partial \psi(x^*)$, which implies that 
\begin{equation}\label{eq:fP}
f(x^k) - f(x^*) - \langle \nabla f(x^*), x^k-x^* \rangle \leq P(x^k) - P(x^*). 
\end{equation}

Thus, 
$$
\frac{1}{n} \sum_{\tau=1}^n \mathbb{E}_k \left\| \nabla f_{i_k^\tau}^{(\tau)}(x^k) - \nabla f_{i_k^\tau}^{(\tau)}(w^k)  \right\|^2 \leq 4L  [P(x^k) - P(x^*)]  + 4L [P(w^k) - P(x^*)], 
$$
and 
\begin{eqnarray*}
\mathbb{E}_k \left\| \frac{1}{n} \sum_{\tau=1}^n \left(  \nabla f_{i_k^\tau}^{(\tau)}(x^k) - \nabla f_{i_k^\tau}^{(\tau)}(w^k)  \right) \right\|^2 &\leq& 4\left(L_f + \frac{L}{n} \right) [P(x^k) - P(x^*)]   + 4\left(L_f + \frac{L}{n} \right)  [P(w^k) - P(x^*)].  
\end{eqnarray*}

For $ \mathbb{E}_k \left\|\frac{1}{n} \sum_{\tau=1}^n \left(  \nabla f_{i_k^\tau}^{(\tau)}(x^k) - \nabla f_{i_k^\tau}^{(\tau)}(w^k)  \right) + \nabla f(w^k) - \nabla f(x^k) \right\|^2$, we have 
\begin{eqnarray*}
	 &&\mathbb{E}_k \left\|\frac{1}{n} \sum_{\tau=1}^n \left(  \nabla f_{i_k^\tau}^{(\tau)}(x^k) - \nabla f_{i_k^\tau}^{(\tau)}(w^k)  \right) + \nabla f(w^k) - \nabla f(x^k) \right\|^2  \\ 
	 &=& \mathbb{E}_k \left\|\frac{1}{n} \sum_{\tau=1}^n \left(  \nabla f_{i_k^\tau}^{(\tau)}(x^k) - \nabla f_{i_k^\tau}^{(\tau)}(w^k)  \right) \right\|^2 - \|\nabla f(x^k) - \nabla f(w^k)\|^2 \\
	&=& \frac{1}{n^2} \sum_{\tau=1}^n \mathbb{E}_k \|q^k_{\tau}\|^2 - \frac{1}{n^2} \sum_{\tau=1}^n \|\nabla f^{(\tau)}(x^k) - \nabla f^{(\tau)}(w^k) \|^2 \\ 
	&\leq& \frac{1}{n^2} \sum_{\tau=1}^n \mathbb{E}_k \|q^k_{\tau}\|^2 \\ 
	&\leq& \frac{4L}{n} [P(x^k) - P(x^*)]  + \frac{4L}{n} [P(w^k) - P(x^*)].
\end{eqnarray*}

Since $f^{(\tau)}$ is ${\bar L}$-smooth, we have 
$$
\|\nabla f^{(\tau)}(x) - \nabla f^{(\tau)}(y)\|^2 \leq 2{\bar L} (f^{(\tau)}(x) - f^{(\tau)}(y) - \langle \nabla f^{(\tau)}(y), x-y\rangle ). 
$$

Then similarly, we can get
$$
\frac{1}{n} \sum_{\tau=1}^n \| \nabla f^{(\tau)}(x^k) - \nabla f^{(\tau)}(w^k) \|^2 \leq 4{\bar L}  [P(x^k) - P(x^*)]  + 4{\bar L} [P(w^k) - P(x^*)]. 
$$

\end{proof}

The following two lemmas show the evolution of $e^k_\tau$ and $e^k$, which will be used to construct the Lyapulov functions.

\begin{lemma}\label{lm:ek+1-1eclsvrg}
	We have 
	\begin{eqnarray*}
		\frac{1}{n} \sum_{\tau=1}^n \mathbb{E}_k [\|e^{k+1}_{\tau}\|^2] &\leq& \left(  1 - \frac{\delta}{2}  \right) \frac{1}{n} \sum_{\tau=1}^n \|e^k_\tau\|^2 +  \frac{4(1-\delta)\eta^2}{\delta n} \sum_{\tau=1}^n  \|\nabla f^{(\tau)}(w^k) - h^k_\tau\|^2 \\
		&&  + 4(1-\delta) \eta^2 \left(  \frac{4\bar L}{\delta} + L \right) \left(  P(x^k) - P(x^*)  + P(w^k) - P(x^*) \right). 
	\end{eqnarray*}
	
\end{lemma}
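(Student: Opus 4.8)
The plan is to track $e^{k+1}_\tau$ through its defining recursion. From the update rules of Algorithm~\ref{alg:ec-lsvrg} ($y^k_\tau = Q(\eta g^k_\tau + e^k_\tau)$ and $e^{k+1}_\tau = e^k_\tau + \eta g^k_\tau - y^k_\tau$), we see that $e^{k+1}_\tau = (\eta g^k_\tau + e^k_\tau) - Q(\eta g^k_\tau + e^k_\tau)$ is exactly the compression residual of the input $\eta g^k_\tau + e^k_\tau$. First I would condition on the sample $i^\tau_k$, so that the compressor acts on a fixed vector, and invoke the contraction property \eqref{eq:contractor} together with independence of the compression randomness from the sampling to obtain
\[
\mathbb{E}_k\|e^{k+1}_\tau\|^2 \;\leq\; (1-\delta)\,\mathbb{E}_k\|\eta g^k_\tau + e^k_\tau\|^2 .
\]

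The key step is to expand the right-hand side with a variance decomposition rather than a blunt Young's inequality. Writing $A_\tau \eqdef \nabla f^{(\tau)}_{i^\tau_k}(x^k) - \nabla f^{(\tau)}_{i^\tau_k}(w^k)$, one has $\mathbb{E}_k[A_\tau] = \nabla f^{(\tau)}(x^k) - \nabla f^{(\tau)}(w^k)$, and hence
\[
\eta g^k_\tau + e^k_\tau \;=\; \eta\big(A_\tau - \mathbb{E}_k[A_\tau]\big) \;+\; \big(\eta(\nabla f^{(\tau)}(x^k) - h^k_\tau) + e^k_\tau\big),
\]
where the first bracket is mean-zero under $\mathbb{E}_k$ and the second is deterministic. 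Orthogonality then gives
\[
\mathbb{E}_k\|\eta g^k_\tau + e^k_\tau\|^2 \;=\; \eta^2\,\mathbb{E}_k\|A_\tau - \mathbb{E}_k[A_\tau]\|^2 \;+\; \big\|\eta(\nabla f^{(\tau)}(x^k) - h^k_\tau) + e^k_\tau\big\|^2 .
\]
Isolating the variance term this way is what I expect to be the crux: it keeps the stochastic part at the clean coefficient $(1-\delta)\eta^2$, which produces the $L$ contribution with no $1/\delta$ blow-up, while only the deterministic part absorbs the $1/\delta$ factor from the subsequent Young step, producing the $\bar L$ and $h^k_\tau$ contributions. A split performed before the decomposition would instead inflate the $L$ term by $1/\delta$ and overshoot the stated bound.

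To finish, I would apply $\|a+b\|^2 \le (1+\beta)\|a\|^2 + (1+\beta^{-1})\|b\|^2$ to the deterministic bracket with $a = e^k_\tau$, $b = \eta(\nabla f^{(\tau)}(x^k) - h^k_\tau)$, and $\beta = \tfrac{\delta}{2(1-\delta)}$, so that $(1-\delta)(1+\beta) = 1-\tfrac{\delta}{2}$ and $(1-\delta)(1+\beta^{-1}) = \tfrac{(1-\delta)(2-\delta)}{\delta}$. Then I would split $\nabla f^{(\tau)}(x^k) - h^k_\tau = (\nabla f^{(\tau)}(x^k) - \nabla f^{(\tau)}(w^k)) + (\nabla f^{(\tau)}(w^k) - h^k_\tau)$ via $\|u+v\|^2 \le 2\|u\|^2 + 2\|v\|^2$. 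Averaging over $\tau$, bounding $\mathbb{E}_k\|A_\tau - \mathbb{E}_k[A_\tau]\|^2 \le \mathbb{E}_k\|A_\tau\|^2$ and invoking \eqref{eq:gktau2} for the stochastic term (giving $4L$) and \eqref{eq:nftau} for the gradient-difference term (giving $4{\bar L}$), the coefficients collapse to the claimed $1-\tfrac{\delta}{2}$ on $\tfrac1n\sum_\tau\|e^k_\tau\|^2$, $\tfrac{4(1-\delta)\eta^2}{\delta}$ on $\tfrac1n\sum_\tau\|\nabla f^{(\tau)}(w^k) - h^k_\tau\|^2$, and $4(1-\delta)\eta^2(\tfrac{4{\bar L}}{\delta} + L)$ on $P(x^k)-P(x^*)+P(w^k)-P(x^*)$, once one checks the elementary inequalities $2(2-\delta)\le 4$ and $8(2-\delta)\le 16$, both of which hold for every $\delta \in (0,1]$.
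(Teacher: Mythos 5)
Your proposal is correct and follows essentially the same route as the paper's proof: the same contraction step, the same mean-zero/deterministic orthogonal decomposition of $\eta g^k_\tau + e^k_\tau$ (which is exactly why the $L$ term avoids the $1/\delta$ factor), the same Young's inequality with $\beta = \tfrac{\delta}{2(1-\delta)}$, and the same final invocations of \eqref{eq:gktau2} and \eqref{eq:nftau}. The only cosmetic differences are that you carry the $(2-\delta)$ factors explicitly and verify them at the end, whereas the paper bounds $(1-\delta)(1+\beta^{-1})$ by $\tfrac{2(1-\delta)}{\delta}$ immediately, and that the paper separately notes the degenerate case $\delta = 1$ (where the bound is trivial since the right-hand side after contraction vanishes).
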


\begin{proof}

First, we have 
\begin{eqnarray*}
	&& \mathbb{E}_k [ \|e^{k+1}_\tau\|^2 ] \\ 
	&\overset{(\ref{eq:contractor})}{\leq}& (1-\delta)\mathbb{E}_k \|e^k_\tau + \eta g^k_\tau\|^2 \\
	&=& (1-\delta)\mathbb{E}_k \|e^k_\tau + \eta (\nabla f^{(\tau)}(x^k) - \nabla f^{(\tau)}(w^k) ) + \eta g^k_\tau - \eta (\nabla f^{(\tau)}(x^k) - \nabla f^{(\tau)}(w^k) ) \|^2 \\
	&=& (1-\delta) \mathbb{E}_k \|e^k_\tau +  \eta (\nabla f^{(\tau)}(x^k) - \nabla f^{(\tau)}(w^k) + \nabla f^{(\tau)}(w^k) - h^k_{\tau} )\|^2 \\ 
	&& + (1-\delta)\eta^2 \mathbb{E}_k \| \nabla f_{i_k^\tau}^{(\tau)}(x^k) - \nabla f_{i_k^\tau}^{(\tau)}(w^k)  - (\nabla f^{(\tau)}(x^k) - \nabla f^{(\tau)}(w^k) ) \|^2 \\ 
	&\leq& (1-\delta) \|e^k_\tau +  \eta (\nabla f^{(\tau)}(x^k) - h^k_{\tau} )\|^2 + (1-\delta) \eta^2\mathbb{E}_k \| \nabla f_{i_k^\tau}^{(\tau)}(x^k) - \nabla f_{i_k^\tau}^{(\tau)}(w^k)  \|^2 \\ 
	&\leq& (1-\delta)(1+\beta) \|e^k_\tau\|^2 + (1-\delta)\left(1+ \frac{1}{\beta} \right)\eta^2 \|\nabla f^{(\tau)}(x^k) -h^k_{\tau}\|^2 \\ 
	&& + (1-\delta) \eta^2\mathbb{E}_k \|\nabla f_{i_k^\tau}^{(\tau)}(x^k) - \nabla f_{i_k^\tau}^{(\tau)}(w^k) \|^2  \\ 
	&\leq& \left(1-\frac{\delta}{2} \right) \|e^k_\tau\|^2 + \frac{2(1-\delta)}{\delta}\eta^2 \|\nabla f^{(\tau)}(x^k) - h^k_\tau\|^2 + (1-\delta) \eta^2\mathbb{E}_k \|\nabla f_{i_k^\tau}^{(\tau)}(x^k) - \nabla f_{i_k^\tau}^{(\tau)}(w^k) \|^2, 
\end{eqnarray*}
where we use Young's inequality in the third inequality and choose $\beta = \frac{\delta}{2(1-\delta)}$ when $\delta<1$. When $\delta = 1$, it is easy to see that the above inequality also holds. 

Then from Young's inequality, we can get 

\begin{eqnarray*}
	&& \frac{1}{n} \sum_{\tau=1}^n \mathbb{E}_k [\|e^{k+1}_\tau \|^2 ] \\ 
	&\leq& \left(  1 - \frac{\delta}{2}  \right) \frac{1}{n} \sum_{\tau=1}^n \|e^k_\tau\|^2 +  \frac{4(1-\delta) \eta^2}{\delta n}  \sum_{\tau=1}^n \|\nabla f^{(\tau)}(x^k) - \nabla f^{(\tau)}(w^k) \|^2 \\ 
	&& +  \frac{4(1-\delta)\eta^2}{\delta n} \sum_{\tau=1}^n\|\nabla f^{(\tau)}(w^k) - h^k_\tau\|^2 + (1-\delta) \eta^2 \frac{1}{n} \sum_{\tau=1}^n \mathbb{E}_k \|\nabla f_{i_k^\tau}^{(\tau)}(x^k) - \nabla f_{i_k^\tau}^{(\tau)}(w^k) \|^2 \\ 
	&\overset{(\ref{eq:nftau})}{\leq}& \left(  1 - \frac{\delta}{2}  \right) \frac{1}{n} \sum_{\tau=1}^n \|e^k_\tau\|^2 + \frac{16(1-\delta)\eta^2 {\bar L}}{\delta} \left(  P(x^k) - P(x^*)  + P(w^k) - P(x^*) \right)  \\
	&& +   \frac{4(1-\delta)\eta^2}{\delta n} \sum_{\tau=1}^n \|\nabla f^{(\tau)}(w^k) - h^k_\tau\|^2 + (1-\delta) \eta^2 \frac{1}{n} \sum_{\tau=1}^n \mathbb{E}_k \|\nabla f_{i_k^\tau}^{(\tau)}(x^k) - \nabla f_{i_k^\tau}^{(\tau)}(w^k) \|^2 \\ 
	&\overset{(\ref{eq:gktau2})}{\leq}& \left(  1 - \frac{\delta}{2}  \right) \frac{1}{n} \sum_{\tau=1}^n \|e^k_\tau\|^2 +  \frac{4(1-\delta)\eta^2}{\delta n} \sum_{\tau=1}^n \|\nabla f^{(\tau)}(w^k) - h^k_\tau\|^2 \\
	&&  + 4(1-\delta) \eta^2 \left(  \frac{4\bar L}{\delta} + L \right) \left(  P(x^k) - P(x^*)  + P(w^k) - P(x^*) \right). 
\end{eqnarray*}

\end{proof}

\begin{lemma}\label{lm:ek+1-2eclsvrg}
	Under Assumption \ref{as:expcompressor}, we have 
	\begin{eqnarray*}
		\mathbb{E}_k \|e^{k+1}\|^2 &\leq&   \left(  1 - \frac{\delta}{2}  \right) \|e^k\|^2 + \frac{2(1-\delta)\delta}{n^2} \sum_{\tau=1}^n  \|e^k_{\tau} \|^2  + \frac{4(1-\delta)\delta \eta^2 }{n^2 } \sum_{\tau=1}^n  \| \nabla f^{(\tau)}(w^k) - h^k_\tau \|^2  \\  
		&& + 4(1-\delta)\eta^2 \left(  \frac{4L_f}{\delta} + \frac{5L}{n}  \right) [P(x^k)- P(x^*) + P(w^k) - P(x^*)]  + \frac{4(1-\delta)\eta^2}{\delta }  \|\nabla f(w^k) - h^k \|^2.  
	\end{eqnarray*}
	
\end{lemma}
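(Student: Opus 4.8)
The plan is to bound the averaged error $\mathbb{E}_k\|e^{k+1}\|^2$ directly (in contrast with Lemma~\ref{lm:ek+1-1eclsvrg}, which bounds the per-node quantity $\frac{1}{n}\sum_\tau\|e^{k+1}_\tau\|^2$), exploiting Assumption~\ref{as:expcompressor} together with the independence of the compressors across nodes. Writing $a_\tau \eqdef e^k_\tau + \eta g^k_\tau$, the error update gives $e^{k+1}_\tau = a_\tau - Q(a_\tau)$, hence $e^{k+1} = \frac{1}{n}\sum_\tau(a_\tau - Q(a_\tau))$, with $\frac{1}{n}\sum_\tau a_\tau = e^k + \eta g^k$.

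First I would condition on everything except the compressor randomness. Under Assumption~\ref{as:expcompressor}, $\mathbb{E}_Q[Q(a_\tau)] = \delta a_\tau$, so $\mathbb{E}_Q[e^{k+1}_\tau] = (1-\delta)a_\tau$ and $\mathbb{E}_Q[e^{k+1}] = (1-\delta)(e^k + \eta g^k)$. Since the compressors act independently on the $n$ nodes, the bias--variance split factorizes, giving $\mathbb{E}_Q\|e^{k+1}\|^2 = (1-\delta)^2\|e^k + \eta g^k\|^2 + \frac{1}{n^2}\sum_\tau\mathrm{Var}_Q(e^{k+1}_\tau)$. The crucial point is the variance bound: combining the contraction property $\mathbb{E}_Q\|a_\tau - Q(a_\tau)\|^2 \le (1-\delta)\|a_\tau\|^2$ with $\mathbb{E}_Q[e^{k+1}_\tau] = (1-\delta)a_\tau$ yields $\mathrm{Var}_Q(e^{k+1}_\tau) \le (1-\delta)\|a_\tau\|^2 - (1-\delta)^2\|a_\tau\|^2 = (1-\delta)\delta\|a_\tau\|^2$. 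This extra factor of $\delta$, unavailable without Assumption~\ref{as:expcompressor}, is exactly what scales down the $\|e^k_\tau\|^2$ and $\|\nabla f^{(\tau)}(w^k) - h^k_\tau\|^2$ terms in the statement.

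Next I would take the expectation over the sampling $\{i_k^\tau\}$. For the first term I use $(1-\delta)^2 \le 1-\delta$ and the decomposition $\mathbb{E}_k\|e^k + \eta g^k\|^2 = \|e^k + \eta\mathbb{E}_k g^k\|^2 + \eta^2\mathbb{E}_k\|g^k - \mathbb{E}_k g^k\|^2$, where $\mathbb{E}_k g^k = \nabla f(x^k) - h^k$. Young's inequality with parameter $\beta = \frac{\delta}{2(1-\delta)}$ turns $(1-\delta)\|e^k + \eta\mathbb{E}_k g^k\|^2$ into $(1-\frac{\delta}{2})\|e^k\|^2 + \frac{2(1-\delta)}{\delta}\eta^2\|\nabla f(x^k) - h^k\|^2$; splitting $\|\nabla f(x^k) - h^k\|^2 \le 2\|\nabla f(x^k) - \nabla f(w^k)\|^2 + 2\|\nabla f(w^k) - h^k\|^2$ and invoking $L_f$-smoothness with (\ref{eq:fP}) produces the $\frac{4(1-\delta)\eta^2}{\delta}\|\nabla f(w^k) - h^k\|^2$ term and the $L_f/\delta$ part of the function-value gap. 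The stochastic term is precisely the variance-reduced quantity bounded in (\ref{eq:gk2variance}). For the second, $\delta$-scaled term I decompose each $\mathbb{E}_k\|a_\tau\|^2$ analogously: the mean part, via $\|a+b\|^2 \le 2\|a\|^2 + 2\|b\|^2$ followed by the same split of $\|\nabla f^{(\tau)}(x^k) - h^k_\tau\|^2$, yields the $\frac{2(1-\delta)\delta}{n^2}\sum_\tau\|e^k_\tau\|^2$ and $\frac{4(1-\delta)\delta\eta^2}{n^2}\sum_\tau\|\nabla f^{(\tau)}(w^k) - h^k_\tau\|^2$ terms plus a gradient-difference gap bounded by (\ref{eq:nftau}), while the per-node stochastic part is controlled by (\ref{eq:gktau2}).

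The main obstacle is bookkeeping: assembling the function-value coefficient into the clean $\frac{4L_f}{\delta} + \frac{5L}{n}$. The two stochastic contributions combine through the identity $(1-\delta)^2 + (1-\delta)\delta = 1-\delta$, collapsing their $\frac{4L}{n}$-type pieces into a single $\frac{4L}{n}$; the leftover gradient-difference piece contributes $\frac{16\delta\bar L}{n}$, and bounding $\delta\bar L \le L$ (using $\delta \le 1$ and $\bar L \le L$) gives the total $\frac{20L}{n} = 4\cdot\frac{5L}{n}$. I expect the delicate part to be exactly this tracking of constants through the several Young's inequalities, so that the $\delta$ factors land on the intended terms, rather than any single conceptual step.
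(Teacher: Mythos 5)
Your proposal is correct and follows essentially the same route as the paper: your bias--variance split under $\mathbb{E}[Q(a_\tau)]=\delta a_\tau$ with cross-node independence, together with the contraction bound giving $\mathrm{Var}_Q(e^{k+1}_\tau)\le \delta(1-\delta)\|a_\tau\|^2$, is exactly the paper's diagonal/off-diagonal expansion of $\mathbb{E}_k\|\frac{1}{n}\sum_\tau e^{k+1}_\tau\|^2$, and the subsequent steps (Young with $\beta=\frac{\delta}{2(1-\delta)}$, the sampling mean--variance split with (\ref{eq:gk2variance}), (\ref{eq:gktau2}), and the smoothness bound (\ref{eq:nablaf})) mirror the paper's Lemma~\ref{lm:ek+1-2eclsvrg} proof. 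The only cosmetic deviation is your handling of the $\delta$-scaled per-node term (a sampling variance split plus (\ref{eq:nftau}), rather than the paper's direct Young bound on $\|g^k_\tau\|^2$), which is in fact marginally tighter since $\bar L\le L$ and lands on the same constants $\frac{4L_f}{\delta}+\frac{5L}{n}$.
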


\begin{proof}

Under Assumption \ref{as:expcompressor}, we have $\mathbb{E}[Q(x)] = \delta x$, and 
\begin{eqnarray*}
	\mathbb{E}_k \|e^{k+1}\|^2 &=& \mathbb{E}_k \left\| \frac{1}{n} \sum_{\tau=1}^n e^{k+1}_{\tau} \right\|^2 \\
	&=& \frac{1}{n^2} \sum_{i, j} \mathbb{E}_k \langle e^{k+1}_i, e^{k+1}_j \rangle \\ 
	&=& \frac{1}{n^2} \sum_{\tau=1}^n \mathbb{E}_k \|e^{k+1}_{\tau}\|^2 + \frac{1}{n^2} \sum_{i\neq j} \mathbb{E}_k \langle e^{k+1}_i, e^{k+1}_j \rangle \\
	&\overset{(\ref{eq:contractor})}{\leq}& \frac{1-\delta}{n^2} \sum_{\tau=1}^n \mathbb{E}_k \left\|e^k_{\tau} + \eta g^k_{\tau} \right\|^2 + \frac{(1-\delta)^2}{n^2} \sum_{i\neq j} \mathbb{E}_k \left\langle e^k_i + \eta  g^k_i , e^k_j + \eta g^k_j \right\rangle \\ 
	&=& \frac{(1-\delta)^2}{n^2} \mathbb{E}_k \left\|\sum_{\tau=1}^n (e^k_{\tau} + \eta g^k_{\tau}) \right\|^2 + \frac{(1-\delta)\delta}{n^2} \sum_{\tau=1}^n \mathbb{E}_k \left\|e^k_{\tau} + \eta g^k_{\tau} \right\|^2 \\ 
	&\leq& (1-\delta) \mathbb{E}_k \left\|e^k + \eta g^k \right\|^2 + \frac{(1-\delta)\delta}{n^2} \sum_{\tau=1}^n \mathbb{E}_k \left\|e^k_{\tau} +\eta g^k_{\tau} \right\|^2, 
\end{eqnarray*}
where we use the definitions of $e^k$ and $g^k$ in the last inequality. Then we can obtain 

\begin{eqnarray}
&& \mathbb{E}_k \|e^{k+1}\|^2 \nonumber \\ 
&\leq&  (1-\delta) \mathbb{E}_k \left\|e^k + \eta g^k \right\|^2 + \frac{(1-\delta)\delta}{n^2} \sum_{\tau=1}^n \mathbb{E}_k \left\|e^k_{\tau} + \eta g^k_{\tau} \right\|^2 \nonumber \\ 
&\leq& (1-\delta) \mathbb{E}_k \left\|e^k + \eta g^k \right\|^2 + \frac{2(1-\delta)\delta}{n^2} \sum_{\tau=1}^n  \|e^k_{\tau} \|^2 + \frac{2(1-\delta)\delta \eta^2 }{n^2 } \sum_{\tau=1}^n \mathbb{E}_k \| g^k_{\tau} \|^2 \nonumber \\ 
&\leq& (1-\delta) \mathbb{E}_k \left\|e^k + \eta g^k \right\|^2 + \frac{2(1-\delta)\delta}{n^2} \sum_{\tau=1}^n \|e^k_{\tau} \|^2 + \frac{4(1-\delta)\delta \eta^2 }{n^2 } \sum_{\tau=1}^n \mathbb{E}_k \| \nabla f_{i_k^\tau}^{(\tau)}(x^k) - \nabla f_{i_k^\tau}^{(\tau)}(w^k) \|^2 \nonumber \\
&& + \frac{4(1-\delta)\delta \eta^2 }{n^2 } \sum_{\tau=1}^n \| \nabla f^{(\tau)}(w^k) - h^k_\tau \|^2 \nonumber \\
&\overset{(\ref{eq:gktau2})}{\leq}& (1-\delta) \mathbb{E}_k \left\|e^k + \eta g^k \right\|^2 + \frac{2(1-\delta)\delta}{n^2} \sum_{\tau=1}^n \|e^k_{\tau} \|^2  + \frac{4(1-\delta)\delta \eta^2 }{n^2 } \sum_{\tau=1}^n  \| \nabla f^{(\tau)}(w^k) - h^k_\tau \|^2  \nonumber \\ 
&& + \frac{16(1-\delta)\delta L\eta^2}{n}  [P(x^k) - P(x^*) + P(w^k) - P(x^*)], \label{eq:ek+1in}
\end{eqnarray}
where in the second and third inequalities we use the Young's inequality. 

For $(1-\delta)\mathbb{E}_k \|e^k + \eta g^k\|^2$, we have 
\begin{eqnarray*}
	&& (1-\delta)\mathbb{E}_k \left\|e^k + \eta g^k \right\|^2 \\ 
	&=& (1-\delta) \mathbb{E}_k \left\|e^k +\eta (\nabla f(x^k) - \nabla f(w^k)) + \eta g^k - \eta (\nabla f(x^k) - \nabla f(w^k)) \right\|^2 \\ 
	&=& (1-\delta) \mathbb{E}_k \left\|e^k + \eta (\nabla f(x^k) - h^k) \right\|^2 \\ 
	&& + (1-\delta)\eta^2 \mathbb{E}_k \left\|  \frac{1}{n} \sum_{\tau=1}^n \left(  \nabla f_{i_k^\tau}^{(\tau)}(x^k) - \nabla f_{i_k^\tau}^{(\tau)}(w^k)  \right) - (\nabla f(x^k) - \nabla f(w^k)) \right\|^2 \\ 
	&\leq& \left(  1 - \frac{\delta}{2}  \right) \|e^k\|^2 + \frac{2(1-\delta)\eta^2}{\delta }  \|\nabla f(x^k) - h^k \|^2 \\ 
	&& + (1-\delta)\eta^2 \mathbb{E}_k \left\| \frac{1}{n} \sum_{\tau=1}^n \left(  \nabla f_{i_k^\tau}^{(\tau)}(x^k) - \nabla f_{i_k^\tau}^{(\tau)}(w^k)  \right) - (\nabla f(x^k) - \nabla f(w^k)) \right\|^2 \\ 
	&\overset{(\ref{eq:gk2variance})}{\leq}&  \left(  1 - \frac{\delta}{2}  \right) \|e^k\|^2 + \frac{2(1-\delta)\eta^2}{\delta }  \|\nabla f(x^k) - h^k \|^2  +  (1-\delta)\frac{4L\eta^2}{n} [P(x^k) - P(x^*) + P(w^k) - P(x^*)]. 
\end{eqnarray*}

Since $f$ is $L_f$-smooth, we have 

\begin{eqnarray}
\|\nabla f(x^k) - \nabla f(w^k)\|^2 &=&  \|\nabla f(x^k) - \nabla f(x^*) + \nabla f(x^*) - \nabla f(w^k)\|^2 \nonumber \\
&\leq&  2 \|\nabla f(x^k) - \nabla f(x^*)\|^2 + 2 \| \nabla f(w^k) - \nabla f(x^*)\|^2 \nonumber \\ 
&\leq& 4L_f \left[ f(x^k) - f(x^*) - \langle \nabla f(x^*), x^k-x^* \rangle  \right] \nonumber \\
&& + 4L_f \left[  f(w^k) - f(x^*) - \langle \nabla f(x^*), w^k-x^* \rangle  \right] \nonumber \\
&\overset{(\ref{eq:fP})}{\leq}& 4L_f [P(x^k)- P(x^*)] + 4L_f [P(w^k) - P(x^*)], \label{eq:nablaf}
\end{eqnarray}

which implies that 

\begin{eqnarray*}
	 \|\nabla f(x^k) - h^k \|^2 &\leq& 2 \| \nabla f(x^k) - \nabla f(w^k)\|^2 + 2 \| \nabla f(w^k) - h^k\|^2  \\
	&\leq& 8L_f [P(x^k)- P(x^*)] + 8L_f [P(w^k) - P(x^*)] + 2 \| \nabla f(w^k) - h^k\|^2. 
\end{eqnarray*}

Hence, we arrive at 
\begin{eqnarray*}
	(1-\delta)\mathbb{E}_k \left\|e^k + \eta g^k \right\|^2 &\leq&  \left(  1 - \frac{\delta}{2}  \right) \|e^k\|^2  + \frac{4(1-\delta)\eta^2}{\delta } \|\nabla f(w^k) - h^k \|^2 \\ 
	&& + 4(1-\delta)\eta^2 \left(  \frac{4L_f}{\delta} + \frac{L}{n}  \right) [P(x^k)- P(x^*) + P(w^k) - P(x^*)]. 
\end{eqnarray*}

Combining (\ref{eq:ek+1in}) and the above inequality, we can get 

\begin{eqnarray*}
	&& \mathbb{E}_k \|e^{k+1}\|^2 \\ 
	&\leq&  \left(  1 - \frac{\delta}{2}  \right) \|e^k\|^2 + \frac{2(1-\delta)\delta}{n^2} \sum_{\tau=1}^n  \|e^k_{\tau} \|^2  + \frac{4(1-\delta)\delta \eta^2 }{n^2 } \sum_{\tau=1}^n  \| \nabla f^{(\tau)}(w^k) - h^k_\tau \|^2  \\ 
	&& + 4(1-\delta)\eta^2 \left(  \frac{4L_f}{\delta} + \frac{L}{n} + \frac{4\delta L}{n}  \right) [P(x^k)- P(x^*) + P(w^k) - P(x^*)]  + \frac{4(1-\delta)\eta^2}{\delta }  \|\nabla f(w^k) - h^k \|^2\\ 
	&\leq&  \left(  1 - \frac{\delta}{2}  \right) \|e^k\|^2 + \frac{2(1-\delta)\delta}{n^2} \sum_{\tau=1}^n  \|e^k_{\tau} \|^2  + \frac{4(1-\delta)\delta \eta^2 }{n^2 } \sum_{\tau=1}^n \| \nabla f^{(\tau)}(w^k) - h^k_\tau \|^2  \\  
	&& + 4(1-\delta)\eta^2 \left(  \frac{4L_f}{\delta} + \frac{5L}{n}  \right) [P(x^k)- P(x^*) + P(w^k) - P(x^*)]  + \frac{4(1-\delta)\eta^2}{\delta } \|\nabla f(w^k) - h^k \|^2. 
\end{eqnarray*}

\end{proof}

\begin{lemma}\label{lm:hk+1-1eclsvrg}
	We have 
	\begin{eqnarray*}
		\frac{1}{n} \sum_{\tau=1}^n \mathbb{E}_k [\| h^{k+1}_\tau - \nabla f^{(\tau)}(w^{k+1}) \|^2] &\leq& \left(  1 - \frac{\delta_1}{2}  \right) \frac{1}{n} \sum_{\tau=1}^n [\|h^k_\tau - \nabla f^{(\tau)}(w^k)\|^2 ] \\ 
		&& + 4{\bar L}p \left(1 + \frac{2p}{\delta_1} \right) [P(x^k)- P(x^*) + P(w^k) - P(x^*)]. 
	\end{eqnarray*}
	
\end{lemma}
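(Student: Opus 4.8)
The plan is to track $h^{k+1}_\tau - \nabla f^{(\tau)}(w^{k+1})$ by unfolding both the update $h^{k+1}_\tau = h^k_\tau + z^k_\tau$ with $z^k_\tau = Q_1(\nabla f^{(\tau)}(w^k) - h^k_\tau)$ and the random rule for $w^{k+1}$. Writing $a_\tau \eqdef \nabla f^{(\tau)}(w^k) - h^k_\tau$, the compressed increment satisfies $h^k_\tau + z^k_\tau - \nabla f^{(\tau)}(w^k) = -(a_\tau - Q_1(a_\tau))$, so the only obstruction to a clean contraction is the mismatch between $\nabla f^{(\tau)}(w^{k+1})$ and $\nabla f^{(\tau)}(w^k)$ that appears precisely when the reference point is refreshed.

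The first step is to note that the randomness in $Q_1$ (which determines $h^{k+1}_\tau$) is independent of the Bernoulli choice that sets $w^{k+1}$, so I can split the conditional expectation $\mathbb{E}_k$ over the two events $\{w^{k+1}=w^k\}$ (probability $1-p$) and $\{w^{k+1}=x^k\}$ (probability $p$). On $\{w^{k+1}=w^k\}$, the identity above together with the contraction property (\ref{eq:contractor}) of $Q_1$ gives $\mathbb{E}_{k}\|h^{k+1}_\tau - \nabla f^{(\tau)}(w^k)\|^2 \leq (1-\delta_1)\|a_\tau\|^2$. On $\{w^{k+1}=x^k\}$ I would add and subtract $\nabla f^{(\tau)}(w^k)$ and apply Young's inequality with a free parameter $\gamma>0$, producing $(1+\gamma)(1-\delta_1)\|a_\tau\|^2 + (1+\tfrac{1}{\gamma})\|\nabla f^{(\tau)}(x^k)-\nabla f^{(\tau)}(w^k)\|^2$ after taking the expectation over $Q_1$.

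Combining the two events, the coefficient of $\|a_\tau\|^2$ is $[(1-p)+p(1+\gamma)](1-\delta_1) = (1+p\gamma)(1-\delta_1)$. The one genuine design choice is $\gamma$: taking $\gamma = \frac{\delta_1}{2p}$ makes $p\gamma = \frac{\delta_1}{2}$, so that $(1+\tfrac{\delta_1}{2})(1-\delta_1) = 1 - \tfrac{\delta_1}{2} - \tfrac{\delta_1^2}{2} \leq 1 - \tfrac{\delta_1}{2}$, which is exactly the contraction factor in the statement; this same choice turns $p(1+\tfrac{1}{\gamma}) = p(1+\tfrac{2p}{\delta_1})$ into the prefactor of the gradient-difference term. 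Averaging over $\tau$ and invoking the smoothness bound (\ref{eq:nftau}) of Lemma \ref{lm:gk2} converts $\frac{1}{n}\sum_{\tau=1}^n\|\nabla f^{(\tau)}(x^k)-\nabla f^{(\tau)}(w^k)\|^2$ into $4{\bar L}[P(x^k)-P(x^*)+P(w^k)-P(x^*)]$, which yields the claimed inequality.

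I do not anticipate a serious obstacle beyond correctly balancing the Young parameter against $p$: the extra mass $p\gamma$ pumped into the contraction term must stay within the slack $\tfrac{\delta_1}{2}$ left by $(1-\delta_1)$, and the choice $\gamma=\frac{\delta_1}{2p}$ is what makes this work while simultaneously producing the $(1+\tfrac{2p}{\delta_1})$ factor on the error term. One subtle point worth stating carefully is the independence of $Q_1$ and $u^{k+1}$, since it is what justifies factoring $\mathbb{E}_k$ into the probability-$p$ convex combination; everything after that is routine bookkeeping.
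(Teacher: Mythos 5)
Your proposal is correct and follows essentially the same route as the paper's proof: split $\mathbb{E}_k$ over the Bernoulli refresh of $w^{k+1}$ (using independence of $Q_1$ and $u^{k+1}$), apply Young's inequality with parameter $\delta_1/(2p)$ on the refresh branch, use the contraction property of $Q_1$ so that $(1+\tfrac{\delta_1}{2})(1-\delta_1)\leq 1-\tfrac{\delta_1}{2}$, and finish with inequality (\ref{eq:nftau}). The only cosmetic difference is that you apply the contraction separately on each branch before combining, whereas the paper combines the branches first and applies contraction once; the coefficients are identical.
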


\begin{proof}

First, from the update rule of $w^k$, we have 
\begin{eqnarray*}
 && \mathbb{E}_k [\| h^{k+1}_\tau - \nabla f^{(\tau)}(w^{k+1}) \|^2] \\ 
 &=& p\mathbb{E}_k [\|h^{k+1}_\tau - \nabla f^{(\tau)}(x^k)\|^2 ] + (1-p)\mathbb{E}_k [\|h^{k+1}_\tau - \nabla f^{(\tau)}(w^k)\|^2] \\ 
 &\leq& p\left(1 + \frac{2p}{\delta_1} \right) \mathbb{E}_k [\| \nabla f^{(\tau)}(x^k) - \nabla f^{(\tau)}(w^k)\|^2] + p\left(  1 + \frac{\delta_1}{2p}  \right)\mathbb{E}_k [\|h^{k+1}_\tau - \nabla f^{(\tau)}(w^k)\|^2 ] \\ 
 && + (1-p)\mathbb{E}_k [\|h^{k+1}_\tau - \nabla f^{(\tau)}(w^k)\|^2 ] \\ 
 &=& p\left(1 + \frac{2p}{\delta_1} \right) [\| \nabla f^{(\tau)}(x^k) - \nabla f^{(\tau)}(w^k)\|^2] +\left(  1 + \frac{\delta_1}{2}  \right) \mathbb{E}_k [\|h^{k+1}_\tau - \nabla f^{(\tau)}(w^k)\|^2 ] \\ 
 &\leq& p\left(1 + \frac{2p}{\delta_1} \right) [\| \nabla f^{(\tau)}(x^k) - \nabla f^{(\tau)}(w^k)\|^2] + \left(  1 - \frac{\delta_1}{2}  \right) [\|h^k_\tau - \nabla f^{(\tau)}(w^k)\|^2 ], 
\end{eqnarray*}
where the first inequality comes from the Young's inequality and the last inequality comes from the contraction property of $Q_1$. 

Then we can obtain 
\begin{eqnarray*}
&& \frac{1}{n} \sum_{\tau=1}^n \mathbb{E}_k [\| h^{k+1}_\tau - \nabla f^{(\tau)}(w^{k+1}) \|^2] \\
&\leq& \frac{p}{n}\left(1 + \frac{2p}{\delta_1} \right) \sum_{\tau=1}^n [\| \nabla f^{(\tau)}(x^k) - \nabla f^{(\tau)}(w^k)\|^2] + \left(  1 - \frac{\delta_1}{2}  \right) \frac{1}{n} \sum_{\tau=1}^n [\|h^k_\tau - \nabla f^{(\tau)}(w^k)\|^2 ] \\ 
&\overset{(\ref{eq:nftau})}{\leq}&  \left(  1 - \frac{\delta_1}{2}  \right) \frac{1}{n} \sum_{\tau=1}^n  [\|h^k_\tau - \nabla f^{(\tau)}(w^k)\|^2 ] +  4{\bar L}p \left(1 + \frac{2p}{\delta_1} \right) [P(x^k)- P(x^*) + P(w^k) - P(x^*)]. 
\end{eqnarray*}

\end{proof}

\begin{lemma}\label{lm:hk+1-2eclsvrg}
	Under Assumption \ref{as:expcompressor}, we have 
	\begin{eqnarray*}
	\mathbb{E}_k \|h^{k+1} - \nabla f^{(\tau)}(w^{k+1}) \|^2 &\leq& \left(  1 - \frac{\delta_1}{2}  \right) \|h^k - \nabla f(w^k)\|^2 + \frac{(1- \frac{\delta_1}{2})\delta_1}{n^2} \sum_{\tau=1}^n \| h^k_\tau - \nabla f^{(\tau)}(w^k)\|^2 \\ 
	&& + 4pL_f \left(  1 + \frac{2p}{\delta_1}  \right)  [P(x^k)- P(x^*) + P(w^k) - P(x^*)]. 
	\end{eqnarray*}
	
\end{lemma}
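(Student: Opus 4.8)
The plan is to mirror the structure of the proof of Lemma~\ref{lm:hk+1-1eclsvrg}, but to exploit the exactness granted by Assumption~\ref{as:expcompressor} (namely $\mathbb{E}_k[Q_1(x)] = \delta_1 x$) so as to replace the single contraction factor by the stronger combination of a $(1-\delta_1)^2$ mean-contraction and a small variance term, exactly in the way Lemma~\ref{lm:ek+1-2eclsvrg} improves on Lemma~\ref{lm:ek+1-1eclsvrg}. (I read the left-hand side as $\mathbb{E}_k\|h^{k+1} - \nabla f(w^{k+1})\|^2$, the averaged analogue of Lemma~\ref{lm:hk+1-1eclsvrg}.) First I would condition on the update rule of $w^{k+1}$, which equals $x^k$ with probability $p$ and $w^k$ otherwise, to write
$$
\mathbb{E}_k\|h^{k+1} - \nabla f(w^{k+1})\|^2 = p\,\mathbb{E}_k\|h^{k+1} - \nabla f(x^k)\|^2 + (1-p)\,\mathbb{E}_k\|h^{k+1} - \nabla f(w^k)\|^2.
$$
Applying Young's inequality to the first term with parameter $\tfrac{\delta_1}{2p}$ (as in Lemma~\ref{lm:hk+1-1eclsvrg}) collapses the two terms into $(1 + \tfrac{\delta_1}{2})\mathbb{E}_k\|h^{k+1} - \nabla f(w^k)\|^2 + p(1 + \tfrac{2p}{\delta_1})\|\nabla f(x^k) - \nabla f(w^k)\|^2$, so everything reduces to bounding $\mathbb{E}_k\|h^{k+1} - \nabla f(w^k)\|^2$.

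The crucial step is this inner bound. Writing $d^k_\tau \eqdef \nabla f^{(\tau)}(w^k) - h^k_\tau$ and $d^k \eqdef \tfrac{1}{n}\sum_\tau d^k_\tau = \nabla f(w^k) - h^k$, the update $h^{k+1} = h^k + \tfrac{1}{n}\sum_\tau Q_1(d^k_\tau)$ gives
$$
h^{k+1} - \nabla f(w^k) = -(1-\delta_1)d^k + \frac{1}{n}\sum_{\tau=1}^n\bigl(Q_1(d^k_\tau) - \delta_1 d^k_\tau\bigr),
$$
where the second summand has zero conditional mean by Assumption~\ref{as:expcompressor}. Since the $Q_1$-randomness is independent across nodes, the cross term with the deterministic part and all off-diagonal cross terms vanish, leaving
$$
\mathbb{E}_k\|h^{k+1} - \nabla f(w^k)\|^2 = (1-\delta_1)^2\|d^k\|^2 + \frac{1}{n^2}\sum_{\tau=1}^n\mathbb{E}_k\|Q_1(d^k_\tau) - \delta_1 d^k_\tau\|^2.
$$
To control the per-node variance I would combine the contraction property \eqref{eq:contractor} of $Q_1$ with $\mathbb{E}_k[Q_1(d^k_\tau)] = \delta_1 d^k_\tau$: expanding $\mathbb{E}_k\|d^k_\tau - Q_1(d^k_\tau)\|^2 \le (1-\delta_1)\|d^k_\tau\|^2$ yields $\mathbb{E}_k\|Q_1(d^k_\tau)\|^2 \le \delta_1\|d^k_\tau\|^2$, and hence $\mathbb{E}_k\|Q_1(d^k_\tau) - \delta_1 d^k_\tau\|^2 = \mathbb{E}_k\|Q_1(d^k_\tau)\|^2 - \delta_1^2\|d^k_\tau\|^2 \le \delta_1(1-\delta_1)\|d^k_\tau\|^2$.

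Finally I would assemble the pieces and carry out the scalar clean-up: using $(1+\tfrac{\delta_1}{2})(1-\delta_1)^2 \le 1 - \tfrac{\delta_1}{2}$ and $(1+\tfrac{\delta_1}{2})(1-\delta_1) \le 1 - \tfrac{\delta_1}{2}$ (both valid for $0 < \delta_1 \le 1$, the first because $\tfrac{\delta_1^3}{2}\le\delta_1$ and the second because the product equals $1-\tfrac{\delta_1}{2}-\tfrac{\delta_1^2}{2}$), together with the already-established inequality \eqref{eq:nablaf}, $\|\nabla f(x^k) - \nabla f(w^k)\|^2 \le 4L_f[P(x^k) - P(x^*) + P(w^k) - P(x^*)]$, applied to the progress term. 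This produces exactly the three claimed summands. I expect the only genuine subtlety to be the variance decomposition: getting the mean-fluctuation split right and justifying that the inter-node cross terms vanish (cross-node independence of $Q_1$ together with the unbiasedness of the centered increments) is the heart of the argument, whereas the Young's-inequality split and the coefficient bookkeeping are routine.
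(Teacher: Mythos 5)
Your proposal is correct and follows essentially the same route as the paper: condition on the $w^{k+1}$ update, apply Young's inequality with parameters $\tfrac{2p}{\delta_1}$, $\tfrac{\delta_1}{2p}$, bound the inner term $\mathbb{E}_k\|h^{k+1}-\nabla f(w^k)\|^2$ by exploiting $\mathbb{E}_k[Q_1(x)]=\delta_1 x$ together with contraction and cross-node independence, and finish with \eqref{eq:nablaf}. Your mean-fluctuation split of the inner term is algebraically equivalent to the paper's diagonal/off-diagonal expansion (the paper invokes the same computation as in Lemma~\ref{lm:ek+1-2eclsvrg}), and in fact retains the slightly tighter factor $(1-\delta_1)^2$ on $\|h^k-\nabla f(w^k)\|^2$ where the paper relaxes it to $(1-\delta_1)$; both suffice for the claimed constants.
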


\begin{proof}

First, from the update rule of $w^k$, we can obtain 

\begin{eqnarray*}
&& \mathbb{E}_k \|h^{k+1} - \nabla f(w^{k+1})\|^2 \\
&=& p\mathbb{E}_k \|h^{k+1} -  \nabla f(x^k)\|^2 + (1-p)\mathbb{E}_k \|h^{k+1} - \nabla f(w^k)\|^2 \\
&\leq& p\left(  1 + \frac{2p}{\delta_1}  \right)\|\nabla f(x^k) - \nabla f(w^k) \|^2 + p\left(  1 + \frac{\delta_1}{2p}  \right)\mathbb{E}_k \| h^{k+1} - \nabla f(w^k)\|^2 + (1-p)\mathbb{E}_k \|h^{k+1} - \nabla f(w^k)\|^2 \\ 
&=&  p\left(  1 + \frac{2p}{\delta_1}  \right) \|\nabla f(x^k) - \nabla f(w^k) \|^2 + \left(  1 + \frac{\delta_1}{2}  \right) \mathbb{E}_k \|h^{k+1} - \nabla f(w^k)\|^2. 
\end{eqnarray*}

For $\mathbb{E}_k \|h^{k+1} - \nabla f(w^k)\|^2$, under Assumption \ref{as:expcompressor}, same as $\mathbb{E}_k \|e^{k+1}\|^2$, we have 
\begin{eqnarray*}
\mathbb{E}_k \|h^{k+1} - \nabla f(w^k)\|^2 \leq (1-\delta_1) \|h^k - \nabla f(w^k)\|^2 + \frac{(1-\delta_1)\delta_1}{n^2} \sum_{\tau=1}^n \| h^k_\tau - \nabla f^{(\tau)}(w^k)\|^2. 
\end{eqnarray*}

Hence, we arrive at 
\begin{eqnarray*}
&& \mathbb{E}_k \|h^{k+1} - \nabla f(w^{k+1})\|^2 \\
&\leq& p\left(  1 + \frac{2p}{\delta_1}  \right) \|\nabla f(x^k) - \nabla f(w^k) \|^2 + \left(  1 - \frac{\delta_1}{2}  \right) \|h^k - \nabla f(w^k)\|^2 \\ 
&& + \frac{(1- \frac{\delta_1}{2})\delta_1}{n^2} \sum_{\tau=1}^n \| h^k_\tau - \nabla f^{(\tau)}(w^k)\|^2 \\ 
&\overset{(\ref{eq:nablaf})}{\leq}&  \left(  1 - \frac{\delta_1}{2}  \right) \|h^k - \nabla f(w^k)\|^2 + \frac{(1- \frac{\delta_1}{2})\delta_1}{n^2} \sum_{\tau=1}^n \| h^k_\tau - \nabla f^{(\tau)}(w^k)\|^2 \\ 
&& + 4pL_f \left(  1 + \frac{2p}{\delta_1}  \right) [P(x^k)- P(x^*) + P(w^k) - P(x^*)]. 
\end{eqnarray*}

\end{proof}

\subsection{Proof of Theorem \ref{th:eclsvrg-1}}

Let $\eta \leq \frac{1}{4L_f}$. From $\|e^k\|^2 \leq \frac{1}{n} \sum_{\tau=1}^n \|e^k_\tau\|^2$ and Lemma \ref{lm:itereclsvrg}, we have 

\begin{eqnarray*}
	&& \left(1 + \frac{\eta \mu}{2} \right)\mathbb{E}_k \|\tilde{x}^{k+1} - x^*\|^2 - \|\tilde{x}^k - x^*\|^2 - 2\eta \mathbb{E}_k (P(x^*) - P(x^{k+1}))  \\ 
	&\leq& \|e^k\|^2  + (1 + \eta \mu) \mathbb{E}_k \|e^{k+1}\|^2 + 4\eta^2\mathbb{E}_k \|g^k + h^k - \nabla f(x^k) \|^2 \\ 
	&\leq&  \frac{1}{n} \sum_{\tau=1}^n \|e^k_\tau\|^2 +  \frac{5}{4n} \sum_{\tau=1}^n \mathbb{E}_k \|e^{k+1}_\tau\|^2 + 4\eta^2\mathbb{E}_k \|g^k + h^k - \nabla f(x^k)\|^2 \\ 
	&\overset{Lemma \ref{lm:ek+1-1eclsvrg}}{\leq}& \frac{1}{n} \sum_{\tau=1}^n \|e^k_\tau\|^2 + \frac{5}{4} \left(  1 - \frac{\delta}{2}  \right) \frac{1}{n} \sum_{\tau=1}^n \|e^k_\tau\|^2 + \frac{5(1-\delta)\eta^2}{\delta n} \sum_{\tau=1}^n  \|\nabla f^{(\tau)}(w^k) - h^k_\tau\|^2 \\ 
	&& + 5(1-\delta)\eta^2 \left(  \frac{{4\bar L}}{\delta} + L \right) [P(x^k) - P(x^*) + P(w^k) - P(x^*)] + 4\eta^2\mathbb{E}_k \|g^k  + h^k - \nabla f(x^k)\|^2 \\ 
	&\overset{(\ref{eq:gk2variance})}{\leq}&  \frac{9}{4} \cdot \frac{1}{n} \sum_{\tau=1}^n \|e^k_\tau\|^2 + \frac{5(1-\delta)\eta^2}{\delta n} \sum_{\tau=1}^n  \|\nabla f^{(\tau)}(w^k) - h^k_\tau\|^2 \\ 
	&& + \left(  5(1-\delta)\left(  \frac{4{\bar L}}{\delta} + L \right) + \frac{16L}{n}  \right) \eta^2  [P(x^k) - P(x^*) + P(w^k) - P(x^*)]. 
\end{eqnarray*}

From the definition of $w^{k+1}$, we have 

\begin{equation}\label{eq:wk+1}
\mathbb{E}_k [P(w^{k+1}) - P(x^*)] = p [P(x^k) - P(x^*)] + (1-p)  [P(w^k) - P(x^*)]. 
\end{equation}

From Lemma \ref{lm:ek+1-1eclsvrg}, we have 

\begin{eqnarray*}
	&& \left(1 + \frac{\eta \mu}{2} \right)\mathbb{E}_k \|\tilde{x}^{k+1} - x^*\|^2   - \|\tilde{x}^k - x^*\|^2 - 2\eta \mathbb{E}_k [P(x^*) - P(x^{k+1})] + \frac{9}{\delta n} \sum_{\tau=1}^n\mathbb{E}_k \|e^{k+1}_{\tau}\|^2 \\ 
	&\leq&  \frac{9}{\delta n}\left(  1 -  \frac{\delta}{4}  \right) \sum_{\tau=1}^n \|e^k_{\tau}\|^2 + \frac{41(1-\delta)\eta^2}{\delta^2 n}  \sum_{\tau=1}^n  \|\nabla f^{(\tau)}(w^k) - h^k_\tau\|^2 \\ 
	&& + \left(  \frac{41(1-\delta)}{\delta} \left(  \frac{4{\bar L}}{\delta} + L \right) + \frac{16L}{n}  \right) \eta^2 [P(x^k) - P(x^*) + P(w^k) - P(x^*)]. 
\end{eqnarray*} 

Then from Lemma \ref{lm:hk+1-1eclsvrg}, we have 

\begin{eqnarray*}
	&& \left(1 + \frac{\eta \mu}{2} \right)\mathbb{E}_k \|\tilde{x}^{k+1} - x^*\|^2   - \|\tilde{x}^k - x^*\|^2 - 2\eta \mathbb{E}_k(P(x^*) - P(x^{k+1})) \\ 
	&& + \frac{9}{\delta n} \sum_{\tau=1}^n\mathbb{E}_k \|e^{k+1}_{\tau}\|^2 + \frac{164(1-\delta)\eta^2}{\delta^2 \delta_1 n} \sum_{\tau=1}^n \mathbb{E}_k \| h^{k+1}_\tau - \nabla f^{(\tau)}(w^{k+1})\|^2  \\ 
	&\leq&  \frac{9}{\delta n}\left(  1 -  \frac{\delta}{4}  \right) \sum_{\tau=1}^n \|e^k_{\tau}\|^2 + \frac{164(1-\delta)\eta^2}{\delta^2 \delta_1 n} \left(  1 - \frac{\delta_1}{4}  \right)  \sum_{\tau=1}^n  \| h^k_\tau - \nabla f^{(\tau)}(w^k) \|^2 \\ 
	&& + \left(  \frac{41(1-\delta)}{\delta} \left(  \frac{4{\bar L}}{\delta} + L \right) + \frac{16L}{n} + \frac{656(1-\delta){\bar L}p}{\delta^2 \delta_1} \left(  1 + \frac{2p}{\delta_1}  \right) \right) \eta^2 [P(x^k) - P(x^*) + P(w^k) - P(x^*)]. 
\end{eqnarray*}

Combining the above inequality and (\ref{eq:wk+1}), we can get 

\begin{eqnarray*}
	 \mathbb{E}_k [{\Phi}^{k+1}_1] &=& \left(1 + \frac{\eta \mu}{2} \right)\mathbb{E}_k \|\tilde{x}^{k+1} - x^*\|^2 + \frac{9}{\delta n} \sum_{\tau=1}^n\mathbb{E}_k \|e^{k+1}_{\tau}\|^2 + \frac{164(1-\delta)\eta^2}{\delta^2 \delta_1 n} \sum_{\tau=1}^n \mathbb{E}_k \| h^{k+1}_\tau - \nabla f^{(\tau)}(w^{k+1})\|^2 \\ 
	 && + \frac{4\eta^2}{3p} \left(  \frac{41(1-\delta)}{\delta} \left(  \frac{4{\bar L}}{\delta} + L  + \frac{16{\bar L}p}{\delta \delta_1} \left(  1 + \frac{2p}{\delta_1}  \right) \right) + \frac{16L}{n} \right) \mathbb{E}_k [P(w^{k+1}) - P(x^*)] \\ 
	&\leq&  \|\tilde{x}^k - x^*\|^2 + \frac{9}{\delta n}\left(  1 -  \frac{\delta}{4}  \right) \sum_{\tau=1}^n \|e^k_{\tau}\|^2  + \frac{164(1-\delta)\eta^2}{\delta^2 \delta_1 n} \left(  1 - \frac{\delta_1}{4}  \right)  \sum_{\tau=1}^n \| h^k_\tau - \nabla f^{(\tau)}(w^k) \|^2 \\ 
	&& + \frac{4\eta^2}{3p} \left(  \frac{41(1-\delta)}{\delta} \left(  \frac{4{\bar L}}{\delta} + L  + \frac{16{\bar L}p}{\delta \delta_1} \left(  1 + \frac{2p}{\delta_1}  \right) \right) + \frac{16L}{n} \right) \left(  1- \frac{p}{4}  \right)  [P(w^{k}) - P(x^*)] \\ 
	&& + 2\eta \mathbb{E}_k [P(x^*) - P(x^{k+1})] \\ 
	&& +  \left(  \frac{96(1-\delta)}{\delta} \left(  \frac{4{\bar L}}{\delta} + L  + \frac{16{\bar L}p}{\delta \delta_1} \left(  1 + \frac{2p}{\delta_1}  \right) \right) + \frac{38L}{n}  \right) \eta^2 [P(x^k) - P(x^*)] \\ 
	&\leq& \left(  1 - \min\left\{  \frac{\mu \eta}{3}, \frac{\delta}{4}, \frac{\delta_1}{4}, \frac{p}{4}  \right\}  \right) \Phi^k_1 + 2\eta \mathbb{E}_k [P(x^*) - P(x^{k+1})] \\ 
	&& + \left(  \frac{96(1-\delta)}{\delta} \left(  \frac{4{\bar L}}{\delta} + L  + \frac{16{\bar L}p}{\delta \delta_1} \left(  1 + \frac{2p}{\delta_1}  \right) \right) + \frac{38L}{n}  \right) \eta^2 [P(x^k) - P(x^*)], 
\end{eqnarray*}
where we use $\left(  1 + \frac{\eta \mu}{2}  \right)^{-1} \leq 1 - \frac{\mu \eta}{3}$ for $\mu \eta<1$. Taking expectation again and applying the tower property, we can get the result.

\subsection{Proof of Theorem \ref{th:eclsvrg-2}} 

Let $\eta \leq \frac{1}{4L_f}$. From Lemma \ref{lm:itereclsvrg}, we have 

\begin{eqnarray*}
	&& \left(1 + \frac{\eta \mu}{2} \right)\mathbb{E}_k \|\tilde{x}^{k+1} - x^*\|^2 - \|\tilde{x}^k - x^*\|^2 - 2\eta \mathbb{E}_k (P(x^*) - P(x^{k+1}))\\ 
	&\leq&  \|e^k\|^2  + (1 + \eta \mu) \mathbb{E}_k \|e^{k+1}\|^2 + 4\eta^2\mathbb{E}_k \|g^k + h^k - \nabla f(x^k) \|^2 \\ 
	&\leq& \|e^k\|^2  + \frac{5}{4} \mathbb{E}_k \|e^{k+1}\|^2 + 4\eta^2\mathbb{E}_k \|g^k + h^k - \nabla f(x^k)\|^2 \\ 
	&\overset{Lemma \ref{lm:ek+1-2eclsvrg}}{\leq}&  \frac{9}{4} \|e^k\|^2 + \frac{5(1-\delta)\delta}{2n^2} \sum_{\tau=1}^n  \|e^k_\tau\|^2 + \frac{5(1-\delta)\delta \eta^2}{n^2} \sum_{\tau=1}^n \|h^k_\tau - \nabla f^{(\tau)}(w^k)\|^2 \\ 
	&& + \frac{5(1-\delta)\eta^2}{\delta} \|h^k - \nabla f(w^k)\|^2 + 4\eta^2 \mathbb{E}_k \|g^k + h^k -\nabla f(x^k)\|^2 \\ 
	&& + 5(1-\delta)\eta^2 \left(  \frac{4L_f}{\delta} + \frac{5L}{n}  \right) [P(x^k)- P(x^*) + P(w^k) - P(x^*)] \\ 
	&\overset{(\ref{eq:gk2variance})}{\leq}&  \frac{9}{4} \|e^k\|^2 + \frac{5(1-\delta)\delta}{2n^2} \sum_{\tau=1}^n \|e^k_\tau\|^2  + \frac{5(1-\delta)\delta \eta^2}{n^2} \sum_{\tau=1}^n \|h^k_\tau - \nabla f^{(\tau)}(w^k)\|^2 \\ 
	&& + \frac{5(1-\delta)\eta^2}{\delta} \|h^k - \nabla f(w^k)\|^2  \\ 
	&& + \left(  5(1-\delta) \left(  \frac{4L_f}{\delta} + \frac{5L}{n}  \right) + \frac{16L}{n} \right) \eta^2 [P(x^k)- P(x^*) + P(w^k) - P(x^*)]. 
\end{eqnarray*}

Then from Lemmas \ref{lm:ek+1-1eclsvrg} and \ref{lm:ek+1-2eclsvrg}, we have 

\begin{eqnarray*}
	&& \left(1 + \frac{\eta \mu}{2} \right)\mathbb{E}_k \|\tilde{x}^{k+1} - x^*\|^2 + \frac{9}{\delta} \mathbb{E}_k \|e^{k+1}\|^2 + \frac{84(1-\delta)}{\delta n^2} \sum_{\tau=1}^n \mathbb{E}_k \|e^{k+1}_\tau\|^2\\ 
	&&  - \mathbb{E}\|\tilde{x}^k - x^*\|^2 - 2\eta \mathbb{E}[P(x^*) - P(x^{k+1})]\\ 
	&\overset{lemma \ref{lm:ek+1-2eclsvrg}}{\leq}&  \left(  1 - \frac{\delta}{2} + \frac{\delta}{4}  \right) \frac{9}{\delta} \|e^k\|^2  + \frac{41(1-\delta) \eta^2}{n^2} \sum_{\tau=1}^n \|h^k_\tau - \nabla f^{(\tau)}(w^k)\|^2 + \frac{41(1-\delta)\eta^2}{\delta^2} \|h^k - \nabla f(w^k)\|^2   \\ 
	&& + \left(  \frac{18(1-\delta)}{n^2} + \frac{5(1-\delta)\delta}{2n^2} \right) \sum_{\tau=1}^n \|e^k_\tau\|^2 + \frac{84(1-\delta)}{\delta n^2} \sum_{\tau=1}^n \mathbb{E}_k \|e^{k+1}_\tau\|^2 \\ 
	&& +  \left( \frac{41(1-\delta)}{\delta} \left(  \frac{4L_f}{\delta} + \frac{5L}{n}  \right)  + \frac{16L}{n} \right) \eta^2 [P(x^k)- P(x^*) + P(w^k) - P(x^*)] \\ 
	&\overset{Lemma \ref{lm:ek+1-1eclsvrg}}{\leq}&  \left(  1 -  \frac{\delta}{4}  \right) \frac{9}{\delta} \|e^k\|^2 + \left(  1 - \frac{\delta}{4}  \right) \frac{84(1-\delta)}{\delta n^2} \sum_{\tau=1}^n \|e^{k}_\tau\|^2 + \frac{336(1-\delta) \eta^2}{\delta^2 n^2}\sum_{\tau=1}^n \|h^k_\tau - \nabla f^{(\tau)}(w^k)\|^2  \\ 
	&& + \frac{41(1-\delta)\eta^2}{\delta^2} \|h^k - \nabla f(w^k)\|^2  + \left(  \frac{41(1-\delta)}{\delta} \left(  \frac{4L_f}{\delta} + \frac{5L}{n}  \right) + \frac{16L}{n} + \frac{336(1-\delta)^2}{\delta n} \left(  \frac{4{\bar L}}{\delta} + L  \right)  \right) \\ 
	&& \cdot \eta^2  [P(x^k)- P(x^*) + P(w^k) - P(x^*)]\\ 
	&\leq&  \left(  1 -  \frac{\delta}{4}  \right) \frac{9}{\delta} \|e^k\|^2 + \left(  1 - \frac{\delta}{4}  \right) \frac{84(1-\delta)}{\delta n^2} \sum_{\tau=1}^n \|e^{k}_\tau\|^2 + \frac{336(1-\delta)\eta^2}{\delta^2 n^2}\sum_{\tau=1}^n \|h^k_\tau - \nabla f^{(\tau)}(w^k)\|^2  \\ 
	&& + \frac{41(1-\delta)\eta^2}{\delta^2} \|h^k - \nabla f(w^k)\|^2  \\ 
	&& + \left(  \frac{(1-\delta)}{\delta} \left(  \frac{164L_f}{\delta} + \frac{1344{\bar L}}{\delta n} + \frac{541L}{n}  \right) + \frac{16L}{n}  \right)  \eta^2 [P(x^k)- P(x^*) + P(w^k) - P(x^*)]. 
\end{eqnarray*}

Then from Lemma \ref{lm:hk+1-1eclsvrg} and Lemma \ref{lm:hk+1-2eclsvrg}, we can get 
\begin{eqnarray*}
	&& \left(1 + \frac{\eta \mu}{2} \right)\mathbb{E}_k \|\tilde{x}^{k+1} - x^*\|^2 + \frac{9}{\delta} \mathbb{E}_k \|e^{k+1}\|^2 + \frac{84(1-\delta)}{\delta n^2} \sum_{\tau=1}^n \mathbb{E}_k \|e^{k+1}_\tau\|^2 \\
	&& + \frac{164(1-\delta)\eta^2}{\delta^2 \delta_1} \mathbb{E}_k \|h^{k+1} - \nabla f(w^{k+1})\|^2 + \frac{2000(1-\delta)\eta^2}{\delta^2 \delta_1 n^2} \sum_{\tau=1}^n \mathbb{E}_k \|h^{k+1}_\tau - \nabla f^{(\tau)}(w^{k+1})\|^2 \\ 
	&& - \|\tilde{x}^k - x^*\|^2 - 2\eta \mathbb{E}_k [P(x^*) - P(x^{k+1})]\\ 
	&\overset{Lemma~\ref{lm:hk+1-2eclsvrg}}{\leq}& \left(  1 -  \frac{\delta}{4}  \right) \frac{9}{\delta} \|e^k\|^2 + \left(  1 - \frac{\delta}{4}  \right) \frac{84(1-\delta)}{\delta n^2} \sum_{\tau=1}^n \|e^{k}_\tau\|^2  + \left(  1 - \frac{\delta_1}{4}  \right) \frac{164(1-\delta)\eta^2}{\delta^2 \delta_1} \|h^k - \nabla f(w^k)\|^2 \\ 
	&& + \frac{500(1-\delta)\eta^2}{\delta^2 n^2}\sum_{\tau=1}^n \|h^k_\tau - \nabla f^{(\tau)}(w^k)\|^2 + \frac{2000(1-\delta)\eta^2}{\delta^2 \delta_1 n^2} \sum_{\tau=1}^n \mathbb{E}_k \|h^{k+1}_\tau - \nabla f^{(\tau)}(w^{k+1})\|^2  \\ 
	&& + \left(  \frac{(1-\delta)}{\delta} \left(  \frac{164L_f}{\delta} + \frac{1344{\bar L}}{\delta n} + \frac{541L}{n}  \right) + \frac{16L}{n} + \frac{656(1-\delta)pL_f}{\delta^2 \delta_1}\left(  1 + \frac{2p}{\delta_1}  \right) \right) \\ 
	&& \cdot  \eta^2  [P(x^k)- P(x^*) + P(w^k) - P(x^*)] \\ 
	&\overset{Lemma~\ref{lm:hk+1-1eclsvrg}}{\leq}& \left(  1 -  \frac{\delta}{4}  \right) \frac{9}{\delta} \|e^k\|^2 + \left(  1 - \frac{\delta}{4}  \right) \frac{84(1-\delta)}{\delta n^2} \sum_{\tau=1}^n \|e^{k}_\tau\|^2  + \left(  1 - \frac{\delta_1}{4}  \right) \frac{164(1-\delta)\eta^2}{\delta^2 \delta_1} \|h^k - \nabla f(w^k)\|^2 \\ 
	&& + \left(  1 - \frac{\delta_1}{4}  \right) \frac{2000(1-\delta)\eta^2}{\delta^2 \delta_1 n^2} \sum_{\tau=1}^n \|h^{k}_\tau - \nabla f^{(\tau)}(w^{k})\|^2 \\ 
	&& + \left(  \frac{(1-\delta)}{\delta} \left(  \frac{164L_f}{\delta} + \frac{1344{\bar L}}{\delta n} + \frac{541L}{n}  \right) + \frac{16L}{n} + \frac{(656L_f + \frac{8000{\bar L}}{n})(1-\delta)p}{\delta^2 \delta_1}\left(  1 + \frac{2p}{\delta_1}  \right) \right) \\ 
	&& \cdot  \eta^2 [P(x^k)- P(x^*) + P(w^k) - P(x^*)]. 
\end{eqnarray*}

Combining the above inequality and (\ref{eq:wk+1}), we can obtain 

\begin{eqnarray*}
	&& \mathbb{E}_k [{\Phi_2^{k+1}}] \\ 
	&=& \left(1 + \frac{\eta \mu}{2} \right)\mathbb{E}_k \|\tilde{x}^{k+1} - x^*\|^2 + \frac{9}{\delta} \mathbb{E}_k \|e^{k+1}\|^2 + \frac{84(1-\delta)}{\delta n^2} \sum_{\tau=1}^n \mathbb{E}_k\|e^{k+1}_\tau\|^2 \\ 
	&&  + \frac{164(1-\delta)\eta^2}{\delta^2 \delta_1} \mathbb{E}_k \|h^{k+1} - \nabla f(w^{k+1})\|^2 + \frac{2000(1-\delta)\eta^2}{\delta^2 \delta_1 n^2} \sum_{\tau=1}^n \mathbb{E}_k \|h^{k+1}_\tau - \nabla f^{(\tau)}(w^{k+1})\|^2 \\ 
	&& + \frac{4\eta^2}{3p}  \left(  \frac{(1-\delta)}{\delta} \left(  \frac{164L_f}{\delta} + \frac{1344{\bar L}}{\delta n} + \frac{541L}{n}  + \frac{(656L_f + \frac{8000{\bar L}}{n})p}{\delta \delta_1}\left(  1 + \frac{2p}{\delta_1}  \right)  \right) + \frac{16L}{n} \right) \\ 
	&& \cdot \mathbb{E}_k [P(w^{k+1}) - P(x^*)] \\ 
	&\leq& \left(  1 - \min\left\{  \frac{\mu \eta}{3}, \frac{\delta}{4}, \frac{\delta_1}{4}, \frac{p}{4}  \right\}  \right) {\Phi}_2^k + 2\eta \mathbb{E}_k [P(x^*) - P(x^{k+1})] \\ 
	&& +  \left(  \frac{(1-\delta)}{\delta} \left(  \frac{383L_f}{\delta} + \frac{3136{\bar L}}{\delta n} + \frac{1263L}{n} + \frac{(1531L_f + \frac{18667{\bar L}}{n})p}{\delta \delta_1}\left(  1 + \frac{2p}{\delta_1}  \right)  \right) + \frac{38L}{n} \right)  \eta^2 [P(x^k)- P(x^*) ], 
\end{eqnarray*}
where we use $\left(  1 + \frac{\eta \mu}{2}  \right)^{-1} \leq 1 - \frac{\mu \eta}{3}$ for $\mu \eta<1$. Taking expectation again and applying the tower property, we can have the result.

\subsection{Proof of Theorem \ref{th:eclsvrg-11}}	

Let $\eta \leq \frac{1}{4L_f}$. From Theorem \ref{th:eclsvrg-1}, we have 

\begin{eqnarray*}
	&& \mathbb{E}[\Phi^{k}_1] \\ 
	&\leq& \left(  1 - \min\left\{  \frac{\mu \eta}{3}, \frac{\delta}{4}, \frac{\delta_1}{4}, \frac{p}{4}  \right\}  \right) \mathbb{E} [\Phi^{k-1}_1] + 2\eta \mathbb{E}[P(x^*) - P(x^{k})] \\ 
	&& + \left(  \frac{96(1-\delta)}{\delta} \left(  \frac{4{\bar L}}{\delta} + L  + \frac{16{\bar L}p}{\delta \delta_1} \left(  1 + \frac{2p}{\delta_1}  \right) \right) + \frac{38L}{n}  \right) \eta^2 \mathbb{E} [P(x^{k-1}) - P(x^*)] \\ 
	&\leq&  \left(  1 - \min\left\{  \frac{\mu \eta}{3}, \frac{\delta}{4}, \frac{\delta_1}{4}, \frac{p}{4}  \right\}  \right)^k \Phi_1^0 - 2\eta \sum_{i=1}^k  \left(  1 - \min\left\{  \frac{\mu \eta}{3}, \frac{\delta}{4}, \frac{\delta_1}{4}, \frac{p}{4}  \right\}  \right)^{k-i} \mathbb{E} [P(x^i) - P(x^*)] \\ 
	&& + \left(  \frac{96(1-\delta)}{\delta} \left(  \frac{4{\bar L}}{\delta} + L  + \frac{16{\bar L}p}{\delta \delta_1} \left(  1 + \frac{2p}{\delta_1}  \right) \right) + \frac{38L}{n}  \right) \eta^2 \\ 
	&& \cdot \sum_{i=0}^{k-1}  \left(  1 - \min\left\{  \frac{\mu \eta}{3}, \frac{\delta}{4}, \frac{\delta_1}{4}, \frac{p}{4}  \right\}  \right)^{k-1-i} \mathbb{E} [P(x^i) - P(x^*)] \\ 
	&=& \frac{1}{w_k} \Phi_1^0 - \frac{2\eta}{w_k} \sum_{i=1}^k w_i \mathbb{E} [P(x^i) - P(x^*)] \\ 
	&& +\left(  \frac{96(1-\delta)}{\delta} \left(  \frac{4{\bar L}}{\delta} + L  + \frac{16{\bar L}p}{\delta \delta_1} \left(  1 + \frac{2p}{\delta_1}  \right) \right) + \frac{38L}{n}  \right) \frac{w_1 \eta^2}{w_k} \sum_{i=0}^{k-1} w_i \mathbb{E} [P(x^i) - P(x^*)] \\ 
	&\leq& \frac{1}{w_k} \Phi_1^0 - \frac{2\eta}{w_k} \sum_{i=1}^k w_i \mathbb{E} [P(x^i) - P(x^*)] \\ 
	&&  + \left(  \frac{105(1-\delta)}{\delta} \left(  \frac{4{\bar L}}{\delta} + L  + \frac{16{\bar L}p}{\delta \delta_1} \left(  1 + \frac{2p}{\delta_1}  \right) \right) + \frac{42L}{n}  \right) \frac{\eta^2}{w_k} \sum_{i=0}^{k} w_i \mathbb{E} [P(x^i) - P(x^*)], 
\end{eqnarray*}
where we use $w_1 \leq \frac{12}{11}$ in the last inequality. Rearranging the above inequality, we can get 
\begin{eqnarray*}
	&& \frac{2}{w_k} \sum_{i=0}^k w_i \mathbb{E} [P(x^i) - P(x^*)] \\ 
	&\leq&  \frac{1}{\eta w_k} \Phi_1^0 - \frac{1}{\eta} \mathbb{E} [\Phi_1^k] + \frac{2(P(x^0) - P(x^*))}{w_k}  \\ 
	&& + \left(  \frac{105(1-\delta)}{\delta} \left(  \frac{4{\bar L}}{\delta} + L  + \frac{16{\bar L}p}{\delta \delta_1} \left(  1 + \frac{2p}{\delta_1}  \right) \right) + \frac{42L}{n}  \right) \frac{\eta}{w_k} \sum_{i=0}^{k} w_i \mathbb{E} [P(x^i) - P(x^*)] \\ 
	&\leq&  \frac{1}{\eta w_k} \Phi_1^0 + \frac{2(P(x^0) - P(x^*))}{w_k} \\ 
	&& + \left(  \frac{105(1-\delta)}{\delta} \left(  \frac{4{\bar L}}{\delta} + L  + \frac{16{\bar L}p}{\delta \delta_1} \left(  1 + \frac{2p}{\delta_1}  \right) \right) + \frac{42L}{n}  \right) \frac{\eta}{w_k} \sum_{i=0}^{k} w_i \mathbb{E} [P(x^i) - P(x^*)]. 
\end{eqnarray*}

Hence, if 
$$
\eta \leq \frac{1}{\left(  \frac{105(1-\delta)}{\delta} \left(  \frac{4{\bar L}}{\delta} + L  + \frac{16{\bar L}p}{\delta \delta_1} \left(  1 + \frac{2p}{\delta_1}  \right) \right) + 4L_f + \frac{42L}{n}  \right)},
$$
then 
\begin{equation}\label{eq:sumPx-1-eclsvrg}
\sum_{i=0}^k w_i \mathbb{E} [P(x^i) - P(x^*)] \leq \frac{1}{\eta} \Phi_1^0 + 2(P(x^0) - P(x^*)). 
\end{equation}

When $\mu>0$, since 
$$
W_k = \sum_{i=0}^k w_i = \frac{1 - \frac{1}{(1 - \min\{   \frac{\mu \eta}{3}, \frac{\delta}{4}, \frac{\delta_1}{4}, \frac{p}{4}   \})^{k+1}}}{1 - \frac{1}{1 - \min\{   \frac{\mu \eta}{3}, \frac{\delta}{4}, \frac{\delta_1}{4}, \frac{p}{4}    \}}} = \frac{1 - (1 - \min\{   \frac{\mu \eta}{3}, \frac{\delta}{4}, \frac{\delta_1}{4}, \frac{p}{4}   \})^{k+1}}{\min\{   \frac{\mu \eta}{3}, \frac{\delta}{4}, \frac{\delta_1}{4}, \frac{p}{4}   \} (1 - \min\{   \frac{\mu \eta}{3}, \frac{\delta}{4}, \frac{\delta_1}{4}, \frac{p}{4}   \})^k}, 
$$
we can get 
\begin{eqnarray*}
	&& \frac{1}{W_k} \sum_{i=0}^k w_i \mathbb{E} [P(x^i) - P(x^*)] \\ 
	&\overset{(\ref{eq:sumPx-1-eclsvrg})}{\leq}& \frac{1}{W_k} \left(  \frac{1}{\eta} \Phi_1^0 + 2(P(x^0) - P(x^*))  \right) \\ 
	&=& \frac{\min\{   \frac{\mu \eta}{3}, \frac{\delta}{4}, \frac{\delta_1}{4}, \frac{p}{4}   \} }{1 - (1 - \min\{   \frac{\mu \eta}{3}, \frac{\delta}{4}, \frac{\delta_1}{4}, \frac{p}{4}   \})^{k+1}} \left(  \frac{1}{\eta} \Phi_1^0 + 2(P(x^0) - P(x^*))  \right) \left(  1 - \min\left\{  \frac{\mu \eta}{3}, \frac{\delta}{4}, \frac{\delta_1}{4}, \frac{p}{4}   \right\}  \right)^k. 
\end{eqnarray*}

From the definition of $\Phi_1^k$ and $e_\tau^0=0$, we have 
\begin{eqnarray*}
	&& \min\{   \frac{\mu \eta}{3}, \frac{\delta}{4}, \frac{\delta_1}{4}, \frac{p}{4}   \} \cdot \frac{1}{\eta} \Phi_1^0 \\ 
	&=& \min\{   \frac{\mu \eta}{3}, \frac{\delta}{4}, \frac{\delta_1}{4}, \frac{p}{4}   \} \left(  \frac{1}{\eta} \left(  1 + \frac{\mu \eta}{2}  \right)\|x^0 - x^*\|^2 + \frac{164(1-\delta)\eta}{\delta^2 \delta_1 n} \sum_{\tau=1}^n \| \nabla f^{(\tau)}(x^0) - h_\tau^0\|^2 \right. \\ 
	&& \left. + \frac{4\eta}{3p} \left(  \frac{41(1-\delta)}{\delta} \left(  \frac{4{\bar L}}{\delta} + L  + \frac{16{\bar L}p}{\delta \delta_1} \left(  1 + \frac{2p}{\delta_1}  \right) \right) + \frac{16L}{n} \right)  [P(x^0) - P(x^*)].  \right) \\ 
	&\leq& \frac{\mu}{3} \left(  1 + \frac{\mu \eta}{2}  \right) \|x^0-x^*\|^2 + \min\{   \frac{\mu \eta}{3}, \frac{\delta}{4}, \frac{\delta_1}{4}, \frac{p}{4}   \} \cdot \frac{1}{10 {\bar L} p n} \sum_{\tau=1}^n \| \nabla f^{(\tau)}(x^0)  - h_\tau^0\|^2 \\ 
	&& +  \min\{   \frac{\mu \eta}{3}, \frac{\delta}{4}, \frac{\delta_1}{4}, \frac{p}{4}   \} \cdot \frac{2}{3p} [P(x^0) - P(x^*)] \\ 
	&\leq& \frac{\mu}{2} \|x^0 - x^*\|^2 + \frac{1}{3} [P(x^0) - P(x^*)] + \frac{1}{40{\bar L}n}\sum_{\tau=1}^n \| \nabla f^{(\tau)}(x^0)  - h_\tau^0\|^2  . 
\end{eqnarray*}

Therefore, we arrive at 
\begin{eqnarray*}
&& \frac{1}{W_k} \sum_{i=0}^k w_i \mathbb{E} [P(x^i) - P(x^*)] \\ 
&\leq& \frac{\frac{\mu}{2} \|x^0 - x^*\|^2 + \frac{1}{2} (P(x^0) - P(x^*))  + \frac{1}{40{\bar L}n}\sum_{\tau=1}^n \| \nabla f^{(\tau)}(x^0)  - h_\tau^0\|^2 }{1 - (1 - \min\{   \frac{\mu \eta}{3}, \frac{\delta}{4}, \frac{\delta_1}{4},  \frac{p}{4}   \})^{k+1}} \left(  1 - \min\left\{  \frac{\mu \eta}{3}, \frac{\delta}{4}, \frac{\delta_1}{4}, \frac{p}{4}   \right\}  \right)^k. 
\end{eqnarray*}

For ${\bar x}^k = \frac{1}{W_k} \sum_{i=0}^k w_ix^i$, from the convexity of $P$, we have 
\begin{eqnarray*}
	&& \mathbb{E}[P({\bar x}^k)- P(x^*)] \leq  \frac{1}{W_k} \sum_{i=0}^k w_i \mathbb{E} [P(x^i) - P(x^*)] \\ 
	&\leq& \frac{\frac{\mu}{2} \|x^0 - x^*\|^2 + \frac{1}{2} (P(x^0) - P(x^*))  + \frac{1}{40{\bar L}n}\sum_{\tau=1}^n \| \nabla f^{(\tau)}(x^0)  - h_\tau^0\|^2 }{1 - (1 - \min\{   \frac{\mu \eta}{3}, \frac{\delta}{4}, \frac{\delta_1}{4},  \frac{p}{4}   \})^{k+1}} \left(  1 - \min\left\{  \frac{\mu \eta}{3}, \frac{\delta}{4}, \frac{\delta_1}{4}, \frac{p}{4}   \right\}  \right)^k. 
\end{eqnarray*}

If we choose $\eta = \frac{1}{\left(  \frac{105(1-\delta)}{\delta} \left(  \frac{4{\bar L}}{\delta} + L  + \frac{16{\bar L}p}{\delta \delta_1} \left(  1 + \frac{2p}{\delta_1}  \right) \right) +4L_f + \frac{42L}{n}  \right)}$, then in order to guarantee $\mathbb{E} [P({\bar x}^k) - P(x^*)] \leq \epsilon$, we first let 
$$
\left(  1 - \min\left\{  \frac{\mu \eta}{3}, \frac{\delta}{4}, \frac{\delta_1}{4}, \frac{p}{4}  \right\}  \right)^{k+1} \leq \frac{1}{2}, 
$$
which implies that 
$$
\mathbb{E}[P({\bar x}^k)- P(x^*)] \leq \left(  \mu \|x^0 - x^*\|^2 + P(x^0) - P(x^*) + \frac{1}{20{\bar L}n}\sum_{\tau=1}^n \| \nabla f^{(\tau)}(x^0)  - h_\tau^0\|^2  \right) \left(  1 - \min\left\{  \frac{\mu \eta}{3}, \frac{\delta}{4}, \frac{\delta_1}{4}, \frac{p}{4}   \right\}  \right)^k. 
$$

Hence, when $\epsilon \leq \frac{\mu}{2}\|x^0-x^*\|^2 + \frac{1}{2} (P(x^0) - P(x^*)) + \frac{1}{40{\bar L}n}\sum_{\tau=1}^n \| \nabla f^{(\tau)}(x^0)  - h_\tau^0\|^2$, $\mathbb{E}[P({\bar x}^k)- P(x^*)] \leq \epsilon$ as long as 
$$
\left(  1 - \min\left\{  \frac{\mu \eta}{3}, \frac{\delta}{4}, \frac{\delta_1}{4}, \frac{p}{4}  \right\}  \right)^{k} \leq \frac{\epsilon}{ \mu \|x^0 - x^*\|^2 + P(x^0) - P(x^*)  + \frac{1}{20{\bar L}n}\sum_{\tau=1}^n \| \nabla f^{(\tau)}(x^0)  - h_\tau^0\|^2}, 
$$
which is equivalent to 
$$
k \geq \frac{1}{- \ln (1 - \min\left\{  \frac{\mu \eta}{3}, \frac{\delta}{4}, \frac{\delta_1}{4}, \frac{p}{4}  \right\} )} \ln\left(  \frac{\mu \|x^0 - x^*\|^2 + P(x^0) - P(x^*) + \frac{1}{20{\bar L}n}\sum_{\tau=1}^n \| \nabla f^{(\tau)}(x^0)  - h_\tau^0\|^2}{\epsilon}  \right). 
$$
Since $-\ln(1 - x) \geq x$ for $x \in[0, 1)$, if $p\leq O(\delta_1)$, we have $\mathbb{E}[P({\bar x}^k)- P(x^*)] \leq \epsilon$ as long as 
\begin{eqnarray*}
k \geq O\left(  \left(  \frac{1}{\delta} + \frac{1}{p}  + \frac{L_f}{\mu} + \frac{L}{n \mu}  +  \frac{(1-\delta){\bar L}}{\delta^2 \mu} + \frac{(1-\delta)L}{\delta \mu} \right)  \ln \frac{1}{\epsilon}  \right). 
\end{eqnarray*}

When $\mu=0$, from (\ref{eq:sumPx-1-eclsvrg}) and the convexity of $P$, we have 
$$
\mathbb{E}[P({\bar x}^k) - P(x^*)] \leq \frac{1}{k+1} \sum_{i=0}^k \mathbb{E}[P(x^i) - P(x^*)] \leq \frac{1}{k+1} \left(   \frac{1}{\eta} \Phi_1^0 + 2(P(x^0) - P(x^*))  \right). 
$$

From the definition of $\Phi_1^k$ and $e_\tau^0=0$, we have 
\begin{eqnarray*}
	 \frac{1}{\eta} \Phi_1^0 &=&  \frac{1}{\eta} \left(  1 + \frac{\mu \eta}{2}  \right)\|x^0 - x^*\|^2 + \frac{164(1-\delta)\eta}{\delta^2 \delta_1 n} \sum_{\tau=1}^n \| \nabla f^{(\tau)}(x^0)  - h_\tau^0\|^2 \\ 
	&&  + \frac{4\eta}{3p} \left(  \frac{41(1-\delta)}{\delta} \left(  \frac{4{\bar L}}{\delta} + L  + \frac{16{\bar L}p}{\delta \delta_1} \left(  1 + \frac{2p}{\delta_1}  \right) \right) + \frac{16L}{n} \right)  [P(x^0) - P(x^*)].  \\ 
	&\leq& \frac{9}{8\eta} \|x^0-x^*\|^2 + \frac{1}{10 {\bar L} p n} \sum_{\tau=1}^n \| \nabla f^{(\tau)}(x^0)  - h_\tau^0\|^2 + \frac{2}{3p} (P(x^0) - P(x^*)). 
\end{eqnarray*}

Hence, we arrive at 
$$
\mathbb{E}[P({\bar x}^k) - P(x^*)] \leq \frac{1}{k+1} \left( \frac{9}{8\eta} \|x^0-x^*\|^2 + \frac{1}{10 {\bar L} p n} \sum_{\tau=1}^n \| \nabla f^{(\tau)}(x^0)  - h_\tau^0\|^2 + \frac{8}{3p} (P(x^0) - P(x^*))  \right). 
$$

In particular, if we choose $\eta = \frac{1}{\left(  \frac{105(1-\delta)}{\delta} \left(  \frac{4{\bar L}}{\delta} + L  + \frac{16{\bar L}p}{\delta \delta_1} \left(  1 + \frac{2p}{\delta_1}  \right) \right) +4L_f + \frac{42L}{n}  \right)}$ and $p\leq O(\delta_1)$, we have $\mathbb{E}[P({\bar x}^k)- P(x^*)] \leq \epsilon$ as long as 
$$
k \geq O\left(  \left( \frac{1}{p} + L_f + \frac{L}{n} + \frac{(1-\delta){\bar L}}{\delta^2} + \frac{(1-\delta) L}{\delta}   \right) \frac{1}{\epsilon} \right). 
$$

\subsection{Proof of Theorem \ref{th:eclsvrg-22}}

Same as the proof of Theorem \ref{th:eclsvrg-11}, if 
$$
\eta \leq \frac{1}{ \left(  \tfrac{(1-\delta)}{\delta} \left(  \tfrac{418L_f}{\delta} + \tfrac{3422{\bar L}}{\delta n} + \tfrac{1349L}{n} + \tfrac{(1671L_f + \tfrac{20364{\bar L}}{n})p}{\delta \delta_1}\left(  1 + \tfrac{2p}{\delta_1}  \right)  \right) + 4L_f + \tfrac{42L}{n} \right) },
$$
then 
\begin{equation}\label{eq:sumPx-2-eclsvrg}
\sum_{i=0}^k w_i \mathbb{E} [P(x^i) - P(x^*)] \leq \frac{1}{\eta} \Phi_2^0 + 2(P(x^0) - P(x^*)). 
\end{equation}

When $\mu>0$, notice that 

\begin{eqnarray*}
	&& \min\{   \frac{\mu \eta}{3}, \frac{\delta}{4}, \frac{\delta_1}{4}, \frac{p}{4}   \} \cdot \frac{1}{\eta} \Phi_2^0 \\ 
	&=& \min\{   \frac{\mu \eta}{3}, \frac{\delta}{4}, \frac{\delta_1}{4}, \frac{p}{4}   \} \left(  \frac{1}{\eta} \left(  1 + \frac{\mu \eta}{2}  \right)\|x^0 - x^*\|^2 + \frac{164(1-\delta)\eta}{\delta^2 \delta_1} \|\nabla f(x^0)\|^2 + \frac{2000(1-\delta)\eta}{\delta^2 \delta_1 n} \sum_{\tau=1}^n \| \nabla f^{(\tau)}(x^0)  - h_\tau^0\|^2 \right. \\ 
	&& \left. + \frac{4\eta}{3p} \left(  \frac{(1-\delta)}{\delta} \left(  \frac{164L_f}{\delta} + \frac{1344{\bar L}}{\delta n} + \frac{541L}{n}  + \frac{(656L_f + \frac{8000{\bar L}}{n})p}{\delta \delta_1}\left(  1 + \frac{2p}{\delta_1}  \right)  \right) + \frac{16L}{n} \right)  [P(x^0) - P(x^*)]  \right) \\ 
	&\leq& \frac{\mu}{3} \left(  1 + \frac{\mu \eta}{2}  \right) \|x^0-x^*\|^2 + \min\{   \frac{\mu \eta}{3}, \frac{\delta}{4}, \frac{\delta_1}{4}, \frac{p}{4}   \} \cdot \frac{1}{10L_fp} \|\nabla f(x^0) \|^2 \\ 
	&& + \min\{   \frac{\mu \eta}{3}, \frac{\delta}{4}, \frac{\delta_1}{4}, \frac{p}{4}   \} \cdot \frac{6}{5 L_f p n} \sum_{\tau=1}^n \| \nabla f^{(\tau)}(x^0)  - h_\tau^0\|^2  +  \min\{   \frac{\mu \eta}{3}, \frac{\delta}{4}, \frac{\delta_1}{4}, \frac{p}{4}   \} \cdot \frac{2}{3p} [P(x^0) - P(x^*)] \\ 
	&\leq& \frac{\mu}{2} \|x^0 - x^*\|^2 + \frac{1}{3} [P(x^0) - P(x^*)] + \frac{1}{40L_f}\|\nabla f(x^0)\|^2 + \frac{3}{10 L_fn}\sum_{\tau=1}^n \| \nabla f^{(\tau)}(x^0)  - h_\tau^0\|^2. 
\end{eqnarray*}

Hence, 

\begin{eqnarray*}
	 && \frac{1}{W_k} \sum_{i=0}^k w_i \mathbb{E} [P(x^i) - P(x^*)] \leq  \left(  1 - \min\left\{  \frac{\mu \eta}{3}, \frac{\delta}{4}, \frac{\delta_1}{4}, \frac{p}{4}   \right\}  \right)^k \\ 
	&& \cdot \frac{ \frac{\mu}{2} \|x^0 - x^*\|^2 + \frac{1}{2} (P(x^0) - P(x^*)) + \frac{1}{40L_f}\|\nabla f(x^0)\|^2 + \frac{3}{10L_f n}\sum_{\tau=1}^n \| \nabla f^{(\tau)}(x^0)  - h_\tau^0\|^2 }{1 - (1 - \min\{   \frac{\mu \eta}{3}, \frac{\delta}{4}, \frac{\delta_1}{4},  \frac{p}{4}   \})^{k+1}}. 
\end{eqnarray*}

When $\mu=0$, notice that 

\begin{eqnarray*}
	 \frac{1}{\eta} \Phi_2^0 &=&  \frac{1}{\eta} \left(  1 + \frac{\mu \eta}{2}  \right)\|x^0 - x^*\|^2 + \frac{164(1-\delta)\eta}{\delta^2 \delta_1} \|\nabla f(x^0)\|^2 + \frac{2000(1-\delta)\eta}{\delta^2 \delta_1 n} \sum_{\tau=1}^n \| \nabla f^{(\tau)}(x^0)  - h_\tau^0\|^2  \\ 
	&&  + \frac{4\eta}{3p} \left(  \frac{(1-\delta)}{\delta} \left(  \frac{164L_f}{\delta} + \frac{1344{\bar L}}{\delta n} + \frac{541L}{n}  + \frac{(656L_f + \frac{8000{\bar L}}{n})p}{\delta \delta_1}\left(  1 + \frac{2p}{\delta_1}  \right)  \right) + \frac{16L}{n} \right)  [P(x^0) - P(x^*)] \\ 
	&\leq& \frac{9}{8\eta} \|x^0-x^*\|^2 +  \frac{1}{10L_fp} \|\nabla f(x^0)\|^2  +  \frac{6}{5 L_f p n} \sum_{\tau=1}^n \| \nabla f^{(\tau)}(x^0)  - h_\tau^0\|^2  + \frac{2}{3p} \left(P(x^0) - P(x^*) \right). 
\end{eqnarray*}

From $\frac{{\bar L}}{n} \leq L_f$, the rest is the same as that of Theorem \ref{th:eclsvrg-11}.

\section{Proofs for EC-LSVRG in the Smooth Case}

\subsection{A lemma} 

Thanks to the following lemma, we can get better results than the composite case. The main difference between Lemma \ref{lm:itereclsvrg} and Lemma \ref{lm:itereclsvrgsmooth} is that there is an additional stepsize $\eta$ before $\mathbb{E}\|e^k\|^2$. The following lemma is similar to Lemma 7 in \citep{Stich19}. However, for completeness, we give the proof.  

\begin{lemma}\label{lm:itereclsvrgsmooth}
	If $\eta\leq \tfrac{1}{4L_f + 8L/n}$, then 
	\begin{eqnarray*}
		\mathbb{E}_k \|{\tilde x}^{k+1} - x^*\|^2 &\leq& \left(1-\tfrac{\mu\eta}{2} \right) \|{\tilde x}^k - x^*\|^2 - \tfrac{\eta}{2} [f(x^k) - f(x^*)] \\
		&&  + 3L_f \eta \|e^k\|^2 + \tfrac{4L}{n}\eta^2 [f(w^k) - f(x^*)]. 
	\end{eqnarray*}
\end{lemma}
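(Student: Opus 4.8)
The plan is to run the standard perturbed-iterate (virtual-iterate) analysis on the sequence ${\tilde x}^k = x^k - e^k$. Since $\psi = 0$, the recursion derived just after Algorithm~\ref{alg:ec-lsvrg} collapses to ${\tilde x}^{k+1} = {\tilde x}^k - \eta(g^k + h^k)$, and I would recall from the proof of Lemma~\ref{lm:itereclsvrg} that $\mathbb{E}_k[g^k + h^k] = \nabla f(x^k)$. First I would expand
$$
\|{\tilde x}^{k+1} - x^*\|^2 = \|{\tilde x}^k - x^*\|^2 - 2\eta \langle g^k + h^k, {\tilde x}^k - x^* \rangle + \eta^2 \|g^k + h^k\|^2,
$$
and take $\mathbb{E}_k$, which turns the middle term into $-2\eta\langle \nabla f(x^k), {\tilde x}^k - x^*\rangle$.

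Next, using ${\tilde x}^k - x^* = (x^k - x^*) - e^k$, I would split this inner product into $-2\eta\langle\nabla f(x^k), x^k - x^*\rangle$ and $+2\eta\langle\nabla f(x^k), e^k\rangle$. The first is controlled by $\mu$-strong convexity, $\langle\nabla f(x^k), x^k - x^*\rangle \ge f(x^k) - f(x^*) + \tfrac{\mu}{2}\|x^k - x^*\|^2$. For the second, Young's inequality with weight $2L_f$ gives $2\langle\nabla f(x^k), e^k\rangle \le \tfrac{1}{2L_f}\|\nabla f(x^k)\|^2 + 2L_f\|e^k\|^2$, and since $\nabla f(x^*)=0$ in the smooth case, $L_f$-smoothness yields $\|\nabla f(x^k)\|^2 \le 2L_f(f(x^k)-f(x^*))$; thus this term contributes at most $\eta(f(x^k)-f(x^*)) + 2L_f\eta\|e^k\|^2$.

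For the second-moment term I would observe that the $h^k_\tau$ terms cancel with $h^k$, so $g^k + h^k = \tfrac1n\sum_\tau(\nabla f_{i_k^\tau}^{(\tau)}(x^k) - \nabla f_{i_k^\tau}^{(\tau)}(w^k)) + \nabla f(w^k)$, whence $\mathbb{E}_k\|g^k+h^k\|^2 = \|\nabla f(x^k)\|^2 + \mathbb{E}_k\|g^k+h^k - \nabla f(x^k)\|^2$. Bounding the first summand again by $2L_f(f(x^k)-f(x^*))$ and the variance by (\ref{eq:gk2variance}) with $P=f$ gives
$$
\eta^2\mathbb{E}_k\|g^k+h^k\|^2 \le \eta^2\Big(2L_f + \tfrac{4L}{n}\Big)(f(x^k)-f(x^*)) + \tfrac{4L}{n}\eta^2(f(w^k)-f(x^*)).
$$
Collecting all $(f(x^k)-f(x^*))$ contributions produces the factor $-\eta\big[1 - \eta(2L_f + \tfrac{4L}{n})\big]$, which the hypothesis $\eta \le 1/(4L_f + 8L/n)$ makes at most $-\tfrac{\eta}{2}$; this is exactly where the stated stepsize bound is used.

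The final step, which is the only slightly delicate one, is to convert $-\eta\mu\|x^k - x^*\|^2$ into the virtual-iterate quantity: applying $\|a+b\|^2 \ge \tfrac12\|a\|^2 - \|b\|^2$ to $x^k - x^* = ({\tilde x}^k - x^*) + e^k$ gives $-\eta\mu\|x^k-x^*\|^2 \le -\tfrac{\mu\eta}{2}\|{\tilde x}^k - x^*\|^2 + \eta\mu\|e^k\|^2$. The main obstacle is keeping the coefficient of $\|e^k\|^2$ at exactly $3L_f\eta$: one must invoke $\mu \le L_f$ to bound the extra $\eta\mu\|e^k\|^2$ by $\eta L_f\|e^k\|^2$ and add it to the $2L_f\eta\|e^k\|^2$ already collected. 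Assembling all pieces yields the claimed inequality.
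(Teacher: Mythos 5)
Your proposal is correct and follows essentially the same route as the paper's proof: the same virtual-iterate expansion, the same split of $\langle \nabla f(x^k), {\tilde x}^k - x^*\rangle$ via strong convexity plus Young's inequality with weight $2L_f$, the same bias--variance decomposition of $\mathbb{E}_k\|g^k+h^k\|^2$ bounded through (\ref{eq:gk2variance}), and the same use of $\mu \le L_f$ to absorb $\mu\eta\|e^k\|^2$ into $3L_f\eta\|e^k\|^2$. No gaps.
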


\begin{proof}

Since $\psi =0$, we have ${\tilde x}^{k+1} = {\tilde x}^k - \eta (g^k + h^k)$. Hence 
\begin{eqnarray*}
	&& \mathbb{E}_k \|{\tilde x}^{k+1} - x^*\|^2 \\ 
	&=& \mathbb{E}_k \| {\tilde x}^k -x^* - \eta(g^k + h^k)\|^2 \\ 
	&=& \|{\tilde x}^k - x^*\|^2 - 2\eta \langle {\tilde x}^k - x^*, \nabla f(x^k) \rangle + \eta^2 \mathbb{E}_k \|g^k +h^k\|^2 \\ 
	&=&  \|{\tilde x}^k - x^*\|^2 - 2\eta \langle x^k - x^*, \nabla f(x^k) \rangle + 2\eta \langle x^k - {\tilde x}^k, \nabla f(x^k) \rangle  + \eta^2 \mathbb{E}_k \|g^k + h^k\|^2 \\ 
	&\leq& \|{\tilde x}^k - x^*\|^2 - 2\eta (f(x^k) - f(x^*)) - \mu\eta \|x^k-x^*\|^2 + 2\eta \langle e^k, \nabla f(x^k) \rangle  + \eta^2 \mathbb{E}_k \|g^k + h^k\|^2, 
\end{eqnarray*}
where the last inequality comes from the $\mu$-strongly convexity of $f$. 

For $\|x^k - x^*\|^2$, we have 
$$
\|{\tilde x}^k - x^*\|^2 \leq 2\|x^k - x^*\|^2 + 2\|e^k\|^2.
$$
For $2\langle e^k, \nabla f(x^k) \rangle$, we have 
$$
2\langle e^k, \nabla f(x^k) \rangle \leq \frac{1}{2L_f} \|\nabla f(x^k)\|^2 + 2L_f\|e^k\|^2 \leq f(x^k) - f(x^*) + 2L_f\|e^k\|^2. 
$$
Thus, we arrive at 
\begin{eqnarray*}
	&& \mathbb{E}_k \|{\tilde x}^{k+1} - x^*\|^2 \\ 
	&\leq& \left(1-\frac{\mu\eta}{2} \right) \|{\tilde x}^k - x^*\|^2 - \eta (f(x^k) - f(x^*)) + (2L_f+\mu)\eta \|e^k\|^2 + \eta^2 \mathbb{E}_k \|g^k + h^k\|^2 \\ 
	&\leq& \left(1-\frac{\mu\eta}{2} \right) \|{\tilde x}^k - x^*\|^2 - \eta (f(x^k) - f(x^*)) + 3L_f\eta \|e^k\|^2 + \eta^2 \mathbb{E}_k \|g^k + h^k\|^2. 
\end{eqnarray*}

Finally, for $\mathbb{E}_k \|g^k+ h^k\|^2$, we have 
\begin{eqnarray*}
	\mathbb{E}_k \|g^k+ h^k\|^2 &=& \mathbb{E}_k \|\frac{1}{n} \sum_{\tau=1}^n \left(  \nabla f_{i_k^\tau}^{(\tau)}(x^k) - \nabla f_{i_k^\tau}^{(\tau)}(w^k)  \right)  + \nabla f(w^k) - \nabla f(x^k) + \nabla f(x^k) - \nabla f(x^*)\|^2 \\ 
	&=& \mathbb{E}_k \|\frac{1}{n} \sum_{\tau=1}^n \left(  \nabla f_{i_k^\tau}^{(\tau)}(x^k) - \nabla f_{i_k^\tau}^{(\tau)}(w^k)  \right)  + \nabla f(w^k) - \nabla f(x^k)\|^2 + \mathbb{E}\|\nabla f(x^k) - \nabla f(x^*)\|^2 \\ 
	&\leq&  \mathbb{E}_k \|\frac{1}{n} \sum_{\tau=1}^n \left(  \nabla f_{i_k^\tau}^{(\tau)}(x^k) - \nabla f_{i_k^\tau}^{(\tau)}(w^k)  \right)  + \nabla f(w^k) - \nabla f(x^k)\|^2 + 2L_f (f(x^k) - f(x^*)) \\ 
	&\overset{(\ref{eq:gk2variance})}{\leq}& \left(2L_f + \frac{4L}{n} \right) [f(x^k) - f(x^*)] + \frac{4L}{n} [f(w^k) - f(x^*)]. 
\end{eqnarray*}
Thereofore, 
\begin{eqnarray*}
	\mathbb{E}_k \|{\tilde x}^{k+1} - x^*\|^2 &\leq& \left(1-\frac{\mu\eta}{2} \right) \|{\tilde x}^k - x^*\|^2 - \eta \left(1 - \left(2L_f + \frac{4L}{n} \right)\eta \right)[f(x^k) - f(x^*)] \\ 
	&& + 3L_f \eta \|e^k\|^2 + \frac{4L}{n}\eta^2  [f(w^k) - f(x^*)]. 
\end{eqnarray*}
By choosing $\eta \leq \frac{1}{4L_f + 8L/n}$, we can get the reslut. 

\end{proof}

\subsection{Proof of Theorem \ref{th:eclsvrgsmooth-1}} 

Let $\eta\leq \frac{1}{4L_f + 8L/n}$. From Lemma \ref{lm:itereclsvrgsmooth}, Lemma \ref{lm:ek+1-1eclsvrg}, and $\|e^k\|^2 \leq \frac{1}{n} \sum_{\tau=1}^n \|e^k_{\tau}\|^2$, we can obtain 

\begin{eqnarray*}
	&& \mathbb{E}_k \|{\tilde x}^{k+1} - x^*\|^2 + \frac{12L_f \eta}{n \delta } \sum_{\tau=1}^n \mathbb{E}_k \|e^{k+1}_\tau\|^2 - \left(1-\frac{\mu\eta}{2} \right) \|{\tilde x}^k - x^*\|^2 + \frac{\eta}{2} [f(x^k) - f(x^*)] \\ 
	&\leq&  \frac{3L_f \eta}{n} \sum_{\tau=1}^n \|e^k_\tau\|^2 + \frac{4L}{n}\eta^2 [f(w^k) - f(x^*)]  + \frac{48(1-\delta)L_f \eta^3}{\delta^2 n} \sum_{\tau=1}^n \|h^k_\tau - \nabla f^{(\tau)}(w^k)\|^2 \\ 
	&& + \frac{12L_f \eta}{n \delta } \left(  1 - \frac{\delta}{2}  \right) \sum_{\tau=1}^n \|e^{k}_\tau\|^2 + \frac{48(1-\delta) L_f \eta^3}{\delta} \left(  \frac{4{\bar L}}{\delta} + L  \right)  [f(x^k) - f(x^*) + f(w^k) - f(x^*)] \\ 
	&=&  \frac{12L_f \eta}{n \delta } \left(  1 - \frac{\delta}{4}  \right) \sum_{\tau=1}^n \|e^{k}_\tau\|^2 + \frac{48(1-\delta)L_f \eta^3}{\delta^2 n} \sum_{\tau=1}^n \|h^k_\tau - \nabla f^{(\tau)}(w^k)\|^2 + \frac{4L}{n}\eta^2 [f(w^k) - f(x^*)]  \\ 
	&& + \frac{48(1-\delta) L_f \eta^3}{\delta} \left(  \frac{4{\bar L}}{\delta} + L  \right)  [f(x^k) - f(x^*) +  f(w^k) - f(x^*)]. 
\end{eqnarray*}

Then from Lemma \ref{lm:hk+1-1eclsvrg}, we can get 
\begin{eqnarray*}
	&& \mathbb{E}_k \|{\tilde x}^{k+1} - x^*\|^2 + \frac{12L_f \eta}{n \delta } \sum_{\tau=1}^n \mathbb{E}_k \|e^{k+1}_\tau\|^2 + \frac{192(1-\delta)L_f \eta^3}{\delta^2 \delta_1 n}  \sum_{\tau=1}^n \mathbb{E}_k \|h^{k+1}_\tau - \nabla f^{(\tau)}(w^{k+1})\|^2 \\ 
	&&  -  \left(1-\frac{\mu\eta}{2} \right) \|{\tilde x}^k - x^*\|^2 + \frac{\eta}{2} [f(x^k) - f(x^*)] -  \frac{4L}{n}\eta^2 [f(w^k) - f(x^*)]  \\ 
	&\leq&  \left(  1 - \frac{\delta}{4}  \right)\frac{12L_f \eta}{n \delta }  \sum_{\tau=1}^n \|e^{k}_\tau\|^2 + \left(  1 - \frac{\delta_1}{4}  \right) \frac{192(1-\delta)L_f \eta^3}{\delta^2 \delta_1 n} \sum_{\tau=1}^n \|h^k_\tau - \nabla f^{(\tau)}(w^k)\|^2 \\
	&& + \frac{48(1-\delta) L_f \eta^3}{\delta} \left(  \frac{4{\bar L}}{\delta} + L + \frac{16{\bar L}p}{\delta \delta_1} \left(  1 + \frac{2p}{\delta_1}  \right)  \right) [f(x^k) - f(x^*) +  f(w^k) - f(x^*)]. 
\end{eqnarray*}

Combining (\ref{eq:wk+1}) and the above inequality, we can obtain 
\begin{eqnarray*}
	&& \mathbb{E}_k [\Phi_3^{k+1}] \\ 
	&=& \mathbb{E}_k \|{\tilde x}^{k+1} - x^*\|^2 + \frac{12L_f \eta}{n \delta } \sum_{\tau=1}^n \mathbb{E}_k \|e^{k+1}_\tau\|^2 + \frac{192(1-\delta)L_f \eta^3}{\delta^2 \delta_1 n}  \sum_{\tau=1}^n \mathbb{E}_k \|h^{k+1}_\tau - \nabla f^{(\tau)}(w^{k+1})\|^2 \\ 
	&& + \frac{4}{3p}  \left(   \frac{48(1-\delta) L_f \eta^3}{\delta} \left(  \frac{4{\bar L}}{\delta} + L  + \frac{16{\bar L}p}{\delta \delta_1} \left(  1 + \frac{2p}{\delta_1}  \right) \right) + \frac{4L\eta^2}{n}  \right) \mathbb{E}_k [f(w^{k+1}) - f(x^*)] \\ 
	&\leq&  \left(1-\frac{\mu\eta}{2} \right) \|{\tilde x}^k - x^*\|^2 + \left(  1 - \frac{\delta}{4}  \right) \frac{12L_f \eta}{n \delta }  \sum_{\tau=1}^n \|e^{k}_\tau\|^2 + \left(  1 - \frac{\delta_1}{4}  \right) \frac{192(1-\delta)L_f \eta^3}{\delta^2 \delta_1 n} \sum_{\tau=1}^n  \|h^k_\tau - \nabla f^{(\tau)}(w^k)\|^2 \\ 
	&& + \frac{4}{3p}  \left(   \frac{48(1-\delta) L_f \eta^3}{\delta} \left(  \frac{4{\bar L}}{\delta} + L  + \frac{16{\bar L}p}{\delta \delta_1} \left(  1 + \frac{2p}{\delta_1}  \right)   \right) + \frac{4L\eta^2}{n}  \right) \left(  1 - \frac{p}{4}  \right) [f(w^{k}) - f(x^*)] \\ 
	&& - \frac{\eta}{2} \left(  1 - \frac{224(1-\delta)L_f \eta^2}{\delta} \left(  \frac{4{\bar L}}{\delta} + L  + \frac{16{\bar L}p}{\delta \delta_1} \left(  1 + \frac{2p}{\delta_1}  \right)  \right) - \frac{11L\eta}{n}   \right) [f(x^k) - f(x^*)] \\ 
	&\leq& \left(  1 - \min\left\{  \frac{\mu \eta}{2}, \frac{\delta}{4}, \frac{\delta_1}{4}, \frac{p}{4}  \right\}  \right) \Phi_3^k \\ 
	&& - \frac{\eta}{2} \left(  1 - \frac{224(1-\delta)L_f \eta^2}{\delta} \left(  \frac{4{\bar L}}{\delta} + L + \frac{16{\bar L}p}{\delta \delta_1} \left(  1 + \frac{2p}{\delta_1}  \right)   \right) - \frac{11L\eta}{n}   \right) [f(x^k) - f(x^*)]. 
\end{eqnarray*}

Taking expectation again and applying the tower property, we can get the result.

\subsection{Proof of Theorem \ref{th:eclsvrgsmooth-2}} 

Let $\eta \leq \frac{1}{4L_f + 8L/n}$. From Lemma \ref{lm:itereclsvrgsmooth}, we have 

\begin{eqnarray*}
	&& \mathbb{E}_k \|{\tilde x}^{k+1} - x^*\|^2 + \frac{12L_f \eta}{\delta} \mathbb{E}_k \|e^{k+1}\|^2 +  \frac{96(1-\delta) L_f \eta}{n^2 \delta } \sum_{\tau=1}^n \mathbb{E}_k \|e^{k+1}_\tau\|^2 \\ 
	&& - \left(1-\frac{\mu\eta}{2} \right) \|{\tilde x}^k - x^*\|^2 +  \frac{\eta}{2} [f(x^k) - f(x^*)] - \frac{4L}{n}\eta^2 [f(w^k) - f(x^*)] \\ 
	&\leq& 3L_f \eta \|e^k\|^2 +  \frac{12L_f \eta}{\delta} \mathbb{E}_k \|e^{k+1}\|^2 +  \frac{96(1-\delta) L_f \eta}{n^2 \delta } \sum_{\tau=1}^n \mathbb{E}_k \|e^{k+1}_\tau\|^2 \\ 
	&\overset{Lemma~\ref{lm:ek+1-2eclsvrg}}{\leq}&  \frac{12L_f \eta}{\delta} \left(  1 - \frac{\delta}{4}  \right) \|e^k\|^2 + \frac{24(1-\delta)L_f \eta }{n^2} \sum_{\tau=1}^n \|e^k_\tau\|^2 +  \frac{96(1-\delta) L_f \eta}{n^2 \delta } \sum_{\tau=1}^n \mathbb{E}_k \|e^{k+1}_\tau\|^2 \\ 
	&& + \frac{48(1-\delta) L_f \eta^3}{\delta} \left(  \frac{4L_f}{\delta} + \frac{5L}{n}  \right) [f(x^k) - f(x^*) + f(w^k) - f(x^*)] \\ 
	&& + \frac{48(1-\delta)L_f \eta^3}{n^2} \sum_{\tau=1}^n \|h^k_\tau - \nabla f^{(\tau)}(w^k)\|^2 + \frac{48(1-\delta)L_f \eta^3}{\delta^2} \|h^k - \nabla f(w^k)\|^2 \\
	&\overset{Lemma~\ref{lm:ek+1-1eclsvrg}}{\leq}&  \frac{12L_f \eta}{\delta} \left(  1 -  \frac{\delta}{4}  \right)\|e^k\|^2 +  \frac{96(1-\delta) L_f \eta}{n^2 \delta } \left( 1 - \frac{\delta}{4} \right) \sum_{\tau=1}^n \|e^{k}_\tau\|^2 \\ 
	&& + \frac{48(1-\delta)L_f \eta^3}{\delta} \left(  \frac{4L_f}{\delta} + \frac{32{\bar L}}{n \delta} + \frac{13L}{n}  \right) [f(x^k) - f(x^*) + f(w^k) - f(x^*)] \\ 
	&& + \frac{384(1-\delta)L_f \eta^3}{\delta^2 n^2} \sum_{\tau=1}^n \|h^k_\tau - \nabla f^{(\tau)}(w^k)\|^2 + \frac{48(1-\delta)L_f \eta^3}{\delta^2} \|h^k - \nabla f(w^k)\|^2. 
\end{eqnarray*}

Then from Lemma \ref{lm:hk+1-1eclsvrg} and Lemma \ref{lm:hk+1-2eclsvrg}, we can obtain 
\begin{eqnarray*}
	&& \mathbb{E}_k \|{\tilde x}^{k+1} - x^*\|^2 + \frac{12L_f \eta}{\delta} \mathbb{E}_k \|e^{k+1}\|^2 +  \frac{96(1-\delta) L_f \eta}{n^2 \delta } \sum_{\tau=1}^n \mathbb{E}_k \|e^{k+1}_\tau\|^2 - \left(  1 -  \frac{\delta}{4}  \right)\frac{12L_f \eta}{\delta}  \|e^k\|^2 \\ 
	&& + \frac{192(1-\delta)L_f \eta^3}{\delta^2 \delta_1} \mathbb{E}_k \|h^{k+1} - \nabla f(w^{k+1})\|^2 + \frac{2304(1-\delta)L_f \eta^3}{\delta^2 \delta_1 n^2} \sum_{\tau=1}^n \mathbb{E}_k \|h^{k+1}_\tau - \nabla f^{(\tau)}(w^{k+1})\|^2  \\ 
	&& - \left(1-\frac{\mu\eta}{2} \right) \|{\tilde x}^k - x^*\|^2 +  \frac{\eta}{2} [f(x^k) - f(x^*)] - \frac{4L}{n}\eta^2 [f(w^k) - f(x^*)] \\ 
	&\overset{Lemma \ref{lm:hk+1-2eclsvrg}}{\leq}& \left( 1 - \frac{\delta}{4} \right) \frac{96(1-\delta) L_f \eta}{n^2 \delta }  \sum_{\tau=1}^n  \|e^{k}_\tau\|^2 + \frac{2304(1-\delta)L_f \eta^3}{\delta^2 \delta_1 n^2} \sum_{\tau=1}^n \mathbb{E}_k \|h^{k+1}_\tau - \nabla f^{(\tau)}(w^{k+1})\|^2  \\ 
	&& + \left(  1 - \frac{\delta_1}{4}  \right) \frac{192(1-\delta)L_f \eta^3}{\delta^2 \delta_1}\|h^{k} - \nabla f(w^{k})\|^2 + \frac{576(1-\delta)L_f \eta^3}{\delta^2 n^2} \sum_{\tau=1}^n \|h^k_\tau - \nabla f^{(\tau)}(w^k)\|^2 \\ 
	&& + \frac{48(1-\delta)L_f \eta^3}{\delta} \left(  \frac{4L_f}{\delta}  + \frac{32{\bar L}}{n \delta} + \frac{13L}{n}  + \frac{16pL_f}{\delta\delta_1} \left(  1 + \frac{2p}{\delta_1}  \right)\right) [f(x^k) - f(x^*) + f(w^k) - f(x^*)] \\ 
	&\overset{Lemma \ref{lm:hk+1-1eclsvrg}}{\leq}&  \left( 1 - \frac{\delta}{4} \right)\frac{96(1-\delta) L_f \eta}{n^2 \delta } \sum_{\tau=1}^n \|e^{k}_\tau\|^2 + \left(  1 - \frac{\delta_1}{4}  \right) \frac{2304(1-\delta)L_f \eta^3}{\delta^2 \delta_1 n^2} \sum_{\tau=1}^n \|h^{k}_\tau - \nabla f^{(\tau)}(w^{k})\|^2 \\ 
	&& + \left(  1 - \frac{\delta_1}{4}  \right) \frac{192(1-\delta)L_f \eta^3}{\delta^2 \delta_1} \|h^{k} - \nabla f(w^{k})\|^2 +  [f(x^k) - f(x^*) + f(w^k) - f(x^*)]  \\ 
	&& \cdot \frac{48(1-\delta)L_f \eta^3}{\delta} \left(  \frac{4L_f}{\delta} + \frac{32{\bar L}}{n \delta} + \frac{13L}{n}  + \frac{16p(L_f + \frac{12{\bar L}}{n})}{\delta\delta_1} \left(  1 + \frac{2p}{\delta_1}  \right)\right). 
\end{eqnarray*}

Combining (\ref{eq:wk+1}) and the above inequality, we arrive at 

\begin{eqnarray*}
	&& \mathbb{E}_k [\Phi_4^{k+1}] \\ 
	&=& \mathbb{E}_k \|{\tilde x}^{k+1} - x^*\|^2 + \frac{12L_f \eta}{\delta} \mathbb{E}_k \|e^{k+1}\|^2 +  \frac{96(1-\delta) L_f \eta}{n^2 \delta } \sum_{\tau=1}^n \mathbb{E}_k \|e^{k+1}_\tau\|^2 \\ 
	&& + \frac{192(1-\delta)L_f \eta^3}{\delta^2 \delta_1} \mathbb{E}_k \|h^{k+1} - \nabla f(w^{k+1})\|^2  + \frac{2304(1-\delta)L_f \eta^3}{\delta^2 \delta_1 n^2} \sum_{\tau=1}^n \mathbb{E}_k \|h^{k+1}_\tau - \nabla f^{(\tau)}(w^{k+1})\|^2  \\ 
	&& + \frac{4}{3p} \left( \frac{48(1-\delta)L_f \eta^3}{\delta} \left(  \frac{4L_f}{\delta} + \frac{32{\bar L}}{n \delta} + \frac{13L}{n}  + \frac{16p(L_f + \frac{12{\bar L}}{n})}{\delta\delta_1} \left(  1 + \frac{2p}{\delta_1}  \right)\right) + \frac{4L\eta^2}{n}  \right) \mathbb{E}_k [f(w^{k+1}) - f(x^*)] \\ 
	&\leq& \left(  1 - \min\left\{  \frac{\mu\eta}{2}, \frac{\delta}{4}, \frac{\delta_1}{4}, \frac{p}{4}  \right\}  \right) \Phi_4^k  \\ 
	&& -  \frac{\eta}{2} \left(  1 - \frac{224(1-\delta) L_f \eta^2}{\delta} \left(  \frac{4L_f}{\delta} + \frac{32{\bar L}}{n \delta} + \frac{13L}{n}  + \frac{16p(L_f + \frac{12{\bar L}}{n})}{\delta\delta_1} \left(  1 + \frac{2p}{\delta_1}  \right)\right) -  \frac{11L\eta}{n} \right) [f(x^k) - f(x^*)]. 
\end{eqnarray*}

Taking expectation again and applying the tower property, we can get the result.

\subsection{Proof of Theorem \ref{th:eclsvrgsmooth-11}}

Let $\eta \leq \min\left\{  \frac{1}{4L_f + 33L/n}, \frac{\delta}{60\sqrt{(1-\delta)L_f{\bar L}}}, \frac{\sqrt{\delta}}{64\sqrt{(1-\delta) L_fL}}, \frac{\delta \sqrt{\delta_1}}{120\sqrt{(1-\delta)L_f{\bar L}p\left(1 + \frac{2p}{\delta_1} \right)}}  \right\}$. Then we have 
$$
\frac{11L\eta}{n} \leq \frac{1}{3}, \ \ \frac{896(1-\delta)L_f {\bar L}\eta^2}{\delta^2} \leq \frac{1}{4}, \ \ \frac{224(1-\delta)L_f L\eta^2}{\delta} \leq \frac{1}{18}, \ \ {\rm and} \ \ \frac{3584(1-\delta)L_f{\bar L}p\eta^2 \left(  1 + \frac{2p}{\delta_1}  \right)}{\delta^2 \delta_1} \leq \frac{1}{4}. 
$$

Hence, from Theorem \ref{th:eclsvrgsmooth-1}, we have 
\begin{eqnarray*}
	\mathbb{E} [\Phi_3^{k+1}] &\leq& \left(  1 - \min\left\{  \frac{\mu \eta}{2}, \frac{\delta}{4}, \frac{\delta_1}{4}, \frac{p}{4}  \right\}  \right) \mathbb{E} [\Phi_3^{k}] - \frac{\eta}{18} \mathbb{E} [f(x^{k}) - f(x^*)] \\ 
	&\leq& \left(  1 - \min\left\{  \frac{\mu \eta}{2}, \frac{\delta}{4}, \frac{\delta_1}{4}, \frac{p}{4}  \right\}  \right)^{k+1} \Phi^0_3 - \frac{\eta}{18} \sum_{i=0}^{k} \left(  1 - \min\left\{  \frac{\mu \eta}{2}, \frac{\delta}{4}, \frac{\delta_1}{4}, \frac{p}{4}  \right\}  \right)^{k-i} \mathbb{E}[f(x^i) - f(x^*)] \\ 
	&\leq& \left(  1 - \min\left\{  \frac{\mu \eta}{2}, \frac{\delta}{4}, \frac{\delta_1}{4}, \frac{p}{4}  \right\}  \right)^{k} \Phi^0_3 - \frac{\eta}{18} \sum_{i=0}^{k} \left(  1 - \min\left\{  \frac{\mu \eta}{2}, \frac{\delta}{4}, \frac{\delta_1}{4}, \frac{p}{4}  \right\}  \right)^{k-i} \mathbb{E}[f(x^i) - f(x^*)] \\ 
	&=& \frac{1}{w_k} \Phi_3^0 - \frac{\eta}{18w_k} \sum_{i=0}^k w_i \mathbb{E}[f(x^i) - f(x^*)], 
\end{eqnarray*}

which implies that 

\begin{equation}\label{eq:sumf-1-eclsvrgsmooth}
	\frac{1}{W_k} \sum_{i=0}^k w_i \mathbb{E}[f(x^i) - f(x^*)] \leq \frac{18}{\eta W_k} \Phi_3^0 - \frac{18w_k}{\eta W_k} \mathbb{E}[\Phi_3^{k+1}] \leq \frac{18}{\eta W_k} \Phi_3^0. 
\end{equation}

When $\mu>0$, from (\ref{eq:sumf-1-eclsvrgsmooth}) and 
\begin{equation}\label{eq:Wkeclsvrg}
W_k = \sum_{i=0}^k w_i = \frac{1 - \frac{1}{(1 - \min\{   \frac{\mu \eta}{2}, \frac{\delta}{4}, \frac{\delta_1}{4}, \frac{p}{4}   \})^{k+1}}}{1 - \frac{1}{1 - \min\{   \frac{\mu \eta}{2}, \frac{\delta}{4}, \frac{\delta_1}{4}, \frac{p}{4}    \}}} = \frac{1 - (1 - \min\{   \frac{\mu \eta}{2}, \frac{\delta}{4}, \frac{\delta_1}{4}, \frac{p}{4}   \})^{k+1}}{\min\{   \frac{\mu \eta}{2}, \frac{\delta}{4}, \frac{\delta_1}{4}, \frac{p}{4}   \} (1 - \min\{   \frac{\mu \eta}{2}, \frac{\delta}{4}, \frac{\delta_1}{4}, \frac{p}{4} \})^k}, 
\end{equation}
we can obtain 
\begin{eqnarray*}
	\frac{1}{W_k} \sum_{i=0}^k w_i \mathbb{E}[f(x^i) - f(x^*)] \leq \frac{\min\{   \frac{\mu \eta}{2}, \frac{\delta}{4}, \frac{\delta_1}{4}, \frac{p}{4}   \} }{1 - (1 - \min\{   \frac{\mu \eta}{2}, \frac{\delta}{4}, \frac{\delta_1}{4}, \frac{p}{4}   \})^{k+1}} \cdot \frac{18}{\eta} \Phi_3^0 \left(1 - \min\left\{   \frac{\mu \eta}{2}, \frac{\delta}{4}, \frac{\delta_1}{4}, \frac{p}{4}   \right\} \right)^k. 
\end{eqnarray*}

From the definition of $\Phi_3^k$ and $e^0_\tau=0$, we have 

\begin{eqnarray*}
	&& \min\left\{   \frac{\mu \eta}{2}, \frac{\delta}{4}, \frac{\delta_1}{4}, \frac{p}{4}   \right\} \cdot \frac{1}{\eta} \Phi_3^0 \\ 
	&=& \min\left\{   \frac{\mu \eta}{2}, \frac{\delta}{4}, \frac{\delta_1}{4}, \frac{p}{4}   \right\} \left(  \frac{1}{\eta} \|x^0-x^*\|^2 + \frac{192(1-\delta)L_f\eta^2}{\delta^2 \delta_1 n}\sum_{\tau=1}^n \|\nabla f^{(\tau)}(x^0)  - h_\tau^0\|^2 \right. \\ 
	&& \left.  + \frac{4}{3p}  \left(   \frac{48(1-\delta) L_f \eta^2}{\delta} \left(  \frac{4{\bar L}}{\delta} + L + \frac{16{\bar L}p}{\delta \delta_1} \left(  1 + \frac{2p}{\delta_1}  \right)  \right) + \frac{4L\eta}{n}  \right)  [f(x^{0}) - f(x^*)]  \right) \\ 
	&\leq& \frac{\mu}{2} \|x^0 - x^*\|^2 + \min\left\{   \frac{\mu \eta}{2}, \frac{\delta}{4}, \frac{\delta_1}{4}, \frac{p}{4}   \right\}  \frac{1}{70{\bar L}p n} \sum_{\tau=1}^n \|\nabla f^{(\tau)}(x^0)  - h_\tau^0\|^2 \\ 
	&&  + \min\left\{   \frac{\mu \eta}{2}, \frac{\delta}{4}, \frac{\delta_1}{4}, \frac{p}{4}   \right\} \frac{4}{3p} \left(  \frac{1}{18} + \frac{1}{84} + \frac{1}{18} + \frac{1}{8}  \right) [f(x^0) - f(x^*)] \\ 
	&\leq& \frac{\mu}{2} \|x^0 - x^*\|^2 + \frac{1}{280{\bar L}n} \sum_{\tau=1}^n \|\nabla f^{(\tau)}(x^0)  - h_\tau^0\|^2  + \frac{1}{9} [f(x^0) - f(x^*)]. 
\end{eqnarray*}

Therefore, we can get 
\begin{eqnarray*}
&& \frac{1}{W_k} \sum_{i=0}^k w_i \mathbb{E}[f(x^i) - f(x^*)] \\
&\leq& \frac{ 9\mu\|x^0-x^*\|^2 + 2(f(x^0) - f(x^*)) + \frac{1}{15{\bar L}n} \sum_{\tau=1}^n \|\nabla f^{(\tau)}(x^0)  - h_\tau^0\|^2 }{1 - (1 - \min\{   \frac{\mu \eta}{2}, \frac{\delta}{4}, \frac{\delta_1}{4}, \frac{p}{4}   \})^{k+1}} \left(1 - \min\left\{   \frac{\mu \eta}{2}, \frac{\delta}{4}, \frac{\delta_1}{4}, \frac{p}{4}   \right\} \right)^k. 
\end{eqnarray*}

For ${\bar x}^k = \frac{1}{W_k} \sum_{i=0}^k w_ix^i$, from the convexity of $f$ anf the above inequality, we have 
\begin{eqnarray*}
&& \mathbb{E} [f({\bar x}^k) - f(x^*)] \\ 
&\leq& \frac{ 9\mu\|x^0-x^*\|^2 + 2(f(x^0) - f(x^*)) + \frac{1}{15{\bar L}n} \sum_{\tau=1}^n \|\nabla f^{(\tau)}(x^0)  - h_\tau^0\|^2 }{1 - (1 - \min\{   \frac{\mu \eta}{2}, \frac{\delta}{4}, \frac{\delta_1}{4}, \frac{p}{4}   \})^{k+1}} \left(1 - \min\left\{   \frac{\mu \eta}{2}, \frac{\delta}{4}, \frac{\delta_1}{4}, \frac{p}{4}   \right\} \right)^k. 
\end{eqnarray*}

If we choose  $\eta = \min\left\{  \frac{1}{4L_f + 33L/n}, \frac{\delta}{60\sqrt{(1-\delta)L_f{\bar L}}}, \frac{\sqrt{\delta}}{64\sqrt{(1-\delta) L_fL}}, \frac{\delta \sqrt{\delta_1}}{120\sqrt{(1-\delta)L_f{\bar L}p\left(1 + \frac{2p}{\delta_1} \right)}}  \right\}$, then in order to guarantee $\mathbb{E} [f({\bar x}^k) - f(x^*)] \leq \epsilon$, we first let 
$$
\left(1 - \min\left\{   \frac{\mu \eta}{2}, \frac{\delta}{4}, \frac{\delta_1}{4}, \frac{p}{4}   \right\} \right)^{k+1} \leq \frac{1}{2}, 
$$
which implies that 
\begin{align*}
& \mathbb{E} [f({\bar x}^k) - f(x^*)] \\ 
& \leq  \left(  18\mu\|x^0-x^*\|^2 + 4(f(x^0) - f(x^*)) + \frac{2}{15{\bar L}n} \sum_{\tau=1}^n \|\nabla f^{(\tau)}(x^0)  - h_\tau^0\|^2  \right) \left(1 - \min\left\{   \frac{\mu \eta}{2}, \frac{\delta}{4}, \frac{\delta_1}{4}, \frac{p}{4}   \right\} \right)^k. 
\end{align*}
Hence, when $\epsilon \leq  9\mu\|x^0-x^*\|^2 + 2(f(x^0) - f(x^*)) + \frac{1}{15{\bar L}n} \sum_{\tau=1}^n \|\nabla f^{(\tau)}(x^0)  - h_\tau^0\|^2 $, $\mathbb{E} [f({\bar x}^k) - f(x^*)] \leq \epsilon$ as long as 
$$
\left(1 - \min\left\{   \frac{\mu \eta}{2}, \frac{\delta}{4}, \frac{p}{2}   \right\} \right)^k \leq \frac{\epsilon}{18\mu\|x^0-x^*\|^2 + 4(f(x^0) - f(x^*)) + \frac{2}{15{\bar L}n} \sum_{\tau=1}^n \|\nabla f^{(\tau)}(x^0)  - h_\tau^0\|^2  }, 
$$
which is equivalen to 
$$
k \geq \frac{1}{-\ln (1 - \min\left\{   \frac{\mu \eta}{2}, \frac{\delta}{4}, \frac{\delta_1}{4}, \frac{p}{4}   \right\} )} \ln \left(  \frac{18\mu\|x^0-x^*\|^2 + 4(f(x^0) - f(x^*)) + \frac{2}{15{\bar L}n} \sum_{\tau=1}^n \|\nabla f^{(\tau)}(x^0)  - h_\tau^0\|^2 }{\epsilon}  \right). 
$$

Since $-\ln(1-x) \geq x$ for $x\in [0, 1)$, if $p\leq O(\delta_1)$, we have $\mathbb{E} [f({\bar x}^k) - f(x^*)] \leq \epsilon$ as long as 
\begin{eqnarray*}
k &\geq& O\left( \left( \frac{1}{\delta} + \frac{1}{p} + \frac{L_f}{\mu} + \frac{L}{n \mu}  +  \frac{\sqrt{(1-\delta) L_f{\bar L}}}{\delta \mu}  + \frac{\sqrt{(1-\delta)L_f L}}{\sqrt{\delta}\mu}   \right) \right. \\ 
&& \left. \cdot  \ln \left(  \frac{18\mu\|x^0-x^*\|^2 + 4(f(x^0) - f(x^*)) + \frac{2}{15{\bar L}n} \sum_{\tau=1}^n \|\nabla f^{(\tau)}(x^0)  - h_\tau^0\|^2 }{\epsilon}  \right) \right). 
\end{eqnarray*}

When $\mu=0$, from (\ref{eq:sumf-1-eclsvrgsmooth}) and the convexity of $f$, we have 

$$
\mathbb{E}[f({\bar x}^k) - f(x^*)] \leq \frac{1}{k+1}\sum_{i=0}^k \mathbb{E}[f(x^i) - f(x^*)] \leq \frac{18}{\eta(k+1)}\Phi_3^0. 
$$

From the definition of $\Phi_3^k$ and $e^0_\tau=0$, we have 

\begin{eqnarray*}
	\frac{1}{\eta} \Phi_3^0 &=&  \frac{1}{\eta} \|x^0-x^*\|^2 + \frac{192(1-\delta)L_f\eta^2}{\delta^2 \delta_1 n}\sum_{\tau=1}^n \|\nabla f^{(\tau)}(x^0)  - h_\tau^0\|^2  \\ 
	&&  + \frac{4}{3p}  \left(   \frac{48(1-\delta) L_f \eta^2}{\delta} \left(  \frac{4{\bar L}}{\delta} + L + \frac{16{\bar L}p}{\delta \delta_1} \left(  1 + \frac{2p}{\delta_1}  \right)  \right) + \frac{4L\eta}{n}  \right)  [f(x^{0}) - f(x^*)]  \\ 
	&\leq& \frac{1}{\eta} \|x^0 - x^*\|^2 +  \frac{1}{70{\bar L}p n} \sum_{\tau=1}^n \|\nabla f^{(\tau)}(x^0)  - h_\tau^0\|^2 + \frac{4}{3p} \left(  \frac{1}{18} + \frac{1}{84} + \frac{1}{18} + \frac{1}{8}  \right) [f(x^0) - f(x^*)] \\ 
	&\leq& \frac{1}{\eta} \|x^0 - x^*\|^2 + \frac{1}{70{\bar L}p n} \sum_{\tau=1}^n \|\nabla f^{(\tau)}(x^0)  - h_\tau^0\|^2  + \frac{1}{3p} [f(x^0) - f(x^*)]. 
\end{eqnarray*}

Hence, we arrive at 
$$
\mathbb{E}[f({\bar x}^k) - f(x^*)] \leq  \frac{1}{k+1} \left( \frac{18}{\eta} \|x^0 - x^*\|^2 + \frac{1}{3{\bar L}p n} \sum_{\tau=1}^n \|\nabla f^{(\tau)}(x^0)  - h_\tau^0\|^2  + \frac{6}{p} \left(f(x^0) - f(x^*) \right)  \right). 
$$

In particular, if we choose $\eta= \min\left\{  \frac{1}{4L_f + 33L/n}, \frac{\delta}{60\sqrt{(1-\delta)L_f{\bar L}}}, \frac{\sqrt{\delta}}{64\sqrt{(1-\delta) L_fL}}, \frac{\delta \sqrt{\delta_1}}{120\sqrt{(1-\delta)L_f{\bar L}p\left(1 + \frac{2p}{\delta_1} \right)}}  \right\}$ and $p\leq O(\delta_1)$, we have $\mathbb{E}[f({\bar x}^k) - f(x^*)] \leq \epsilon$ as long as 
$$
k \geq O\left(  \left( \frac{1}{p} + L_f + \frac{L}{n} +  \frac{\sqrt{(1-\delta) L_f{\bar L}}}{\delta}  + \frac{\sqrt{(1-\delta)L_f L}}{\sqrt{\delta}}    \right) \frac{1}{\epsilon}  \right). 
$$

\subsection{Proof of Theorem \ref{th:eclsvrgsmooth-22}}

Let $\eta \leq \min \left\{  \frac{1}{4L_f + 33L/n}, \frac{\delta}{60\sqrt{1-\delta}L_f}, \frac{\sqrt{n \delta}}{229\sqrt{(1-\delta)L_fL}}, \frac{\sqrt{n} \delta}{360\sqrt{(1-\delta)L_f {\bar L}}} , \frac{\delta\sqrt{\delta_1}}{120\sqrt{(1-\delta)pL_f\left(  L_f + \frac{12{\bar L}}{n}  \right) \left(  1 + \frac{2p}{\delta_1}  \right)}} \right\}$. Then we have 
$$
\frac{896(1-\delta)L_f^2 \eta^2}{\delta^2} \leq \frac{1}{4}, \ \ \frac{7168(1-\delta)L_f {\bar L} \eta^2}{n\delta^2} \leq \frac{1}{18}, \ \ \frac{2912(1-\delta)L_f L \eta^2}{n \delta} \leq \frac{1}{18}, \ \ \frac{11L\eta}{n}\leq \frac{1}{3}, 
$$
and $\frac{3584(1-\delta)p\eta^2 L_f (L_f + \frac{12{\bar L}}{n})(1 + \frac{2p}{\delta_1})}{\delta^2 \delta_1} \leq \frac{1}{4}$.  

Therefore, from Theorem \ref{th:eclsvrgsmooth-2}, we have 

\begin{eqnarray*}
	\mathbb{E} [\Phi_4^{k+1}] &\leq& \left(  1 - \min\left\{  \frac{\mu \eta}{2}, \frac{\delta}{4}, \frac{\delta_1}{4}, \frac{p}{4}  \right\}  \right) \mathbb{E} [\Phi_4^{k}] - \frac{\eta}{36} \mathbb{E} [f(x^{k}) - f(x^*)] \\ 
	&\leq& \left(  1 - \min\left\{  \frac{\mu \eta}{2}, \frac{\delta}{4}, \frac{\delta_1}{4}, \frac{p}{4}  \right\}  \right)^{k+1} \Phi^0_4 - \frac{\eta}{36} \sum_{i=0}^{k} \left(  1 - \min\left\{  \frac{\mu \eta}{2}, \frac{\delta}{4}, \frac{\delta_1}{4}, \frac{p}{4}  \right\}  \right)^{k-i} \mathbb{E}[f(x^i) - f(x^*)] \\ 
	&\leq& \left(  1 - \min\left\{  \frac{\mu \eta}{2}, \frac{\delta}{4}, \frac{\delta_1}{4}, \frac{p}{4}  \right\}  \right)^{k} \Phi^0_4 - \frac{\eta}{36} \sum_{i=0}^{k} \left(  1 - \min\left\{  \frac{\mu \eta}{2}, \frac{\delta}{4}, \frac{\delta_1}{4}, \frac{p}{4}  \right\}  \right)^{k-i} \mathbb{E}[f(x^i) - f(x^*)] \\ 
	&=& \frac{1}{w_k} \Phi_4^0 - \frac{\eta}{36w_k} \sum_{i=0}^k w_i \mathbb{E}[f(x^i) - f(x^*)], 
\end{eqnarray*}

When $\mu>0$, notice that 
\begin{eqnarray*}
	&& \min\left\{   \frac{\mu \eta}{2}, \frac{\delta}{4}, \frac{\delta_1}{4}, \frac{p}{4}   \right\} \cdot \frac{1}{\eta} \Phi_4^0 \\ 
	&=& \min\left\{   \frac{\mu \eta}{2}, \frac{\delta}{4}, \frac{\delta_1}{4}, \frac{p}{4}   \right\} \left(  \frac{1}{\eta} \|x^0-x^*\|^2 + \frac{192(1-\delta)L_f \eta^2}{\delta^2 \delta_1} \|\nabla f(x^0)\|^2 + \frac{2304(1-\delta)L_f\eta^2}{\delta^2 \delta_1 n}\sum_{\tau=1}^n \|\nabla f^{(\tau)}(x^0)  - h_\tau^0\|^2 \right. \\ 
	&& \left.  + \frac{4}{3p}  \left(   \frac{48(1-\delta) L_f \eta^2}{\delta} \left(  \frac{4L_f}{\delta} + \frac{32{\bar L}}{n \delta} + \frac{13L}{n}  + \frac{16p(L_f + \frac{12{\bar L}}{n})}{\delta\delta_1} \left(  1 + \frac{2p}{\delta_1}  \right)  \right) + \frac{4L\eta}{n}  \right)  [f(x^{0}) - f(x^*)]  \right) \\ 
	&\leq& \frac{\mu}{2} \|x^0 - x^*\|^2 +  \min\left\{   \frac{\mu \eta}{2}, \frac{\delta}{4}, \frac{\delta_1}{4}, \frac{p}{4}   \right\} \frac{1}{70L_fp}\|\nabla f(x^0)\|^2 + \min\left\{   \frac{\mu \eta}{2}, \frac{\delta}{4}, \frac{\delta_1}{4}, \frac{p}{4}   \right\}  \frac{1}{6L_f p n} \sum_{\tau=1}^n \|\nabla f^{(\tau)}(x^0)  - h_\tau^0\|^2 \\ 
	&&  + \min\left\{   \frac{\mu \eta}{2}, \frac{\delta}{4}, \frac{\delta_1}{4}, \frac{p}{4}   \right\} \frac{4}{3p} \left(  \frac{1}{18} + \frac{1}{84}  + \frac{1}{84} + \frac{1}{18} + \frac{1}{8}  \right) [f(x^0) - f(x^*)] \\ 
	&\leq& \frac{\mu}{2} \|x^0 - x^*\|^2 + \frac{1}{280L_f}\|\nabla f(x^0) - \nabla f(x^*)\|^2 + \frac{1}{24L_f n} \sum_{\tau=1}^n \|\nabla f^{(\tau)}(x^0)  - h_\tau^0\|^2  + \frac{1}{9} [f(x^0) - f(x^*)] \\ 
	&\leq& \frac{\mu}{2} \|x^0 - x^*\|^2 + \frac{1}{24L_f n} \sum_{\tau=1}^n \|\nabla f^{(\tau)}(x^0)  - h_\tau^0\|^2  + \frac{1}{8} [f(x^0) - f(x^*)]. 
\end{eqnarray*}

Then same as the proof of Theorem \ref{th:eclsvrgsmooth-11}, we have 
\begin{eqnarray*}
\mathbb{E} [f({\bar x}^k) - f(x^*)] &\leq& \frac{ 18\mu\|x^0-x^*\|^2  + \frac{2}{L_f n}\sum_{\tau=1}^n \|\nabla f^{(\tau)}(x^0)  - h_\tau^0\|^2 +  5(f(x^0) - f(x^*)) }{1 - (1 - \min\{   \frac{\mu \eta}{2}, \frac{\delta}{4}, \frac{\delta_1}{4}, \frac{p}{4}   \})^{k+1}}\\ 
&& \cdot \left(1 - \min\left\{   \frac{\mu \eta}{2}, \frac{\delta}{4}, \frac{\delta_1}{4}, \frac{p}{4}   \right\} \right)^k, 
\end{eqnarray*}

and if we choose 
$$
\eta = \min \left\{  \tfrac{1}{4L_f + 33L/n}, \tfrac{\delta}{60\sqrt{1-\delta}L_f}, \tfrac{\sqrt{n \delta}}{229\sqrt{(1-\delta)L_fL}}, \tfrac{\sqrt{n} \delta}{360\sqrt{(1-\delta)L_f {\bar L}}} , \tfrac{\delta\sqrt{\delta_1}}{120\sqrt{(1-\delta)pL_f\left(  L_f + \tfrac{12{\bar L}}{n}  \right) \left(  1 + \tfrac{2p}{\delta_1}  \right)}} \right\}, 
$$ 
and $p\leq O(\delta_1)$, then $\mathbb{E} [f({\bar x}^k) - f(x^*)] \leq \epsilon$ with  $\epsilon \leq 9\mu\|x^0-x^*\|^2 + \frac{1}{L_f n}\sum_{\tau=1}^n \|\nabla f^{(\tau)}(x^0)  - h_\tau^0\|^2 +  3(f(x^0) - f(x^*))  $ as long as 
\begin{eqnarray*} 
	k &\geq& O\left( \left( \frac{1}{\delta}  + \frac{1}{p}  + \frac{L_f}{\mu} + \frac{L}{n \mu}  + \frac{\sqrt{(1-\delta)}L_f}{\mu \delta} + \frac{\sqrt{(1-\delta) L_f{\bar L}}}{\mu \sqrt{n}\delta}  + \frac{\sqrt{(1-\delta)L_f L}}{\mu \sqrt{n\delta}}  \right) \ln  \frac{1 }{\epsilon}  \right), 
\end{eqnarray*}
which is equivalent to 
\begin{eqnarray*}
k &\geq& O\left( \left( \frac{1}{\delta} + \frac{1}{p}  + \frac{L_f}{\mu} + \frac{L}{n \mu} + \frac{\sqrt{(1-\delta)}L_f}{\mu \delta}   \right) \ln  \frac{1  }{\epsilon}   \right), 
\end{eqnarray*}
since $\frac{{\bar L}}{n} \leq L_f$, and 
$$
2\sqrt{\frac{(1-\delta)L_f L}{n\delta}} \leq \frac{\sqrt{1-\delta}L_f}{\delta} + \frac{\sqrt{1-\delta}L}{n} \leq \frac{\sqrt{1-\delta}L_f}{\delta} + \frac{L}{n}. 
$$

When $\mu=0$, notice that 

\begin{eqnarray*}
	\frac{1}{\eta} \Phi_4^0 &=&  \frac{1}{\eta} \|x^0-x^*\|^2 + \frac{192(1-\delta)L_f \eta^2}{\delta^2 \delta_1} \|\nabla f(x^0)\|^2 + \frac{2304(1-\delta)L_f\eta^2}{\delta^2 \delta_1 n}\sum_{\tau=1}^n \|\nabla f^{(\tau)}(x^0)  - h_\tau^0\|^2  \\ 
	&&   + \frac{4}{3p}  \left(   \frac{48(1-\delta) L_f \eta^2}{\delta} \left(  \frac{4L_f}{\delta} + \frac{32{\bar L}}{n \delta} + \frac{13L}{n}  + \frac{16p(L_f + \frac{12{\bar L}}{n})}{\delta\delta_1} \left(  1 + \frac{2p}{\delta_1}  \right)  \right) + \frac{4L\eta}{n}  \right)  [f(x^{0}) - f(x^*)]  \\ 
	&\leq& \frac{1}{\eta} \|x^0 - x^*\|^2 +  \frac{1}{70L_fp}\|\nabla f(x^0)\|^2 +  \frac{1}{6L_f p n} \sum_{\tau=1}^n \|\nabla f^{(\tau)}(x^0)  - h_\tau^0\|^2 \\ 
	&&  + \frac{4}{3p} \left(  \frac{1}{18} + \frac{1}{84}  + \frac{1}{84} + \frac{1}{18} + \frac{1}{8}  \right) [f(x^0) - f(x^*)] \\ 
	&\leq& \frac{1}{\eta} \|x^0 - x^*\|^2 + \frac{1}{70L_f p}\|\nabla f(x^0) - \nabla f(x^*)\|^2 + \frac{1}{6L_f p n} \sum_{\tau=1}^n \|\nabla f^{(\tau)}(x^0)  - h_\tau^0\|^2  + \frac{4}{9p} [f(x^0) - f(x^*)] \\ 
	&\leq& \frac{1}{\eta} \|x^0 - x^*\|^2 + \frac{1}{6L_f p n} \sum_{\tau=1}^n \|\nabla f^{(\tau)}(x^0)  - h_\tau^0\|^2  + \left( \frac{4}{9p} + \frac{1}{35p} \right) [f(x^0) - f(x^*)]. 
\end{eqnarray*}

Then same as the proof of Theorem \ref{th:eclsvrgsmooth-11}, we have 
$$
\mathbb{E}[f({\bar x}^k) - f(x^*)] \leq \frac{1}{k+1} \left(  \frac{36}{\eta} \|x^0 - x^*\|^2  + \frac{6}{L_f p n} \sum_{\tau=1}^n \|\nabla f^{(\tau)}(x^0)  - h_\tau^0\|^2  + \frac{18}{p} \left(f(x^0) - f(x^*)\right)  \right). 
$$

In particular, if we choose 
$$
\eta = \min \left\{  \tfrac{1}{4L_f + 33L/n}, \tfrac{\delta}{60\sqrt{1-\delta}L_f}, \tfrac{\sqrt{n \delta}}{229\sqrt{(1-\delta)L_fL}}, \tfrac{\sqrt{n} \delta}{360\sqrt{(1-\delta)L_f {\bar L}}} , \tfrac{\delta\sqrt{\delta_1}}{120\sqrt{(1-\delta)pL_f\left(  L_f + \tfrac{12{\bar L}}{n}  \right) \left(  1 + \tfrac{2p}{\delta_1}  \right)}} \right\}, 
$$ 
and $p\leq O(\delta_1)$, we have $\mathbb{E}[f({\bar x}^k) - f(x^*)] \leq \epsilon$ as long as 
$$
k \geq O\left( \left(  \frac{1}{p} +  L_f + \frac{L}{n} + \frac{\sqrt{(1-\delta)}L_f}{\delta} \right)  \frac{1}{\epsilon}  \right). 
$$

\section{Proofs for EC-Quartz}

\subsection{Lemmas}

For brevity, let $$A = [ A_{11}, ..., A_{m1}, A_{12}, ..., A_{m2}, ..., A_{n1}, ..., A_{mn} ] \in \R^{d \times tN}.$$ Let $S = \{  (i^\tau, \tau) | \ i^\tau \mbox{ is chosen from $[m]$ uniformly and independently for all } \tau \in [n]  \}$. For any vector $h \in \R^{tN}$, let $h_{[S]} \in \R^{tN}$ be defined by  
$$
(h_{[S]})_{i\tau} = \left\{ \begin{array}{rl}
h_{i\tau} & \mbox{ if $(i, \tau) \in S$} \\
0 &\mbox{ otherwise }
\end{array} \right.
$$

\begin{lemma}\label{lm:eso}[ESO] 
	The following inequality holds for all $h \in \R^{tN}$: 
	\begin{equation}\label{eq:esouni}
	\mathbb{E} [\| Ah_{[S]}\|^2 ] \leq \sum_{\tau=1}^n \sum_{i=1}^m p_{i\tau}v_{i\tau} \|h_{i\tau}\|^2, 
	\end{equation}
	where $p_{i\tau} = \frac{1}{m}$ and $v_{i\tau} = R_m^2 + nR^2$. 
\end{lemma}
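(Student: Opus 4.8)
The plan is to exploit the independence of the sampling across the $n$ nodes. First I would rewrite the compressed product explicitly: since $S$ selects exactly one index $i^\tau \in [m]$ on each node $\tau$, we have $Ah_{[S]} = \sum_{\tau=1}^n A_{i^\tau \tau}h_{i^\tau \tau}$. Writing $X_\tau \eqdef A_{i^\tau \tau}h_{i^\tau \tau}$, the random vectors $X_1,\dots,X_n$ are mutually independent because the indices $i^1,\dots,i^n$ are drawn independently, and $\mathbb{E}[X_\tau] = \frac{1}{m}\sum_{i=1}^m A_{i\tau}h_{i\tau}$.

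Next I would apply the bias--variance decomposition. Centering each $X_\tau$ and using that the cross terms of independent, mean-zero vectors vanish in expectation gives $\mathbb{E}\|\sum_\tau X_\tau\|^2 = \sum_\tau \mathbb{E}\|X_\tau - \mathbb{E}X_\tau\|^2 + \|\sum_\tau \mathbb{E}X_\tau\|^2$, and since the subtracted mean-square terms are nonnegative this yields the bound $\mathbb{E}\|\sum_\tau X_\tau\|^2 \leq \sum_\tau \mathbb{E}\|X_\tau\|^2 + \|\sum_\tau \mathbb{E}X_\tau\|^2$. This splits the estimate into a diagonal term and a mean term.

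It then remains to bound the two pieces. For the diagonal term I would use $\|A_{i\tau}\| \leq R_m$ to get $\mathbb{E}\|X_\tau\|^2 = \frac{1}{m}\sum_{i=1}^m \|A_{i\tau}h_{i\tau}\|^2 \leq \frac{R_m^2}{m}\sum_{i=1}^m \|h_{i\tau}\|^2$. For the mean term I would observe that $\sum_\tau \mathbb{E}[X_\tau] = \frac{1}{m}\sum_{\tau,i} A_{i\tau}h_{i\tau} = \frac{1}{m}Ah$, so that by the operator-norm bound and the definition of $R$, $\|\sum_\tau \mathbb{E}X_\tau\|^2 \leq \frac{1}{m^2}\lambda_{\rm max}(AA^\top)\|h\|^2 = \frac{NR^2}{m^2}\|h\|^2$, where $AA^\top = \sum_{\tau,i}A_{i\tau}A_{i\tau}^\top$ and $\lambda_{\rm max}(AA^\top) = NR^2$.

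Finally I would combine these, using $N = mn$ so that $\frac{NR^2}{m^2} = \frac{nR^2}{m}$ and $\|h\|^2 = \sum_{\tau,i}\|h_{i\tau}\|^2$, to arrive at $\mathbb{E}\|Ah_{[S]}\|^2 \leq \sum_{\tau,i}\frac{1}{m}(R_m^2 + nR^2)\|h_{i\tau}\|^2 = \sum_{\tau,i}p_{i\tau}v_{i\tau}\|h_{i\tau}\|^2$, which is the claim. The only real content is the mean-term step: recognizing $\sum_{\tau,i}A_{i\tau}h_{i\tau} = Ah$ and controlling it through $\lambda_{\rm max}(AA^\top) = NR^2$; everything else is bookkeeping. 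This computation parallels that of Lemma \ref{lm:esoAS}, specialized to the one-index-per-node sampling of Algorithm \ref{alg:ec-quartz}.
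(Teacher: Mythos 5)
Your proposal is correct and coincides with the paper's second proof of Lemma \ref{lm:eso}: your bias--variance decomposition $\mathbb{E}\|\sum_\tau X_\tau\|^2 = \sum_\tau \mathbb{E}\|X_\tau - \mathbb{E}X_\tau\|^2 + \|\sum_\tau \mathbb{E}X_\tau\|^2$ is exactly the paper's cross-term expansion (the dropped nonnegative quantity $\sum_\tau \|\mathbb{E}X_\tau\|^2$ is the paper's term $\frac{1}{m^2}\sum_\tau \|\sum_i A_{i\tau}h_{i\tau}\|^2$), and the two bounds via $R_m^2$ and $\lambda_{\rm max}(AA^\top) = NR^2$ are identical. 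The paper also offers an alternative first proof that invokes a group-sampling lemma from prior work, but your direct argument is the self-contained one it presents second.
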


\begin{proof}

We give two proofs. 

{\bf First proof.} Notice that $S$ can be regarded as a group sampling \citep{qian2019svrg} where the index set on each node is a group and $\mathbb{P}[(i, \tau) \in S] = p_{i\tau} = \frac{1}{m}$ for all $i\in [m]$ and $\tau \in [n]$. Hence, from Lemma 6.6 in \citep{qian2019svrg}, we have 
\begin{align*}
m^2  \mathbb{E} \left[  \left\| \sum_{(i, \tau) \in S} A_{i\tau} h_{i\tau} \right\|^2  \right]  & = \mathbb{E} \left[  \left\| \sum_{(i, \tau) \in S} \frac{1}{p_{i\tau}} A_{i\tau} h_{i\tau} \right\|^2  \right] \\ 
& \leq \sum_{\tau=1}^n \sum_{i=1}^m \frac{1}{p_{i\tau}} \|A_{i\tau} h_{i\tau}\|^2 + \left\| \sum_{\tau=1}^n \sum_{i=1}^m A_{i\tau} h_{i\tau} \right\|^2 \\ 
& = m \sum_{\tau=1}^n \sum_{i=1}^m \|A_{i\tau} h_{i\tau}\|^2 + \| Ah\|^2 \\ 
& \leq m \sum_{\tau=1}^n \sum_{i=1}^m \|A_{i\tau}\|^2 \|h_{i\tau}\|^2 + \lambda_{\rm max}(A^\top A) \|h\|^2 \\ 
& = m \sum_{\tau=1}^n \sum_{i=1}^m \|A_{i\tau}\|^2 \|h_{i\tau}\|^2 + \lambda_{\rm max}(A A^\top) \|h\|^2 \\ 
& \leq (m R_m^2 + NR^2) \sum_{\tau=1}^n \sum_{i=1}^m \|h_{i\tau}\|^2,
\end{align*}
where in the last inequality, we use $\|A_{i\tau}\| \leq \max_{i, \tau} \|A_{i\tau}\| = R_m$ and $$\frac{1}{N}\lambda_{\rm max}(A A^\top)  = \frac{1}{N} \lambda_{\rm max} (\sum_{\tau=1}^n \sum_{i=1}^m A_{i\tau} A_{i\tau}^\top) = R^2. $$ 
Then we arrive at
\begin{align*}
\mathbb{E} \left[\| Ah_{[S]}\|^2 \right] & =  \mathbb{E} \left[  \left\| \sum_{(i, \tau) \in S} A_{i\tau} h_{i\tau} \right\|^2  \right] 
 \leq \frac{1}{m} (R_m^2 + nR^2)  \sum_{\tau=1}^n \sum_{i=1}^m \|h_{i\tau}\|^2  =  \sum_{\tau=1}^n \sum_{i=1}^m p_{i\tau} v_{i\tau} \|h_{i\tau}\|^2. 
\end{align*}

{\bf Second proof.} From the definition of $S$, we have 
\begin{align*}
\mathbb{E} [\| Ah_{[S]}\|^2 ] & =  \mathbb{E} \left[  \left\| \sum_{(i, \tau) \in S} A_{i\tau} h_{i\tau} \right\|^2  \right] \\
& =  \mathbb{E} \left[  \left\| \sum_{\tau=1}^n A_{i^\tau \tau} h_{i^\tau \tau} \right\|^2  \right] \\ 
& = \mathbb{E} \left[ \sum_{\tau_1 \neq \tau_2} \langle A_{i^{\tau_1} \tau_1} h_{i^{\tau_1} \tau_1}, A_{i^{\tau_2} \tau_2} h_{i^{\tau_2} \tau_2} \rangle  \right] + \mathbb{E} \left[  \sum_{\tau=1}^n \| A_{i^\tau \tau} h_{i^\tau \tau}\|^2 \right] \\ 
& = \sum_{\tau_1 \neq \tau_2} \left\langle \frac{1}{m} \sum_{i=1}^m A_{i \tau_1} h_{i \tau_1}, \frac{1}{m} \sum_{j=1}^m A_{j \tau_2} h_{j \tau_2} \right\rangle + \frac{1}{m} \sum_{\tau=1}^n \sum_{i=1}^m \|A_{i\tau} h_{i\tau}\|^2 \\ 
& = \frac{1}{m^2} \sum_{\tau_1 \neq \tau_2} \sum_{i, j=1}^m \langle  A_{i \tau_1} h_{i \tau_1}, A_{j \tau_2} h_{j \tau_2}  \rangle + \frac{1}{m} \sum_{\tau=1}^n \sum_{i=1}^m \|A_{i\tau} h_{i\tau}\|^2 \\ 
& =  \frac{1}{m^2} \sum_{\tau_1, \tau_2=1}^n  \sum_{i, j=1}^m \langle  A_{i \tau_1} h_{i \tau_1}, A_{j \tau_2} h_{j \tau_2}  \rangle - \frac{1}{m^2} \sum_{\tau=1}^n \sum_{i, j=1}^m \langle  A_{i \tau} h_{i \tau}, A_{j \tau} h_{j \tau}  \rangle \\ 
& \quad + \frac{1}{m} \sum_{\tau=1}^n \sum_{i=1}^m \|A_{i\tau} h_{i\tau}\|^2 \\ 
& = \frac{1}{m^2} \|Ah\|^2 - \frac{1}{m^2} \sum_{\tau=1}^n \left\| \sum_{i=1}^m A_{i\tau}h_{i\tau} \right\|^2 + \frac{1}{m} \sum_{\tau=1}^n \sum_{i=1}^m \|A_{i\tau} h_{i\tau}\|^2  \\ 
& \leq  \frac{1}{m^2} \|Ah\|^2 + \frac{1}{m} \sum_{\tau=1}^n \sum_{i=1}^m \|A_{i\tau} h_{i\tau}\|^2 \\ 
& \leq \frac{NR^2}{m^2} \sum_{\tau=1}^n \sum_{i=1}^m \| h_{i\tau}\|^2 + \frac{1}{m} R_m^2  \sum_{\tau=1}^n \sum_{i=1}^m \| h_{i\tau}\|^2 \\ 
& =  \sum_{\tau=1}^n \sum_{i=1}^m p_{i\tau} v_{i\tau} \|h_{i\tau}\|^2, 
\end{align*}
where in the fourth equality, we use the fact that $i^{\tau_1}$ is indpendent of $i^{\tau_2}$ for $\tau_1 \neq \tau_2$. 

\end{proof}

\begin{lemma}\label{lm:tildef}[Lemma 18 in \citep{Quartz}]
	Function ${\tilde f} : \R^{tN} \to \R$ satisfies the following inequality: 
	\begin{equation}\label{eq:tildef}
	{\tilde f} (\alpha + h) \leq {\tilde f}(\alpha) + \langle \nabla {\tilde f} (\alpha), h \rangle + \frac{1}{2\lambda N^2} h^\top A^\top A h, \quad \forall \alpha, h \in \R^{tN}. 
	\end{equation}
\end{lemma}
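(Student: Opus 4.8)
The plan is to reduce this to the standard conjugate-duality fact that, since $g$ is $1$-strongly convex (Assumption~\ref{as:ec-Quartz}), its Fenchel conjugate $g^*$ is $1$-smooth, i.e.\ $\nabla g^*$ is $1$-Lipschitz. Writing ${\tilde f}(\alpha) = \lambda g^*(u(\alpha))$ where $u(\alpha) \eqdef \frac{1}{\lambda N} A\alpha = \frac{1}{\lambda N}\sum_{\tau=1}^n\sum_{i=1}^m A_{i\tau}\alpha_{i\tau}$ is an affine (here linear) map, the inequality we want is just the smoothness (descent) inequality for $g^*$ transported through $u$, with the constants tracked carefully.

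Concretely, first I would record the smoothness inequality for the $1$-smooth function $g^*$: for all $v, w \in \R^d$,
\begin{equation*}
g^*(v + w) \leq g^*(v) + \langle \nabla g^*(v), w\rangle + \tfrac{1}{2}\|w\|^2.
\end{equation*}
Next I would specialize this at $v = u(\alpha)$ and $w = u(\alpha + h) - u(\alpha) = \frac{1}{\lambda N}Ah$ (using that $u$ is linear in $\alpha$), and multiply through by $\lambda$:
\begin{equation*}
{\tilde f}(\alpha + h) \leq \lambda g^*(u(\alpha)) + \tfrac{1}{N}\langle \nabla g^*(u(\alpha)), Ah\rangle + \tfrac{\lambda}{2(\lambda N)^2}\|Ah\|^2.
\end{equation*}
Finally I would identify the three terms with the claimed ones: the first is ${\tilde f}(\alpha)$ by definition; the last equals $\frac{1}{2\lambda N^2}h^\top A^\top A h$ after simplifying $\frac{\lambda}{(\lambda N)^2} = \frac{1}{\lambda N^2}$ and $\|Ah\|^2 = h^\top A^\top A h$; and for the middle term I would invoke the chain rule $\nabla {\tilde f}(\alpha) = \lambda \cdot \frac{1}{\lambda N} A^\top \nabla g^*(u(\alpha)) = \frac{1}{N}A^\top \nabla g^*(u(\alpha))$, so that $\frac{1}{N}\langle \nabla g^*(u(\alpha)), Ah\rangle = \langle \nabla {\tilde f}(\alpha), h\rangle$. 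This yields exactly \eqref{eq:tildef}.

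The only genuinely substantive ingredient is the duality statement ``$g$ is $1$-strongly convex $\implies$ $g^*$ is $1$-smooth,'' which is classical and can be cited; everything else is the composition of a smooth function with a linear map plus careful bookkeeping of the $\lambda$ and $N$ factors. Thus the ``hard part'' is not conceptual but purely a matter of keeping the scalings straight so that the coefficient comes out as $\frac{1}{2\lambda N^2}$ rather than some other power of $\lambda N$; since the result is Lemma 18 of \citep{Quartz}, I would keep the argument short and lean on the cited statement where convenient.
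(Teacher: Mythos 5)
Your proof is correct. Note that the paper itself offers no proof of this lemma --- it is imported wholesale by citation as Lemma 18 of the Quartz paper --- so your self-contained derivation is the right thing to supply, and it is exactly the standard argument underlying the cited result: $1$-strong convexity of $g$ gives $1$-smoothness of $g^*$, the descent inequality for $g^*$ is transported through the linear map $\alpha \mapsto \frac{1}{\lambda N}A\alpha$, and the chain rule $\nabla {\tilde f}(\alpha) = \frac{1}{N}A^\top \nabla g^*\bigl(\frac{1}{\lambda N}A\alpha\bigr)$ identifies the middle term. The scalings also check out: multiplying $\frac{1}{2}\bigl\|\frac{1}{\lambda N}Ah\bigr\|^2$ by $\lambda$ gives precisely $\frac{1}{2\lambda N^2}\,h^\top A^\top A h$.
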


\begin{lemma}\label{lm:Dalpha}[Lemma 19 in \citep{Quartz}] 
	For all $\alpha$, $h\in \R^{tN}$, the following holds: 
	\begin{align}
	&\mathbb{E} [- D(\alpha + h_{[S]}) ] \nonumber \\
	& \leq {\tilde f}(\alpha) + \sum_{\tau=1}^n \sum_{i=1}^m p_{i\tau} \left\langle  \frac{1}{N} A_{i\tau}^\top \nabla g^* \left(\frac{1}{\lambda N} A\alpha \right), h_{i\tau}  \right\rangle + \frac{1}{2\lambda N^2} \sum_{\tau=1}^n \sum_{i=1}^m p_{i\tau} v_{i\tau} \|h_{i\tau}\|^2 \nonumber \\ 
	& \quad + \frac{1}{N} \sum_{\tau=1}^n \sum_{i=1}^m [(1 - p_{i\tau})\phi^*_{i\tau} (-\alpha_{i\tau}) + p_{i\tau} \phi^*_{i\tau} (-\alpha_{i\tau} - h_{i\tau}) ].     \label{eq:Dalpha}
	\end{align}
	
\end{lemma}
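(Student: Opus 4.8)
The plan is to exploit the additive structure $-D(\alpha) = {\tilde f}(\alpha) + {\tilde \psi}(\alpha)$ and to bound the two pieces separately after taking the expectation over the random set $S$. First I would write
\[
\mathbb{E}[-D(\alpha + h_{[S]})] = \mathbb{E}[{\tilde f}(\alpha + h_{[S]})] + \mathbb{E}[{\tilde \psi}(\alpha + h_{[S]})],
\]
and handle the smooth dual term ${\tilde f}$ and the separable conjugate term ${\tilde \psi}$ on their own.

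For the ${\tilde f}$ term I would apply Lemma~\ref{lm:tildef} with increment $h_{[S]}$ to obtain
\[
{\tilde f}(\alpha + h_{[S]}) \leq {\tilde f}(\alpha) + \langle \nabla {\tilde f}(\alpha), h_{[S]}\rangle + \frac{1}{2\lambda N^2}\, h_{[S]}^\top A^\top A\, h_{[S]},
\]
and then take expectations term by term. The key computation is the block gradient: since ${\tilde f}(\alpha) = \lambda g^*\!\left(\frac{1}{\lambda N} A\alpha\right)$, the chain rule gives $\nabla_{\alpha_{i\tau}} {\tilde f}(\alpha) = \frac{1}{N} A_{i\tau}^\top \nabla g^*\!\left(\frac{1}{\lambda N} A\alpha\right)$. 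Because $(h_{[S]})_{i\tau}$ equals $h_{i\tau}$ exactly when $(i,\tau)\in S$ and vanishes otherwise, linearity of expectation together with $\mathbb{P}[(i,\tau)\in S] = p_{i\tau}$ turns the inner-product term into $\sum_{\tau=1}^n\sum_{i=1}^m p_{i\tau}\langle \frac{1}{N} A_{i\tau}^\top \nabla g^*(\frac{1}{\lambda N}A\alpha), h_{i\tau}\rangle$. For the quadratic term I would rewrite $h_{[S]}^\top A^\top A\, h_{[S]} = \|A h_{[S]}\|^2$ and invoke the ESO estimate of Lemma~\ref{lm:eso}, which bounds its expectation by $\sum_{\tau=1}^n\sum_{i=1}^m p_{i\tau} v_{i\tau}\|h_{i\tau}\|^2$.

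For the ${\tilde \psi}$ term I would use its separability, writing ${\tilde \psi}(\alpha + h_{[S]}) = \frac{1}{N}\sum_{\tau=1}^n\sum_{i=1}^m \phi_{i\tau}^*(-\alpha_{i\tau} - (h_{[S]})_{i\tau})$. Splitting the sum according to whether $(i,\tau)\in S$ and taking expectation coordinate by coordinate, again using $\mathbb{P}[(i,\tau)\in S] = p_{i\tau}$, yields exactly $\frac{1}{N}\sum_{\tau=1}^n\sum_{i=1}^m[(1-p_{i\tau})\phi_{i\tau}^*(-\alpha_{i\tau}) + p_{i\tau}\phi_{i\tau}^*(-\alpha_{i\tau} - h_{i\tau})]$. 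Adding the three contributions gives the claimed inequality.

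The main obstacle here is bookkeeping rather than depth: the two genuinely nontrivial inputs (the smoothness inequality for ${\tilde f}$ from Lemma~\ref{lm:tildef} and the ESO bound from Lemma~\ref{lm:eso}) are already available, so the work is to compute $\nabla {\tilde f}$ correctly, to recognize the quadratic form as $\|A h_{[S]}\|^2$ so that Lemma~\ref{lm:eso} applies verbatim, and to push the expectation over $S$ through coordinatewise. The only point requiring care is consistency of the marginal inclusion probabilities: $\mathbb{E}[\langle \nabla {\tilde f}(\alpha), h_{[S]}\rangle]$ weights each block by $p_{i\tau}$ rather than by one, and the same weights must appear in all three pieces.
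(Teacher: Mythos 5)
Your proof is correct. The paper itself offers no proof of this lemma---it is imported verbatim as Lemma 19 of \citep{Quartz}---so there is nothing internal to compare against; your argument (splitting $-D = {\tilde f} + {\tilde \psi}$, bounding the smooth part via Lemma~\ref{lm:tildef} together with the ESO bound of Lemma~\ref{lm:eso}, and computing the expectation of the separable conjugate term coordinatewise using $\mathbb{P}[(i,\tau)\in S]=p_{i\tau}$) is exactly the standard argument behind the cited result, with the block gradient $\nabla_{\alpha_{i\tau}}{\tilde f}(\alpha) = \frac{1}{N}A_{i\tau}^\top \nabla g^*\left(\frac{1}{\lambda N}A\alpha\right)$ and the marginal weights $p_{i\tau}$ handled correctly in all three pieces.
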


\begin{lemma}\label{lm:Dalpha-2}
	Fixing $\alpha \in \R^{tN}$ and $x \in \R^d$, let ${\tilde u} = \frac{1}{\lambda N} A \alpha$ and $h\in \R^{tN}$ be defined by: 
	$$
	h_{i\tau} = - \theta p_{i\tau}^{-1} (\alpha_{i\tau} + \nabla \phi_{i\tau} (A_{i\tau}^\top x) ), \ i\in [m], \ \tau \in [n], 
	$$
	where $\theta>0$. Then 
	\begin{align*}
	& {\tilde f}(\alpha) + \sum_{\tau=1}^n \sum_{i=1}^m p_{i\tau} \left\langle  \frac{1}{N} A_{i\tau}^\top \nabla g^* \left(\frac{1}{\lambda N} A\alpha \right), h_{i\tau}  \right\rangle + \frac{1}{2\lambda N^2} \sum_{\tau=1}^n \sum_{i=1}^m p_{i\tau} v_{i\tau} \|h_{i\tau}\|^2 \nonumber \\ 
	& \quad + \frac{1}{N} \sum_{\tau=1}^n \sum_{i=1}^m [(1 - p_{i\tau})\phi^*_{i\tau} (-\alpha_{i\tau}) + p_{i\tau} \phi^*_{i\tau} (-\alpha_{i\tau} - h_{i\tau}) ]\\ 
	& \leq -(1-\theta) D(\alpha) - \theta \lambda g(\nabla g^*({u})) -  \frac{1}{N} \sum_{\tau=1}^n \sum_{i=1}^m \langle \theta \nabla g^*(u), A_{i\tau} \nabla \phi_{i\tau} (A_{i\tau}^\top x) \rangle \\ 
	& \quad + \frac{\theta}{N} \sum_{\tau=1}^n \sum_{i=1}^m \phi^*_{i\tau} (\nabla \phi_{i\tau}(A_{i\tau}^\top x)) + \frac{\rho + \theta\lambda}{2} \|{\tilde u} - u\|^2 \\ 
	& \quad + \frac{1}{2\lambda N^2} \sum_{\tau=1}^n \sum_{i=1}^m \left( p_{i\tau}v_{i\tau} + \frac{ N\lambda p_{i\tau}^2 R^2}{\rho} -  \frac{N \lambda \gamma p_{i\tau}^2 (1-\theta p_{i\tau}^{-1})}{\theta}  \right) \|h_{i\tau}\|^2, 
	\end{align*}
	for any $u\in \R^d$ and $\rho>0$. 
\end{lemma}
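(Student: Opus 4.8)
The plan is to start from Lemma~\ref{lm:Dalpha}, whose right-hand side is exactly the quantity to be bounded here, and to substitute the explicit direction $h_{i\tau} = -\theta p_{i\tau}^{-1}(\alpha_{i\tau} + s_{i\tau})$ with $s_{i\tau} := \nabla\phi_{i\tau}(A_{i\tau}^\top x)$. I would split the expression into its \emph{conjugate part} $\frac{1}{N}\sum_{\tau,i}[(1-p_{i\tau})\phi^*_{i\tau}(-\alpha_{i\tau}) + p_{i\tau}\phi^*_{i\tau}(-\alpha_{i\tau}-h_{i\tau})]$ and its \emph{smooth part} ${\tilde f}(\alpha) + \frac{1}{N}\sum_{\tau,i}p_{i\tau}\langle A_{i\tau}^\top\nabla g^*({\tilde u}),h_{i\tau}\rangle + \frac{1}{2\lambda N^2}\sum_{\tau,i}p_{i\tau}v_{i\tau}\|h_{i\tau}\|^2$, treating them separately and then recombining.

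For the conjugate part, observe that $-\alpha_{i\tau}-h_{i\tau} = (1-\theta p_{i\tau}^{-1})(-\alpha_{i\tau}) + \theta p_{i\tau}^{-1}s_{i\tau}$ is a convex combination whenever $\theta p_{i\tau}^{-1}\le 1$ (guaranteed by the choice of $\theta$ in the theorems, since $p_{i\tau}=1/m$). Because $\phi_{i\tau}$ is $\frac{1}{\gamma}$-smooth (Assumption~\ref{as:ec-Quartz}), $\phi^*_{i\tau}$ is $\gamma$-strongly convex, so Jensen's inequality with the strong-convexity correction gives $p_{i\tau}\phi^*_{i\tau}(-\alpha_{i\tau}-h_{i\tau}) \le (p_{i\tau}-\theta)\phi^*_{i\tau}(-\alpha_{i\tau}) + \theta\phi^*_{i\tau}(s_{i\tau}) - \frac{\gamma\theta}{2}(1-\theta p_{i\tau}^{-1})\|\alpha_{i\tau}+s_{i\tau}\|^2$. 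Using $\alpha_{i\tau}+s_{i\tau} = -\frac{p_{i\tau}}{\theta}h_{i\tau}$, the quadratic term becomes exactly the $-\frac{N\lambda\gamma p_{i\tau}^2(1-\theta p_{i\tau}^{-1})}{\theta}\|h_{i\tau}\|^2$ contribution (inside the $\frac{1}{2\lambda N^2}$ factor) of the claimed bound, while the remaining terms collapse to $(1-\theta){\tilde\psi}(\alpha) + \frac{\theta}{N}\sum_{\tau,i}\phi^*_{i\tau}(s_{i\tau})$.

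For the smooth part, I would use $p_{i\tau}h_{i\tau} = -\theta(\alpha_{i\tau}+s_{i\tau})$ together with $A\alpha = \lambda N{\tilde u}$ and the Fenchel--Young equality $g^*({\tilde u}) + g(\nabla g^*({\tilde u})) = \langle{\tilde u},\nabla g^*({\tilde u})\rangle$ to rewrite ${\tilde f}(\alpha) + \frac{1}{N}\sum p_{i\tau}\langle A_{i\tau}^\top\nabla g^*({\tilde u}),h_{i\tau}\rangle$ as $(1-\theta){\tilde f}(\alpha) - \theta\lambda g(\nabla g^*({\tilde u})) - \frac{\theta}{N}\langle\nabla g^*({\tilde u}),As\rangle$, where $s$ is the block vector of the $s_{i\tau}$. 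Combining with the conjugate part and the identity $-(1-\theta)D(\alpha) = (1-\theta)({\tilde f}(\alpha)+{\tilde\psi}(\alpha))$ then reproduces the entire claimed bound \emph{except} that $\nabla g^*({\tilde u})$ appears in place of $\nabla g^*(u)$ and the terms $\frac{\rho+\theta\lambda}{2}\|{\tilde u}-u\|^2$ and $\frac{N\lambda p_{i\tau}^2R^2}{\rho}\|h_{i\tau}\|^2$ are still missing.

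The final and main step is therefore to pass from ${\tilde u}$ to $u$, i.e.\ to prove $F(w_u) - F(w_{\tilde u}) \le \frac{\rho+\theta\lambda}{2}\|{\tilde u}-u\|^2 + \frac{R^2}{2\rho N}\sum p_{i\tau}^2\|h_{i\tau}\|^2$ for $F(v) := \theta\lambda g(v) + \frac{\theta}{N}\langle As,v\rangle$ (which is $\theta\lambda$-strongly convex), with $w_u=\nabla g^*(u)$, $w_{\tilde u}=\nabla g^*({\tilde u})$. The crucial observation, and the part I expect to be the main obstacle, is the identity $\nabla F(w_{\tilde u}) = -\frac{1}{N}\sum_{\tau,i}p_{i\tau}A_{i\tau}h_{i\tau}$, which follows from $As = -A\alpha - \frac{1}{\theta}\sum p_{i\tau}A_{i\tau}h_{i\tau}$ and $A\alpha=\lambda N{\tilde u}$. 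Using the strong-convexity gradient inequality $F(w_u)-F(w_{\tilde u}) \le \langle\nabla F(w_u),w_u-w_{\tilde u}\rangle - \frac{\theta\lambda}{2}\|w_u-w_{\tilde u}\|^2$ and the split $\nabla F(w_u) = \theta\lambda(u-{\tilde u}) + \nabla F(w_{\tilde u})$, I would bound $\theta\lambda\langle u-{\tilde u},w_u-w_{\tilde u}\rangle$ by Young's inequality so that its quadratic part cancels the $-\frac{\theta\lambda}{2}\|w_u-w_{\tilde u}\|^2$ term and leaves $\frac{\theta\lambda}{2}\|u-{\tilde u}\|^2$, and bound $-\frac{1}{N}\langle\sum p_{i\tau}A_{i\tau}h_{i\tau},w_u-w_{\tilde u}\rangle$ by Young's inequality with parameter $\rho$. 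Here I would invoke the $1$-Lipschitzness of $\nabla g^*$ (from $1$-smoothness of $g^*$, equivalently $1$-strong convexity of $g$) to replace $\|w_u-w_{\tilde u}\|^2$ by $\|u-{\tilde u}\|^2$, and the spectral bound $\|\sum p_{i\tau}A_{i\tau}h_{i\tau}\|^2 \le NR^2\sum p_{i\tau}^2\|h_{i\tau}\|^2$ (as in the proof of Lemma~\ref{lm:eso}, via $\frac{1}{N}\lambda_{\max}(AA^\top)=R^2$) to produce exactly $\frac{R^2}{2\rho N}\sum p_{i\tau}^2\|h_{i\tau}\|^2$. Adding the two bounds supplies the missing terms and completes the proof.
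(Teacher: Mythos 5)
Your proposal is correct: every step checks out, and the error terms you produce ($\tfrac{\rho+\theta\lambda}{2}\|\tilde u-u\|^2$ and $\tfrac{R^2}{2\rho N}\sum_{\tau,i}p_{i\tau}^2\|h_{i\tau}\|^2$, the latter being exactly the $\tfrac{N\lambda p_{i\tau}^2R^2}{\rho}$ contribution inside the $\tfrac{1}{2\lambda N^2}$ factor) match the statement. The route differs from the paper's in two ways. First, you re-derive the conjugate-part bound from $\gamma$-strong convexity of $\phi^*_{i\tau}$ applied to the convex combination $-\alpha_{i\tau}-h_{i\tau}=(1-\theta p_{i\tau}^{-1})(-\alpha_{i\tau})+\theta p_{i\tau}^{-1}\nabla\phi_{i\tau}(A_{i\tau}^\top x)$, whereas the paper simply cites inequality (50) of \citep{Quartz}; your version is more self-contained. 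Second, and more substantively, the paper performs the passage from $\nabla g^*(\tilde u)$ to $\nabla g^*(u)$ \emph{early}: it applies Young's inequality with parameter $\rho$ to the cross term $\langle \nabla g^*(\tilde u)-\nabla g^*(u),\tfrac{1}{N}\sum_{\tau,i}p_{i\tau}A_{i\tau}h_{i\tau}\rangle$ and then carries out all conjugacy manipulations at $u$, using $1$-smoothness of $g^*$ to absorb $\theta\lambda\left[g^*(\tilde u)-g^*(u)-\langle\nabla g^*(u),\tilde u-u\rangle\right]$ into $\tfrac{\theta\lambda}{2}\|\tilde u-u\|^2$. You instead keep everything exact at $\tilde u$ (your smooth-part rewriting is an identity via Fenchel--Young) and transfer $\tilde u\to u$ \emph{late}, in one shot, through the $\theta\lambda$-strong convexity of the auxiliary function $F(v)=\theta\lambda g(v)+\tfrac{\theta}{N}\langle As,v\rangle$ and the gradient identity $\nabla F(\nabla g^*(\tilde u))=-\tfrac{1}{N}\sum_{\tau,i}p_{i\tau}A_{i\tau}h_{i\tau}$; the same Young step with $\rho$ and the same spectral bound $\lambda_{\max}(AA^\top)=NR^2$ then reappear, attached to the inner product $\langle\tfrac{1}{N}\sum_{\tau,i}p_{i\tau}A_{i\tau}h_{i\tau},\nabla g^*(u)-\nabla g^*(\tilde u)\rangle$, which is the same quantity the paper bounds. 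So the two proofs rest on identical estimates organized differently; your packaging via $F$ is arguably cleaner since the only inequalities occur in the final comparison step. Two small caveats, which apply equally to the paper's proof: the convex-combination/strong-convexity step needs $\theta p_{i\tau}^{-1}\le 1$ (you flag this; the paper inherits the same restriction through Quartz's (50), and it holds for the $\theta$ chosen in the theorems), and if $g$ is not differentiable, $\nabla g(\nabla g^*(u))=u$ should be read as the subgradient relation $u\in\partial g(\nabla g^*(u))$, under which your strong-convexity inequality for $F$ still holds verbatim.
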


\begin{proof}
	
	First, for any $u\in \R^d$ and $\rho>0$, we have 
	\begin{align*}
	& {\tilde f}(\alpha) + \sum_{\tau=1}^n \sum_{i=1}^m p_{i\tau} \left\langle  \frac{1}{N} A_{i\tau}^\top \nabla g^* \left(\frac{1}{\lambda N} A\alpha \right), h_{i\tau}  \right\rangle \\
	& =  {\tilde f}(\alpha) + \frac{1}{N} \sum_{\tau=1}^n \sum_{i=1}^m p_{i\tau} \left\langle  A_{i\tau}^\top \nabla g^* \left({\tilde u} \right), h_{i\tau}  \right\rangle \\ 
	& = {\tilde f}(\alpha) + \frac{1}{N} \sum_{\tau=1}^n \sum_{i=1}^m p_{i\tau}  \langle A_{i\tau}^\top \nabla g^*(u), h_{i\tau} \rangle + \frac{1}{N} \sum_{\tau=1}^n \sum_{i=1}^m p_{i\tau}  \langle  (\nabla g^*({\tilde u}) - \nabla g^*(u)) , A_{i\tau} h_{i\tau} \rangle \\ 
	& =  {\tilde f}(\alpha) + \frac{1}{N} \sum_{\tau=1}^n \sum_{i=1}^m p_{i\tau}  \langle A_{i\tau}^\top \nabla g^*(u), h_{i\tau} \rangle +  \langle  (\nabla g^*({\tilde u}) - \nabla g^*(u)) , \frac{1}{N} Ah^p \rangle \\ 
	& \leq  {\tilde f}(\alpha) + \frac{1}{N} \sum_{\tau=1}^n \sum_{i=1}^m p_{i\tau}  \langle A_{i\tau}^\top \nabla g^*(u), h_{i\tau} \rangle +  \frac{\rho}{2} \| \nabla g^*({\tilde u}) - \nabla g^*(u)\|^2 + \frac{1}{2\rho N^2}\|A h^p\|^2  \\ 
	& \leq {\tilde f}(\alpha) + \frac{1}{N} \sum_{\tau=1}^n \sum_{i=1}^m p_{i\tau}  \langle A_{i\tau}^\top \nabla g^*(u), h_{i\tau} \rangle + \frac{\rho}{2} \|{\tilde u} - u\|^2 + \frac{1}{N} \sum_{\tau=1}^n \sum_{i=1}^m \frac{p_{i\tau}^2 R^2}{2\rho}\|h_{i\tau}\|^2, 
	\end{align*}
	where we denote $h^p \in \R^{tn}$ such that $h^p_{i\tau} = {p_{i\tau}} h_{i\tau}$ in the third equality, in the first inequality we use the Young's inequality and the last inequality comes from $g^*$ is $1$-smooth since $g$ is $1$-strongly convex. For the first two terms in the above inequality, we have 
	
	\begin{align*}
	& {\tilde f}(\alpha) + \frac{1}{N} \sum_{\tau=1}^n \sum_{i=1}^m p_{i\tau}  \langle A_{i\tau}^\top \nabla g^*(u), h_{i\tau} \rangle \\
	& = \lambda g^*({\tilde u}) - \frac{\theta}{N} \sum_{\tau=1}^n \sum_{i=1}^m \langle \nabla g^*(u), A_{i\tau} \alpha_{i\tau} + A_{i\tau} \nabla \phi_{i\tau} (A_{i\tau}^\top x) \rangle \\ 
	& = \lambda g^*({\tilde u}) - \theta \lambda \langle \nabla g^*(u), {\tilde u} \rangle -  \frac{1}{N} \sum_{\tau=1}^n \sum_{i=1}^m \langle \theta \nabla g^*(u), A_{i\tau} \nabla \phi_{i\tau} (A_{i\tau}^\top x) \rangle \\ 
	& = (1-\theta) \lambda g^*({\tilde u}) + \theta \lambda g^*({\tilde u}) - \theta \lambda g^*(u) - \theta \lambda \langle \nabla g^*(u), {\tilde u} - u \rangle + \theta \lambda g^*(u) - \theta \lambda \langle \nabla g^*(u), u \rangle \\ 
	& \quad  -  \frac{1}{N} \sum_{\tau=1}^n \sum_{i=1}^m \langle \theta \nabla g^*(u), A_{i\tau} \nabla \phi_{i\tau} (A_{i\tau}^\top x) \rangle \\ 
	& \leq (1-\theta) \lambda g^*({\tilde u}) + \frac{\theta \lambda}{2}\|{\tilde u} - u\|^2 + \theta \lambda (g^*(u) - \langle \nabla g^*(u), u \rangle) \\ 
	& \quad -  \frac{1}{N} \sum_{\tau=1}^n \sum_{i=1}^m \langle \theta \nabla g^*(u), A_{i\tau} \nabla \phi_{i\tau} (A_{i\tau}^\top x) \rangle \\ 
	& = (1-\theta) \lambda g^*({\tilde u}) + \frac{\theta \lambda}{2}\|{\tilde u} - u\|^2 - \theta \lambda g(\nabla g^*({u})) -  \frac{1}{N} \sum_{\tau=1}^n \sum_{i=1}^m \langle \theta \nabla g^*(u), A_{i\tau} \nabla \phi_{i\tau} (A_{i\tau}^\top x) \rangle, 
	\end{align*}
	where the first inequality comes from $g^*$ is $1$-smooth and the last equality comes from the definition of conjugate functions. 
	
	From (50) in \citep{Quartz}, we also have 
	
	\begin{align*}
	& \frac{1}{N} \sum_{\tau=1}^n \sum_{i=1}^m [(1 - p_{i\tau})\phi^*_{i\tau} (-\alpha_{i\tau}) + p_{i\tau} \phi^*_{i\tau} (-\alpha_{i\tau} - h_{i\tau}) ] \\ 
	& \leq (1-\theta) {\tilde \psi}(\alpha) + \frac{\theta}{N} \sum_{\tau=1}^n \sum_{i=1}^m \phi^*_{i\tau} (\nabla \phi_{i\tau}(A_{i\tau}^\top x)) - \frac{1}{2\lambda N^2} \sum_{\tau=1}^n \sum_{i=1}^m \frac{N \lambda \gamma p_{i\tau}^2 (1-\theta p_{i\tau}^{-1})}{\theta} \|h_{i\tau}\|^2. 
	\end{align*}
	
	Combining the above three inequalities, we arrive at 
	\begin{align*}
	& {\tilde f}(\alpha) + \sum_{\tau=1}^n \sum_{i=1}^m p_{i\tau} \left\langle  \frac{1}{N} A_{i\tau}^\top \nabla g^* \left(\frac{1}{\lambda N} A\alpha \right), h_{i\tau}  \right\rangle + \frac{1}{2\lambda N^2} \sum_{\tau=1}^n \sum_{i=1}^m p_{i\tau} v_{i\tau} \|h_{i\tau}\|^2 \nonumber \\ 
	& \quad + \frac{1}{N} \sum_{\tau=1}^n \sum_{i=1}^m [(1 - p_{i\tau})\phi^*_{i\tau} (-\alpha_{i\tau}) + p_{i\tau} \phi^*_{i\tau} (-\alpha_{i\tau} - h_{i\tau}) ]\\ 
	& \leq -(1-\theta) D(\alpha) - \theta \lambda g(\nabla g^*({u})) -  \frac{1}{N} \sum_{\tau=1}^n \sum_{i=1}^m \langle \theta \nabla g^*(u), A_{i\tau} \nabla \phi_{i\tau} (A_{i\tau}^\top x) \rangle \\ 
	& \quad + \frac{\theta}{N} \sum_{\tau=1}^n \sum_{i=1}^m \phi^*_{i\tau} (\nabla \phi_{i\tau}(A_{i\tau}^\top x)) + \frac{\rho + \theta\lambda}{2} \|{\tilde u} - u\|^2 \\ 
	& \quad + \frac{1}{2\lambda N^2} \sum_{\tau=1}^n \sum_{i=1}^m \left( p_{i\tau}v_{i\tau} + \frac{ N\lambda p_{i\tau}^2 R^2}{\rho} -  \frac{N \lambda \gamma p_{i\tau}^2 (1-\theta p_{i\tau}^{-1})}{\theta}  \right) \|h_{i\tau}\|^2. 
	\end{align*}
	
\end{proof}

Let $\mathbb{E}_k[\cdot]$ denote the expectation conditional on $x^k$, $\alpha^k$, $u^k$, and $e^k_\tau$. Define $\Delta \alpha_{i\tau}^{k+1} \eqdef - \theta p_{i\tau}^{-1} \alpha_{i\tau}^k - \theta p_{i\tau}^{-1} \nabla \phi_{i\tau} (A_{i\tau}^\top x^{k+1}) $ for $k\geq 0$. Notice that $\mathbb{E}_k[x^{k+1}] = x^{k+1}$. Hence, $\mathbb{E}_k[\Delta \alpha_{i\tau}^{k+1} ] = \Delta \alpha_{i\tau}^{k+1}$.

\begin{lemma}\label{lm:ek+1-1ecQuartz}
	We have 
	\begin{eqnarray*}
		\frac{1}{n} \sum_{\tau=1}^n  \mathbb{E}_k [\|e^{k+1}_{\tau}\|^2] \leq  \left(1-\frac{\delta}{2} \right) \frac{1}{n} \sum_{\tau=1}^n \|e^k_\tau\|^2 + \frac{(1-\delta)}{\lambda^2 Nm^2} \left(  \frac{2{\bar R}^2}{\delta} + R_m^2  \right) \sum_{\tau=1}^n \sum_{i=1}^m \| \Delta \alpha_{i\tau}^{k+1}\|^2. 
	\end{eqnarray*}
	
\end{lemma}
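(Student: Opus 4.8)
The plan is to exploit the error-feedback recursion for $e_\tau^{k+1}$ together with the contraction property~\eqref{eq:contractor}, carefully isolating the two sources of randomness at iteration $k$: the sampling of $i_k^\tau$ and the internal randomness of the compressor $Q$. Writing $c_\tau^k \eqdef \frac{1}{\lambda m} A_{i_k^\tau \tau} \Delta\alpha_{i_k^\tau \tau}^{k+1}$, the updates in Algorithm~\ref{alg:ec-quartz} read $y_\tau^k = Q(e_\tau^k + c_\tau^k)$ and $e_\tau^{k+1} = (e_\tau^k + c_\tau^k) - Q(e_\tau^k + c_\tau^k)$. First I would condition on the sampled index and take expectation over $Q$ alone, so that~\eqref{eq:contractor} gives $\mathbb{E}_k[\|e_\tau^{k+1}\|^2] \leq (1-\delta)\,\mathbb{E}_k\|e_\tau^k + c_\tau^k\|^2$, where the outer expectation is over the sampling of $i_k^\tau$.

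The key step, and the one that produces the asymmetric constants $\tfrac{2{\bar R}^2}{\delta}$ and $R_m^2$, is a bias--variance split of $\mathbb{E}_k\|e_\tau^k + c_\tau^k\|^2$. Since $e_\tau^k$ is fixed under the conditioning and $\mathbb{E}_k[c_\tau^k]=\bar c_\tau^k \eqdef \frac{1}{\lambda m^2}\sum_{i=1}^m A_{i\tau}\Delta\alpha_{i\tau}^{k+1}$, the cross term vanishes and
\[
\mathbb{E}_k\|e_\tau^k + c_\tau^k\|^2 = \|e_\tau^k + \bar c_\tau^k\|^2 + \mathbb{E}_k\|c_\tau^k - \bar c_\tau^k\|^2 \le \|e_\tau^k + \bar c_\tau^k\|^2 + \mathbb{E}_k\|c_\tau^k\|^2.
\]
I would then apply Young's inequality $\|e_\tau^k + \bar c_\tau^k\|^2 \le (1+\beta)\|e_\tau^k\|^2 + (1+\beta^{-1})\|\bar c_\tau^k\|^2$ with the choice $\beta = \frac{\delta}{2(1-\delta)}$, which makes $(1-\delta)(1+\beta) = 1-\tfrac{\delta}{2}$ and $(1-\delta)(1+\beta^{-1}) \le \tfrac{2(1-\delta)}{\delta}$. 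This way the mean part $\|\bar c_\tau^k\|^2$ carries the $1/\delta$ factor, while the variance term $\mathbb{E}_k\|c_\tau^k\|^2$ does not.

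It then remains to bound the two $c$-dependent quantities by the $\Delta\alpha$ norms. For the mean part, writing $\bar c_\tau^k = \frac{1}{\lambda m^2}\sum_{i} A_{i\tau}\Delta\alpha_{i\tau}^{k+1}$ as a matrix--vector product and using $\lambda_{\rm max}(\sum_i A_{i\tau}A_{i\tau}^\top) \le m{\bar R}^2$ (from the definition of ${\bar R}$ in Algorithm~\ref{alg:ec-quartz}) gives $\|\bar c_\tau^k\|^2 \le \frac{{\bar R}^2}{\lambda^2 m^3}\sum_i \|\Delta\alpha_{i\tau}^{k+1}\|^2$. For the variance part, $\mathbb{E}_k\|c_\tau^k\|^2 = \frac{1}{\lambda^2 m^3}\sum_i \|A_{i\tau}\Delta\alpha_{i\tau}^{k+1}\|^2 \le \frac{R_m^2}{\lambda^2 m^3}\sum_i \|\Delta\alpha_{i\tau}^{k+1}\|^2$, using $\|A_{i\tau}\| \le R_m$. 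Combining yields the per-node bound $\mathbb{E}_k\|e_\tau^{k+1}\|^2 \le (1-\tfrac{\delta}{2})\|e_\tau^k\|^2 + \frac{1-\delta}{\lambda^2 m^3}\bigl(\tfrac{2{\bar R}^2}{\delta}+R_m^2\bigr)\sum_i\|\Delta\alpha_{i\tau}^{k+1}\|^2$; averaging over $\tau$ and using $N = mn$ to rewrite $\frac{1}{n m^3} = \frac{1}{N m^2}$ gives exactly the claim.

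The hard part will be the bias--variance decomposition together with the correct Young parameter: a naive split of $c_\tau^k$ directly against $e_\tau^k$ would force the $R_m^2$ term to inherit the $1/\delta$ blow-up, destroying the improvement in which the $1/\delta$ factor multiplies only the smaller node-averaged quantity ${\bar R}^2$. Separating the sampling mean from its fluctuation is precisely what keeps $R_m^2$ out of the $1/\delta$ term, and verifying that the variance of the (uniform, independent per node) sampling contributes the clean $\frac{1}{m^3}\sum_i\|A_{i\tau}\Delta\alpha_{i\tau}^{k+1}\|^2$ is the main computational care point.
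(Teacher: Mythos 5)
Your proposal is correct and follows essentially the same route as the paper's proof: contraction of $Q$, the bias--variance split of $\mathbb{E}_k\|e_\tau^k+c_\tau^k\|^2$ around the sampling mean $\bar c_\tau^k$ (dropping the variance down to the second moment), Young's inequality with $\beta=\frac{\delta}{2(1-\delta)}$ on the mean part, and the bounds $\lambda_{\max}(\sum_i A_{i\tau}A_{i\tau}^\top)\le m{\bar R}^2$ and $\|A_{i\tau}\|\le R_m$, followed by averaging over $\tau$. The only cosmetic difference is that the paper explicitly remarks that the final inequality also holds when $\delta=1$ (where your choice of $\beta$ is undefined but the contraction bound is trivially $0$); otherwise the arguments coincide step for step.
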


\begin{proof}
	
	First, from the contraction property of $Q$, we have 
	\begin{eqnarray*}
		&& \mathbb{E}_k[ \|e^{k+1}_\tau\|^2 ] \\ 
		&\overset{(\ref{eq:contractor})}{\leq}& (1-\delta)\mathbb{E}_k \|e^k_\tau + \frac{1}{\lambda m}A_{i_k^\tau \tau} \Delta\alpha_{i^\tau_{k} \tau}^{k+1} \|^2 \\ 
		&=& (1-\delta) \mathbb{E}_k [\|e_\tau^k + \frac{1}{\lambda m^2} \sum_{i=1}^m A_{i\tau} \Delta \alpha_{i\tau}^{k+1} +  \frac{1}{\lambda m}A_{i_k^\tau \tau} \Delta\alpha_{i^\tau_{k} \tau}^{k+1} -   \frac{1}{\lambda m^2} \sum_{i=1}^m A_{i\tau} \Delta \alpha_{i\tau}^{k+1} \|^2 ] \\ 
		&=& (1-\delta)\mathbb{E}_k [ \|e_\tau^k + \frac{1}{\lambda m^2} \sum_{i=1}^m A_{i\tau} \Delta \alpha_{i\tau}^{k+1}\|^2 ] + (1-\delta) \mathbb{E}_k [ \| \frac{1}{\lambda m}A_{i_k^\tau \tau} \Delta\alpha_{i^\tau_{k} \tau}^{k+1} -   \frac{1}{\lambda m^2} \sum_{i=1}^m A_{i\tau} \Delta \alpha_{i\tau}^{k+1} \|^2 ] \\
		&\leq& (1-\delta)\mathbb{E}_k [ \|e_\tau^k + \frac{1}{\lambda m^2} \sum_{i=1}^m A_{i\tau} \Delta \alpha_{i\tau}^{k+1}\|^2 ] + (1-\delta) \mathbb{E}_k [ \| \frac{1}{\lambda m}A_{i_k^\tau \tau} \Delta\alpha_{i^\tau_{k} \tau}^{k+1} \|^2 ] \\ 
		&\leq&  (1-\delta)\mathbb{E}_k [ \|e_\tau^k + \frac{1}{\lambda m^2} \sum_{i=1}^m A_{i\tau} \Delta \alpha_{i\tau}^{k+1}\|^2 ] + \frac{(1-\delta)R_m^2}{\lambda^2 m^3}\sum_{i=1}^m \| \Delta \alpha_{i\tau}^{k+1}\|^2 \\ 
		&\leq& (1-\delta)(1+\beta) \|e^k_\tau\|^2 + (1-\delta)\left(1+ \frac{1}{\beta} \right) \frac{1}{\lambda^2 m^4} \left\| \sum_{i=1}^m A_{i\tau} \Delta \alpha_{i\tau}^{k+1} \right\|^2 +  \frac{(1-\delta)R_m^2}{\lambda^2 m^3}\sum_{i=1}^m \| \Delta \alpha_{i\tau}^{k+1}\|^2  \\
		&\leq& \left(1-\frac{\delta}{2} \right) \|e^k_\tau\|^2 + \frac{2(1-\delta)}{\delta} \frac{{\bar R}^2}{\lambda^2m^3} \sum_{i=1}^m \|\Delta \alpha_{i\tau}^{k+1}\|^2  +  \frac{(1-\delta)R_m^2}{\lambda^2 m^3}\sum_{i=1}^m \| \Delta \alpha_{i\tau}^{k+1}\|^2 \\ 
		&=&  \left(1-\frac{\delta}{2} \right) \|e^k_\tau\|^2 + \frac{(1-\delta)}{\lambda^2 m^3} \left(  \frac{2{\bar R}^2}{\delta} + R_m^2  \right) \sum_{i=1}^m \| \Delta \alpha_{i\tau}^{k+1}\|^2, 
	\end{eqnarray*}
	where we use Young's inequality in the third inequality and choose $\beta = \frac{\delta}{2(1-\delta)}$ when $\delta<1$. When $\delta = 1$, it is easy to see that the above inequality also holds. 
	
	Taking the average of the above inequality from $\tau=1$ to $n$, we can get the result. 
	
\end{proof}

\begin{lemma}\label{lm:ek+1-2ecQuartz}
	Let $e^k = \frac{1}{n}\sum_{\tau=1}^n e_\tau^k$ for $k\geq 0$. Under Assumption \ref{as:expcompressor}, we have 
	\begin{eqnarray*}
		&& \mathbb{E}_k \|e^{k+1}\|^2 \\ 
		&\leq&  \left(  1 - \frac{\delta}{2}  \right) \|e^k\|^2 + \frac{2(1-\delta)\delta}{n^2} \sum_{\tau=1}^n \|e^k_{\tau} \|^2 + \frac{(1-\delta)}{\lambda^2 N m^2} \left(  \frac{(2\delta+1) R_m^2}{n}  + \frac{2R^2}{\delta}  \right) \sum_{\tau=1}^n \sum_{i=1}^m \| \Delta \alpha_{i\tau}^{k+1}\|^2. 
	\end{eqnarray*}
	
\end{lemma}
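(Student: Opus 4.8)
The plan is to follow exactly the route used in the proof of Lemma~\ref{lm:ek+1-2eclsvrg} for the EC-LSVRG case, with the per-node communicated increment playing the role that $\eta g_\tau^k$ played there. Write $r_\tau^k \eqdef \frac{1}{\lambda m}A_{i_k^\tau \tau}\Delta\alpha_{i^\tau_k \tau}^{k+1}$, $r^k \eqdef \frac1n\sum_{\tau=1}^n r_\tau^k$, and $\bar r_\tau^k \eqdef \mathbb{E}_k r_\tau^k$, $\bar r^k \eqdef \mathbb{E}_k r^k$, so that $e_\tau^{k+1} = (e_\tau^k + r_\tau^k) - Q(e_\tau^k + r_\tau^k)$. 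First I would expand
\begin{equation*}
\mathbb{E}_k\|e^{k+1}\|^2 = \frac{1}{n^2}\sum_{\tau=1}^n \mathbb{E}_k\|e_\tau^{k+1}\|^2 + \frac{1}{n^2}\sum_{i\neq j}\mathbb{E}_k\langle e_i^{k+1}, e_j^{k+1}\rangle,
\end{equation*}
bounding the diagonal terms with the contraction property (\ref{eq:contractor}) and evaluating the off-diagonal terms exactly. Because the samples $i_k^\tau$ and the compressor calls are independent across nodes, $\mathbb{E}_k\langle e_i^{k+1}, e_j^{k+1}\rangle = \langle \mathbb{E}_k e_i^{k+1}, \mathbb{E}_k e_j^{k+1}\rangle$, and Assumption~\ref{as:expcompressor} gives $\mathbb{E}_k e_\tau^{k+1} = (1-\delta)(e_\tau^k + \bar r_\tau^k)$, supplying the factor $(1-\delta)^2$. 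Recombining the diagonal and off-diagonal pieces (add and subtract $\tfrac{(1-\delta)^2}{n^2}\sum_\tau\mathbb{E}_k\|e_\tau^k+r_\tau^k\|^2$, and use $(1-\delta)^2\le(1-\delta)$) yields the same intermediate bound as in Lemma~\ref{lm:ek+1-2eclsvrg}:
\begin{equation*}
\mathbb{E}_k\|e^{k+1}\|^2 \leq (1-\delta)\,\mathbb{E}_k\|e^k + r^k\|^2 + \frac{(1-\delta)\delta}{n^2}\sum_{\tau=1}^n \mathbb{E}_k\|e_\tau^k + r_\tau^k\|^2.
\end{equation*}

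For the second term I would use Young's inequality $\|e_\tau^k + r_\tau^k\|^2 \le 2\|e_\tau^k\|^2 + 2\|r_\tau^k\|^2$ together with the per-node second-moment estimate $\mathbb{E}_k\|r_\tau^k\|^2 \le \frac{R_m^2}{\lambda^2 m^3}\sum_{i=1}^m\|\Delta\alpha_{i\tau}^{k+1}\|^2$ (from uniform sampling of $i_k^\tau$ and $\|A_{i\tau}\|\le R_m$), giving the $\frac{2(1-\delta)\delta}{n^2}\sum_\tau\|e_\tau^k\|^2$ term and a contribution $\frac{2(1-\delta)\delta R_m^2}{\lambda^2 n^2 m^3}\sum_{\tau,i}\|\Delta\alpha_{i\tau}^{k+1}\|^2$. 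The first term I would split by the bias--variance identity $\mathbb{E}_k\|e^k + r^k\|^2 = \|e^k + \bar r^k\|^2 + \mathbb{E}_k\|r^k - \bar r^k\|^2$, with $\bar r^k = \frac{1}{\lambda N m}\sum_{\tau,i}A_{i\tau}\Delta\alpha_{i\tau}^{k+1} = \frac{1}{\lambda N m}A\Delta\alpha^{k+1}$. For the bias I would apply Young's inequality with $\beta = \frac{\delta}{2(1-\delta)}$, so that $(1-\delta)(1+\beta)=1-\frac{\delta}{2}$ reproduces the leading $(1-\frac{\delta}{2})\|e^k\|^2$, and bound $\|\bar r^k\|^2 \le \frac{\lambda_{\max}(AA^\top)}{\lambda^2 N^2 m^2}\|\Delta\alpha^{k+1}\|^2 = \frac{R^2}{\lambda^2 N m^2}\sum_{\tau,i}\|\Delta\alpha_{i\tau}^{k+1}\|^2$ using $\lambda_{\max}(AA^\top) = NR^2$. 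For the variance, cross-node independence again diagonalizes it, $\mathbb{E}_k\|r^k - \bar r^k\|^2 = \frac{1}{n^2}\sum_\tau \mathbb{E}_k\|r_\tau^k - \bar r_\tau^k\|^2 \le \frac{1}{n^2}\sum_\tau \mathbb{E}_k\|r_\tau^k\|^2$, to which the same per-node estimate applies.

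Finally I would collect coefficients: the $\|e^k\|^2$ and $\sum_\tau\|e_\tau^k\|^2$ terms already match the claim; the bias produces $\frac{2(1-\delta)R^2}{\lambda^2 N m^2\,\delta}$, matching the $\frac{2R^2}{\delta}$ part; and the two independent sources of $R_m^2$, namely $\frac{(1-\delta)R_m^2}{\lambda^2 n^2 m^3}$ (variance) and $\frac{2(1-\delta)\delta R_m^2}{\lambda^2 n^2 m^3}$ (second term), combine to $\frac{(1-\delta)(2\delta+1)R_m^2}{\lambda^2 n^2 m^3}$, which is the $\frac{(2\delta+1)R_m^2}{n}$ part after recalling $N=mn$. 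The main obstacle is purely bookkeeping: one must keep the bias and the variance of the averaged increment $r^k$ strictly separate, since only the bias feels the sharp spectral constant $R^2$ through $\lambda_{\max}(AA^\top)$ while the variance and the residual per-node term feel the cruder $R_m^2$, and then verify that the two $R_m^2$ pieces add with the exact coefficient $2\delta+1$. Independence of the sampling and compression across the $n$ nodes is precisely what legitimizes both the off-diagonal cancellation and the variance diagonalization.
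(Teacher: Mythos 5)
Your proposal is correct and follows essentially the same route as the paper's proof: the same diagonal/off-diagonal expansion using Assumption \ref{as:expcompressor} and cross-node independence, the same recombination into $(1-\delta)\,\mathbb{E}_k\|e^k+r^k\|^2+\frac{(1-\delta)\delta}{n^2}\sum_{\tau}\mathbb{E}_k\|e_\tau^k+r_\tau^k\|^2$, the same Young's inequality with $\beta=\frac{\delta}{2(1-\delta)}$, and the same spectral ($NR^2$) and per-node ($R_m^2$) bounds, landing on identical constants. The only cosmetic difference is in the variance of the averaged increment: the paper invokes its ESO Lemma \ref{lm:eso} to bound $\mathbb{E}_k\bigl\|\sum_{(i,\tau)\in S_k}A_{i\tau}\Delta\alpha_{i\tau}^{k+1}\bigr\|^2$ and then subtracts the squared mean, whereas you diagonalize the variance across nodes directly and bound each node's variance by its second moment --- which is precisely the computation inside the ESO lemma's own (second) proof, so the two bookkeepings coincide.
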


\begin{proof}
	
	Under Assumption \ref{as:expcompressor}, we have $\mathbb{E}[Q(x)] = \delta x$, and 
	\begin{eqnarray*}
		\mathbb{E}_k \|e^{k+1}\|^2 &=& \mathbb{E}_k \left\| \frac{1}{n} \sum_{\tau=1}^n e^{k+1}_{\tau} \right\|^2 \\
		&=& \frac{1}{n^2} \sum_{j, s} \mathbb{E}_k \langle e^{k+1}_j, e^{k+1}_s \rangle \\ 
		&=& \frac{1}{n^2} \sum_{\tau=1}^n \mathbb{E}_k \|e^{k+1}_{\tau}\|^2 + \frac{1}{n^2} \sum_{j\neq s} \mathbb{E}_k \langle e^{k+1}_j, e^{k+1}_s \rangle \\
		&\overset{(\ref{eq:contractor})}{\leq}& \frac{1-\delta}{n^2} \sum_{\tau=1}^n \mathbb{E}_k \left\|e^k_{\tau} + \frac{1}{\lambda m}A_{i_k^\tau \tau} \Delta\alpha_{i^\tau_{k} \tau}^{k+1} \right\|^2 \\ 
		&& + \frac{(1-\delta)^2}{n^2} \sum_{j\neq s} \mathbb{E}_k \left\langle e^k_j +\frac{1}{\lambda m}A_{i_k^j j} \Delta\alpha_{i^j_{k} j}^{k+1}  , e^k_s + \frac{1}{\lambda m}A_{i_k^s \tau} \Delta\alpha_{i^s_{k} s}^{k+1}  \right\rangle \\ 
		&=& \frac{(1-\delta)^2}{n^2} \mathbb{E}_k \left\|\sum_{\tau=1}^n (e^k_{\tau} + \frac{1}{\lambda m}A_{i_k^\tau \tau} \Delta\alpha_{i^\tau_{k} \tau}^{k+1}) \right\|^2 + \frac{(1-\delta)\delta}{n^2} \sum_{\tau=1}^n \mathbb{E}_k \left\|e^k_{\tau} + \frac{1}{\lambda m}A_{i_k^\tau \tau} \Delta\alpha_{i^\tau_{k} \tau}^{k+1} \right\|^2 \\ 
		&\leq& (1-\delta) \mathbb{E}_k \left\|e^k + \frac{1}{n} \sum_{\tau=1}^n \frac{1}{\lambda m}A_{i_k^\tau \tau} \Delta\alpha_{i^\tau_{k} \tau}^{k+1}  \right\|^2 + \frac{(1-\delta)\delta}{n^2} \sum_{\tau=1}^n \mathbb{E}_k \left\|e^k_{\tau} + \frac{1}{\lambda m}A_{i_k^\tau \tau} \Delta\alpha_{i^\tau_{k} \tau}^{k+1} \right\|^2, 
	\end{eqnarray*}
	where we use the definitions of $e^k$ in the last inequality. \\
	Let $S_k = \{  (i^\tau_k, \tau) | \ i^\tau_k \mbox{ is chosen from $[m]$ uniformly and independently for all } \tau \in [n]  \}$.  Then $ \sum_{\tau=1}^n A_{i_k^\tau \tau} \Delta\alpha_{i^\tau_{k} \tau}^{k+1} = \sum_{(i, \tau) \in S_k} A_{i\tau} \Delta \alpha^{k+1}_{i\tau} $, and we can obtain 
	
	\begin{eqnarray}
	&& \mathbb{E}_k \|e^{k+1}\|^2 - (1-\delta) \mathbb{E}_k \left\|e^k + \frac{1}{\lambda N }\sum_{(i, \tau) \in S_k} A_{i\tau} \Delta \alpha^{k+1}_{i\tau}  \right\|^2 \nonumber \\ 
	&\leq&   \frac{(1-\delta)\delta}{n^2} \sum_{\tau=1}^n \mathbb{E}_k \left\|e^k_{\tau} + \frac{1}{\lambda m}A_{i_k^\tau \tau} \Delta\alpha_{i^\tau_{k} \tau}^{k+1}\right\|^2 \nonumber \\ 
	&\leq&  \frac{2(1-\delta)\delta}{n^2} \sum_{\tau=1}^n \|e^k_{\tau} \|^2 + \frac{2(1-\delta)\delta  }{\lambda^2 m^2 n^2 } \sum_{\tau=1}^n \mathbb{E}_k \| A_{i_k^\tau \tau} \Delta\alpha_{i^\tau_{k} \tau}^{k+1} \|^2 \nonumber \\ 
	&=& \frac{2(1-\delta)\delta}{n^2} \sum_{\tau=1}^n \|e^k_{\tau} \|^2 + \frac{2(1-\delta)\delta  }{\lambda^2 m^3 n^2 } \sum_{\tau=1}^n \sum_{i=1}^m  \| A_{i \tau} \Delta\alpha_{i \tau}^{k+1} \|^2 \nonumber \\ 
	&\leq& \frac{2(1-\delta)\delta}{n^2} \sum_{\tau=1}^n \|e^k_{\tau} \|^2 + \frac{2(1-\delta)\delta R_m^2 }{\lambda^2 N^2 m } \sum_{\tau=1}^n \sum_{i=1}^m  \| \Delta\alpha_{i \tau}^{k+1} \|^2, \label{eq:ek+1in-ecQuartz}
	\end{eqnarray}
	where in the second and third inequalities we use the Young's inequality. 
	
	For $(1-\delta) \mathbb{E}_k \left\|e^k + \frac{1}{\lambda N }\sum_{(i, \tau) \in S_k} A_{i\tau} \Delta \alpha^{k+1}_{i\tau}  \right\|^2$, we have 
	\begin{eqnarray*}
		&& (1-\delta) \mathbb{E}_k \left\|e^k + \frac{1}{\lambda N }\sum_{(i, \tau) \in S_k} A_{i\tau} \Delta \alpha^{k+1}_{i\tau}  \right\|^2 \\ 
		&=& (1-\delta) \mathbb{E}_k \left\|e^k + \frac{1}{\lambda N m} \sum_{\tau=1}^n \sum_{i=1}^m A_{i\tau} \Delta \alpha_{i\tau}^{k+1}  + \frac{1}{\lambda N }\sum_{(i, \tau) \in S_k} A_{i\tau} \Delta \alpha^{k+1}_{i\tau}  - \frac{1}{\lambda N m} \sum_{\tau=1}^n \sum_{i=1}^m A_{i\tau} \Delta \alpha_{i\tau}^{k+1}  \right\|^2 \\ 
		&=& (1-\delta) \mathbb{E}_k \left\|e^k + \frac{1}{\lambda N m} \sum_{\tau=1}^n \sum_{i=1}^m A_{i\tau} \Delta \alpha_{i\tau}^{k+1} \right\|^2 \\ 
		&& + (1-\delta) \mathbb{E}_k \left\| \frac{1}{\lambda N } \sum_{(i, \tau) \in S_k} A_{i\tau} \Delta \alpha^{k+1}_{i\tau}  - \frac{1}{\lambda N m} \sum_{\tau=1}^n \sum_{i=1}^m A_{i\tau} \Delta \alpha_{i\tau}^{k+1}  \right\|^2 \\
		&\leq& \left(  1 - \frac{\delta}{2}  \right) \|e^k\|^2 + \frac{2(1-\delta)}{\delta \lambda^2N^2m^2} \left\|  \sum_{\tau=1}^n \sum_{i=1}^m A_{i\tau} \Delta \alpha_{i\tau}^{k+1} \right\|^2 + \frac{(1-\delta)}{\lambda^2 N^2 } \mathbb{E}_k \left\| \sum_{(i, \tau) \in S_k} A_{i\tau} \Delta \alpha^{k+1}_{i\tau} \right\|^2 \\ 
		&& - \frac{(1-\delta)}{\lambda^2N^2m^2} \left\|  \sum_{\tau=1}^n \sum_{i=1}^m A_{i\tau} \Delta \alpha_{i\tau}^{k+1} \right\|^2 \\ 
		&\leq&  \left(  1 - \frac{\delta}{2}  \right) \|e^k\|^2 + \frac{(1-\delta) R^2}{ \lambda^2N m^2}\left( \frac{2}{\delta} -1 \right) \sum_{\tau=1}^n \sum_{i=1}^m \| \Delta \alpha_{i\tau}^{k+1}\|^2 +  \frac{(1-\delta)}{\lambda^2 N^2 } \mathbb{E}_k \left\| \sum_{(i, \tau) \in S_k} A_{i\tau} \Delta \alpha^{k+1}_{i\tau} \right\|^2 \\
		&\overset{Lemma~\ref{lm:eso}}{\leq}&  \left(  1 - \frac{\delta}{2}  \right) \|e^k\|^2 +\frac{(1-\delta) R^2}{ \lambda^2N m^2}\left( \frac{2}{\delta} -1 \right) \sum_{\tau=1}^n \sum_{i=1}^m \| \Delta \alpha_{i\tau}^{k+1}\|^2 +  \frac{(1-\delta)}{\lambda^2 N^2 } \sum_{\tau=1}^n \sum_{i=1}^m p_{i\tau}v_{i\tau}  \| \Delta \alpha_{i\tau}^{k+1}\|^2. 
	\end{eqnarray*}
	
	Recall that $p_{i\tau} = \frac{1}{m}$ and $v_{i\tau} = R_m^2 + nR^2$, we have 
	\begin{eqnarray*}
		&& (1-\delta) \mathbb{E}_k \left\|e^k + \frac{1}{\lambda N }\sum_{(i, \tau) \in S_k} A_{i\tau} \Delta \alpha^{k+1}_{i\tau}  \right\|^2 \\ 
		&\leq&  \left(  1 - \frac{\delta}{2}  \right) \|e^k\|^2 + \frac{(1-\delta)}{\lambda^2 N m^2} \left(  \frac{2R^2}{\delta} + \frac{R_m^2}{n}  \right) \sum_{\tau=1}^n \sum_{i=1}^m \| \Delta \alpha_{i\tau}^{k+1}\|^2. 
	\end{eqnarray*}
	
	Combining (\ref{eq:ek+1in-ecQuartz}) and the above inequality, we can get 
	
	\begin{eqnarray*}
		&& \mathbb{E}_k \|e^{k+1}\|^2 \\ 
		&\leq&  \left(  1 - \frac{\delta}{2}  \right) \|e^k\|^2 + \frac{2(1-\delta)\delta}{n^2} \sum_{\tau=1}^n \|e^k_{\tau} \|^2 + \frac{(1-\delta)}{\lambda^2 N m^2} \left(  \frac{2\delta R_m^2}{n}  + \frac{2R^2}{\delta} + \frac{R_m^2}{n}  \right) \sum_{\tau=1}^n \sum_{i=1}^m \| \Delta \alpha_{i\tau}^{k+1}\|^2 \\ 
		&=&  \left(  1 - \frac{\delta}{2}  \right) \|e^k\|^2 + \frac{2(1-\delta)\delta}{n^2} \sum_{\tau=1}^n \|e^k_{\tau} \|^2 + \frac{(1-\delta)}{\lambda^2 N m^2} \left(  \frac{(2\delta+1) R_m^2}{n}  + \frac{2R^2}{\delta}  \right) \sum_{\tau=1}^n \sum_{i=1}^m \| \Delta \alpha_{i\tau}^{k+1}\|^2. 
	\end{eqnarray*}
	
\end{proof}

\subsection{Proof of Theorem \ref{th:ecQuartz-1}}

Let $h^k \in \R^{tN}$ be defined by: 
$$
h^k_{i\tau} = \Delta \alpha_{i\tau}^{k+1} = -\theta p_{i\tau}^{-1} (\alpha_{i\tau}^k + \nabla \phi_{i\tau}(A_{i\tau}^\top x^{k+1}) ), \ i\in [m], \ \tau\in[n], 
$$
for $k\geq 0$. Then we have $\alpha^{k+1} = \alpha^k + h^k_{[S_k]}$. By Lemma \ref{lm:Dalpha}, 
\begin{align*}
& \mathbb{E}_k[-D(\alpha^{k+1})]  \\ 
& \leq {\tilde f}(\alpha^k) + \sum_{\tau=1}^n \sum_{i=1}^m p_{i\tau} \left\langle  \frac{1}{N} A_{i\tau}^\top \nabla g^* \left(\frac{1}{\lambda N} A\alpha^k \right), h^k_{i\tau}  \right\rangle + \frac{1}{2\lambda N^2} \sum_{\tau=1}^n \sum_{i=1}^m p_{i\tau} v_{i\tau} \|h^k_{i\tau}\|^2 \nonumber \\ 
& \quad + \frac{1}{N} \sum_{\tau=1}^n \sum_{i=1}^m [(1 - p_{i\tau})\phi^*_{i\tau} (-\alpha^k_{i\tau}) + p_{i\tau} \phi^*_{i\tau} (-\alpha^k_{i\tau} - h^k_{i\tau}) ]. 
\end{align*}

Define ${\tilde u}^k = u^k + e^k$ for $k\geq 0$. Then we have 
\begin{align*}
{\tilde u}^{k+1} &= u^{k+1} + e^{k+1} \\
& = u^k + \frac{1}{n}\sum_{\tau=1}^n y_{\tau}^k + \frac{1}{n} \sum_{\tau=1}^n e_{\tau}^{k+1} \\ 
& = u^k +  \frac{1}{n}\sum_{\tau=1}^n y_{\tau}^k +  \frac{1}{n}\sum_{\tau=1}^n \left(  e_\tau^k +  \frac{1}{\lambda m}A_{i_k^\tau \tau} \Delta\alpha_{i^\tau_{k} \tau}^{k+1} - y_\tau^k   \right) \\ 
& = u^k + e^k + \frac{1}{\lambda N} \sum_{\tau=1}^n A_{i_k^\tau \tau} \Delta\alpha_{i^\tau_{k} \tau}^{k+1} \\ 
& = {\tilde u}^k + \frac{1}{\lambda N} \sum_{\tau=1}^n A_{i_k^\tau \tau} \Delta\alpha_{i^\tau_{k} \tau}^{k+1}. 
\end{align*}

Moreover, since ${\tilde u}^0 = u^0 = \frac{1}{\lambda N} \sum_{\tau=1}^n \sum_{i=1}^m A_{i\tau} \alpha_{i\tau}^0$, we have 
\begin{equation}\label{eq:tildeuk}
{\tilde u}^k = \frac{1}{\lambda N} \sum_{\tau=1}^n \sum_{i=1}^m A_{i\tau} \alpha_{i\tau}^k, 
\end{equation}
for $k\geq 0$.

Next we use Lemma \ref{lm:Dalpha-2} to further bound $\mathbb{E}_k[-D(\alpha^{k+1})]$ by choosing $\alpha = \alpha^k$, $x = x^{k+1}$, and $u=u^k$. We have 

\begin{align*}
&\mathbb{E}_k[-D(\alpha^{k+1})] \\ 
& \leq -(1-\theta) D(\alpha^k) - \theta \lambda g(\nabla g^*({u^k})) -  \frac{1}{N} \sum_{\tau=1}^n \sum_{i=1}^m \langle \theta \nabla g^*(u^k), A_{i\tau} \nabla \phi_{i\tau} (A_{i\tau}^\top x^{k+1}) \rangle \\ 
& \quad + \frac{\theta}{N} \sum_{\tau=1}^n \sum_{i=1}^m \phi^*_{i\tau} (\nabla \phi_{i\tau}(A_{i\tau}^\top x^{k+1})) + \frac{\rho + \theta\lambda}{2} \|e^k\|^2 \\ 
& \quad + \frac{1}{2\lambda N^2} \sum_{\tau=1}^n \sum_{i=1}^m \left( p_{i\tau}v_{i\tau} + \frac{ N\lambda p_{i\tau}^2 R^2}{\rho} -  \frac{N \lambda \gamma p_{i\tau}^2 (1-\theta p_{i\tau}^{-1})}{\theta}  \right) \|h^k_{i\tau}\|^2. 
\end{align*}

By convexity of $g$, 
\begin{align*}
P(x^{k+1}) & = \frac{1}{N} \sum_{\tau=1}^n \sum_{i=1}^m \phi_{i\tau}(A_{i\tau}^\top x^{k+1}) + \lambda g((1-\theta)x^k + \theta \nabla g^*(u^k)) \\ 
& \leq  \frac{1}{N} \sum_{\tau=1}^n \sum_{i=1}^m \phi_{i\tau}(A_{i\tau}^\top x^{k+1}) + (1-\theta) \lambda g(x^k) + \theta \lambda g(\nabla g^*(u^k)). 
\end{align*}

By combining the above two inequalities, we can get 
\begin{align*}
& \mathbb{E}_k[P(x^{k+1}) - D(\alpha^{k+1})] \\ 
& \leq  \frac{1}{N} \sum_{\tau=1}^n \sum_{i=1}^m \phi_{i\tau}(A_{i\tau}^\top x^{k+1}) + (1-\theta) \lambda g(x^k) - (1-\theta) D(\alpha^k) \\ 
& \quad -  \frac{1}{N} \sum_{\tau=1}^n \sum_{i=1}^m \langle \theta \nabla g^*(u^k), A_{i\tau} \nabla \phi_{i\tau} (A_{i\tau}^\top x^{k+1}) \rangle + \frac{\theta}{N} \sum_{\tau=1}^n \sum_{i=1}^m \phi^*_{i\tau} (\nabla \phi_{i\tau}(A_{i\tau}^\top x^{k+1})) + \frac{\rho + \theta\lambda}{2} \|e^k\|^2 \\
& \quad + \frac{1}{2\lambda N^2} \sum_{\tau=1}^n \sum_{i=1}^m \left( p_{i\tau}v_{i\tau} + \frac{ N\lambda p_{i\tau}^2 R^2}{\rho} -  \frac{N \lambda \gamma p_{i\tau}^2 (1-\theta p_{i\tau}^{-1})}{\theta}  \right) \|h^k_{i\tau}\|^2. 
\end{align*}

Since $\nabla g^*(u^k) = x^{k+1} - (1-\theta)x^k$, same as the proof of Theorem 9 in \citep{Quartz}, we can simply the above inequality to the following form 
\begin{align}
& \mathbb{E}_k[P(x^{k+1}) - D(\alpha^{k+1})] \nonumber \\ 
& \leq (1-\theta) (P(x^k) - D(\alpha^k)) +  \frac{\rho + \theta\lambda}{2} \|e^k\|^2 \nonumber \\ 
& \quad + \frac{1}{2\lambda N^2} \sum_{\tau=1}^n \sum_{i=1}^m \left( p_{i\tau}v_{i\tau} + \frac{ N\lambda p_{i\tau}^2 R^2}{\rho} -  \frac{N \lambda \gamma p_{i\tau}^2 (1-\theta p_{i\tau}^{-1})}{\theta}  \right) \|h^k_{i\tau}\|^2. \label{eq:PDk+1}
\end{align}

Since $\|e^k\|^2 \leq \frac{1}{n} \sum_{\tau=1}^n \|e_\tau^k\|^2$ from the convexity of the Euclidean norm $\|\cdot\|^2$, from (\ref{eq:PDk+1}) we have 
\begin{align*}
\mathbb{E}_k[\Psi_1^{k+1}] & \leq  (1-\theta) (P(x^k) - D(\alpha^k)) + \frac{2(\rho + \theta \lambda)}{\delta n} \sum_{\tau=1}^n \mathbb{E}_k \|e_\tau^{k+1}\|^2 + \frac{\rho + \theta \lambda}{2n} \sum_{\tau=1}^n \|e_\tau^k\|^2 \\ 
& \quad + \frac{1}{2\lambda N^2} \sum_{\tau=1}^n \sum_{i=1}^m \left( p_{i\tau}v_{i\tau} + \frac{ N\lambda p_{i\tau}^2 R^2}{\rho} -  \frac{N \lambda \gamma p_{i\tau}^2 (1-\theta p_{i\tau}^{-1})}{\theta}  \right) \|h^k_{i\tau}\|^2 \\ 
& \overset{Lemma~\ref{lm:ek+1-1ecQuartz}}{\leq}  (1-\theta) (P(x^k) - D(\alpha^k)) + \left(  1 - \frac{\delta}{2} + \frac{\delta}{4}  \right) \frac{2(\rho + \theta \lambda)}{\delta n} \sum_{\tau=1}^n \mathbb{E}_k \|e_\tau^{k}\|^2 \\ 
& \quad + \frac{1}{2\lambda N^2} \sum_{\tau=1}^n \sum_{i=1}^m \left( \frac{4(1-\delta) (\rho+ \theta \lambda)n}{\delta \lambda m} \left(  \frac{2{\bar R}^2}{\delta} + R_m^2  \right) + p_{i\tau}v_{i\tau} \right. \\ 
& \quad \left. + \frac{ N\lambda p_{i\tau}^2 R^2}{\rho} -  \frac{N \lambda \gamma p_{i\tau}^2 (1-\theta p_{i\tau}^{-1})}{\theta}  \right) \|h^k_{i\tau}\|^2. 
\end{align*}

Recall that $p_{i\tau} = \frac{1}{m}$, by choosing $\rho = \frac{\delta \lambda R}{2 \sqrt{a_1}}$, where $a_1=  (1-\delta) (2{\bar R}^2 + \delta R_m^2) $, the coefficient of $\|h^k_{i\tau}\|^2$ becomes 
\begin{align*}
\frac{4np_{i\tau}R\sqrt{a_1 }}{\delta}    +     \frac{4 \theta  n a_1}{\delta^2 m}    + p_{i\tau}v_{i\tau} -  \frac{N \lambda \gamma p_{i\tau}^2 (1-\theta p_{i\tau}^{-1})}{\theta}. 
\end{align*}
In order to guarantee the above coefficient to be nonpositive, we let 
$$
\frac{4\theta n a_1}{\delta^2  m}  \leq \frac{1}{3} \cdot \frac{N \lambda \gamma p_{i\tau}^2 (1-\theta p_{i\tau}^{-1})}{\theta}, \ \ p_{i\tau}v_{i\tau} \leq \frac{1}{3} \cdot \frac{N \lambda \gamma p_{i\tau}^2 (1-\theta p_{i\tau}^{-1})}{\theta}, 
$$
and 
$$
\frac{4np_{i\tau}R\sqrt{a_1}}{\delta} \leq \frac{1}{3} \cdot \frac{N \lambda \gamma p_{i\tau}^2 (1-\theta p_{i\tau}^{-1})}{\theta}, 
$$
which is equivalent to 

$$
\theta \leq \min\left\{  \tfrac{2\delta \lambda \gamma}{\delta \lambda \gamma m + \sqrt{\delta^2 \lambda^2 \gamma^2 m^2 + 48\lambda \gamma a_1 }},  \tfrac{N\lambda \gamma p_{i\tau}}{3v_{i\tau} + N\lambda \gamma}, \tfrac{\delta \lambda \gamma}{\delta \lambda \gamma m + 12R \sqrt{a_1} }  \right\}. 
$$ 
By choosing the upper bound in the above inequality for $\theta$, we arrive at 
\begin{align*}
\mathbb{E}_k[\Psi_1^{k+1}] & \leq  (1-\theta) (P(x^k) - D(\alpha^k)) + \left(  1 - \frac{\delta}{2} + \frac{\delta}{4}  \right) \frac{2(\rho + \theta \lambda)}{\delta n} \sum_{\tau=1}^n \mathbb{E}_k \|e_\tau^{k}\|^2 \\ 
& = \left(  1 - \min\left\{  \theta, \frac{\delta}{4}  \right\}  \right) \Psi_1^k. 
\end{align*}

By using the tower property, we can obtain 
$$
\mathbb{E}[\Psi_1^k] \leq \left(  1 - \min\left\{  \theta, \frac{\delta}{4}  \right\}  \right)^k \Psi_1^0. 
$$
Therefore, $\mathbb{E}[\Psi_1^k] \leq \epsilon$ as long as 
\begin{align*}
k & \geq O \left( \left(  \frac{1}{\theta} + \frac{1}{\delta}  \right) \ln\frac{1}{\epsilon}  \right) \\ 
& = O \left( \left(  \frac{1}{\delta} + m + \frac{R_m^2}{n\lambda \gamma} + \frac{R^2}{\lambda \gamma} + \frac{1}{\delta} \sqrt{\frac{(1-\delta) ({\bar R}^2 + \delta R_m^2) }{\lambda \gamma } }    +  \frac{R \sqrt{ (1-\delta) ({\bar R}^2 + \delta R_m^2)  }}{\delta \lambda \gamma}  \right) \ln\frac{1}{\epsilon}  \right) \\ 
& = O \left( \left(  \frac{1}{\delta} + m + \frac{R_m^2}{n\lambda \gamma} + \frac{R^2}{\lambda \gamma}   +  \frac{R \sqrt{ (1-\delta) ({\bar R}^2 + \delta R_m^2)  }}{\delta \lambda \gamma}  \right) \ln\frac{1}{\epsilon}  \right) \\ 
& = O \left( \left(  \frac{1}{\delta} + m + \frac{R_m^2}{n\lambda \gamma} + \frac{R^2}{\lambda \gamma}   +   \frac{\sqrt{1-\delta} R{\bar R}}{\delta \lambda \gamma}  + \frac{\sqrt{1-\delta} RR_m}{\lambda \gamma \sqrt{\delta}}  \right) \ln\frac{1}{\epsilon}  \right), 
\end{align*}
where we use $\frac{R^2}{\gamma} \geq \lambda$ in the second equality.

\subsection{Proof of Theorem \ref{th:ecQuartz-2}} 

First, from (\ref{eq:PDk+1}) and Lemma \ref{lm:ek+1-2ecQuartz} we have 
\begin{align*}
& \mathbb{E}_k[P(x^{k+1}) - D(\alpha^{k+1}) + \frac{2(\rho + \theta \lambda)}{\delta} \|e^{k+1}\|^2 ] \\ 
& \leq (1-\theta) (P(x^k) - D(\alpha^k)) + \left(  1- \frac{\delta}{4}  \right) \frac{2(\rho + \theta \lambda)}{\delta} \|e^{k}\|^2 + \frac{4(1-\delta) (\rho + \theta\lambda)}{n^2} \sum_{\tau=1}^n\|e_\tau^k\|^2  \\ 
&  \quad + \frac{1}{2\lambda N^2} \sum_{\tau=1}^n \sum_{i=1}^m \left(  \frac{4(\rho + \theta\lambda)(1-\delta)n}{\delta \lambda m} \left(  \frac{(2\delta+1) R_m^2}{n} + \frac{2R^2}{\delta}  \right)   +    p_{i\tau}v_{i\tau} + \frac{ N\lambda p_{i\tau}^2 R^2}{\rho} -  \frac{N \lambda \gamma p_{i\tau}^2 (1-\theta p_{i\tau}^{-1})}{\theta}  \right) \|h^k_{i\tau}\|^2. 
\end{align*}

Combining the above inequality and Lemma \ref{lm:ek+1-1ecQuartz} yields 
\begin{align*}
\mathbb{E}_k[\Psi_2^{k+1}] &\leq \left(  1 - \min\left\{  \theta, \frac{\delta}{4}  \right\}  \right) \Psi_2^k    +    \frac{1}{2\lambda N^2} \sum_{\tau=1}^n \sum_{i=1}^m \left(   \frac{32(1-\delta)^2 (\rho + \theta\lambda)}{\delta \lambda m} \left(  \frac{2{\bar R}^2}{\delta}  + R_m^2  \right)  \right. \\ 
& \quad \left.  +   \frac{4(\rho + \theta\lambda)(1-\delta)n}{\delta \lambda m} \left(  \frac{(2\delta+1) R_m^2}{n} + \frac{2R^2}{\delta}  \right)   +    p_{i\tau}v_{i\tau} + \frac{ N\lambda p_{i\tau}^2 R_m^2}{\rho} -  \frac{N \lambda \gamma p_{i\tau}^2 (1-\theta p_{i\tau}^{-1})}{\theta}  \right) \|h^k_{i\tau}\|^2 \\ 
& \leq  \left(  1 - \min\left\{  \theta, \frac{\delta}{4}  \right\}  \right) \Psi_2^k    +    \frac{1}{2\lambda N^2} \sum_{\tau=1}^n \sum_{i=1}^m \left(   \frac{4(1-\delta) (\rho + \theta\lambda)n}{\delta \lambda m} \left(  \frac{9R_m^2}{n} +   \frac{16{\bar R}^2}{\delta n}  + \frac{2R^2}{\delta}  \right)  \right. \\ 
& \quad \left.   +    p_{i\tau}v_{i\tau} + \frac{ N\lambda p_{i\tau}^2 R^2}{\rho} -  \frac{N \lambda \gamma p_{i\tau}^2 (1-\theta p_{i\tau}^{-1})}{\theta}  \right) \|h^k_{i\tau}\|^2. 
\end{align*}

By choosing $\rho = \frac{\delta \lambda R}{2\sqrt{a_2}}$, where $a_2 = (1-\delta) (2R^2 + \frac{16{\bar R}^2}{n} + \frac{9\delta R_m^2}{n})$, the coefficient of $\|h^k_{i\tau}\|^2$ becomes 
\begin{align*}
\frac{4np_{i\tau}R\sqrt{a_2 }}{\delta}    +     \frac{4 \theta  n a_2}{\delta^2 m}    + p_{i\tau}v_{i\tau} -  \frac{N \lambda \gamma p_{i\tau}^2 (1-\theta p_{i\tau}^{-1})}{\theta}. 
\end{align*}
Same as the proof of Theorem \ref{th:ecQuartz-1}, we can choose 
$$
\theta = \min\left\{  \tfrac{2\delta \lambda \gamma}{\delta \lambda \gamma m + \sqrt{\delta^2 \lambda^2 \gamma^2 m^2 + 48\lambda \gamma a_2 }},  \tfrac{N\lambda \gamma p_{i\tau}}{3v_{i\tau} + N\lambda \gamma}, \tfrac{\delta \lambda \gamma}{\delta \lambda \gamma m + 12R \sqrt{a_2} }  \right\} , 
$$ 
and get $\mathbb{E}_k[\Psi_2^{k+1}] \leq \left(  1 - \min\left\{  \theta, \frac{\delta}{4}  \right\}  \right) \Psi_2^k$. 

By using the tower property, we can obtain 
$$
\mathbb{E}[\Psi_2^k] \leq \left(  1 - \min\left\{  \theta, \frac{\delta}{4}  \right\}  \right)^k \Psi_2^0. 
$$
Therefore, $\mathbb{E}[\Psi_2^k] \leq \epsilon$ as long as 
\begin{align*}
k & \geq O \left( \left(  \frac{1}{\theta} + \frac{1}{\delta}  \right) \ln\frac{1}{\epsilon}  \right) \\ 
& = O \left( \left(  \frac{1}{\delta} + m + \frac{R_m^2}{n\lambda \gamma} + \frac{R^2}{\lambda \gamma} + \frac{1}{\delta} \sqrt{\frac{(1-\delta) R^2  }{\lambda \gamma } }   + \frac{\sqrt{(1-\delta) R_m^2}}{\sqrt{\delta n \lambda \gamma}}   +  \frac{\sqrt{1-\delta}}{\delta} \frac{R^2}{\lambda \gamma}  + \frac{\sqrt{1-\delta}}{\sqrt{\delta n}} \frac{RR_m}{\lambda \gamma} \right) \ln\frac{1}{\epsilon}  \right) \\ 
& = O \left( \left(  \frac{1}{\delta} + m + \frac{R_m^2}{n\lambda \gamma} + \frac{R^2}{\lambda \gamma}  +  \frac{\sqrt{1-\delta}}{\delta} \frac{R^2}{\lambda \gamma}  + \frac{\sqrt{1-\delta}}{\sqrt{\delta n}} \frac{RR_m}{\lambda \gamma} \right) \ln\frac{1}{\epsilon}  \right) \\ 
& = O \left( \left(  \frac{1}{\delta} + m + \frac{R_m^2}{n\lambda \gamma} + \frac{R^2}{\lambda \gamma}     +  \frac{\sqrt{1-\delta}}{\delta} \frac{R^2}{\lambda \gamma}   \right) \ln\frac{1}{\epsilon}  \right), 
\end{align*}
where in the first equality we use $\frac{{\bar R}^2}{n} \leq R^2$, in the second equality we use 
$$
\frac{2}{\delta} \sqrt{\frac{(1-\delta) R^2  }{\lambda \gamma } } \leq \frac{\sqrt{1-\delta}}{\delta} + \frac{\sqrt{1-\delta}}{\delta} \frac{R^2}{\lambda \gamma}, 
$$
and 
$$
2\frac{\sqrt{(1-\delta) R_m^2}}{\sqrt{\delta n \lambda \gamma}} \leq \frac{1-\delta}{\delta} + \frac{R_m^2}{n\lambda \gamma}, 
$$
and in the last equality, we use 
$$
2\frac{\sqrt{1-\delta}}{\sqrt{\delta n}} \frac{RR_m}{\lambda \gamma}  \leq \frac{\sqrt{1-\delta}}{\delta} \frac{R^2}{\lambda \gamma} + \frac{\sqrt{1-\delta} R_m^2}{n\lambda \gamma}. 
$$

\section{Proofs for EC-SDCA} 

\subsection{ A lemma}

\begin{lemma}\label{lm:ecsdca}
For error compensated SDCA, we have 
\begin{align}
\mathbb{E}_k [\epsilon_D^{k+1}] & \leq (1-\theta) \epsilon_D^k - \theta \epsilon_P^{k+1} + \frac{\rho + \theta \lambda }{2} \|e^k\|^2 \nonumber \\ 
& \quad - \frac{1}{2\lambda N^2} \sum_{\tau=1}^n \sum_{i=1}^m \left(  \frac{\lambda N\gamma p_{i\tau}^2(1-\theta p_{i\tau}^{-1})}{\theta} - p_{i\tau}v_{i\tau} - \frac{\lambda Np_{i\tau}^2 R^2}{\rho}  \right) \|\Delta \alpha_{i\tau}^{k+1}\|^2, \label{eq:epsilonD-ecsdca}
\end{align}
for any $\rho>0$. 
\end{lemma}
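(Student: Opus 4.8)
The plan is to follow the template of the EC-Quartz proof of Theorem~\ref{th:ecQuartz-1} up through the two key lemmas, and only then exploit the difference in the primal update. First I would set $h^k_{i\tau} = \Delta\alpha_{i\tau}^{k+1} = -\theta p_{i\tau}^{-1}(\alpha_{i\tau}^k + \nabla\phi_{i\tau}(A_{i\tau}^\top x^{k+1}))$, so that $\alpha^{k+1} = \alpha^k + h^k_{[S_k]}$, and apply Lemma~\ref{lm:Dalpha} to bound $\mathbb{E}_k[-D(\alpha^{k+1})]$. Then I would invoke Lemma~\ref{lm:Dalpha-2} with the choices $\alpha = \alpha^k$, $x = x^{k+1}$, $u = u^k$, exactly as in the Quartz argument. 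As there, I would record that ${\tilde u}^k \eqdef u^k + e^k = \frac{1}{\lambda N}A\alpha^k$ (cf.~(\ref{eq:tildeuk})), so that the term $\frac{\rho+\theta\lambda}{2}\|{\tilde u}^k - u^k\|^2$ appearing in Lemma~\ref{lm:Dalpha-2} is precisely $\frac{\rho+\theta\lambda}{2}\|e^k\|^2$, and the quadratic terms come out as $\frac{1}{2\lambda N^2}\sum_{\tau,i}\big(p_{i\tau}v_{i\tau} + \frac{N\lambda p_{i\tau}^2 R^2}{\rho} - \frac{N\lambda\gamma p_{i\tau}^2(1-\theta p_{i\tau}^{-1})}{\theta}\big)\|\Delta\alpha_{i\tau}^{k+1}\|^2$, which is already the (negated) coefficient claimed in the lemma.

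The point where the SDCA proof departs from the Quartz one is the primal update $x^{k+1} = \nabla g^*(u^k)$ (rather than the convex combination used in Quartz). Instead of bounding the duality gap $P(x^{k+1}) - D(\alpha^{k+1})$, I would add $D(\alpha^*)$ to both sides of the bound on $\mathbb{E}_k[-D(\alpha^{k+1})]$ to form $\mathbb{E}_k[\epsilon_D^{k+1}]$. Writing $D(\alpha^*) - (1-\theta)D(\alpha^k) = (1-\theta)\epsilon_D^k + \theta D(\alpha^*)$ and invoking strong duality $D(\alpha^*) = P(x^*)$, the right-hand side becomes $(1-\theta)\epsilon_D^k + \frac{\rho+\theta\lambda}{2}\|e^k\|^2$, the quadratic $\|\Delta\alpha_{i\tau}^{k+1}\|^2$ terms identified above, and the residual $\theta$-block
$$
\theta P(x^*) - \theta\lambda g(\nabla g^*(u^k)) - \tfrac{\theta}{N}\sum_{\tau,i}\langle\nabla g^*(u^k), A_{i\tau}\nabla\phi_{i\tau}(A_{i\tau}^\top x^{k+1})\rangle + \tfrac{\theta}{N}\sum_{\tau,i}\phi^*_{i\tau}(\nabla\phi_{i\tau}(A_{i\tau}^\top x^{k+1})).
$$
It then remains to show this block equals $-\theta\epsilon_P^{k+1} = \theta P(x^*) - \theta P(x^{k+1})$.

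The crux is this last identity, and it is where the SDCA update pays off. Since $x^{k+1} = \nabla g^*(u^k)$, I have $g(\nabla g^*(u^k)) = g(x^{k+1})$ and $\langle\nabla g^*(u^k), A_{i\tau}\nabla\phi_{i\tau}(A_{i\tau}^\top x^{k+1})\rangle = \langle A_{i\tau}^\top x^{k+1}, \nabla\phi_{i\tau}(A_{i\tau}^\top x^{k+1})\rangle$. Applying the Fenchel--Young equality $\phi^*_{i\tau}(\nabla\phi_{i\tau}(z)) = \langle z, \nabla\phi_{i\tau}(z)\rangle - \phi_{i\tau}(z)$ at $z = A_{i\tau}^\top x^{k+1}$ (valid because each $\phi_{i\tau}$ is differentiable, being $\frac{1}{\gamma}$-smooth) makes the two $\phi$-terms collapse to $-\frac{\theta}{N}\sum_{\tau,i}\phi_{i\tau}(A_{i\tau}^\top x^{k+1})$, so the whole block reduces to $\theta P(x^*) - \theta\lambda g(x^{k+1}) - \frac{\theta}{N}\sum_{\tau,i}\phi_{i\tau}(A_{i\tau}^\top x^{k+1}) = \theta P(x^*) - \theta P(x^{k+1})$, as needed. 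Combining everything gives (\ref{eq:epsilonD-ecsdca}). I expect the only real obstacle to be careful bookkeeping of signs and of the strong-duality substitution; the Fenchel--Young cancellation itself is exact and mirrors the analogous simplification in \citep{Quartz}.
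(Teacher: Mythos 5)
Your proposal is correct, but it takes a genuinely different route from the paper's own proof. The paper proves Lemma \ref{lm:ecsdca} from scratch: it expands $N[D(\alpha^{k+1}) - D(\alpha^k)]$ directly using the $\gamma$-strong convexity of $\phi^*_{i\tau}$ and the $1$-smoothness of $g^*$, applies the Fenchel--Young equality to the $\phi$-terms, computes $P(x^{k+1}) - D(\alpha^k)$ via $g(x^{k+1}) + g^*(u^k) = \langle x^{k+1}, u^k\rangle$, and only then takes conditional expectations, invoking Lemma \ref{lm:eso} for the quadratic term and Young's inequality (with the free parameter $\rho$) for the cross term involving $\nabla g^*({\tilde u}^k) - \nabla g^*(u^k)$. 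You instead reuse the EC-Quartz machinery: Lemma \ref{lm:Dalpha} followed by Lemma \ref{lm:Dalpha-2} with $(\alpha, x, u) = (\alpha^k, x^{k+1}, u^k)$, which already packages all of those ingredients (strong convexity of $\phi^*$, smoothness of $g^*$, the ESO bound, and the $\rho$-splitting of the compression error $\|{\tilde u}^k - u^k\| = \|e^k\|$). Your departure point is exactly where it should be: whereas the Quartz proof applies convexity of $g$ to the averaged update $x^{k+1} = (1-\theta)x^k + \theta\nabla g^*(u^k)$ to close a recursion on the duality gap, you add $D(\alpha^*)$ to both sides, invoke strong duality $D(\alpha^*) = P(x^*)$, and exploit the SDCA update $x^{k+1} = \nabla g^*(u^k)$ together with the Fenchel--Young equality $\phi^*_{i\tau}(\nabla\phi_{i\tau}(z)) = \langle z, \nabla\phi_{i\tau}(z)\rangle - \phi_{i\tau}(z)$ to collapse the residual $\theta$-block to $-\theta\epsilon_P^{k+1}$; this collapse is exact and yields precisely (\ref{eq:epsilonD-ecsdca}), with the same coefficient of $\|\Delta\alpha_{i\tau}^{k+1}\|^2$. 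What each approach buys: yours is more modular and shorter, and it isolates the single point at which the Quartz and SDCA analyses diverge; the paper's computation is self-contained and never routes the SDCA analysis through the duality-gap lemmas at all. Two minor points worth making explicit if you write this up: both arguments use strong duality (the paper does so silently when it rewrites $\theta(P(x^{k+1}) - D(\alpha^k))$ as $\theta\epsilon_P^{k+1} + \theta\epsilon_D^k$), and identity (\ref{eq:tildeuk}), which you cite from the EC-Quartz section, does carry over verbatim to EC-SDCA because the updates of $u^k$, $e^k_\tau$, $y^k_\tau$, and $\alpha^k$ are shared by the two methods.
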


\begin{proof}

Denote $s_\tau^k = \theta p_{i_k^\tau \tau}^{-1}$ and $z_\tau^k = -\nabla \phi_{i^\tau_k \tau}(A_{i_k^\tau \tau}^\top x^{k+1})$.  Then from (\ref{eq:tildeuk}) we have 
\begin{align*}
N[D(\alpha^{k+1}) - D(\alpha^k)] &=  \left(  \sum_{\tau=1}^n -\phi_{i^\tau_k \tau}^*(-\alpha_{i^\tau_k \tau}^{k+1}) - \lambda N g^*({\tilde u}^{k+1})  \right) - \left(  \sum_{\tau=1}^n  -\phi_{i^\tau_k \tau}^*(-\alpha_{i^\tau_k \tau}^{k}) - \lambda N g^*({\tilde u}^{k})   \right)  \\ 
& = \sum_{\tau=1}^n \left[ \phi_{i^\tau_k \tau}^*(-\alpha_{i^\tau_k \tau}^{k}) -  \phi_{i^\tau_k \tau}^*(-(1-s_\tau^k) \alpha_{i^\tau_k \tau} - s_\tau^k z_\tau^k) \right] - \lambda N \left(  g^*({\tilde u}^{k+1}) - g^*({\tilde u}^k)  \right)   \\ 
& \geq \sum_{\tau=1}^n \left[  \phi_{i^\tau_k \tau}^*(-\alpha_{i^\tau_k \tau}^{k}) - (1-s_\tau^k) \phi_{i^\tau_k \tau}^*(-\alpha_{i^\tau_k \tau}^{k}) - s_\tau^k \phi_{i^\tau_k \tau}^*(-z_\tau^k)  + \frac{\gamma s_\tau^k (1-s_\tau^k)}{2} \|z_\tau^k - \alpha_{i^\tau_k \tau}^{k}\|^2  \right] \\ 
& \quad -  \lambda N \left(  \langle \nabla g^*({\tilde u}^k), {\tilde u}^{k+1} - {\tilde u}^k \rangle + \frac{1}{2} \|{\tilde u}^{k+1} - {\tilde u}^k \|^2   \right) \\ 
& = \sum_{\tau=1}^n \left[  s_\tau^k \phi_{i^\tau_k \tau}^*(-\alpha_{i^\tau_k \tau}^{k}) - s_\tau^k \phi_{i^\tau_k \tau}^*(-z_\tau^k)  + \frac{\gamma s_\tau^k (1-s_\tau^k)}{2} \|z_\tau^k - \alpha_{i^\tau_k \tau}^{k}\|^2  \right] - \lambda N\langle x^{k+1}, {\tilde u}^{k+1} - {\tilde u}^k \rangle \\
& \quad -  \lambda N \left(  \langle \nabla g^*({\tilde u}^k) - x^{k+1}, {\tilde u}^{k+1} - {\tilde u}^k \rangle + \frac{1}{2} \|{\tilde u}^{k+1} - {\tilde u}^k \|^2   \right), 
\end{align*}
where in the first inequality, we use that $\phi^*_{i\tau}$ is $\gamma$-strongly convex and $g^*$ is $1$-smooth. From (\ref{eq:tildeuk}) and the update of $\alpha^k$, we know ${\tilde u}^{k+1} - {\tilde u}^k = \frac{1}{\lambda N} \sum_{\tau=1}^n  A_{i_k^\tau \tau} \Delta \alpha^{k+1}_{i_k^\tau \tau} = \frac{1}{\lambda N} \sum_{\tau=1}^n A_{i_k^\tau \tau}(-s_\tau^k \alpha_{i^\tau_k \tau}^k + s_\tau^k z_\tau^k)$. Then we have 
\begin{align}
& \quad N[D(\alpha^{k+1}) - D(\alpha^k)] \nonumber \\ 
&\geq \sum_{\tau=1}^n \left[  s_\tau^k \left( \phi_{i^\tau_k \tau}^*(-\alpha_{i^\tau_k \tau}^{k}) + \langle A_{i_k^\tau \tau}^\top x^{k+1}, \alpha_{i^\tau_k \tau}^{k}\rangle \right)  - s_\tau^k \left( \phi_{i^\tau_k \tau}^*(-z_\tau^k) + \langle A_{i_k^\tau \tau}^\top x^{k+1}, z_\tau^k \rangle \right) + \frac{\gamma  (1-s_\tau^k)}{2s_\tau^k} \| \Delta \alpha^{k+1}_{i_k^\tau \tau}\|^2  \right] \nonumber \\ 
& \quad - \left\langle \nabla g^*({\tilde u}^k) - x^{k+1},  \sum_{\tau=1}^n  A_{i_k^\tau \tau} \Delta \alpha^{k+1}_{i_k^\tau \tau} \right\rangle - \frac{1}{2\lambda N} \left\|  \sum_{\tau=1}^n  A_{i_k^\tau \tau} \Delta \alpha^{k+1}_{i_k^\tau \tau} \right\|^2 \nonumber \\
& = \sum_{\tau=1}^n \left[  s_\tau^k \left(  \phi_{i^\tau_k \tau}(A_{i_k^\tau \tau}^\top x^{k+1}) + \phi_{i^\tau_k \tau}^*(-\alpha_{i^\tau_k \tau}^{k}) + \langle A_{i_k^\tau \tau}^\top x^{k+1}, \alpha_{i^\tau_k \tau}^{k}\rangle \right)  + \frac{\gamma  (1-s_\tau^k)}{2s_\tau^k} \| \Delta \alpha^{k+1}_{i_k^\tau \tau}\|^2  \right] \nonumber \\ 
& \quad - \left\langle \nabla g^*({\tilde u}^k) - x^{k+1},  \sum_{\tau=1}^n  A_{i_k^\tau \tau} \Delta \alpha^{k+1}_{i_k^\tau \tau} \right\rangle - \frac{1}{2\lambda N} \left\|  \sum_{\tau=1}^n  A_{i_k^\tau \tau} \Delta \alpha^{k+1}_{i_k^\tau \tau} \right\|^2, \label{eq:diffD-ecsdca}
\end{align}
where in the last equality we use $\phi_{i^\tau_k \tau}^*(-z_\tau^k) + \langle A_{i_k^\tau \tau}^\top x^{k+1}, z_\tau^k \rangle = - \phi_{i^\tau_k \tau}(A_{i_k^\tau \tau}^\top x^{k+1})$ which comes from $z_\tau^k = - \nabla \phi_{i^\tau_k \tau} (A_{i_k^\tau \tau}^\top x^{k+1})$.

Since $x^{k+1} = \nabla g^*(u^k)$, we have $g(x^{k+1}) + g^*(u^k) = \langle x^{k+1}, u^k \rangle$. Therefore, 
\begin{align*}
P(x^{k+1}) - D(\alpha^k) & = \frac{1}{N} \sum_{\tau=1}^n \sum_{i=1}^m \phi_{i \tau}(A_{i\tau}^\top x^{k+1}) + \lambda g(x^{k+1}) + \frac{1}{N} \sum_{\tau=1}^n \sum_{i=1}^m \phi_{i \tau}^*(-\alpha_{i\tau}^k) + \lambda g^*({\tilde u}^k) \\ 
& = \frac{1}{N} \sum_{\tau=1}^n \sum_{i=1}^m \left( \phi_{i \tau}(A_{i\tau}^\top x^{k+1}) + \phi_{i \tau}^*(-\alpha_{i\tau}^k) \right) + \lambda g^*({\tilde u}^k) - \lambda g^*(u^k) + \lambda \langle x^{k+1}, u^k \rangle, 
\end{align*}
which indicates that 
\begin{align}
& \quad \mathbb{E}_k \left[  \sum_{\tau=1}^n  s_\tau^k \left(  \phi_{i^\tau_k \tau}(A_{i_k^\tau \tau}^\top x^{k+1}) + \phi_{i^\tau_k \tau}^*(-\alpha_{i^\tau_k \tau}^{k}) + \langle A_{i_k^\tau \tau}^\top x^{k+1}, \alpha_{i^\tau_k \tau}^{k}\rangle \right)  \right] \nonumber \\ 
& = \sum_{\tau=1}^n \sum_{i=1}^m \theta \left(  \phi_{i\tau}(A_{i\tau}^\top x^{k+1}) + \phi_{i\tau}^*(-\alpha_{i\tau}^k) + \langle A_{i\tau}^\top x^{k+1}, \alpha_{i\tau}^k \rangle \right) \nonumber \\ 
& \overset{(\ref{eq:tildeuk})}{=}  \sum_{\tau=1}^n \sum_{i=1}^m \theta \left(  \phi_{i\tau}(A_{i\tau}^\top x^{k+1}) + \phi_{i\tau}^*(-\alpha_{i\tau}^k) \right) + \theta\lambda N \langle x^{k+1}, {\tilde u}^k \rangle \nonumber \\ 
& = \theta N \left(  P(x^{k+1}) - D(\alpha^k)  \right) - \theta \lambda N \left(  g^*({\tilde u}^k) - g^*(u^k) - \langle x^{k+1}, {\tilde u}^k - u^k \rangle  \right) \nonumber \\ 
& = \theta N \left(  P(x^{k+1}) - D(\alpha^k)  \right) - \theta \lambda N \left(  g^*({\tilde u}^k) - g^*(u^k) - \langle \nabla g^*(u^k), {\tilde u}^k - u^k \rangle  \right) \nonumber \\ 
& \geq \theta N \left(  P(x^{k+1}) - D(\alpha^k)  \right) - \frac{\theta \lambda N}{2}\|e^k\|^2, \label{eq:inter-ecsdca}
\end{align}
where in the last equality we use $x^{k+1} = \nabla g^*(u^k)$, and in the last inequality we use that $g^*$ is $1$-smooth and ${\tilde u}^k = u^k + e^k$. 

From (\ref{eq:diffD-ecsdca}) and (\ref{eq:inter-ecsdca}), we arrive at 
\begin{align*}
\mathbb{E}_k [\epsilon_D^k - \epsilon_D^{k+1}] & = \mathbb{E}_k[D(\alpha^{k+1}) - D(\alpha^k)] \\
& \geq \theta  \left(  P(x^{k+1}) - D(\alpha^k)  \right) - \frac{\theta \lambda }{2} \|e^k\|^2 - \frac{1}{N} \mathbb{E}_k \left\langle \nabla g^*({\tilde u}^k) - x^{k+1},  \sum_{\tau=1}^n  A_{i_k^\tau \tau} \Delta \alpha^{k+1}_{i_k^\tau \tau} \right\rangle  \\ 
& \quad + \frac{1}{N} \mathbb{E}_k \left[  \sum_{\tau=1}^n \frac{\gamma  (1-s_\tau^k)}{2s_\tau^k} \| \Delta \alpha^{k+1}_{i_k^\tau \tau}\|^2  \right] - \frac{1}{2\lambda N^2}\mathbb{E}_k \left\|  \sum_{\tau=1}^n  A_{i_k^\tau \tau} \Delta \alpha^{k+1}_{i_k^\tau \tau} \right\|^2. 
\end{align*}

\noindent For $\mathbb{E}_k \left[  \sum_{\tau=1}^n \frac{\gamma  (1-s_\tau^k)}{2s_\tau^k} \| \Delta \alpha^{k+1}_{i_k^\tau \tau}\|^2  \right]$, we have 
$$
\mathbb{E}_k \left[  \sum_{\tau=1}^n \frac{\gamma  (1-s_\tau^k)}{2s_\tau^k} \| \Delta \alpha^{k+1}_{i_k^\tau \tau}\|^2  \right] = \sum_{\tau=1}^n \sum_{i=1}^m \frac{\gamma p_{i\tau}^2(1-\theta p_{i\tau}^{-1})}{2\theta} \|\Delta \alpha_{i\tau}^{k+1}\|^2. 
$$
\noindent For $\mathbb{E}_k \left\|  \sum_{\tau=1}^n  A_{i_k^\tau \tau} \Delta \alpha^{k+1}_{i_k^\tau \tau} \right\|^2$, from Lemma \ref{lm:eso}, we have 
$$
\mathbb{E}_k \left\|  \sum_{\tau=1}^n  A_{i_k^\tau \tau} \Delta \alpha^{k+1}_{i_k^\tau \tau} \right\|^2 \leq \sum_{\tau=1}^n \sum_{i=1}^m p_{i\tau} v_{i\tau} \|\Delta \alpha_{i\tau}^{k+1}\|^2. 
$$

\noindent For $\mathbb{E}_k \left\langle \nabla g^*({\tilde u}^k) - x^{k+1},  \sum_{\tau=1}^n  A_{i_k^\tau \tau} \Delta \alpha^{k+1}_{i_k^\tau \tau} \right\rangle$, we have 
\begin{align*}
\mathbb{E}_k \left\langle \nabla g^*({\tilde u}^k) - x^{k+1},  \sum_{\tau=1}^n  A_{i_k^\tau \tau} \Delta \alpha^{k+1}_{i_k^\tau \tau} \right\rangle & = \left\langle  \nabla g^*({\tilde u}^k) - \nabla g^*(u^k), \sum_{\tau=1}^n \sum_{i=1}^m A_{i\tau} p_{i\tau} \Delta \alpha_{i\tau}^{k+1}   \right\rangle \\ 
& \leq \frac{\rho N}{2} \|e^k\|^2 + \frac{1}{2\rho N} \left\| \sum_{\tau=1}^n \sum_{i=1}^m A_{i\tau} p_{i\tau} \Delta \alpha_{i\tau}^{k+1}  \right\|^2\\ 
& \leq \frac{\rho N}{2} \|e^k\|^2  +  \frac{\|A\|^2}{2\rho N} \sum_{\tau=1}^n \sum_{i=1}^m p_{i\tau}^2 \|\Delta \alpha_{i\tau}^{k+1}\|^2 \\ 
& = \frac{\rho N}{2} \|e^k\|^2 + \frac{R^2}{2\rho} \sum_{\tau=1}^n \sum_{i=1}^m p_{i\tau}^2 \|\Delta \alpha_{i\tau}^{k+1}\|^2, 
\end{align*}
where in the first inequality we use Young's inequality for any $\rho>0$ and that $g^*$ is $1$-smooth, in the last equality we use the fact that $R^2 = \frac{1}{N} \lambda_{\rm max}\left(  \sum_{\tau=1}^n \sum_{i=1}^m A_{i\tau}A_{i\tau}^\top   \right) = \frac{1}{N} \|A\|^2$.  

Then we can obtain 
\begin{align*}
\mathbb{E}_k [\epsilon_D^k - \epsilon_D^{k+1}] & \geq \theta  \left(  P(x^{k+1}) - D(\alpha^k)  \right) - \frac{\rho + \theta \lambda }{2} \|e^k\|^2 \\ 
& \quad + \frac{1}{2\lambda N^2} \sum_{\tau=1}^n \sum_{i=1}^m \left(  \frac{\lambda N\gamma p_{i\tau}^2(1-\theta p_{i\tau}^{-1})}{\theta} - p_{i\tau}v_{i\tau} - \frac{\lambda Np_{i\tau}^2 R^2}{\rho}  \right) \|\Delta \alpha_{i\tau}^{k+1}\|^2 \\ 
& = \theta \epsilon_P^{k+1} + \theta \epsilon_D^k - \frac{\rho + \theta \lambda }{2} \|e^k\|^2 \\ 
& \quad + \frac{1}{2\lambda N^2} \sum_{\tau=1}^n \sum_{i=1}^m \left(  \frac{\lambda N\gamma p_{i\tau}^2(1-\theta p_{i\tau}^{-1})}{\theta} - p_{i\tau}v_{i\tau} - \frac{\lambda Np_{i\tau}^2 R^2}{\rho}  \right) \|\Delta \alpha_{i\tau}^{k+1}\|^2. 
\end{align*}
After rearrangement, we can get the result. 

\end{proof}

\subsection{Proof of Theorem \ref{th:ecSDCA-1}}

First, notice that (\ref{eq:epsilonD-ecsdca}) in Lemma \ref{lm:ecsdca} is the same as (\ref{eq:PDk+1}) except that $\epsilon_D^{k+1}$ and $\epsilon_D^k$ are replaced by $P(x^{k+1}) - D(\alpha^{k+1})$ and $P(x^k)-D(\alpha^k)$ respectively,  and there is an additional term $-\theta \epsilon_P^{k+1}$. Hence, same as the proof in Theorem \ref{th:ecQuartz-1}, we can get 
\begin{equation}\label{eq:Psi3-ecsdca}
\mathbb{E}_k[\Psi_3^{k+1}] \leq \left(  1 - \min\left\{  \theta, \frac{\delta}{4}  \right\}  \right) \Psi_3^k - \theta \epsilon_P^{k+1}, 
\end{equation}
by when $\theta$ satisfies (\ref{eq:theta-ecQuartz-1}). Since $\epsilon_P^{k+1} \geq 0$, by using the tower property, we can obtain $\mathbb{E}[\Psi_3^k] \leq \left(  1 - \min\left\{  \theta, \frac{\delta}{4}  \right\}  \right)^k \Psi_3^0$. 

From (\ref{eq:Psi3-ecsdca}) and the tower property, we have 
$$
\mathbb{E}[\Psi_3^k] \leq \left(  1 - \min\left\{  \theta, \frac{\delta}{4}  \right\}  \right) \mathbb{E}[\Psi_3^{k-1}] - \theta \mathbb{E}[\epsilon_P^k], 
$$
for $k\geq 1$. Let $w_k = \left(  1 - \min\left\{  \theta, \frac{\delta}{4}  \right\}  \right)^{-k}$ and $W_k = \sum_{i=1}^k w_i$. By multiplying $w_k$ on the both sides of the above inequality, we can get 
\begin{align*}
\mathbb{E}[w_k \Psi_3^k] & \leq w_{k-1}\Psi_3^{k-1} - \theta \mathbb{E}[w_k \epsilon_P^k] \\ 
& \leq w_0 \Psi_3^0 - \theta \sum_{i=1}^k w_i \mathbb{E}[\epsilon_P^i]\\ 
& = \Psi_3^0 - \theta \sum_{i=1}^k w_i \mathbb{E}[\epsilon_P^i], 
\end{align*}
which implies that 
\begin{align*}
\frac{1}{W_k} \sum_{i=1}^k w_i \mathbb{E}[\epsilon_P^i] & \leq \frac{1}{\theta W_k} \Psi_3^0 \\ 
& = \frac{\min\left\{  \theta, \frac{\delta}{4}  \right\} }{\theta \left(  (1-\min\left\{  \theta, \frac{\delta}{4}  \right\} )^{-k} -1  \right)}\Psi_3^0 \\ 
& \leq \frac{\left(  1 - \min\left\{  \theta, \frac{\delta}{4}  \right\}  \right)^k \epsilon_D^0}{1 - \left(  1 - \min\left\{  \theta, \frac{\delta}{4}  \right\}  \right)^k}, 
\end{align*}
where we use $\Psi_3^0 = \epsilon_D^0$. Then from the convexity of $P$, we have 
$$
\mathbb{E}[P({\bar x}^k) - P(x^*)] \leq \frac{1}{W_k} \sum_{i=1}^k w_i \mathbb{E}[\epsilon_P^i] \leq \frac{\left(  1 - \min\left\{  \theta, \frac{\delta}{4}  \right\}  \right)^k \epsilon_D^0}{1 - \left(  1 - \min\left\{  \theta, \frac{\delta}{4}  \right\}  \right)^k}. 
$$

In order to guarantee $\mathbb{E}[P({\bar x}^k) - P(x^*)] \leq \epsilon$, we first let 
$$
\left(  1 - \min\left\{  \theta, \frac{\delta}{4}  \right\}  \right)^k \leq \frac{1}{2}, 
$$
which indicates that 
$$
\mathbb{E}[P({\bar x}^k) - P(x^*)] \leq \left(  1 - \min\left\{  \theta, \frac{\delta}{4}  \right\}  \right)^k \cdot 2\epsilon_D^0. 
$$
Thus, when $\epsilon \leq \epsilon_D^0$, $\mathbb{E}[P({\bar x}^k) - P(x^*)] \leq \epsilon$ as long as 
$$
\left(  1 - \min\left\{  \theta, \frac{\delta}{4}  \right\}  \right)^k \leq \frac{\epsilon}{2\epsilon_D^0}, 
$$
which is equivalent to 
$$
k \geq \frac{1}{-\ln\left(  1 - \min\left\{  \theta, \frac{\delta}{4}  \right\}  \right) } \ln \left(  \frac{2\epsilon_D^0}{\epsilon}  \right). 
$$
Finally, from $-\ln(1-x) \geq x$ for $x\in[0,1)$, we have $\mathbb{E}[P({\bar x}^k) - P(x^*)] \leq \epsilon$ as long as 
\begin{align*}
k & \geq O\left(  \left(  \frac{1}{\delta} + \frac{1}{\theta}  \right) \ln \left(  \frac{2\epsilon_D^0}{\epsilon}  \right)  \right) \\ 
& = O \left( \left(  \frac{1}{\delta} + m + \frac{R_m^2}{n\lambda \gamma} + \frac{R^2}{\lambda \gamma}   +   \frac{\sqrt{1-\delta} R{\bar R}}{\delta \lambda \gamma}  + \frac{\sqrt{1-\delta} RR_m}{\lambda \gamma \sqrt{\delta}}  \right) \ln \left(  \frac{2\epsilon_D^0}{\epsilon}  \right)  \right). 
\end{align*}

\subsection{Proof of Theorem \ref{th:ecSDCA-2}}

The proof is the same as that of Theorem \ref{th:ecSDCA-1}. Hence we omit it.

\end{document}